\documentclass[11pt]{amsart}
\usepackage[dvipsnames,svgnames,x11names]{xcolor}
\usepackage{textgreek,bbm,url,graphicx,verbatim,amssymb,enumerate,stmaryrd,
booktabs,lmodern,mathtools,microtype}
\SetSymbolFont{stmry}{bold}{U}{stmry}{m}{n}
\usepackage[
  pagebackref,
  colorlinks,
  citecolor=Mahogany,
  linkcolor=Mahogany,
  urlcolor=Mahogany,
  filecolor=Mahogany
]{hyperref}
\usepackage[capitalize]{cleveref}
\usepackage[mathscr]{euscript}
\usepackage{appendix}
\usepackage{tikz,tikz-cd}
\usetikzlibrary{matrix,calc,positioning,arrows,decorations.pathreplacing,
patterns,arrows}
\usepackage[a4paper,
            bindingoffset=0.2in,
            left=0.8in,
            right=1in,
            top=1in,
            bottom=1in,
            footskip=.25in]{geometry}
\usepackage{enumitem}
\usepackage{bm}

\DeclareFontFamily{U}{mathx}{}
\DeclareFontShape{U}{mathx}{m}{n}{<-> mathx10}{}
\DeclareSymbolFont{widecheckfont}{U}{mathx}{m}{n}
\SetSymbolFont{widecheckfont}{bold}{U}{mathx}{m}{n}
\DeclareMathAccent{\widecheck}{\mathord}{widecheckfont}{"71}

\newtheorem{theorem}{Theorem}[section]
\newtheorem{lemma}[theorem]{Lemma}
\newtheorem{proposition}[theorem]{Proposition}

\newtheorem*{corollary*}{Corollary}

\theoremstyle{definition}
\newtheorem{definition}[theorem]{Definition}

\theoremstyle{remark}

\newtheorem{remark}[theorem]{Remark}

\AddToHook{env/theorem/begin}{\crefalias{theorem}{theorem}}
\AddToHook{env/lemma/begin}{\crefalias{theorem}{lemma}}
\AddToHook{env/proposition/begin}{\crefalias{theorem}{proposition}}
\AddToHook{env/corollary/begin}{\crefalias{theorem}{corollary}}
\AddToHook{env/conjecture/begin}{\crefalias{theorem}{conjecture}}
\AddToHook{env/atheorem/begin}{\crefalias{atheorem}{atheorem}}
\AddToHook{env/acorollary/begin}{\crefalias{atheorem}{acorollary}}
\AddToHook{env/definition/begin}{\crefalias{theorem}{definition}}
\AddToHook{env/axiom/begin}{\crefalias{theorem}{axiom}}
\AddToHook{env/notation/begin}{\crefalias{theorem}{notation}}
\AddToHook{env/convention/begin}{\crefalias{theorem}{convention}}
\AddToHook{env/example/begin}{\crefalias{theorem}{example}}
\AddToHook{env/nonexample/begin}{\crefalias{theorem}{nonexample}}
\AddToHook{env/remark/begin}{\crefalias{theorem}{remark}}
\AddToHook{env/warning/begin}{\crefalias{theorem}{warning}}
\AddToHook{env/question/begin}{\crefalias{theorem}{question}}
\AddToHook{env/discussion/begin}{\crefalias{theorem}{discussion}}

\newcommand{\subref}[2]{\hyperref[#2]{\ref{#1}.\ref{#2}}}

\numberwithin{equation}{section}
\crefname{equation}{}{}

\newcommand{\sq}[1]{\widetilde{#1}}
\newcommand{\R}{\mathbb{R}}

\newcommand{\N}{\mathbb{N}}

\newcommand{\C}{\mathbb{C}}

\newcommand{\mc}[1]{\mathcal{#1}}

\newcommand{\ov}[1]{\overline{#1}}
\newcommand{\ang}[1]{\langle #1 \rangle}

\date{\today}
\title{Self-Similar Solutions for the gSQG patch equations}
\author{Javier G\'omez-Serrano}
\address{Department of Mathematics, Brown University, 02912 Providence, Rhode Island, USA}
\email{javier\_gomez\_serrano@brown.edu}
\author{Marcus Pasquariello}
\address{Department of Mathematics, Brown University, 02912 Providence, Rhode Island, USA}
\email{marcus\_pasquariello@brown.edu}

\begin{document}

\begin{abstract}
    The inviscid generalized surface quasi-geostrophic (gSQG) equations are a family of active scalar equations parametrized by a scalar $\alpha$ that includes the two-dimensional Euler equations ($\alpha = 0)$ and the SQG equation $(\alpha = 1)$. 
    We prove that, for every $\alpha \in (\frac{7}{4}, 2)$ and every $T > 0$, there exists a self-similar patch solution 
    whose boundary is a smooth, unbounded graph for $0 \leq t < T$ and develops a corner singularity at time $T$. 
\end{abstract}

\maketitle

\tableofcontents

\section{Introduction}

\subsection{The gSQG patch equations}

In this paper we are concerned with the inviscid generalized surface quasi-geostrophic (gSQG) equations
\begin{equation}\label{eqn:gsqg}
\begin{cases}
    \partial_t \theta + u \cdot \nabla \theta = 0, & (x, t) \in \R^2 \times \R_+, \\
    u = -\nabla^\perp(-\Delta)^{-1+\alpha / 2}\theta, \\
    \theta(\cdot, 0) = \theta_0,
\end{cases}
\end{equation}
where $\alpha \in (0,2)$. The family \eqref{eqn:gsqg} was proposed by C\'ordoba--Fontelos--Mancho--Rodrigo \cite{Cordoba-Fontelos-Mancho-Rodrigo:evidence-singularities-contour-dynamics} as an interpolation between the 2D incompressible Euler equations, which correspond to $\alpha = 0$, and the surface quasi-geostrophic (SQG) equation, which corresponds to $\alpha = 1$ and models the evolution of the potential temperature in rapidly rotating, stratified flows \cite{Constantin-Majda-Tabak:formation-fronts-qg,Held-Pierrehumbert-Garner-Swanson:sqg-dynamics}. In the range $\alpha \in (1,2)$, the velocity $u$ is more singular than $\theta$, and the equations were first studied by Chae--Constantin--C\'ordoba--Gancedo--Wu \cite{Chae-Constantin-Cordoba-Gancedo-Wu:gsqg-singular-velocities}.

We will work in the patch setting, where $\theta(\cdot, t) = \mathbbm{1}_{\Omega(t)}$ is the characteristic function of a domain $\Omega(t)$ transported by the flow. In this case, the evolution is completely determined by the boundary $\partial\Omega(t)$, and \eqref{eqn:gsqg} reduces to a nonlocal one-dimensional equation for the interface. We will consider unbounded patches whose boundary is a graph,
\[\Omega(t) = \{(x_1, x_2) \in \R^2 : x_2 < F(x_1, t)\}.\]
Then, the interface $F$ satisfies the front equation
\begin{equation}\label{eqn:front_equation}
    \partial_t F(x,t) = c_\alpha\int_{\R}
    \frac{\partial_x F(x,t)-\partial_x F(x',t)}
    {\big((x-x')^2+(F(x,t)-F(x',t))^2\big)^{\alpha/2}}\,dx',
\end{equation}
where $c_\alpha > 0$ is a constant that depends on the normalization of the kernel of $(-\Delta)^{-1 + \alpha/2}$.  For $\alpha \in (1,2)$, the integral in \eqref{eqn:front_equation} converges absolutely as long as $\partial_x F$ is bounded and Lipschitz, and no decay of $F$ at infinity is required.

Two features of \eqref{eqn:front_equation} will play a central role in this paper. First, the equation is invariant under translations in space and time, $F(x,t) \mapsto F(x - x_0, t - t_0) + h_0$, under the scaling
\[F(x,t) \mapsto \frac{1}{c}F(cx, c^{\alpha}t), \qquad c > 0,\]
and under the reflection $F(x,t) \mapsto F(-x,-t)$. In particular, interfaces that are homogeneous of degree one, such as the corner $F(x) = s\, x_-$ with $x_- = \max\{-x, 0\}$, are scale-invariant. Moreover, since \eqref{eqn:gsqg} is invariant under rotations, so is the patch problem: rotating the domain $\Omega(t)$ of a patch solution yields another patch solution. In the graph formulation \eqref{eqn:front_equation}, this symmetry is only visible for rotations that keep the interface a graph, as is the case for small rotations of interfaces with small slope. Together with the translation invariance, this shows that the normalization of the corner $s\, x_-$, with one flat side and the vertex at the origin, entails no loss of generality: it represents any corner with interior angle $\pi + \arctan s$, located at any point and with any orientation. Second, the linearization of \eqref{eqn:front_equation} around the flat interface $F \equiv 0$ is, after normalizing $c_\alpha$, the dispersive equation
\[\partial_t F = |D|^{\alpha-1}\partial_x F,\]
with dispersion relation $\xi|\xi|^{\alpha-1}$. Dispersion is the mechanism behind the global existence of small, smooth and decaying perturbations of the flat interface \cite{Cordoba-GomezSerrano-Ionescu:global-generalized-sqg-patch}. As we will see, it is also the mechanism behind the singularities constructed in this paper.

\subsection{Historical background}

The local regularity theory for \eqref{eqn:gsqg} and its patch version is well understood. For smooth solutions, local well-posedness in Sobolev spaces was established by Chae--Constantin--C\'ordoba--Gancedo--Wu \cite{Chae-Constantin-Cordoba-Gancedo-Wu:gsqg-singular-velocities} in the range $\alpha \in (1,2)$ and extended down to the critical Sobolev exponent by Hu--Kukavica--Ziane \cite{Hu-Kukavica-Ziane:local-existence-active-scalar} (see also \cite{Chae-Constantin-Wu:inviscid-models-2d-euler-sqg,Inci:well-posedness-sqg,Yu-Zhang:local-wellposedness-sqg}). Below that threshold, the equations are ill-posed: strong ill-posedness, nonexistence and instantaneous loss of regularity were proved by C\'ordoba--Mart\'inez-Zoroa \cite{Cordoba-MartinezZoroa:ill-posedness-sqg,Cordoba-MartinezZoroa:ill-posedness-gsqg}, Jeong--Kim \cite{Jeong-Kim:illposedness-sqg}, C\'ordoba--Mart\'inez-Zoroa--O\.za\'nski \cite{Cordoba-MartinezZoroa-Ozanski:instantaneous-loss-regularity-sqg}, C\'ordoba--Lucas-Manch\'on--Mart\'inez-Zoroa \cite{Cordoba-LucasManchon-MartinezZoroa:ill-posedness-gsqg-sobolev} and, in the range $\alpha \in (1,2)$ and through a degenerate dispersion mechanism, Chae--Jeong--Oh \cite{Chae-Jeong-Oh:illposedness-degenerate-dispersion-gsqg}. For patches, we refer to Rodrigo \cite{Rodrigo:evolution-sharp-fronts-qg}, Gancedo \cite{Gancedo:existence-alpha-patch-sobolev}, Chae--Constantin--C\'ordoba--Gancedo--Wu \cite{Chae-Constantin-Cordoba-Gancedo-Wu:gsqg-singular-velocities} and Gancedo--Patel \cite{Gancedo-Patel:local-existence-blowup-gsqg} for local existence results, and to C\'ordoba--C\'ordoba--Gancedo \cite{Cordoba-Cordoba-Gancedo:uniqueness-sqg-patch} for the uniqueness of patch solutions. For fronts, that is, patches whose boundary is a graph as above, analytic solutions of the SQG equation were constructed by Fefferman--Rodrigo \cite{Fefferman-Rodrigo:analytic-sharp-fronts-sqg} (see also \cite{Fefferman-Rodrigo:almost-sharp-fronts-sqg}). Hunter--Shu--Zhang \cite{Hunter-Shu:regularized-approximate-model-sqg,Hunter-Shu-Zhang:local-wellposedness-model-sqg,Hunter-Shu-Zhang:global-model-front-sqg,Hunter-Shu-Zhang:global-gsqg} derived regularized and approximate equations for gSQG fronts and proved local and global well-posedness results for them, and Ai--Avadanei \cite{Ai-Avadanei:well-posedness-sqg-front} proved well-posedness for the SQG front equation. In the range $\alpha \in (1,2)$ considered here, C\'ordoba--G\'omez-Serrano--Ionescu \cite{Cordoba-GomezSerrano-Ionescu:global-generalized-sqg-patch} showed that small, smooth and sufficiently decaying perturbations of the flat interface lead to global solutions of \eqref{eqn:front_equation}, which disperse and undergo modified scattering as $t \to \infty$.

The question of whether the gSQG evolution can develop singularities in finite time from smooth data remains open, both in the smooth and in the patch setting (see \cite{Kiselev:open-problems}). In the patch setting, there are several numerical scenarios that suggest the formation of singularities. C\'ordoba--Fontelos--Mancho--Rodrigo \cite{Cordoba-Fontelos-Mancho-Rodrigo:evidence-singularities-contour-dynamics} considered two patches and observed an asymptotically self-similar collapse, in which the distance between the patches goes to zero while the curvature of their boundaries blows up. Scott--Dritschel \cite{Scott-Dritschel:self-similar-sqg,Scott-Dritschel:self-similar-sqg-patch} found numerical evidence of a self-similar cascade of filament instabilities and of a scale-invariant singularity of the SQG patch, with blow-up of the curvature of the boundary (see also Scott \cite{Scott:scenario-singularity-quasigeostrophic}). In the presence of a boundary, Kiselev--Ryzhik--Yao--Zlato\v{s} \cite{Kiselev-Ryzhik-Yao-Zlatos:singularity-alpha-patch-boundary} proved finite-time singularity formation for gSQG patches with small $\alpha$ on the half-plane (see also \cite{Kiselev-Yao-Zlatos:local-regularity-sqg-patch-boundary,Gancedo-Patel:local-existence-blowup-gsqg,Miao-Tan-Xue-Xue:singularity-generalized-sqg-patches-halfplane}), and Zlato\v{s} \cite{Zlatos:local-regularity-singularity-gsqg-halfplane} obtained finite-time blow-up from smooth data for the gSQG equation on the half-plane. On the whole plane, Jeon--Zlato\v{s} \cite{Jeon-Zlatos:touching-gsqg-patches} proved singularity formation for touching gSQG patches with small $\alpha$. In the smooth setting, Kiselev--Nazarov \cite{Kiselev-Nazarov:simple-energy-pump-sqg} constructed solutions of the SQG equation that start arbitrarily small and become arbitrarily large at a later time, and He--Kiselev \cite{He-Kiselev:small-scale-creation-sqg} constructed solutions that either blow up in finite time or exhibit exponential growth of H\"older norms. Singular solutions with infinite energy were constructed by Castro--C\'ordoba \cite{Castro-Cordoba:infinite-energy-sqg} (see also \cite{Hou-Qin-Sire-Wu:self-similar-blowup-1d-gsqg}), and Bronzi--Guimar\~aes--Mondaini \cite{Bronzi-Guimaraes-Mondaini:locally-self-similar-blowup-gsqg} ruled out broad classes of locally self-similar blow-up profiles. When an external force is allowed, C\'ordoba--Dom\'inguez--Lucas-Manch\'on--Mart\'inez-Zoroa \cite{Cordoba-Dominguez-LucasManchon-MartinezZoroa:blowup-forced-gsqg} constructed classical, finite-energy solutions of the forced gSQG equations in the range $\alpha \in (1,2)$ that blow up in finite time, with both the initial datum and the force in the Sobolev well-posedness regime. Several singular scenarios have also been ruled out: splash singularities, in which the interface touches itself while remaining regular, cannot occur, as shown by Gancedo--Strain \cite{Gancedo-Strain:absence-splash-muskat-SQG}, Kiselev--Luo \cite{Kiselev-Luo:nonexistence-splash-sqg} and Jeon--Zlato\v{s} \cite{Jeon-Zlatos:no-splash-gsqg}. This suggests that singularities of a single patch, if they exist, are local and of self-similar type.

Self-similar solutions and patches with singular boundaries have attracted considerable attention in recent years. Elgindi--Jeong \cite{Elgindi-Jeong:singular-vortex-patches-i,Elgindi-Jeong:singular-vortex-patches-ii} developed a well-posedness theory for vortex patches with corners for the 2D Euler equations and studied their long-time dynamics. Garc\'ia--G\'omez-Serrano \cite{Garcia-GomezSerrano:self-similar-spirals-gsqg} constructed self-similar spiral solutions of \eqref{eqn:gsqg} for $\alpha \in (0,1)$, and Abe--G\'omez-Serrano--Jeong \cite{Abe-GomezSerrano-Jeong:homogeneous-steady-states-gsqg} studied homogeneous steady states of \eqref{eqn:gsqg}.

\subsection{Main result}

The main goal of this paper is to construct solutions of the gSQG patch equation \eqref{eqn:front_equation} whose interface is $C^\infty$ for $t < T$ and develops a corner at time $T$. The solutions are self-similar, and they are small perturbations of the flat interface in the sense that the slope $\sqrt{\epsilon}$ of the corner is small. Our main result is the following theorem:

\begin{theorem}\label{thm:main}
Let $\alpha \in (\frac{7}{4}, 2)$. Then, there exists $\epsilon_0 > 0$ such that for every $0 < \epsilon < \epsilon_0$ and every $T > 0$ there exists a classical solution $F : \R \times (-\infty, T) \to \R$ of \eqref{eqn:front_equation} of the form
\begin{equation}\label{eqn:intro_self_similar_ansatz}
    F(x,t) = \sqrt{\epsilon}\,L(t)\, f_\epsilon\left(\frac{x}{L(t)}\right), \qquad L(t) = (\alpha(T-t))^{1/\alpha},
\end{equation}
where the profile $f_\epsilon \in C^\infty(\R)$ satisfies
\[|\partial^j(f_\epsilon(y) + y)| \lesssim \ang{y}^{-\alpha + 1 -j}, \quad 0 \leq j \leq 2, \quad y \leq 0,\]
and
\[|f_\epsilon(y)| \lesssim \ang{y}^{-\alpha + 1}, \quad |\partial^{j+1} f_\epsilon(y)| \lesssim \ang{y}^{-\frac{\alpha}{2(\alpha-1)} + \frac{j}{\alpha - 1}}, \quad 0 \leq j \leq 1, \quad y \geq 0.\]
Moreover, the interface converges to a corner at time $T$:
\begin{equation}\label{eqn:intro_corner_convergence}
    \|F(\cdot, t) - \sqrt{\epsilon}\, x_-\|_{L^\infty(\R)} \lesssim \sqrt{\epsilon} (T - t)^{1/\alpha},
\end{equation}
and its curvature blows up at the origin: for every $\delta > 0$ there exist $c = c(\epsilon) > 0$ and $t_\delta < T$ such that
\begin{equation}\label{eqn:intro_curvature_blowup}
    \|\partial_x^2 F(\cdot, t)\|_{L^\infty(-\delta, \delta)} \geq c (T-t)^{-1/\alpha}, \qquad t_\delta < t < T.
\end{equation}
\end{theorem}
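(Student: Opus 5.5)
Substituting the ansatz \eqref{eqn:intro_self_similar_ansatz} into \eqref{eqn:front_equation} reduces everything to an ODE-type nonlocal equation for the profile. Write $y = x/L(t)$. Since $L' = -L^{1-\alpha}$, we have $\partial_t F = \sqrt{\epsilon}\,L^{1-\alpha}(y f' - f)$, while the right-hand side equals $\sqrt\epsilon\,L^{1-\alpha}\,\mathcal{N}_\epsilon(f)$. Here
\[\mathcal{N}_\epsilon(f)(y) = c_\alpha\int \frac{f'(y)-f'(y')}{((y-y')^2+\epsilon(f(y)-f(y'))^2)^{\alpha/2}}\,dy'.\]
So the profile must solve
\[ y f' - f = \mathcal{N}_\epsilon(f), \]
where $\mathcal{N}_0(f) = |D|^{\alpha-1}\partial_y f$ is the linearization (normalized), and $\mathcal{N}_\epsilon - \mathcal{N}_0 = O(\epsilon)$ is cubic-type.

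I would proceed perturbatively in $\epsilon$. First solve the linear problem $y f_0' - f_0 = |D|^{\alpha-1}\partial_y f_0$ with the boundary condition $f_0(y) + y \to 0$ as $y\to-\infty$ and $f_0 \to 0$ as $y \to +\infty$. This is the self-similar dispersive evolution of the corner $-x_-$ run backward, so in Fourier variables it becomes a first-order ODE. The operator $y\partial_y - 1$ becomes $-\xi\partial_\xi - 2$, and the equation reads
\[ -\xi\hat f' - 2\hat f = i\xi|\xi|^{\alpha-1}\hat f .\]
This gives $\hat f_0(\xi) = C\xi^{-2}e^{-i|\xi|^{\alpha}\mathrm{sgn}\,\xi/\alpha}$ times a constant fixed by the corner. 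Equivalently, $f_0(y) = -y_- $ evolved by the linear group, i.e.\ $f_0 = -(e^{t|D|^{\alpha-1}\partial}x_-)|_{t=1/\alpha}$ up to normalization. From oscillatory-integral/stationary-phase estimates I would then read off the asymptotics. On the left, $f_0+y$ decays like $\ang{y}^{1-\alpha}$ (the nonstationary, smooth tail from the $|\xi|^{\alpha}$ singularity). On the right, the stationary phase $\xi\sim y^{1/(\alpha-1)}$ gives oscillating tails with $|f_0'|\sim \ang{y}^{-\alpha/(2(\alpha-1))}$, each derivative costing $y^{1/(\alpha-1)}$. These are exactly the weights in the theorem.

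Next I would write $f_\epsilon = f_0 + g$ and solve
\[ (y\partial_y - 1 - |D|^{\alpha-1}\partial_y) g = \epsilon\,\mathcal{R}(f_0+g), \]
where $\epsilon\mathcal{R} = \mathcal{N}_\epsilon - \mathcal{N}_0$. I would invert the linear operator via the Fourier ODE (Duhamel formula along the scaling flow), using the fact that the decaying homogeneous solutions are excluded by the imposed asymptotics. I would then run a contraction in a weighted space $X$ whose norm encodes the stated bounds on $g,g',g''$, using separate weights for $y\le 0$ and $y\ge0$. The key estimates are: (i) $\|L^{-1}h\|_X\lesssim\|h\|_Y$ for a suitable weighted target space $Y$; (ii) $\mathcal{R}$ maps $X$-balls into $Y$ with Lipschitz bounds. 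This is where the restriction $\alpha>7/4$ should enter. Expanding the denominator gives terms like
\[ \int \frac{(f'(y)-f'(y'))\,(f(y)-f(y'))^2}{|y-y'|^{\alpha+2}}\,dy', \]
and the oscillating right tail has $f'$ decaying only like $y^{-\alpha/(2(\alpha-1))}$ while $f''$ grows. The products must still decay fast enough to be absorbed by $L^{-1}$, and balancing exponents should yield a condition equivalent to $\alpha>7/4$. Smoothness ($C^\infty$) of $f_\epsilon$ follows from bootstrapping, since the linear operator gains regularity locally.

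Finally, I would deduce the consequences. For \eqref{eqn:intro_corner_convergence}, note $F - \sqrt\epsilon x_- = \sqrt\epsilon L\,(f_\epsilon(y)+y_-)$ with $f_\epsilon+y_-$ bounded, because $-\alpha+1<0$ on both sides. For \eqref{eqn:intro_curvature_blowup}, $\partial_x^2F = \sqrt\epsilon L^{-1}f_\epsilon''(x/L)$, and it suffices that $f_\epsilon''(y_0)\ne0$ for some fixed $y_0$; for $t$ near $T$, $y_0L\in(-\delta,\delta)$. This holds since $f_0''\not\equiv0$ (it integrates to the jump $1$ in slope) and $g$ is $O(\epsilon)$ in $C^2$ locally.

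I expect the main obstacle to be the nonlinear estimate (ii) on the right half-line. There the profile is a slowly decaying, rapidly oscillating dispersive tail, so naive size estimates lose; one likely needs to exploit oscillation (integration by parts in the phase, or normal-form-like cancellations) in the cubic nonlocal terms. I would try direct pointwise weighted bounds first, which I expect to close precisely in the range $\alpha\in(7/4,2)$.
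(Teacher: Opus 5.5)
Your reduction to the profile equation $yf'-f=\mathcal{N}_\epsilon(f)$, the formula $\hat f_0\propto\xi^{-2}e^{-i\Phi_0(\xi)}$ with $\Phi_0(\xi)=\xi|\xi|^{\alpha-1}/\alpha$, its asymptotics, and your deduction of the corner convergence and the curvature blow-up from the ansatz are all fine. For the blow-up, $f_\epsilon''\not\equiv0$ simply because $f_\epsilon$ cannot be affine. The gap is the perturbative step $f_\epsilon=f_0+g$ with $g$ small, which fails for three concrete reasons.

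\textbf{Derivative loss.} $\mathcal{N}_\epsilon-\mathcal{N}_0$ is quasilinear. Since $(f(y)-f(y'))/(y-y')\to f'(y)$ on the diagonal, its leading part is $(A_\epsilon(f')-1)\,|D|^{\alpha-1}f'$, where $A_\epsilon(u)=(1+\epsilon u^2)^{-\alpha/2}$. This has the same order $\alpha$ as the linear part. For $g$ controlled only up to $g''$ this term is merely $C^{2-\alpha}$, so (ii) loses derivatives unless you prove a sharp $\alpha$-derivative gain for the inverse in weighted norms along the oscillating tail. The missing idea is that, for the stationary equation, the top-order term factors exactly as $A_\epsilon(f')\,|D|^{\alpha-1}f'$. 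Dividing by $A_\epsilon(f')$ gives the semilinear equation $|D|^{\alpha-1}f'+B_\epsilon(f')(f-yf')+\mc{I}_\epsilon(f)=0$, with $B_\epsilon=A_\epsilon^{-1}$ and $\mc{I}_\epsilon$ of order one.

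\textbf{A cokernel, not a kernel.} Inverting $\mc{L}_0=|D|^{\alpha-1}\partial_y+1-y\partial_y$ is not a matter of ``excluding homogeneous solutions''. The nonlinear terms on the right tail (e.g.\ $y\,\ov{f}'(g')^2$ and $y(g')^3$) force a decay rate $\gamma\geq\frac{\alpha+2}{2(\alpha-1)}$. However, $\widehat{\mc{L}_0^{-1}G}(\xi)\sim\ell(G)\,\xi^{-2}e^{-i\Phi_0(\xi)}$ as $\xi\to\infty$, where $\ell(G)=\int_0^\infty\eta e^{i\Phi_0(\eta)}\widehat G(\eta)\,d\eta$. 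Thus $\mc{L}_0^{-1}G$ contains $-2\operatorname{Re}[\ell(G)f_{0,+}]$, with $f_{0,+}$ the positive high-frequency part of $f_0$. This term decays only like $y^{-\frac{\alpha+2}{2(\alpha-1)}}$ and satisfies your boundary conditions; even at $\gamma=\frac{\alpha+2}{2(\alpha-1)}$ the full inverse fails by a logarithm. This is a two-real-dimensional cokernel. The paper projects onto $\{\ell=0\}$ and performs a Lyapunov--Schmidt reduction with a complex amplitude/phase parameter $\mu$ in $f_{0,\mu}=f_0+2\operatorname{Re}[(\mu/\mu_*-1)f_{0,+}]$, fixed by Brouwer's theorem.

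\textbf{Long-range effects (the most serious issue).} $f_\epsilon-f_0$ is not small in any weighted space in which the nonlinear estimates close, because several $O(\epsilon)$ effects are long-range.
\begin{enumerate}
\item Already the corner $W=\max\{-y,0\}$ gives $\mathcal{N}_\epsilon(W)-\mathcal{N}_0(W)$ equal to a nonzero $O(\epsilon)$ multiple of $|y|^{1-\alpha}$ on each side. This is far too slow and requires explicit corrections $c_\pm|y|^{1-\alpha}$.
\item The nonlocal interaction of the left tail $f\approx-y$ with the right tail contributes $c_\epsilon y^{1-\alpha}\partial_y$ to the linearization for $y\gg1$. Each derivative there costs $y^{1/(\alpha-1)}$, so this term is not perturbative. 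It shifts the phase of the oscillation by $\sim\epsilon\,y^{\alpha(2-\alpha)/(\alpha-1)}$, which is unbounded.
\item Writing the tail as $f'\approx2\operatorname{Re}Z$, the resonant cubic self-interaction $|Z|^2Z$ shifts the phase by a further $c_{\textup{log}}\log y$ (modified scattering), and the third harmonic needs a $Z^3$ correction.
\end{enumerate}
Consequently $f_\epsilon'-f_0'$ becomes comparable to $f_0'$ itself for large $y$, so no contraction on an $O(\epsilon)$ ball around $f_0$ can work. The cancellations you hope to extract inside (ii) must instead be built into the profile.

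The paper does exactly this. It constructs $\ov f_{\epsilon,\mu}=f_{0,\mu}+f_{\textup{slow}}+f_{\textup{osc}}$ with the modified phase $\Psi_{\epsilon,\mu}$ (power and logarithmic corrections), whose residual is $G_\mu+O(\epsilon\ang{y}^{1-2\alpha})$. It then moves $c_\epsilon\chi_+y^{1-\alpha}\partial_y$ into the linear operator and inverts $\mc{L}_\epsilon=\mc{V}_\epsilon\mc{L}_0\mc{U}_\epsilon$ via the change of variables $\phi(\phi')^{\alpha-1}=y-c_\epsilon\chi_+y^{1-\alpha}$.

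Relatedly, the condition $\alpha>\frac74$ is the nonemptiness of $\frac{\alpha+2}{2(\alpha-1)}<\gamma\leq2\alpha-1$. The upper bound comes from the accuracy of the approximate solution, not only from balancing the cubic term.
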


\begin{remark}
Since \eqref{eqn:front_equation} is invariant under $(x,t) \mapsto (-x,-t)$, the function $\sq{F}(x,t) = F(-x,-t)$ with $T = 0$ is a solution of \eqref{eqn:front_equation} for $t > 0$ with initial datum the corner $\sq{F}(x,0) = \sqrt{\epsilon}\, x_+$. Therefore, Theorem~\ref{thm:main} can also be read as a desingularization result: a corner with small slope is instantaneously smoothed into a $C^\infty$ interface.
\end{remark}


\begin{remark}
The restriction $\alpha > \frac{7}{4}$ comes from the choice of the weighted spaces $X_\gamma$ and $Y_\gamma$ in Section~\ref{sec:fixed_point}, which requires a decay exponent $\gamma$ with $\frac{\alpha+2}{2(\alpha-1)} < \gamma \leq 2\alpha - 1$. This interval is nonempty if and only if $\alpha(4\alpha - 7) > 0$, that is, $\alpha > \frac{7}{4}$. We make no claim that $\alpha = \frac{7}{4}$ is optimal.
\end{remark}

\subsection*{Acknowledgements}

JGS and MP were partially supported by the National Science
Foundation through Grants DMS-2247537, DMS-2434314, and DMS-2554957.

\subsection*{AI Declaration}

The authors used GPT-5.4, GPT-5.5, GPT-5.6, and GPT-6 during several stages of the research process. During the exploratory phase of this project, we used GPT-5.4, GPT-5.5, and GPT-5.6 to perform computations and numerical experiments and conduct literature searches that helped us develop the current proof strategy. During the writing phase, we occasionally used GPT-6 to fill in computational details, especially in the section containing the construction of the approximate solution. All of the main ideas of the paper were generated by the authors, and all mathematical statements and proofs have been checked by the authors, who take full responsibility for their correctness.

\section{Main Ideas of The Proof}

In this section, we discuss the main ideas used to construct self-similar solutions to the gSQG patch equations. Our construction follows a perturbative strategy consisting of two main steps:
\begin{enumerate}
    \item Construct an approximate solution (\cref{thm:fixed_point_approximation}).
    \item Close a fixed point argument for the correction by linearizing the equation around this approximate solution and studying the corresponding linear inverse and nonlinear terms (\cref{thm:fixed_point_correction}).
\end{enumerate}
There are several different methods for attacking either of these steps, and depending on the equation, the interplay between these two steps can be quite challenging to manage. See \cite{GarciaJuarez-GomezSerrano-Nguyen-Pausader:self-similar-muskat,Garcia-GomezSerrano:self-similar-spirals-gsqg,Merle-Raphael-Rodnianski-Szeftel:implosion-i,Merle-Raphael-Rodnianski-Szeftel:implosion-ii,Buckmaster-CaoLabora-GomezSerrano:imploding-compressible-fluids,Buckmaster-Shkoller-Vicol:point-shocks-3d-euler} for related constructions of self-similar solutions and singularity formation.

\subsection{General Setup}

Let us denote the nonlinearity by 
\[\mc{J}(F) = c_\alpha \int_{\R} \frac{\partial_x F(x) - \partial_x F(x')}{\big((x-x')^2+(F(x)-F(x'))^2\big)^{\alpha/2}}\,dx', \qquad c_\alpha = \frac{\Gamma(\alpha)\sin(\pi(\alpha-1)/2)}{\pi}\]
so that the front evolution can be written as the quasilinear equation
\[\partial_t F = \mc{J}(F).\]
Here $c_\alpha$ is chosen so that the linearization of $\mc{J}$ around the flat interface $F \equiv 0$ is given by $|D|^{\alpha - 1}\partial_x$. We will search for singularities occurring at a fixed time $T > 0$. If we introduce the self-similar variables
\[\tau = \log \frac{1}{(\alpha(T - t))^{1/\alpha}}, \quad y = \frac{x}{(\alpha(T - t))^{1/\alpha}}, \quad F(x, t) = (\alpha(T - t))^{1/\alpha}f(y, \tau),\]
then the self-similar evolution is given by 
\begin{equation}\label{eqn:self_similar_front_equation}
    \partial_\tau f = f - y\partial_yf + \mc{J}(f).
\end{equation}
We search for solutions to \cref{eqn:self_similar_front_equation} satisfying the asymptotic behavior 
\[\lim_{y \to - \infty} \partial_y f(y, \tau) = -\sqrt{\epsilon}, \quad \lim_{y \to + \infty} f(y, \tau) = 0,\]
where $\epsilon$ is some small parameter. By replacing $f$ with $\sqrt{\epsilon}f$, we can equivalently study the equation
\begin{equation}\label{eqn:self_similar_front_equation_scaled}
    \partial_\tau f = f - y\partial_yf + \mc{J}_\epsilon(f), \quad \lim_{y \to - \infty} f'(y) = -1, \quad \lim_{y \to + \infty} f(y) = 0
\end{equation}
where 
\[\mc{J}_\epsilon(f) = c_\alpha \int_{\R} \frac{\partial_y f(y) - \partial_y f(z)}{\big((y-z)^2+ \epsilon(f(y)-f(z))^2\big)^{\alpha/2}}\,dz.\]
To separate the linear and nonlinear parts of the equation, set $A_\epsilon(u) = (1 + \epsilon u^2)^{-\alpha / 2}$. Then we can write
\[\mc{J}_\epsilon = |D|^{\alpha - 1} \partial_y + \mc{Q}_\epsilon, \quad \mc{Q}_{\epsilon}(f)(y) = c_\alpha \int \frac{\partial_y f(y) - \partial_y f(z)}{|y - z|^\alpha}\left(A_\epsilon\left(\frac{f(y) - f(z)}{y - z}\right) - 1\right) \, dz.\]
From here, we can see that the evolution can be expressed as 
\[\partial_\tau f = \underbrace{|D|^{\alpha - 1} \partial_y f + f - y\partial_y f}_{\textup{linear}} \quad  + \underbrace{\mc{Q}_{\epsilon}(f)}_{\textup{cubic and higher}}\]
Both operators are of order $\alpha$ in terms of the number of derivatives, so carrying out any kind of fixed point argument involving this formulation would need to be quite precise. Fortunately, for the \emph{time-independent} equation, there is a semilinear formulation that will require less delicate analysis. To see this, we extract the leading-order behavior of the nonlinear term:
\begin{align*}
    \mc{J}_\epsilon(f) &= c_\alpha \int_{\R} \frac{\partial_y f(y) - \partial_y f(z)}{\big((y-z)^2+\epsilon(f(y)-f(z))^2\big)^{\alpha/2}}\,dz \\
    & = A_\epsilon(\partial_y f) |D|^{\alpha - 1} \partial_yf + \sq{\mc{J}}_\epsilon(f)
\end{align*}
where $\sq{\mc{J}}_\epsilon$ is the lower-order nonlinear term 
\[\sq{\mc{J}}_\epsilon(f) = c_\alpha \int \frac{\partial_y f(y) - \partial_y f(z)}{|y - z|^\alpha}\left(A_\epsilon\left(\frac{f(y) - f(z)}{y - z}\right) - A_\epsilon(\partial_y f(y))\right) \, dz.\]
This term is lower order because, if $f \in W^{2,\infty}_{\textup{loc}}$, then the integrand will be $\mc{O}(|y - z|^{-\alpha + 2})$ as opposed to $\mc{O}(|y - z|^{-\alpha + 1})$.  The self-similar evolution can now be written as
\begin{equation}\label{eqn:main_ideas_self_similar_front_equation_separated}
    \partial_\tau f = f - y\partial_yf + A_\epsilon(\partial_y f)|D|^{\alpha - 1} \partial_yf + \sq{\mc{J}}_\epsilon(f)
\end{equation}
where $\sq{\mc{J}}_\epsilon(f)$ is of order $1$. We now make the observation that this leading-order term factors exactly as the product of the first-order operator $f \mapsto A_\epsilon(\partial_y f)$ and the \emph{linear} leading-order operator $|D|^{\alpha - 1}\partial_y$. After imposing $\partial_\tau f = 0$ and dividing the equation by $A_\epsilon(\partial_y f)$, we are left with a semilinear equation of the form
\begin{equation}\label{eqn:main_ideas_self_similar_front_equation_semilinear}
    |D|^{\alpha - 1}\partial_y f + \frac{1}{A_\epsilon(\partial_y f)}\left(f - y\partial_yf + \sq{\mc{J}}_\epsilon(f)\right) = 0.
\end{equation}
With this formulation, the nonlinear terms have more favourable mapping properties, and we will be able to close a fixed point argument (see \cref{prop:analysis_semilinear_kernel,thm:fixed_point_nonlinear_estimates}). 

\subsection{Fixed Point Argument}

We perform all of our analysis in physical space. We will construct an approximate solution $\ov{f}_\epsilon$ and set up a fixed point argument to solve the following equation for the correction $r$ so that $f = \ov{f}_\epsilon + r$ solves the full equation:
\[\mc{L}_\epsilon(r) = -\mc{E}_\epsilon(\ov{f}_\epsilon) - \mc{N}_{\epsilon, \ov{f}_\epsilon}(r)\]
where $\mc{E}_\epsilon(\ov{f}_\epsilon)$ is the residual, $\mc{L}_\epsilon$ is a linear operator, and $\mc{N}_{\epsilon, \ov{f}_\epsilon}$ is a nonlinear operator. The space for the correction will be a weighted $W^{2,\infty}$ space defined by the norm 
\[\begin{split}
 \|r\|_{X_\gamma} :=\sup_{y\ge0}\ang{y}^\gamma(|r|+\ang{y}^{-1/(\alpha - 1)}|r'|+\ang{y}^{-2/(\alpha - 1)}|r''|) +\sup_{y\le0}\ang{y}^\gamma(|r|+\ang{y}\,|r'|+\ang{y}^2|r''|),
 \end{split}\]
and the norm for the forcing/residual will be defined by 
\[
\begin{aligned}
 \|G\|_{Y_\gamma} := \sup_{y \geq 0}\ang{y}^\gamma\bigl(|G|+\ang{y}^{-1/(\alpha - 1)}|G'|\bigr) +\sup_{y\le0}\ang{y}^\gamma\bigl(|G|+\ang{y}\,|G'|\bigr)
 \end{aligned}
\]
for an appropriate $\gamma > 0$. These spaces are constructed to capture the Airy-type behavior coming from solutions to the linearized equation. On the negative end, the solutions are slowly varying, and on the positive end, the solutions are rapidly oscillating.
The exact choice of $\gamma$ comes from the mapping properties of the linear inverse, the nonlinear terms, and the number of corrections we use in the approximate solution. See \cref{sec:choice_of_space} for a discussion.
\par
Unfortunately, for our choice of $\gamma$, the linear inverse won't be bounded from $Y_\gamma$ to $X_\gamma$. Indeed, our inverse problem reduces to the study of the free operator through a conjugation $\mc{L}_\epsilon = \mc{V}_\epsilon \mc{L}_0 \mc{U}_\epsilon$ (\cref{lem:linear_conjugation}), where
\[\mc{L}_0(f) = |D|^{\alpha - 1}f' + f - yf'\]
whose formal inverse (see \eqref{eqn:linear_free_inverse}) is given by 
\[\widehat{\mc{L}_0^{-1} G}(\xi) = \frac{e^{-i\Phi_0(\xi)}}{\xi^2}\int_{0}^{\xi}\eta e^{i\Phi_0(\eta)} \widehat{G}(\eta) \, d\eta, \quad \Phi_0(\xi) = \frac{\xi|\xi|^{\alpha - 1}}{\alpha}\]
for $\xi > 0$ and extended to $\xi < 0$ by symmetry. For a general Schwartz function $G$, we then have 
\[\widehat{\mc{L}_0^{-1} G}(\xi) = \left(\int_{0}^\infty\ \eta e^{i\Phi_0(\eta)} \widehat{G}(\eta) \,d\eta\right)\frac{e^{-i\Phi_0(\xi)}}{\xi^2} + \text{lower-order terms}, \quad \xi \to \infty\]
and one can therefore conclude that $\mc{L}_0^{-1}(G)$ can only decay as fast as $y^{- \frac{\alpha + 2}{2(\alpha - 1)}}$ which is too slow for the space $X_\gamma$. To get around this issue, we must project the forcing onto the subspace $\sq{Y}_\gamma \subset Y_\gamma$ consisting of $G$ satisfying $\ell(G) = 0$ where $\ell: Y_\gamma \to \C$ is given by
\[\ell(G) = \int_{0}^{\infty}\eta e^{i\Phi_0(\eta)}\widehat{G}(\eta) \, d\eta.\]
We will then be able to bound $\mc{L}_0^{-1}$ from $\sq{Y}_\gamma$ to $X_\gamma$ (\cref{thm:fixed_point_linear_inverse}). To actually implement this, we will need to perform a Lyapunov-Schmidt reduction and rewrite the equation for $w = \mc{U}_\epsilon r$
\[\mc{L}_0(w) = -\mc{V}_\epsilon^{-1}\mc{E}_\epsilon(\ov{f}_\epsilon) - (\mc{V}_\epsilon^{-1} \mc{N}_{\epsilon,\ov{f}_\epsilon}\mc{U}_\epsilon^{-1})w\]
as
\[\begin{cases}
    \mc{L}_0(w) & = \Pi\left[-\mc{V}_\epsilon^{-1}\mc{E}_\epsilon(\ov{f}_{\epsilon}) - \mc{V}_\epsilon^{-1}\mc{N}_{\epsilon, \ov{f}_{\epsilon}}(\mc{U}_\epsilon^{-1}w)\right] \\
    0 & = \Pi^{\perp}\left[-\mc{V}_\epsilon^{-1}\mc{E}_\epsilon(\ov{f}_{\epsilon}) - \mc{V}_\epsilon^{-1}\mc{N}_{\epsilon, \ov{f}_{\epsilon}}(\mc{U}_\epsilon^{-1}w)\right]
\end{cases}\]
where $\Pi$ denotes the projection onto $\sq{Y}_\gamma$ and $\Pi^\perp:=I-\Pi$ projects onto the chosen two-dimensional complement $\sq{Y}_\gamma^{\perp}:=\operatorname{span}_{\R}\{G_+,G_-\}$, with $G_\pm$ defined in \cref{sec:fixed_point}. The first equation can be solved by a fixed point argument because the forcing is now in the right space, but to solve the second (two-dimensional) equation, we will introduce a complex parameter $\mu \in \C$ into the approximate solution and consider the (now consistent) system 
\[\begin{cases}
    \mc{L}_0(w) & = \Pi\left[-\mc{V}_\epsilon^{-1}\mc{E}_\epsilon(\ov{f}_{\epsilon, \mu}) - \mc{V}_\epsilon^{-1}\mc{N}_{\epsilon, \ov{f}_{\epsilon, \mu}}(\mc{U}_\epsilon^{-1}w)\right] \\
    0 & = \Pi^{\perp}\left[-\mc{V}_\epsilon^{-1}\mc{E}_\epsilon(\ov{f}_{\epsilon, \mu}) - \mc{V}_\epsilon^{-1}\mc{N}_{\epsilon, \ov{f}_{\epsilon, \mu}}(\mc{U}_\epsilon^{-1}w)\right].
\end{cases}\]
After doing this, we will be able to solve for $r = r_{\epsilon, \mu}$ by solving the first equation and $\mu = \mu(\epsilon)$ by solving the second equation. The resulting correction $r = r_{\epsilon, \mu(\epsilon)}$ will then solve the full equation (see the proof of \cref{thm:fixed_point_correction}).

The procedure described here is a Lyapunov--Schmidt reduction. We refer the reader to \cite{Kielhofer:bifurcation-theory} for the general method and \cite{Correia-Cote-Vega:asymptotics-mkdv} for a related construction of self-similar solutions to the modified Korteweg--de Vries equation.

\subsection{Construction of the Approximate Solution}

The first step in executing the fixed point argument is to construct an approximate solution $\ov{f}$ so that the residual
\[\sq{\mc{E}}_\epsilon(\ov{f}) = \mc{L}_{0}(\ov{f}) + \mc{Q}_\epsilon(\ov{f})\]
is small in the forcing space $Y_\gamma$. To do this, we consider the solution to the linearized equation
\[\mc{L}_0(f) = 0, \quad \mc{L}_0(f) = |D|^{\alpha - 1}f' + f - yf'\]
with the right asymptotic behavior (\cref{lem:approx_linear_seed}), which is given by
\[f_{0}(y) = -\frac{y}{2} + \frac{1}{\pi}\int_0^\infty
\frac{1-\cos\bigl(y\xi-\Phi_0(\xi)\bigr)}{\xi^2}\,d\xi,
\qquad \Phi_0(\xi)=\frac{\xi|\xi|^{\alpha - 1}}{\alpha}.\]
We then use the function 
\[f_{0, \mu} = f_{0} + 2\text{Re}\left[\left(\frac{\mu}{\mu_*} - 1\right)f_{0,+}\right]\]
as a seed for the approximate solution, where $f_{0,+}$ is the positive high-frequency part of $f_0$ and $\mu_*$ is a complex number specified in \cref{sec:approx_linear_solution}. The approximate solution will then be constructed as a perturbation of this seed, with the perturbation being small in $\epsilon$ and decaying at infinity. The choice of the term
\[2\text{Re}\left[\left(\frac{\mu}{\mu_*} - 1\right)f_{0,+}\right]\]
comes from the fact that the linear response 
\[\mc{L}_0\left(2\text{Re}\left[\left(\frac{\mu}{\mu_*} - 1\right)f_{0,+}\right]\right)\]
is a Schwartz function in the two-dimensional forcing space $\sq{Y}_\gamma^{\perp}$. Choosing the complex parameter $\mu$ correctly (using the fixed point argument for $\mu$) will allow us to cancel the forcing in $\sq{Y}_\gamma^{\perp}$ and solve the full equation. \par
To construct the approximate solution, we express the (derivative of the) high-frequency part of the linear solution $f_{0,+}$ as
\[f_{0,+}'(y) = -\frac{i}{2\pi}\int_0^\infty\frac{\chi_+(\xi)}{\xi}e^{i(y\xi-\Phi_0(\xi))}\,d\xi = a_0(y)e^{i(\Psi_0(y)-\pi/4)},\] 
where the phase and slowly varying coefficient satisfy
\[\Psi_0(y) = \frac{\alpha - 1}{\alpha}y^{\frac{\alpha}{\alpha - 1}}, \qquad |\partial_y^j a_0(y)|+|\partial_y^j a(y)|\lesssim_j y^{-\frac{\alpha}{2(\alpha - 1)}-j} \quad (y\gg1,\ j\geq0).\]
The approximate solution will then consist of three parts (see \cref{sec:approximate_family})
\[\ov{f}_{\epsilon, \mu} = f_{0, \mu} + f_{\textup{slow}} + f_{\textup{osc}}.\]
These are the linear seed, a slowly varying correction $f_{\textup{slow}}$, and a rapidly oscillating correction $f_{\textup{osc}}$. Here, $f_{\textup{slow}}$ is defined by 
\[f_{\textup{slow}} = \begin{cases}
c_+|y|^{-\alpha + 1}, \quad y \gg 1 \\
c_-|y|^{-\alpha + 1}, \quad y \ll -1
\end{cases} \]
for some constants $c_+,c_- = \mc{O}(\epsilon)$ and its purpose is to cancel the nonlinear response to the growing tail of the linear seed (see \cref{prop:approx_E1,prop:approx_residual_negative}):
\[\mc{Q}_\epsilon(W) = -\alpha \begin{cases}
c_+|y|^{-\alpha + 1}, \quad y \gg 1 \\
c_-|y|^{-\alpha + 1}, \quad y \ll -1
\end{cases}, \quad W = \max\{-y, 0\}, \quad \]
The oscillatory correction $f_{\textup{osc}}$ is defined by
\[f_{\textup{osc}}'(y) = 2\operatorname{Re}\left[a(y)e^{i(\Psi_{\epsilon,\mu}(y)-\pi/4)} + c_{\textup{cubic}}a(y)^3e^{3i(\Psi_{\epsilon,\mu}(y)-\pi/4)} - \frac{\mu}{\mu_*}f_{0,+}'(y)\right],\quad y \gg 1\]
with
\[\lim_{y\to\infty}f_{\textup{osc}}(y) = 0, \quad a(y) = \frac{\mu}{\mu_*}\left(\frac{\Psi_0'(y)}{\Psi_{\epsilon,\mu}'(y)}\right)^{\alpha/2}a_0(y)\]
and where $\Psi_{\epsilon, \mu}$ is the modified phase function 
\[\Psi_{\epsilon,\mu}(y)=\Psi_0(y)+c_{\textup{power}}\left(y^{\frac{\alpha(2 - \alpha)}{\alpha - 1}}-R^{\frac{\alpha(2 - \alpha)}{\alpha - 1}}\right)+c_{\textup{log}}(\mu)\log\frac{y}{R},\qquad y\geq R.\]
It follows that we can express the oscillatory part of $\ov{f}'_{\epsilon, \mu}$ as
\[2\operatorname{Re}\left[a(y)e^{i(\Psi_{\epsilon,\mu}(y)-\pi/4)} + c_{\textup{cubic}}a(y)^3e^{3i(\Psi_{\epsilon,\mu}(y)-\pi/4)}\right]\]
which contains a cubic correction to the linear solution, as well as a modified phase function that accounts for certain responses to the nonlinear operator. Indeed, there are two problematic terms that arise from the nonlinear operator $\mc{Q}_\epsilon$ acting on the linear seed $f_{0, \mu}$, namely, the resonant cubic term (see \cref{prop:approx_E5_cubic})
\[\ov{a_0e^{i(\Psi_{0} - \pi/4)}}\left[a_0 e^{i(\Psi_{0} - \pi/4)}\right]^2 = |a_0|^2a_0e^{i(\Psi_{0} - \pi/4)}\]
and the interaction of the positive oscillatory tail with the slowly varying, growing, negative tail (see \eqref{eqn:approx_wedge_first_variation} and \cref{prop:approx_E6}):
\[D\mc{Q}_\epsilon(W)[f](y) = c_\epsilon y^{-\alpha + 1}f'(y) + \text{lower-order terms}, \quad y > 0\]
so that
\[D\mc{Q}_\epsilon(W)[f_{0,\mu} - W](y) = c_\epsilon y^{-\alpha + 1} 2\text{Re}\left[\frac{\mu}{\mu_*}a_0 e^{i(\Psi_{0} - \pi/4)}\right] + \text{lower-order terms}, \quad y > 0\]
Each of these nonlinear responses needs to be cancelled by a linear response, and the only way to do this is to modify the phase function $\Psi_{\epsilon, \mu}$ with the logarithmic and power corrections. The logarithmic correction cancels the resonant cubic response and the power correction cancels the tail response. After incorporating these corrections, we will be able to bound the residual $\mc{E}_\epsilon(\ov{f}_{\epsilon, \mu})$ in the forcing space $Y_\gamma$. See \cref{thm:fixed_point_approximation} and its proof in \cref{sec:approximate_solution}.

\subsection{Linear and Nonlinear Estimates}

Once we have constructed the family of approximate solutions $\ov{f}_{\epsilon, \mu}$ with small residuals in the desired space, we can set up the fixed point argument for the correction $r$ where $f = \ov{f}_{\epsilon, \mu} + r$. The semilinear formulation of the equation will be expressed as
\[\mc{E}_\epsilon(f) := \mc{L}_0(f) + \mc{N}_\epsilon(f) = 0,\]
where $\mc{N}_\epsilon$ is some nonlinear operator that contains fewer derivatives than $\mc{L}_0$. Here the residual of the semilinear formulation is related to the residual of the quasilinear formulation by
\[\sq{\mc{E}}_\epsilon(f) = A_\epsilon(f')\mc{E}_\epsilon(f),\]
and this coefficient does not affect the mapping properties of the residual. It follows that we can express the equation for the correction as
\[\mc{E}_\epsilon(\ov{f}) + \mc{L}_0(r) + \mc{N}_{\epsilon}(\ov{f} + r) - \mc{N}_\epsilon(\ov{f}) = 0.\]
After performing a careful analysis of the nonlinear operator, we will see that every interaction is perturbative except for the nonlocal interaction between the growing tail in $\ov{f}$ for $y \ll 1$ and the correction for $y \gg 1$. In other words, we can express our correction equation as
\[\mc{E}_\epsilon(\ov{f}) + \mc{L}_\epsilon(r) + \mc{N}_{\epsilon, \ov{f}}(r) = 0\]
where $\mc{L}_\epsilon$ is a small perturbation of the linear operator $\mc{L}_0$ obtained by adding a slow term $c_\epsilon y^{-\alpha + 1}\partial_y$ for $y \gg 1$. This operator is too slow to be bounded from $X_\gamma$ to $Y_\gamma$ even though $c_\epsilon = \mc{O}(\epsilon)$, so it will need to be included in the linear inverse problem. Fortunately, we can perform an isomorphism of $X_\gamma$ and $Y_\gamma$ to reduce the problem to inverting the linear operator $\mc{L}_0$ (see \cref{lem:linear_conjugation}). \par
To bound the inverse $\mc{T}_0 = \mc{L}_0^{-1}$ from $\sq{Y}_\gamma$ to $X_\gamma$, we will express its kernel as a Fourier integral: 
\[\mc{T}_0(G)(y) = \int_{\R}K(y,z)G(z)\,dz, \quad K(y,z) = \frac{1}{2\pi}\int_{\R}\left[\int_0^\xi\frac{\eta}{\xi^2}e^{i(y\xi-z\eta+\Phi_0(\eta)-\Phi_0(\xi))}\,d\eta\right]d\xi.\]
To perform the desired weighted estimates, we will decompose the inverse into three pieces
\[\mc{T}_0 = \mc{T}_{\textup{l}} + \mc{T}_{\textup{h}, \textup{s}} + \mc{T}_{\textup{h}, \textup{ns}}\]
corresponding to the low-frequency part, the high-frequency stationary part, and the high-frequency nonstationary part (see \eqref{eqn:linear_frequency_split} and \cref{sec:linear_low_frequency,sec:linear_high_frequency}). Here stationary and nonstationary refer to the phase $\xi \mapsto y\xi - \Phi_0(\xi)$. The reason we extract the low-frequency part and handle it separately is that the symbol of $\mc{L}_0$ is not smooth at the origin. To leading order, the low-frequency part of the inverse is given by 
\[\widehat{\mc{T}_{\textup{l}}G}(\xi) \sim \chi(\xi)\Phi_0(\xi)\int_{0}^{1}t(1 - t^{\alpha})\widehat{G}(t\xi) \, dt\]
where $\chi$ is a low-frequency cutoff (see the decomposition \eqref{eqn:linear_low_frequency_series} in the proof of \cref{prop:linear_low_frequency}). We are able to bound this term by proving weighted $L^\infty$ estimates for the Fourier multipliers with the non-smooth symbol $\chi(\xi)\Phi_0(\xi)$ (see \cref{prop:appendix_nonsmooth_fourier_inverse}). Next, to leading order, the high-frequency stationary part contains the oscillatory term
\begin{equation}\label{eqn:main_ideas_high_frequency_stationary}
G \mapsto
y^{-\frac{\alpha+2}{2(\alpha-1)}}
\int_{Cy}^\infty
z^{\frac{4-\alpha}{2(\alpha-1)}}
\cos \bigl(\Psi_0(y)-\Psi_0(z)\bigr)\,G(z)\,dz,
\qquad y\gg1,
\end{equation}
for some $C \gg 1$. This term comes from the stationary contribution of the kernel in both variables $\xi = y^{1/(\alpha-1)}$ and $\eta = z^{1/(\alpha - 1)}$ in the projected representation \eqref{eqn:linear_projected_tail}. In our weighted spaces, the bounds of terms like this are straightforward (see \cref{prop:linear_stationary} and \eqref{eqn:linear_positive_stationary_contribution}). To leading order, the high-frequency nonstationary part contains the term (see \cref{lem:linear_nonstationary_symbol_expansion})
\[G \mapsto 
\frac1{2\pi}\int_{\mathbb R}e^{iy\xi}
\frac{(1-\chi(\xi))\chi_{\textup{ns}}(y,|\xi|)}
{i\xi\bigl(|\xi|^{\alpha-1}-y\bigr)}
\widehat G(\xi)\,d\xi
\]
where the cutoffs localize away from $0$ and away from the stationary point $\xi = y^{1/(\alpha-1)}$. Therefore this term is a pseudodifferential operator with a smooth symbol. By analyzing the symbol, we can prove weighted $L^\infty$ estimates for this term as well (see \cref{lem:linear_nonstationary_leading_kernels,prop:linear_high_frequency}). \par
Finally, the bounds for the nonlinear operator $\mc{N}_{\epsilon, \ov{f}}$ are relatively straightforward in our function space setup. The local interactions are bounded immediately from the definition of our spaces (\cref{lem:N1,lem:N2}), and to bound the nonlocal interactions, we simply perform a near-far splitting of the integrals and the desired bounds follow from the decay of the kernel and cancellation of the singularity (\cref{lem:N3}).

\subsection{Choice of Space}\label{sec:choice_of_space}

The spaces $X_\gamma$ and $Y_\gamma$ have three defining properties
\begin{enumerate}
    \item Derivative count ($2$ for $X_\gamma$ and $1$ for $Y_\gamma$)
    \item Derivative decay/growth ($\ang{y}^{-1}$ for $y \leq 0$, and $\ang{y}^{1/(\alpha - 1)}$ for $y \geq 0$)
    \item Base decay rate $\gamma$
\end{enumerate}

The choice of two derivatives in $X_\gamma$ and one derivative in $Y_\gamma$ is motivated by the fact that the nonlinear operator $\mc{N}_{\epsilon, \ov{f}}$ is of order $1$, but is only well-defined for functions in $W^{2,\infty}_{\textup{loc}}$. Indeed, the nonlocal free part of $\mc{N}_{\epsilon, \ov{f}}$ contains a leading-order term of the form 
\[r \mapsto \int \frac{r'(y) - r'(z)}{|y - z|^\alpha}\left(\frac{r(y) - r(z)}{y - z} - r'(y)\right)\left(\frac{r(y) - r(z)}{y - z} + r'(y)\right) \, dz.\]
And to utilize the cancellation in the integrand, we need $r \in W^{2,\infty}_{\textup{loc}}$ (see \cref{rmk:analysis_vanishing_diagonal}). \par
The choice of the weights on the derivatives comes from the fact that the linearized solution $f_0$ has leading-order behavior (see \cref{lem:approx_linear_seed})
\[f_0(y) + y \sim c|y|^{-\alpha + 1}, \quad y \to -\infty\]
and 
\[f_0'(y) \sim \text{Re}\left[ C y^{- \frac{\alpha}{2(\alpha - 1)}}e^{i \left(\frac{\alpha - 1}{\alpha}\right) y^{\left(1 + \frac{1}{\alpha - 1}\right)}} \right], \quad y \to \infty.\]
Therefore we expect the correction to be slowly varying on the negative end, and oscillating at a frequency of $\ang{y}^{1 + \frac{1}{\alpha - 1}}$ on the positive end. The weights on the derivatives are chosen to capture this behavior. \par
Finally, the base decay rate is determined by the mapping properties of the fixed point map. To see this, suppose that we have the bound
\[\mc{N}_{\epsilon, \ov{f}} : X_{\gamma_1} \to Y_{\gamma_2}.\]
The nonlinear operator $\mc{N}_{\epsilon, \ov{f}}$ contains local terms of the form (see \eqref{eq:nonlinear_decomposition} and \cref{lem:N1,lem:N2}) 
\[r \mapsto y(\overline f')^2r' , \quad r \mapsto y\overline f'(r')^2, \quad r \mapsto y(r')^3\]
and in order for these operators to be bounded from $X_{\gamma_1}$ to $Y_{\gamma_2}$, for $y \gg 1$, we require the respective restrictions
\[\gamma_2 \leq \gamma_1, \quad \gamma_2 \leq 2\gamma_1 - \left(\frac{\alpha + 2}{2(\alpha - 1)}\right), \quad \gamma_2 \leq 3\gamma_1 - 2\left(\frac{\alpha + 2}{2(\alpha - 1)}\right).\]
On the other hand, suppose that we have either a bound on the full space or the projected space
\[\mc{L}_0^{-1} : Y_{\gamma_2} \to X_{\gamma_1} \quad \text{or} \quad \mc{L}_0^{-1} : \sq{Y}_{\gamma_2} \to X_{\gamma_1}.\]
Since, for example,
\[\mc{L}_0(\chi_+ y^{-\gamma_2})(y) = (1 + \gamma_2)y^{-\gamma_2} + \text{lower-order terms}, \quad y \gg 1,\]
we must have $\gamma_2 \geq \gamma_1$ in order for the linear inverse to be bounded. Therefore, we must have $\gamma_1 = \gamma_2 =: \gamma$. Based on the restrictions coming from the nonlinear estimates, we then must have
\[\gamma \geq \frac{\alpha + 2}{2(\alpha - 1)}.\]
If we have a bound on the full space $Y_{\gamma_2}$, then by the discussions above, 
\[\widehat{\mc{L}_0^{-1}(G)}(\xi) = \left(\int_{0}^\infty\ \eta e^{i\Phi_0(\eta)} \widehat{G}(\eta) \,d\eta\right)\frac{e^{-i\Phi_0(\xi)}}{\xi^2} + \text{lower-order terms}, \quad \xi \to \infty\]
which implies that $\mc{L}_0^{-1}(G)$ can only decay as fast as $y^{- \frac{\alpha + 2}{2(\alpha - 1)}}$ and this therefore restricts $\gamma$ to equal
\[\gamma = \frac{\alpha + 2}{2(\alpha - 1)}.\]
However, a bound $\mc{L}_0^{-1}: Y_\gamma \to X_\gamma$ fails with this choice of $\gamma$ because for example if
\[\widehat{G}(\xi) = \frac{\chi_+(\xi)}{\xi^2}e^{-i\Phi_0(\xi)}, \quad \xi > 0\]
with $\widehat{G}(\xi) = \overline{\widehat{G}(-\xi)}$ for $\xi < 0$, so that $G$ is real-valued, then $G \in Y_\gamma$, but \eqref{eqn:linear_free_inverse} gives
\[\widehat{\mc{L}_0^{-1}(G)}(\xi) = \xi^{-2}e^{-i\Phi_0(\xi)} (\log \xi + C), \quad \xi \geq 2,\]
and a stationary phase argument shows that $\mc{L}_0^{-1}(G)$ fails to lie in $X_\gamma$ by a factor of $\log y$. Therefore, we will need to project the forcing onto the subspace $\sq{Y}_\gamma$ in order to bound the linear inverse. By the leading order term in \eqref{eqn:main_ideas_high_frequency_stationary} (see also the tail integral in \eqref{eqn:linear_positive_stationary_contribution}), we require
\[\gamma > \frac{\alpha + 2}{2(\alpha - 1)}.\]
Finally, the linear inverse $\mc{L}_0^{-1}$ contains the term 
\[G \mapsto \left(\int_{-\infty}^{\infty}G(y) \, dy\right)\frac{\text{sgn}(y)}{|y|^{\alpha + 1}}\]
and this gives the restriction $\gamma \leq \alpha + 1$ (see the low-frequency kernel bound \eqref{eqn:linear_Hk_kernel}). An additional restriction $\gamma \leq 2\alpha - 1$ comes from the bound \eqref{eqn:approx_E1_bound} on the residual $\mc{E}_\epsilon(\ov{f})$. This yields the final restriction 
\[\frac{\alpha + 2}{2(\alpha - 1)} < \gamma \leq 2\alpha - 1\]
which is nonempty for $\frac{7}{4} < \alpha < 2$. By adding additional corrections to the approximate solution, we can likely extend the upper bound on $\gamma$ to $\alpha + 1$ which would only require
\[\alpha > \frac{1 + \sqrt{33}}{4} \approx 1.686,\]
but we will not pursue this in this paper. To go further below the threshold $(1 + \sqrt{33})/4$, one would need to assume additional moment conditions on the forcing $G$ (like $\int G = 0$) to bound the low-frequency part of the linear inverse into a space with faster decay, as well as add additional corrections to the approximate solution to improve the decay of the residual. We believe that this is possible with our approach, but we will not pursue this matter in this paper.

\subsection{Organization of the Paper} In \cref{sec:fixed_point}, we set up the strategy and state the main theorems \cref{thm:fixed_point_approximation}, \cref{thm:fixed_point_linear_inverse}, and \cref{thm:fixed_point_nonlinear_estimates}, regarding the approximate solution, the linear inverse, and the nonlinear estimates, and then we show how these theorems imply \cref{thm:main} using a fixed point argument. In \cref{sec:analysis_equation}, we prove some miscellaneous facts about the equation. Namely, we rewrite the nonlinear operator in a more convenient form in \cref{prop:analysis_semilinear_kernel}, and we prove the bootstrapping result in \cref{thm:fixed_point_smoothness}. In \cref{sec:approximate_solution}, we construct the approximate solution and bound the residual, which proves \cref{thm:fixed_point_approximation}. In \cref{sec:linear_inverse}, we bound the linear inverse, which proves \cref{thm:fixed_point_linear_inverse}. In \cref{sec:nonlinear_estimates}, we bound the nonlinear operator, which proves \cref{thm:fixed_point_nonlinear_estimates}. 

\subsection{Notation}
We use $x$ for the original spatial variables and $t$ for time, and $y$ and $z$ for the self-similar
spatial variables and $\tau$ for self-similar time. We will use $\xi$ and $\eta$ for Fourier
variables. We will use $\Phi, \Psi$ for phase functions and we will use the lowercase letters
$\phi,\psi$ to denote changes of spatial coordinates.

Calligraphic letters, such as $\mc{E},\mc{L},\mc{N}$ and $\mc{T}$, denote operators and residual terms. Uppercase
letters, such as $A_\epsilon,B_\epsilon$ and $\Gamma_\epsilon$, denote scalar coefficient functions or
kernels. We will use lowercase letters like $f, r, g, h$ to denote functions related to the
correction, and we will use $G$, with subscripts as needed, to denote functions related to the forcing.

We will use $M, N, R$ to denote large numbers. We will use the letter $K$ to denote kernels. We
write $p=\alpha-1$, use $\beta,\gamma$ for decay and weight exponents, and $\epsilon$ for the small
perturbation parameter.

The Japanese bracket is $\ang{y}=(1+y^2)^{1/2}$. For nonnegative quantities $a,b$, we write
$a\lesssim b$ if $a\le Cb$ for a constant $C>0$, and $a\approx b$ if both $a\lesssim b$ and
$b\lesssim a$. Implicit constants may change from line to line and may depend on the fixed exponents
and cutoffs, but are independent of the spatial variables and of $\epsilon$ in the stated range.
Additional dependencies are indicated by subscripts, as in $\lesssim_{M,N}$ or $\lesssim_K$, or
specified locally.
We will use the Fourier transform convention
\[\hat{f}(\xi) = \int_{-\infty}^{\infty} f(x) e^{-i x \xi} dx.\]
For a function $f$, we will denote the secant by
\[S_f(y, z) = \frac{f(y) - f(z)}{y - z}.\]

\section{Fixed Point Argument}\label{sec:fixed_point}

In this section, we set up the fixed point argument and state the main theorems regarding the approximate solution, the linear inverse, and the nonlinear estimates. We then show how these theorems imply \cref{thm:main} using a fixed point argument. \par Write $p = \alpha - 1$. Define the correction norm 
\[
 \begin{split}
 \|r\|_{X_\gamma}:=\sup_{y\ge0}\ang{y}^\gamma(|r|+\ang{y}^{-1/p}|r'|+\ang{y}^{-2/p}|r''|) +\sup_{y\le0}\ang{y}^\gamma(|r|+\ang{y}\,|r'|+\ang{y}^2|r''|),
 \end{split}
\]
and the forcing norm
\begin{equation}\label{eq:Y}
\begin{aligned}
 \|G\|_{Y_\gamma}:= \sup_{y \geq 0}\ang{y}^\gamma\bigl(|G(y)|+\ang{y}^{-1/p}|G'(y)|\bigr) +\sup_{y\le0}\ang{y}^\gamma\bigl(|G(y)|+\ang{y}\,|G'(y)|\bigr).
 \end{aligned}
\end{equation}
We will be working with the correction space
\[X_\gamma = \{r \in W^{2,\infty}_{\textup{loc}}(\R;\R) : \|r\|_{X_\gamma} < \infty\}\]
and the forcing space
\[Y_\gamma = \{G \in  W^{1,\infty}_{\textup{loc}}(\R;\R) : \|G\|_{Y_\gamma} < \infty\}.\]
We now fix the parameters
\[\frac{3}{4} < p < 1, \quad \beta = \frac{p + 1}{2p}, \quad \beta + \frac{1}{p} < \gamma_1 < \gamma_2 < 2p + 1.\]
Our first theorem concerns the existence of a family of approximate solutions to \cref{eqn:main_ideas_self_similar_front_equation_semilinear}. For a function $f$, let the residual be given by 
\[\mc{E}_\epsilon(f) := |D|^{\alpha - 1}\partial_y f + \frac{1}{A_\epsilon(\partial_y f)}\left(f - y\partial_yf + \sq{\mc{J}}_\epsilon(f)\right).\]

\begin{theorem}\label{thm:fixed_point_approximation}(Existence of Approximate solutions)
    There exist small constants $\epsilon_0, \delta > 0$ and a nonzero complex number $\mu_* \in \mathbb{C} \setminus \{0\}$ so that the following holds. Let 
    \[\ov{B} = \ov{B}(\mu_*, \delta) := \{\mu \in \mathbb{C} : |\mu - \mu_*| \leq \delta\}.\]
    There exists a family of real-valued $C^\infty$ functions $\ov{f}_{\epsilon, \mu}$ where $0 \leq \epsilon \leq \epsilon_0$ and $\mu \in \ov{B}$ so that the following properties hold for any $\beta + \frac{1}{p} < \gamma < 2p + 1$:
    \begin{enumerate}
        \item The approximate solutions have the asymptotic behavior
        \[\lim_{y \to -\infty} [\ov{f}_{\epsilon, \mu}(y) + y] = 0, \quad \lim_{y \to \infty} \ov{f}_{\epsilon, \mu}(y) = 0\]
        \item We have the following estimates for $y \geq 0$:
        \[|\ov{f}_{\epsilon, \mu}(y)| \lesssim \ang{y}^{-p}, \quad |\partial^{j}\ov{f}'_{\epsilon, \mu}(y)| \lesssim_j \ang{y}^{-\beta + \frac{j}{p}}, \quad j \geq 0\]
        and for $y \leq 0$,
        \[|\partial^{j}(\ov{f}_{\epsilon, \mu}(y) + y)| \lesssim_j \ang{y}^{-p - j}, \quad j \geq 0.\]
        \item The residuals lie in the forcing space: $\mc{E}_\epsilon(\ov{f}_{\epsilon, \mu}) \in Y_\gamma$. Moreover, for each $\mu \in \ov{B}$, there exists a Schwartz function $G_\mu$ independent of $\epsilon$ so that
        \begin{equation}\label{eqn:fixed_point_residual_bound}
\sup_{\mu \in \ov{B}}\|\mc{E}_\epsilon(\ov{f}_{\epsilon, \mu}) - G_\mu\|_{Y_\gamma} \lesssim \epsilon
\end{equation} 
        and $G_{\mu_*} = 0$. In fact, $G_{\mu}$ has the explicit form 
        \[\widehat{G}_{\mu}(\xi) = \left(1 - \frac{\mu}{\mu_*}\right) \frac{\chi_+'(\xi)}{\xi}e^{-i \Phi_0(\xi)}, \quad \Phi_0(\xi) = \frac{1}{\alpha}\xi|\xi|^{p}\]
        for $\xi > 0$ where $\chi_+$ is a smooth cutoff equal to $1$ for $\xi \geq 2$ and equal to $0$ for $\xi \leq 1$. For $\xi < 0$, we define $\widehat{G}_{\mu}(\xi) = \overline{\widehat{G}_{\mu}(-\xi)}$ so that $G_{\mu}$ is real-valued.
        \item For each $\epsilon$, the maps $\mu \mapsto \ov{f}_{\epsilon, \mu}$ and $\mu \mapsto \mc{E}_\epsilon(\ov{f}_{\epsilon, \mu})$ are continuous into $C^2_{\textup{loc}}$ and $C_{\textup{loc}}^1 \cap Y_\gamma$, respectively.
    \end{enumerate}
    All of the constants in the above estimates are uniform in $\epsilon$ and $\mu$.
\end{theorem}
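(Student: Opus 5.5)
The construction follows the three-part ansatz of the main-ideas section, $\ov{f}_{\epsilon,\mu} = f_{0,\mu} + f_{\textup{slow}} + f_{\textup{osc}}$. The residual is estimated term by term after expanding $\mc{E}_\epsilon$ around the linear seed.

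\textbf{Step 1: the linear seed.} Starting from $\mc{L}_0(f)=|D|^{p}f'+f-yf'$, we solve $\mc{L}_0 f_0=0$ on the Fourier side. Off $\xi=0$ the equation is a first-order ODE, $\xi\partial_\xi \hat f + 2\hat f + \ldots = 0$, whose solutions are multiples of $\xi^{-2}e^{-i\Phi_0(\xi)}$. Choosing the multiple so that the $\delta'$-type singularity at $\xi=0$ produces the tail $-y/2 - |y|/2$ gives the explicit formula for $f_0$.

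We then establish the asymptotics of $f_0$:
\begin{itemize}
\item For $y\to-\infty$, integration by parts (the phase $y\xi-\Phi_0(\xi)$ has no stationary point) together with the nonsmooth low-frequency behavior gives $\partial^j(f_0+y)\sim c\,|y|^{-p-j}$.
\item For $y\to+\infty$, stationary phase at $\xi=y^{1/p}$ gives $f_0' = 2\operatorname{Re}[a_0e^{i(\Psi_0-\pi/4)}]$ with symbol bounds $|\partial^j a_0|\lesssim y^{-\beta-j}$, so each derivative costs a factor $y^{1/p}$. Integrating from $+\infty$ gives $|f_0|\lesssim y^{-\beta-1/p}\le y^{-p}$.
\end{itemize}
The positive-frequency piece $f_{0,+}$, cut off by $\chi_+$, satisfies $\mc{L}_0(f_{0,+})\neq 0$. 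Because $f_{0,+}'$ has Fourier transform proportional to $\chi_+(\xi)\xi^{-1}e^{-i\Phi_0}$ and $\mc{L}_0$ acts on the cutoff only through $-y\partial_y \leftrightarrow \partial_\xi\xi$, a direct Fourier computation shows
\[
\mc{L}_0\Bigl(2\operatorname{Re}\bigl[(\mu/\mu_*-1)f_{0,+}\bigr]\Bigr) = -G_\mu ,
\]
which is Schwartz. This produces the $\epsilon$-independent term $G_\mu$ in (3), and gives $G_{\mu_*}=0$.

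\textbf{Step 2: residual expansion and the three corrections.} We write $\mc{E}_\epsilon(f)=\mc{L}_0 f + \mc{N}_\epsilon(f)$, where $\mc{N}_\epsilon$ is $O(\epsilon)$ and cubic in the slopes. We compute the leading responses of $\mc{N}_\epsilon$ (equivalently $\mc{Q}_\epsilon$, since $A_\epsilon$ does not change mapping properties) applied to the seed.

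\begin{enumerate}
\item \emph{Corner response.} Applied to the corner $W=y_-$, $\mc{Q}_\epsilon$ is homogeneous of degree $1-p$ by scaling, so $\mc{Q}_\epsilon(W)=\epsilon\,\kappa_\pm|y|^{-p}$. Since $\mc{L}_0(|y|^{-p})=(1+p)|y|^{-p}+O(|y|^{-2p})$, setting $f_{\textup{slow}}=c_\pm|y|^{-p}$ with $c_\pm=-\epsilon\kappa_\pm/\alpha$ cancels this term. The leftover is $O(\epsilon|y|^{-2p})$ with matching derivative bounds, which is exactly where the constraint $\gamma\le 2p+1$ enters.
\item \emph{Linearization about the tail.} The first variation $D\mc{Q}_\epsilon(W)$ on the oscillating part for $y\gg1$ contributes $c_\epsilon y^{-p}f'$. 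Inserting a phase perturbation into $\mc{L}_0$ acting on $ae^{i\Psi}$ gives, to leading order, $-i\,y\,\partial_y(\Psi-\Psi_0)\cdot \Psi_0'$-type terms. Matching these fixes $c_{\textup{power}}$, producing the $y^{(2-\alpha)\alpha/p}$ correction.
\item \emph{Resonant cubic self-interaction.} The self-interaction of $a_0e^{i\Psi_0}$ produces $\epsilon|a_0|^2a_0e^{i\Psi_0}$ multiplied by a symbol evaluated at the frequency $y^{1/p}$. Since $|a_0|^2\sim y^{-2\beta}$, this balances a $\log y$ phase correction and fixes $c_{\textup{log}}(\mu)\propto|\mu|^2$. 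The nonresonant $e^{3i\Psi}$ part is removed by $c_{\textup{cubic}}a^3e^{3i\Psi}$, because $\mc{L}_0$ is elliptic at frequency $3y^{1/p}$, which is off the characteristic.
\end{enumerate}

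The amplitude factor $(\Psi_0'/\Psi_{\epsilon,\mu}')^{\alpha/2}$ is the transport (second-order WKB) correction; it keeps the $O(\epsilon)$ amplitude errors at the improved decay. Every remaining error is then $\epsilon$ times a symbol of order $y^{-\gamma'}$ with $\gamma'>\gamma$ on $y\ge0$, or order $|y|^{-2p}$ on $y\le0$, and derivatives cost $y^{1/p}$ or $|y|^{-1}$ respectively. This yields \eqref{eqn:fixed_point_residual_bound}.

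\textbf{Step 3: gluing and the remaining properties.} The corrections are glued with cutoffs on $|y|\ge R$; gluing errors are compactly supported and $O(\epsilon)$. Items (1), (2) and (4) then follow directly: the corrections decay and depend smoothly on $\mu$, $c_{\textup{log}}$ is continuous in $\mu$, and $\mu$ is restricted to the small disc $\ov{B}$.

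\textbf{Main obstacle.} The hardest step will be the nonlocal estimates behind Step 2:
\begin{itemize}
\item showing that $\mc{Q}_\epsilon(\ov f)$, after subtracting the three identified leading terms, gains the required decay;
\item in particular, the far-field interaction between the linearly growing negative tail and the oscillatory positive tail, beyond $c_\epsilon y^{-p}f'$;
\item the near-diagonal cubic interactions of oscillating functions.
\end{itemize}
My first approach would be a near/far splitting of the integral at scale $|y-z|\sim y^{-1/p}\cdot y^{\theta}$, with Taylor expansion in the near region and integration by parts against the phase in the far region.
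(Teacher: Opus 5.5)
Your construction is the paper's. It uses the same exact seed $f_{0,\mu}$ and the same slow tails $c_\pm|y|^{-p}$, which cancel $\mc{Q}_\epsilon(W)$ exactly via $f-yf'=(1+p)f$. It uses the same phase $\Psi_{\epsilon,\mu}$, with a power correction driven by $D\mc{Q}_\epsilon(W)\approx c_\epsilon y^{-p}\partial_y$ and a logarithmic correction driven by the resonant cubic term. The third-harmonic term and the amplitude $\mu a_*\lambda^{-(p+1)/2}$ are also the same. Finally, you only cut off the $O(\epsilon)$ pieces $f_{\textup{slow}},f_{\textup{osc}}$ at scale $R$, so that $\mc{L}_0f_{0,\mu}=G_\mu$ holds globally. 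The paper's proof consists essentially of the residual estimates you defer, so the outline is sound. Several of the computations you do state need fixing, however, and two of them are inconsistent with the conclusions you draw from them.

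\textbf{Sign of $G_\mu$.} $\widehat{\mc{L}_0f_{0,+}}(\xi)=-\chi_+'(\xi)\xi^{-1}e^{-i\Phi_0(\xi)}$, so $\mc{L}_0f_{0,\mu}=+G_\mu$, not $-G_\mu$.

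\textbf{Decay of the slow-correction leftover.} $\mc{Q}_\epsilon(W)$ is homogeneous of degree $-p$, not $1-p$. Since $|D|^p\partial_y$ lowers homogeneity by $p+1$, the leftover is $\mc{L}_0(f_{\textup{slow}})+\mc{Q}_\epsilon(W)=|D|^pf_{\textup{slow}}'=O(\epsilon\ang{y}^{-2p-1})$; this is what produces the threshold $2p+1$. Your bound $O(\epsilon|y|^{-2p})$ is too weak to be useful. Because $\beta+\frac1p=\frac{p+3}{2p}>2p$ for every $p<1$, it would not place the residual in $Y_\gamma$ for any admissible $\gamma$, on either side.

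\textbf{Response to a frequency shift.} The leading response of $\mc{L}_0$ to $f'=2\operatorname{Re}Z$ with local frequency $\lambda=\Psi'$ is $2(\lambda^p-y)\operatorname{Re}Z$. This is the symbol $|\xi|^p-y$ evaluated at $\xi=\lambda$, which vanishes at $\lambda_0=y^{1/p}$. A frequency shift therefore contributes only $p\lambda_0^{p-1}(\lambda-\lambda_0)=py^{1-1/p}(\lambda-\lambda_0)$. This small factor is why the $O(\epsilon y^{-p})$ coefficient forces a growing phase $c_{\textup{power}}y^{(1-p^2)/p}$, and why $n_\epsilon\lambda^p|Z|^2\approx n_\epsilon|\mu|^2y^{-1/p}$ forces $c_{\textup{log}}\log y$. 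Your expression $y\,\partial_y(\Psi-\Psi_0)\,\Psi_0'$ is larger by $y^{2/p}$ and would give decaying phase corrections instead.

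\textbf{Gluing errors.} These are not compactly supported, because $|D|^p$ and $\mc{Q}_\epsilon$ are nonlocal. For instance, $|D|^pf_{\textup{osc}}'(y)=O(\epsilon\ang{y}^{-p-2})$ for $y\le-8R$, using $\|f_{\textup{osc}}\|_{L^1}\lesssim\epsilon$. This is acceptable only because $\gamma<2p+1<p+2$.

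\textbf{Minor.} On $y>0$ the seed decays only like $y^{-p}$, because of $f_{0,\textup{low}}$, not like $y^{-\beta-1/p}$.

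For the deferred estimates, no intermediate scale $\lambda^{-1}y^\theta$ is needed. The paper localizes the oscillatory front to $v_Y$ on $(Y/4,4Y)$ and compares $\mc{Q}_{\epsilon,3}(v_Y)(y)$ with $\mc{Q}_{\epsilon,3}(P_y)(y)$ for the frozen plane wave $P_y$. The response of $P_y$ is given exactly by the identity defining $n_\epsilon$ and $c_{\textup{cubic}}$. The difference $v_Y-P_y$ is controlled on all of $|y-z|\le Y/4$ with a relative gain $y^{-1-1/p}$, and crude bounds suffice beyond that.

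The tail interaction is handled by the exact identity $D\mc{Q}_\epsilon(W)[h](y)=c_\alpha\partial_y\int_0^\infty K(y,t)\bigl(h(y)-h(-t)\bigr)\,dt$, with $K(y,t)=\bigl(A_\epsilon(-t/(y+t))-1\bigr)(y+t)^{-1-p}$. This isolates $c_\epsilon y^{-p}h'$ up to $O(\epsilon y^{-2p-1})$. The linear response uses an oscillatory-multiplier expansion of $|D|^p(ae^{i\Psi})$. The remaining far interactions need only boundedness of $f-W$, giving $O(\epsilon y^{-p-2})$.

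Finally, continuity of $\mu\mapsto\mc{E}_\epsilon(\ov{f}_{\epsilon,\mu})$ into $Y_\gamma$ is not immediate, since $\mu$ enters the phase through $c_{\textup{log}}(\mu)\log(y/R)$. The paper gets it from local $C^1$ convergence plus the uniform bound at a slightly larger exponent. This is one place where the openness of the $\gamma$-range is used.
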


Now we state the main theorem about the existence of the correction. Decompose $f = \overline{f} + r$ and decompose the equation into its residual, a linear part and a nonlinear part
\[\mc{E}_{\epsilon}(f) = \mc{E}_{\epsilon}(\ov{f}) + \mc{L}_{\epsilon}(r) + \mc{N}_{\epsilon, \ov{f}}(r)\]
where 
\[\mc{L}_{\epsilon} = |D|^{p}\partial_y + I - y\partial_y + c_\epsilon \chi_+y^{-p} \partial_y + \mc{R}_\epsilon\]
where $\mc{R}_\epsilon$ is the perturbative remainder term defined in the proof of \cref{lem:linear_conjugation} and 
\[\mc{N}_{\epsilon, \ov{f}}(r) = \mc{E}_{\epsilon}(\ov{f} + r) - \mc{E}_{\epsilon}(\ov{f}) - \mc{L}_{\epsilon}(r).\]
Here $c_\epsilon \chi_+ y^{-p} \partial_y$ is a nonperturbative linear term that arises from the nonlinear and nonlocal interaction of the tail $\ov{f} \approx -y$ as $y \to - \infty$ and the correction $r(y)$ for $y > 0$. Fortunately, every other interaction is perturbative and can be absorbed into the nonlinear operator.

The following theorem concerns the existence of a linear inverse of the operator $\mc{L}_\epsilon$. We are fortunately able to reduce matters to the study of the free operator $\mc{L}_0$. The operator $\mc{L}_0$ doesn't have a bounded inverse from $Y_\gamma$ to $X_\gamma$, so to obtain an inverse, we will project onto a codimension two subspace. Consider the functional 
\[\ell(G) := \int_{0}^{\infty}\xi e^{i \Phi_0(\xi)}\widehat{G}(\xi) \, d\xi, \quad \Phi_0(\xi) = \frac{\xi|\xi|^p}{\alpha}.\]
Let $\sq{Y}_\gamma$ denote the subspace of $Y_\gamma$ consisting of functions $G$ so that $\ell(G) = 0$
\[\sq{Y}_\gamma = \{G \in Y_\gamma : \ell(G) = 0\}\]
and let $\Pi:\ Y_\gamma \to \sq{Y}_\gamma$ be the projection onto this subspace. I.e. we define real-valued Schwartz functions $G_+, G_-$ by 
\[\widehat{G_+}(\xi) = \frac{\chi_+'(\xi)}{\xi}e^{-i\Phi_0(\xi)}, \quad \widehat{G_-}(\xi) = i\frac{\chi_+'(\xi)}{\xi}e^{-i\Phi_0(\xi)}, \quad \xi \geq 0.\]
It's immediate from the definition of $\ell$ that $\ell(G_+) = 1$ and $\ell(G_-) = i$. Therefore, we can define the projection $\Pi$ by
\[\Pi (G) = G - \text{Re}(\ell(G))G_+ - \text{Im}(\ell(G))G_-.\]
It's not immediately obvious that $\ell$ is a well-defined continuous functional on $Y_\gamma$. This will be proved in \cref{sec:linear_inverse}.

\begin{theorem}\label{thm:fixed_point_linear_inverse}(Linear Inverse) Suppose that $\beta + \frac{1}{p} < \gamma < 2p + 1$. Then the functional $\ell: Y_{\gamma} \to \C$ is continuous. Moreover, there exists a bounded linear operator $\mc{T}_0: \sq{Y}_\gamma \to X_\gamma$ and isomorphisms $\mc{U}_\epsilon: X_\gamma \to X_\gamma$, $\mc{V}_\epsilon: Y_\gamma \to Y_\gamma$
so that
\begin{enumerate}
    \item $\mc{T}_0$ is a right inverse of $\mc{L}_0$:
    \begin{equation}\label{eqn:fixed_point_free_inverse}
\mc{L}_0 \mc{T}_0 G = G, \quad G \in \sq{Y}_\gamma.
\end{equation}
    \item The operators conjugate $\mc{L}_\epsilon$ to $\mc{L}_0$: $\mc{L}_\epsilon = \mc{V}_\epsilon \mc{L}_0 \mc{U}_\epsilon$ and we have the uniform estimates 
    \[\sup_{0 \leq \epsilon \leq \epsilon_0}\|\mc{U}_{\epsilon}\|_{X_\gamma \to X_\gamma} < \infty, \quad \sup_{0 \leq \epsilon \leq \epsilon_0}\|\mc{V}_{\epsilon}\|_{Y_\gamma \to Y_\gamma} < \infty\]
    and
    \[\sup_{0 \leq \epsilon \leq \epsilon_0}\|\mc{U}_{\epsilon}^{-1}\|_{X_\gamma \to X_\gamma} < \infty, \quad \sup_{0 \leq \epsilon \leq \epsilon_0}\|\mc{V}_{\epsilon}^{-1}\|_{Y_\gamma \to Y_\gamma} < \infty\]
    and 
    \begin{equation}\label{eqn:fixed_point_residual_transport}
\|(\mc{V}_\epsilon^{-1} - I)G_\mu\|_{Y_\gamma} \lesssim \epsilon.
\end{equation}
\end{enumerate}
None of the operators defined above depends on $\gamma$.
\end{theorem}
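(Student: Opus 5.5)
The proof has three parts: continuity of $\ell$, construction and bounds for $\mc{T}_0$, and the conjugation of $\mc{L}_\epsilon$ to $\mc{L}_0$.

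\textbf{Continuity of $\ell$.} We rewrite $\ell(G)$ in physical space. Insert a cutoff $\chi(\xi/N)$ and exchange the order of integration. This gives $\ell(G)=\lim_N\int G(z)k_N(z)\,dz$ with
\[k_N(z)=\int_0^\infty \xi\chi(\xi/N)e^{i(\Phi_0(\xi)-z\xi)}\,d\xi .\]
For $z\le0$ the phase has no stationary point, so two integrations by parts give $|k(z)|\lesssim\ang{z}^{-2}$ after subtracting the distributional part at $\xi=0$. For $z\gg1$ the phase is stationary at $\xi=z^{1/p}$. Stationary phase then gives
\[k(z)\sim c\,z^{\frac{1}{p}}z^{-\frac{1-p}{2p}}e^{-i\Psi_0(z)}=c\,z^{\frac{p+1}{2p}}e^{-i\Psi_0(z)},\]
with exponent $\beta$, up to an error that we integrate by parts once more. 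Pairing this with $\ang{z}^{-\gamma}$ is not absolutely convergent in general, since $\gamma>\beta+1/p$ only barely helps. We therefore integrate by parts once, using $e^{-i\Psi_0}=(-i\Psi_0')^{-1}\partial_z e^{-i\Psi_0}$ with $\Psi_0'=z^{1/p}$. This moves a derivative onto $G$, which costs $\ang{z}^{1/p}$ in $Y_\gamma$, and gains $z^{-1/p}$. The resulting integrand is $O(z^{\beta-\gamma})$, which is still not integrable. The fix is to split $k$ differently: integrate by parts in $\xi$ before stationary phase, writing $\xi=\partial_\xi(\cdot)$ against the phase, so that $\ell(G)$ becomes the pairing of $G'$ against a kernel of size $z^{\beta-1/p}$. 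This kernel is paired with $|G'|\lesssim z^{1/p-\gamma}$, so the integrand is $O(z^{\beta-\gamma})$, and this is integrable provided $\gamma>\beta+1$. That condition is implied by $\gamma>\beta+1/p$ because $p<1$. The estimate is uniform in $N$, so $\ell$ is continuous.

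\textbf{The inverse $\mc{T}_0$.} We define $\mc{T}_0$ by the Fourier formula $\widehat{\mc{T}_0G}=\xi^{-2}e^{-i\Phi_0}\int_0^\xi\eta e^{i\Phi_0}\widehat G$, extended to $\xi<0$ by conjugate symmetry. The identity $\mc{L}_0\mc{T}_0=I$ is checked on the Fourier side, where $\mc{L}_0$ acts as the first-order ODE operator $\widehat{\mc{L}_0 f}=i\xi|\xi|^p\hat f+\partial_\xi(\xi\hat f)+\hat f$. We then extend from Schwartz functions by a density argument in a weaker topology. For $G\in\sq{Y}_\gamma$ we replace $\int_0^\xi$ by $-\int_\xi^\infty$, since $\ell(G)=0$. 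Using the kernel splitting $\mc{T}_{\textup{l}}+\mc{T}_{\textup{h,s}}+\mc{T}_{\textup{h,ns}}$, we bound each piece in $X_\gamma$ separately:
\begin{enumerate}
\item The low-frequency piece uses the non-smooth multiplier $\chi\Phi_0$. Its kernel decays like $|y|^{-\alpha-1}$, which is where the condition $\gamma\le\alpha+1$ enters.
\item The nonstationary piece is a pseudodifferential operator with a symbol of order $-1$ localized away from $\xi=y^{1/p}$. We bound its kernel by integrating by parts.
\item The stationary piece is the tail integral \eqref{eqn:main_ideas_high_frequency_stationary} restricted to $z\ge Cy$. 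It is bounded by $y^{-\frac{\alpha+2}{2p}}\int_{Cy}^\infty z^{\frac{4-\alpha}{2p}-\gamma}\,dz\lesssim y^{-\gamma+1/p}\cdot$const. Again we integrate by parts once in $z$ to recover the full factor $y^{-\gamma}$, which requires $\gamma>\beta+1/p$ for convergence. Derivatives in $y$ each cost $y^{1/p}$, which matches the weights in $X_\gamma$.
\end{enumerate}

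\textbf{Conjugation.} We build $\mc{U}_\epsilon,\mc{V}_\epsilon$ by absorbing the slow term $c_\epsilon\chi_+y^{-p}\partial_y$ into a change of variables. For large $y$, the operator $-y\partial_y+c_\epsilon y^{-p}\partial_y$ equals $-\phi(y)\partial_y$ with $\phi(y)=y-c_\epsilon y^{-p}$. We choose a diffeomorphism $\psi_\epsilon$ with $\psi_\epsilon(y)=y+O(\epsilon y^{-p})$ that straightens this vector field, so that $\mc{U}_\epsilon r=r\circ\psi_\epsilon^{-1}$ up to a multiplicative factor close to $1$. The commutator of $|D|^p\partial_y$ with this composition is then of lower order with $O(\epsilon)$ coefficients, so it is absorbed together with $\mc{R}_\epsilon$ into $\mc{V}_\epsilon=I+O(\epsilon)$. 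A Neumann series then shows that $\mc{V}_\epsilon$ is invertible on $Y_\gamma$. The bound \eqref{eqn:fixed_point_residual_transport} follows because $G_\mu$ is Schwartz and $\mc{V}_\epsilon-I$ has $O(\epsilon)$ coefficients. None of these operators depends on $\gamma$.

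The main obstacle is the stationary high-frequency part of $\mc{T}_0$, together with continuity of $\ell$: both sit exactly at the borderline $\gamma\approx\beta+1/p$, and they need the one extra integration by parts that trades $G$ for $G'$. I expect the hardest bookkeeping to be the error terms from that integration by parts, which must be handled without losing the $\log$ factor.
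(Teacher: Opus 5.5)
\textbf{Main gap: the conjugation step.} As written, the conjugation does not reduce $\mc{L}_\epsilon$ to $\mc{L}_0$ plus a small bounded remainder. Under a change of variables $\phi$ with $\phi-y=O(\epsilon\ang{y}^{-p})$, one has $\mc{T}_\phi(y\partial_y)\mc{T}_\phi^{-1}=(\phi/\phi')\partial_y$. But also $\mc{T}_\phi|D|^p\partial_y\mc{T}_\phi^{-1}=(\phi')^{-(p+1)}|D|^p\partial_y+(\text{lower order})$, and this Jacobian factor on the principal part is not a lower-order commutator.

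If $\phi$ straightens the transport field alone, $\phi=(y-b_\epsilon)\phi'$, then $\phi-y\approx-\frac{c_\epsilon}{1+p}y^{-p}$ and $(\phi')^{-(p+1)}-1\approx-pc_\epsilon y^{-p-1}$. Take $r\in X_\gamma$ oscillating at the characteristic frequency, e.g.\ $r=\chi_+\ang{y}^{-\gamma}\cos\Psi_0$. Then $|D|^pr'\approx yr'$, so the leftover $-pc_\epsilon y^{-p-1}|D|^p\partial_y$ acts like $-p\,b_\epsilon\partial_y$. It maps $r$ to a function of size $\epsilon\ang{y}^{-\gamma+1/p-p}$, which does not have $Y_\gamma$-norm $O(\epsilon)$. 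This term is exactly as non-perturbative as $b_\epsilon\partial_y$ itself (cf.\ \cref{rmk:analysis_coefficient}), so it cannot be absorbed into $\mc{R}_\epsilon$. Correcting the principal coefficient with an output weight $w$ multiplies the transport coefficient by $w$ too, which reintroduces an $O(\epsilon y^{-p})\partial_y$ error. There is a further obstruction: the straightening equation is linear in $\phi$ and has the dilations $Cy$ as solutions where $b_\epsilon=0$. So a solution equal to the identity on $y\le1$ is asymptotic to $\kappa y$ with $\kappa=1+O(\epsilon)\neq1$ at $+\infty$, contradicting your $\psi_\epsilon=y+O(\epsilon y^{-p})$.

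The missing idea is to do both corrections at once, as in \cref{lem:linear_conjugation}. Take $\mc{U}_\epsilon=\mc{T}_\phi^{-1}$ and $\mc{V}_\epsilon=(\phi')^{p+1}\mc{T}_\phi$, so the principal coefficient is exactly $1$ and the transport coefficient becomes $\phi(\phi')^p$. Then solve the coupled ODE $\phi(\phi')^p=y-b_\epsilon$, which gives $\phi-y\approx-\frac{c_\epsilon}{1-p^2}y^{-p}$. Only then are the leftovers, $((\phi')^{p+1}-1)I$ and $r\mapsto c_\alpha\int[1-(\phi'(y)/S_\phi(y,z))^{p+1}]|y-z|^{-1-p}r'(z)\,dz$, bounded $X_\gamma\to Y_\gamma$ with norm $O(\epsilon)$.

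Relatedly, $\mc{V}_\epsilon$ cannot be $I+O(\epsilon)$ on $Y_\gamma$. The composition $\mc{T}_\phi$ must sit on the output side, and the shift $\phi(y)-y\approx\epsilon y^{-p}$ eventually exceeds the wavelength $y^{-1/p}$. Hence $\|(\mc{T}_\phi-I)G\|_{Y_\gamma}$ is comparable to $\|G\|_{Y_\gamma}$ for $G$ oscillating at frequency $y^{1/p}$. So there is no Neumann series. Invertibility comes from the explicit inverse $G\mapsto(\psi')^{p+1}G\circ\psi$ with $\psi=\phi^{-1}$. Likewise, \eqref{eqn:fixed_point_residual_transport} holds only because $G_\mu$ is Schwartz, so that $\ang{y}^{\gamma}G_\mu'$ and $\ang{y}^{\gamma}G_\mu''$ are bounded; this is a bound in a norm strictly stronger than $Y_\gamma$.

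\textbf{Smaller points on $\ell$ and $\mc{T}_0$.} For $\ell$, your final route (pairing $G'$ with $K(z)=-i\int_0^\infty e^{i(\Phi_0(\xi)-z\xi)}\,d\xi$) is the paper's. However, your stationary-phase exponent has the wrong sign. Since $\Phi_0''(z^{1/p})=pz^{(p-1)/p}\to0$, the factor $|\Phi_0''|^{-1/2}$ grows, and $|K(z)|\lesssim\ang{z}^{\frac{1-p}{2p}}$, not $z^{\beta-1/p}=z^{\frac{p-1}{2p}}$. Likewise your $k$ grows like $z^{\frac{3-p}{2p}}$, not $z^{\beta}$. Hence $|G'K|\lesssim\|G\|_{Y_\gamma}\ang{z}^{\frac{3-p}{2p}-\gamma}$, which is integrable exactly when $\gamma>\frac{p+3}{2p}=\beta+\frac1p$. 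The hypothesis is used sharply here, not with the slack $\gamma>\beta+1$ you claim.

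The same arithmetic shows your model stationary integral already gives $y^{-\frac{p+3}{2p}}\cdot y^{\frac{p+3}{2p}-\gamma}=y^{-\gamma}$ directly. No integration by parts in $z$ is needed, and it would gain nothing, since $\Psi_0'(z)=z^{1/p}$ exactly offsets the loss on $G'$.

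The real issue in the stationary part is that \eqref{eqn:main_ideas_high_frequency_stationary} is only the heuristic leading term. What must be done is a uniform bound on the two-dimensional kernel $K^{\textup{s}}_{+,j}(y,z)$ over the triangle $1<\xi<\eta$. On the edge $\xi=\eta$ the phase degenerates to the linear phase $(y-z)\xi$, and that is where the $\log$ you worry about would come from. \cref{lem:appendix_triangle} removes it by integrating by parts away from the stationary point and using $\sup_{a<b}|\int_a^b\sin(\omega t)\,t^{-1}\,dt|\lesssim1$. This is combined with the uniform bound $\sup_{\xi>0}|\int_\xi^\infty e^{i(\Phi_0(\eta)-z\eta)}\,d\eta|\lesssim\ang{z}^{\frac{1-p}{2p}}$. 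You name this obstacle but do not supply the mechanism.
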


\begin{theorem}\label{thm:fixed_point_nonlinear_estimates}(Nonlinear Estimates) Let $\beta + \frac{1}{p} < \gamma < 2p + 1$ and write $\ov{f} = \ov{f}_{\epsilon, \mu}$. The nonlinear operator $\mc{N}_{\epsilon, \ov{f}}$ satisfies the following estimates for any $r_1, r_2, r \in X_\gamma$ with \[\|r\|_{X_\gamma}, \|r_1\|_{X_\gamma}, \|r_2\|_{X_\gamma} \leq N.\] 
\begin{enumerate}
    \item We have that $\mc{N}_{\epsilon, \ov{f}}: X_\gamma \to Y_\gamma$ with the estimate
    \begin{equation}\label{eqn:fixed_point_nonlinear_bound}
\|\mc{N}_{\epsilon, \ov{f}}(r)\|_{Y_\gamma} \lesssim_N \epsilon\|r\|_{X_\gamma}.
\end{equation}
    \item We have the Lipschitz estimate
    \begin{equation}\label{eqn:fixed_point_nonlinear_lipschitz}
\|\mc{N}_{\epsilon, \ov{f}}(r_1) - \mc{N}_{\epsilon, \ov{f}}(r_2)\|_{Y_\gamma} \lesssim_{N} \epsilon \|r_1 - r_2\|_{X_\gamma}.
\end{equation}
    \item For each fixed $r \in X_{\gamma_2}$ and $\epsilon$, the map
    \[\mu \mapsto \mc{N}_{\epsilon, \ov{f}_{\epsilon, \mu}}(r)\]
    is continuous as a map into $Y_{\gamma_1}$.
\end{enumerate}
All of the constants in the above estimates are uniform in $\epsilon$ and $\mu$.
\end{theorem}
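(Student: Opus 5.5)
We propose to prove the nonlinear estimates by writing $\mc{N}_{\epsilon,\ov{f}}(r)$ explicitly as a sum of local and nonlocal pieces, using the semilinear kernel representation from \cref{prop:analysis_semilinear_kernel}. Expanding $\mc{E}_\epsilon(\ov{f}+r)$, we separate three kinds of terms.

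\begin{enumerate}
\item \emph{Local coefficient terms.} These come from $1/A_\epsilon(\partial_y f) = (1+\epsilon f'^2)^{\alpha/2}$ multiplying $f - yf'$. Taylor expanding in $\epsilon f'^2$ produces terms such as $\epsilon\, y(\ov{f}')^2 r'$, $\epsilon\, y\ov{f}'(r')^2$ and $\epsilon\, y(r')^3$, times smooth bounded functions of $\sqrt{\epsilon}\ov{f}'$ and $\sqrt{\epsilon} r'$.
\item \emph{Local-in-kernel remainders.} These are the contributions of $\sq{\mc{J}}_\epsilon$ from $|y-z|\lesssim \ang{y}^{-1/p}$ (for $y\ge0$) or $|y-z|\lesssim\ang{y}$ (for $y\le0$).
\item \emph{Nonlocal far-field terms.} These are the contributions from the complementary region.
\end{enumerate}

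The term linear in $r$ that is not perturbative is $c_\epsilon\chi_+y^{-p}r'$, coming from the interaction of the tail $\ov f\approx -y$ for $z\ll -1$ with $r$ at $y\gg1$. It is subtracted by definition, together with $\mc{R}_\epsilon$, so $\mc{N}_{\epsilon,\ov f}$ only contains what remains.

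For the local terms (the analogue of \cref{lem:N1,lem:N2}) we check weights directly. For $y\ge0$, \cref{thm:fixed_point_approximation} gives $|\ov f^{(j+1)}|\lesssim \ang{y}^{-\beta+j/p}$, and $r\in X_\gamma$ gives $|r'|\lesssim \ang{y}^{-\gamma+1/p}$.

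\begin{itemize}
\item The term $y(\ov f')^2r'$ is of size $\ang{y}^{1-2\beta+1/p-\gamma}$. Since $2\beta=1+1/p$, this equals $\ang{y}^{-\gamma}$. Its derivative costs $\ang{y}^{1/p}$, which the $Y_\gamma$ norm allows, and this uses $r''$.
\item The quadratic and cubic terms require $\gamma\ge \beta+1/p$, which holds.
\item For $y\le0$, every function is slowly varying. Here $\ov f'=-1+O(\ang{y}^{-p-1})$, so the weights close trivially.
\end{itemize}

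Every such term carries at least one factor of $\epsilon$, which gives the $\epsilon\|r\|_{X_\gamma}$ bound. The Lipschitz bound follows by writing differences via the fundamental theorem of calculus along $r_2+s(r_1-r_2)$.

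The nonlocal terms (the analogue of \cref{lem:N3}) are the core. On the near region, we use $W^{2,\infty}_{\textup{loc}}$ to get the cancellation $S_f(y,z)-f'(y)=O(|y-z|\,\|f''\|)$, which makes the kernel $|y-z|^{-\alpha+1}$ integrable, as in \cref{rmk:analysis_vanishing_diagonal}. On the far region, we use the decay $|y-z|^{-\alpha}$ and the boundedness of secants. We split $z$ into $z\le -\ang{y}/2$, $|z-y|\le \ang{y}/2$, and the remaining region, and bound each by the weighted sup norms. The interaction between $\ov f\approx -y$ at $z\to-\infty$ and $r$ at $y$ would produce exactly $c_\epsilon y^{-p}r'$ at leading order. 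We expand the secant $A_\epsilon(S_{\ov f+r})$ around $A_\epsilon(S_{W})$ and verify that the remainder after removing this leading term decays by an extra $\ang{y}^{-1}$, or gains a factor $\ang{y}^{-\gamma}$, using the tail estimate $|\ov f+y|\lesssim\ang{y}^{-p}$.

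The derivative estimate in $Y_\gamma$ is obtained by differentiating under the integral in the form $\int K(y,y-h)\,dh$. We expect the main obstacle to be this step: controlling $\partial_y$ of the nonlocal terms at $y\gg1$ with only the $\ang{y}^{1/p}$ loss. The plan is to use the rescaled near-region size $\ang{y}^{-1/p}$, so that the oscillation scale of $r$ and $\ov f$ matches the splitting, and to use the upper constraint $\gamma<2p+1$ to handle the slow far-field contributions, which decay like $\ang{y}^{-2p-1}$.

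Finally, for continuity in $\mu$ we plan a dominated-convergence argument. By \cref{thm:fixed_point_approximation}(4), $\mu\mapsto\ov f_{\epsilon,\mu}$ is continuous in $C^2_{\textup{loc}}$ with uniform weighted bounds. For fixed $r\in X_{\gamma_2}$, the estimates above give $\mc N_{\epsilon,\ov f_{\epsilon,\mu}}(r)\in Y_{\gamma_2}$ uniformly in $\mu$. Local convergence on compacts, combined with uniform $\ang{y}^{-(\gamma_2-\gamma_1)}$ tail smallness, then yields convergence in $Y_{\gamma_1}$.
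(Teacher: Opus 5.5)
Your outline matches the paper's proof: $\mc{N}_1,\mc{N}_2$ by direct weight counting with $2\beta=1+1/p$ and $\gamma\geq\beta+1/p$; $\mc{N}_3$ by a near/far split with the wedge interaction subtracted in the far field; continuity in $\mu$ by local convergence plus \cref{prop:appendix_convergence}. Two steps, however, do not work as written.

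\textbf{The term $\mc{R}_\epsilon(r)$.} $\mc{R}_\epsilon$ is not a piece of the expansion of $\mc{E}_\epsilon(\ov{f}+r)-\mc{E}_\epsilon(\ov{f})$. It is the remainder produced by conjugating $\mc{L}_0$ with $\mc{U}_\epsilon,\mc{V}_\epsilon$ in \cref{lem:linear_conjugation}, and it depends only on the diffeomorphism $\phi$. Since $\mc{L}_\epsilon=\mc{L}_0+b_\epsilon\partial_y+\mc{R}_\epsilon$, the definition gives exactly
$\mc{N}_{\epsilon,\ov{f}}(r)=\mc{N}_1(r)+\mc{N}_2(r)+\mc{N}_3(r)-\mc{R}_\epsilon(r)$, with $\mc{N}_3(r)=\mc{I}_\epsilon(\ov{f}+r)-\mc{I}_\epsilon(\ov{f})-b_\epsilon r'$. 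So $-\mc{R}_\epsilon(r)$ is an extra term that your expansion never produces and that you never estimate. It is harmless: \cref{lem:linear_conjugation} gives $\|\mc{R}_\epsilon\|_{X_\gamma\to Y_\gamma}\lesssim\epsilon$, and the term is linear and independent of $\mu$. But it has to be added, as the paper does.

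\textbf{The nonlocal near field.} The cancellation $S_f(y,z)-f'(y)=O(|y-z|)$ only gives local integrability. For $y\gg1$ the weighted bound is exactly critical, and it closes only through the cubic smallness of $\Gamma_\epsilon$.
\begin{itemize}
\item On $|y-z|\leq y/2$ one has $|f'(y)|+|S_f(y,z)|\lesssim\ang{y}^{-\beta}$ and $|f'(y)-S_f(y,z)|\lesssim\ang{y}^{-\beta}\min\{1,\ang{y}^{1/p}|y-z|\}$.
\item Combined with $|\nabla\Gamma_\epsilon(u,v)|\lesssim\epsilon|u-v|(|u|+|v|)$ and $|(\partial_1+\partial_2)\Gamma_\epsilon(u,v)|\lesssim\epsilon|u-v|^2$, the integrand $K_{f,h}$ of $D\mc{N}_3(r)[h]$ satisfies $|K_{f,h}|\lesssim\epsilon\|h\|_{X_\gamma}\ang{y}^{-2\beta-\gamma+1/p}\min\{1,\ang{y}^{1/p}|y-z|\}^2$.
\item Integrating against $|y-z|^{-1-p}$ produces exactly a factor $\ang{y}$. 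This gives $\ang{y}^{-2\beta-\gamma+1/p+1}=\ang{y}^{-\gamma}$, the same identity you used for $y(\ov{f}')^2r'$.
\end{itemize}
Taken literally, your first split puts the whole annulus $\ang{y}^{-1/p}\lesssim|y-z|\lesssim y$ into the far field and bounds it using $|y-z|^{-\alpha}$ and bounded secants. That loses a factor $\ang{y}^{1+1/p}$. The near region has to be $|y-z|\leq|y|/2$ (your second split), used with the bounds above. For the derivative you also need $|(\partial_1+\partial_2)\nabla\Gamma_\epsilon|\lesssim\epsilon|u-v|$ from \cref{rmk:analysis_vanishing_diagonal}, because $\partial_{11}\Gamma_\epsilon\,f''h'$ by itself does not vanish at $z=y$.

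\textbf{The far field.} Pair $b_\epsilon h'(y)$ against the whole region $|y-z|\geq y/2$, not only against $z\leq-\ang{y}/2$. The strip $-y/2<z<0$ already contributes at order $\epsilon y^{-p}|h'(y)|$. After this pairing:
\begin{itemize}
\item The leftover gains a factor $\ang{y}^{-1}$, which suffices because $1/p<p+1$.
\item The $S_h$ terms decay like $\ang{y}^{-p-2}$ via $\|h\|_{L^1}$, so the upper constraint actually used here is $\gamma<p+2$, not an $\ang{y}^{-2p-1}$ rate.
\end{itemize}
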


By applying \cref{thm:fixed_point_approximation,thm:fixed_point_linear_inverse,thm:fixed_point_nonlinear_estimates}, we can now prove the existence of the correction $r$.

\begin{theorem}\label{thm:fixed_point_correction}
    For any $\epsilon$ small enough, there exist a function $r = r_\epsilon \in X_{\gamma_2}$ and a parameter $\mu = \mu_\epsilon$ satisfying the correction equation 
    \begin{equation}\label{eqn:fixed_point_correction}
\mc{L}_\epsilon(r) = -\mc{E}_\epsilon(\ov{f}_{\epsilon, \mu}) - \mc{N}_{\epsilon, \ov{f}_{\epsilon, \mu}}(r).
\end{equation}
    Moreover, $r_\epsilon$ and $\mu_\epsilon$ satisfy the uniform bounds
    \begin{equation}\label{eqn:fixed_point_correction_bound}
|\mu_* - \mu_\epsilon| + \|r_\epsilon\|_{X_{\gamma_2}} \lesssim \epsilon.
\end{equation}
\end{theorem}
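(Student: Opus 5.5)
We argue by a Lyapunov--Schmidt reduction combined with a contraction argument, using \cref{thm:fixed_point_approximation,thm:fixed_point_linear_inverse,thm:fixed_point_nonlinear_estimates}. Set $w = \mc{U}_\epsilon r$. By the conjugation $\mc{L}_\epsilon = \mc{V}_\epsilon\mc{L}_0\mc{U}_\epsilon$, \eqref{eqn:fixed_point_correction} is equivalent to
\[\mc{L}_0 w = H_\mu(w) := -\mc{V}_\epsilon^{-1}\mc{E}_\epsilon(\ov{f}_{\epsilon,\mu}) - \mc{V}_\epsilon^{-1}\mc{N}_{\epsilon,\ov{f}_{\epsilon,\mu}}(\mc{U}_\epsilon^{-1}w).\]
We split this into the projected equation $\mc{L}_0 w = \Pi H_\mu(w)$ and the two real scalar equations $\ell(H_\mu(w)) = 0$. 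The latter is what makes $\Pi H_\mu(w) = H_\mu(w)$, since $\Pi^\perp G = \mathrm{Re}(\ell G)G_+ + \mathrm{Im}(\ell G)G_-$.

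\textbf{Step 1 (fixed $\mu$).} For each $\mu\in\ov{B}$ we define $\Phi_\mu(w) = \mc{T}_0\Pi H_\mu(w)$ on the ball $\{\|w\|_{X_{\gamma_2}}\le N\}$. The key observation is that $\Pi$ annihilates $G_\mu$, because $\widehat{G}_\mu$ is a real multiple of $\widehat{G_+}$, so $G_\mu\in\operatorname{span}\{G_+\}$. Combining \eqref{eqn:fixed_point_residual_bound}, \eqref{eqn:fixed_point_residual_transport}, the boundedness of $\ell$ and $\Pi$ on $Y_{\gamma_2}$, and the uniform bounds on $\mc{V}_\epsilon^{-1}$, we get
\[\|\Pi\mc{V}_\epsilon^{-1}\mc{E}_\epsilon(\ov{f}_{\epsilon,\mu})\|_{Y_{\gamma_2}} \le \|\Pi\mc{V}_\epsilon^{-1}(\mc{E}_\epsilon-G_\mu)\| + \|\Pi(\mc{V}_\epsilon^{-1}-I)G_\mu\| \lesssim \epsilon.\]
Together with \eqref{eqn:fixed_point_nonlinear_bound} and \eqref{eqn:fixed_point_nonlinear_lipschitz}, taking $N=1$ and $\epsilon$ small, $\Phi_\mu$ maps the ball of radius $C\epsilon$ into itself and is a contraction with constant $O(\epsilon)$. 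This yields a unique $w_\mu$ with $\|w_\mu\|_{X_{\gamma_2}}\lesssim\epsilon$.

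\textbf{Step 2 (continuity in $\mu$).} We show $\mu\mapsto w_\mu$ is continuous into $X_{\gamma_1}$. Writing $w_\mu - w_{\mu'} = \Phi_\mu(w_\mu)-\Phi_\mu(w_{\mu'}) + \Phi_\mu(w_{\mu'})-\Phi_{\mu'}(w_{\mu'})$, the first difference is absorbed by the contraction estimate, applied now in $Y_{\gamma_1}$ and $X_{\gamma_1}$, which is allowed since all operators are $\gamma$-independent and $w_{\mu'}\in X_{\gamma_2}\subset X_{\gamma_1}$. The second difference tends to zero by item (4) of \cref{thm:fixed_point_approximation} and item (3) of \cref{thm:fixed_point_nonlinear_estimates}. \emph{This is where the two exponents $\gamma_1<\gamma_2$ are needed:} continuity is only available in the weaker space. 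It also explains why the scheme uses a topological argument rather than a contraction in $\mu$.

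\textbf{Step 3 (the reduced equation).} We define the map $\mu\mapsto \ell(H_\mu(w_\mu))$, where $\ell$ is taken continuous on $Y_{\gamma_1}$. By the estimates above,
\[\ell(H_\mu(w_\mu)) = -\ell(G_\mu) + O(\epsilon) = \Big(\frac{\mu}{\mu_*}-1\Big) + O(\epsilon),\]
using $\ell(G_+)=1$. Equivalently, $\mu = \mu_* + \mu_*\,O(\epsilon)$ is a fixed-point problem for a continuous self-map of the disc $\ov{B}$ once $\epsilon\ll\delta$. Brouwer's theorem, or a degree argument comparing with $\mu\mapsto\mu/\mu_*-1$, which has the unique zero $\mu_*$, produces $\mu_\epsilon$ with $|\mu_\epsilon-\mu_*|\lesssim\epsilon$. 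Then $\Pi H=H$, so $\mc{L}_0w = H_{\mu_\epsilon}(w)$ by \eqref{eqn:fixed_point_free_inverse}. Finally, $r_\epsilon = \mc{U}_\epsilon^{-1}w_{\mu_\epsilon}$ solves \eqref{eqn:fixed_point_correction}, and \eqref{eqn:fixed_point_correction_bound} follows from the uniform bound on $\mc{U}_\epsilon^{-1}$.

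The main obstacle is Step 2 together with Step 3. We must ensure the $O(\epsilon)$ error in the reduced map is uniform in $\mu$, which follows from the uniformity statements in the cited theorems, and that the map is genuinely continuous despite the loss from $\gamma_2$ to $\gamma_1$. Since uniqueness in $\mu$ is not needed, Brouwer suffices.
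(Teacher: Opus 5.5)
Your argument is correct and is essentially the paper's proof. Your $H_\mu$ equals the paper's $\mc{G}_{\epsilon,\mu}-G_\mu$, so your map $\mc{T}_0\Pi H_\mu$ and your reduced equation $\ell(H_\mu(w_\mu))=0$ are exactly the paper's system $w=\mc{T}_0\Pi\mc{G}_{\epsilon,\mu}(w)$, $\ell(\mc{G}_{\epsilon,\mu}(w))=1-\mu/\mu_*$. The steps also match: contraction in $X_{\gamma_2}$, continuity in $\mu$ into $X_{\gamma_1}$, and Brouwer on $\ov{B}$.

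One small correction to your justification. Since $\mu$ is complex, $1-\mu/\mu_*$ is not real, and
\[
G_\mu=\operatorname{Re}(1-\mu/\mu_*)\,G_+ +\operatorname{Im}(1-\mu/\mu_*)\,G_-
\]
is not in $\operatorname{span}\{G_+\}$. It does lie in $\operatorname{span}_{\R}\{G_+,G_-\}=\ker\Pi$, so $\Pi G_\mu=0$ and $\ell(G_\mu)=1-\mu/\mu_*$ still hold exactly as you use them.
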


\begin{proof}[Proof of \cref{thm:fixed_point_correction}]
We prove the theorem by setting up a fixed point argument. By using that $\mc{L}_\epsilon = \mc{V}_\epsilon \mc{L}_0 \mc{U}_\epsilon$ from \cref{thm:fixed_point_linear_inverse}, we will rewrite the equation in terms of $w = \mc{U}_\epsilon r$. To solve \eqref{eqn:fixed_point_correction}, we equivalently want to solve
\begin{equation}\label{eqn:fixed_point_conjugated_equation}
\mc{L}_0(w) = - \mc{V}_\epsilon^{-1} \mc{E}_\epsilon(\ov{f}_{\epsilon, \mu}) - \mc{V}_\epsilon^{-1}\mc{N}_{\epsilon, \ov{f}_{\epsilon, \mu}}(\mc{U}_\epsilon^{-1}(w)).
\end{equation}
Write 
\begin{equation}\label{eqn:fixed_point_forcing}
\mc{G}_{\epsilon, \mu}(w) = - [\mc{V}_\epsilon^{-1} \mc{E}_\epsilon(\ov{f}_{\epsilon, \mu}) - G_\mu] - \mc{V}_\epsilon^{-1}\mc{N}_{\epsilon, \ov{f}_{\epsilon, \mu}}(\mc{U}_\epsilon^{-1}(w)).
\end{equation}
The result follows once we have a solution to the system
\begin{equation}\label{eqn:fixed_point_system}
\begin{cases}
    w = \mc{T}_0\Pi \mc{G}_{\epsilon, \mu}(w) \\
    \ell(\mc{G}_{\epsilon, \mu}(w)) = 1 - \frac{\mu}{\mu_*}
\end{cases}.
\end{equation}
Indeed, if we have a solution $(w, \mu)$ to \eqref{eqn:fixed_point_system}, then by \eqref{eqn:fixed_point_free_inverse} and \eqref{eqn:fixed_point_forcing}, we have 
\begin{align*}
    \mc{L}_0(w) & = \Pi \mc{G}_{\epsilon, \mu}(w) \\ 
    & = \mc{G}_{\epsilon, \mu}(w) - \text{Re}[\ell(\mc{G}_{\epsilon, \mu}(w))]G_+ - \text{Im}[\ell(\mc{G}_{\epsilon, \mu}(w))]G_- \\
    & = \mc{G}_{\epsilon, \mu}(w) - \text{Re}\left(1 - \frac{\mu}{\mu_*}\right)G_+ - \text{Im}\left(1 - \frac{\mu}{\mu_*}\right)G_- \\
    & = \mc{G}_{\epsilon, \mu}(w) - G_\mu \\
    & = - \mc{V}_\epsilon^{-1} \mc{E}_\epsilon(\ov{f}_{\epsilon, \mu}) - \mc{V}_\epsilon^{-1}\mc{N}_{\epsilon, \ov{f}_{\epsilon, \mu}}(\mc{U}_\epsilon^{-1}(w))
\end{align*}
This, by \eqref{eqn:fixed_point_conjugated_equation}, implies that $r = \mc{U}_\epsilon^{-1}(w)$ is a solution to \eqref{eqn:fixed_point_correction}. It remains to solve \eqref{eqn:fixed_point_system} for $(w, \mu)$. We start by solving the first equation. Consider the fixed point map
\begin{equation}\label{eqn:fixed_point_map}
\mc{H} = \mc{H}_{\epsilon, \mu} = \mc{T}_0\Pi\mc{G}_{\epsilon, \mu}.
\end{equation}
We claim that $\mc{H}$ is a contraction on the ball of radius $C\epsilon$ in $X_{\gamma_2}$ for some large enough $C$ and small enough $\epsilon$. Indeed, let $w, w_1, w_2 \in X_{\gamma_2}$ with $\|w\|_{X_{\gamma_2}}, \|w_1\|_{X_{\gamma_2}}, \|w_2\|_{X_{\gamma_2}} \leq C\epsilon$. By \eqref{eqn:fixed_point_residual_bound}, \eqref{eqn:fixed_point_residual_transport}, and \cref{lem:linear_conjugation}, 
\begin{equation}\label{eqn:fixed_point_conjugated_residual_bound}
\|\mc{V}_\epsilon^{-1} \mc{E}_\epsilon(\ov{f}_{\epsilon, \mu}) - G_\mu\|_{Y_{\gamma_2}} \lesssim \epsilon.
\end{equation}
Therefore by applying the uniform bounds for $\mc{E}_\epsilon(\ov{f}_{\epsilon, \mu}) - G_\mu$ in \eqref{eqn:fixed_point_residual_bound}, $\mc{U}_\epsilon^{-1}$, $\mc{V}_\epsilon^{-1}$, and $\mc{N}_{\epsilon, \ov{f}_{\epsilon, \mu}}$ in \eqref{eqn:fixed_point_nonlinear_bound}--\eqref{eqn:fixed_point_nonlinear_lipschitz}, we have
\begin{equation}\label{eqn:fixed_point_self_map_bound}
\|\mc{H}(w)\|_{X_{\gamma_2}} \leq C_{1}\epsilon + C_{2}\epsilon \|w\|_{X_{\gamma_2}} \leq C_1\epsilon + C_2 C\epsilon^2
\end{equation}
and 
\begin{equation}\label{eqn:fixed_point_contraction_bound}
\|\mc{H}(w_1) - \mc{H}(w_2)\|_{X_{\gamma_2}} \leq C_3\epsilon \|w_1 - w_2\|_{X_{\gamma_2}}
\end{equation}
where $C_1$, $C_2$, and $C_3$ are uniform constants coming from the bounds in \cref{thm:fixed_point_linear_inverse,thm:fixed_point_nonlinear_estimates}. By choosing $C$ large enough and then $\epsilon$ small enough, we can make the constant in \eqref{eqn:fixed_point_self_map_bound} less than $C\epsilon$ and the constant in \eqref{eqn:fixed_point_contraction_bound} less than $1/2$. This shows that $\mc{H}$ is a contraction on the ball of radius $C\epsilon$ in $X_{\gamma_2}$. Therefore, by Banach's fixed point theorem, there exists a unique solution $w_{\epsilon, \mu} \in X_{\gamma_2}$ to the equation $w = \mc{H}_{\epsilon, \mu}(w)$ that satisfies
\begin{equation}\label{eqn:fixed_point_solution_bound}
\|w_{\epsilon, \mu}\|_{X_{\gamma_2}} \lesssim \epsilon.
\end{equation}
For the scalar equation in \eqref{eqn:fixed_point_system}, define the fixed point map
\begin{equation}\label{eqn:fixed_point_obstruction_map}
\mc{K}_\epsilon(\mu) = \mu_*[1 - \ell(\mc{G}_{\epsilon, \mu}(w_{\epsilon, \mu}))].
\end{equation}  
We will show that the map in \eqref{eqn:fixed_point_obstruction_map} is continuous. First we show that the map $\mu \mapsto w_{\epsilon, \mu}$ is continuous into $X_{\gamma_1}$. Suppose $\mu_j \to \mu$. Then 
\begin{align*}
    \|w_{\epsilon, \mu_j} - w_{\epsilon, \mu}\|_{X_{\gamma_1}} & = \|\mc{H}_{\epsilon, \mu_j}(w_{\epsilon, \mu_j}) - \mc{H}_{\epsilon, \mu}(w_{\epsilon, \mu})\|_{X_{\gamma_1}} \\
    & \leq \|\mc{H}_{\epsilon, \mu_j}(w_{\epsilon, \mu_j}) - \mc{H}_{\epsilon, \mu_j}(w_{\epsilon, \mu})\|_{X_{\gamma_1}} + \|\mc{H}_{\epsilon, \mu_j}(w_{\epsilon, \mu}) - \mc{H}_{\epsilon, \mu}(w_{\epsilon, \mu})\|_{X_{\gamma_1}} \\
    & \leq \frac{1}{2}\|w_{\epsilon, \mu_j} - w_{\epsilon, \mu}\|_{X_{\gamma_1}} + \|\mc{H}_{\epsilon, \mu_j}(w_{\epsilon, \mu}) - \mc{H}_{\epsilon, \mu}(w_{\epsilon, \mu})\|_{X_{\gamma_1}},
\end{align*}
by the bounds for $\mc{H}$ from \cref{thm:fixed_point_linear_inverse,thm:fixed_point_nonlinear_estimates}. Next, for a fixed $w \in X_{\gamma_2}$, we have that 
\[\mc{H}_{\epsilon, \mu_j}(w) \to \mc{H}_{\epsilon, \mu}(w)\]
in $X_{\gamma_1}$ by the continuity of $\mc{E}_\epsilon(\ov{f}_{\epsilon, \mu})$ and $\mc{N}_{\epsilon, \ov{f}_{\epsilon, \mu}}$ in $\mu$ from \cref{thm:fixed_point_approximation,thm:fixed_point_nonlinear_estimates}. Therefore, $w_{\epsilon, \mu_j} \to w_{\epsilon, \mu}$ in $X_{\gamma_1}$. It follows by a similar argument that $\mc{G}_{\epsilon, \mu}(w_{\epsilon, \mu})$ is continuous in $\mu$ into $Y_{\gamma_1}$. Finally, since $\ell$ is a bounded linear functional on $Y_{\gamma_1}$, we have that $\mc{K}_\epsilon(\mu)$ is continuous in $\mu$. Using the fact that this map is continuous, combined with the bound
\begin{equation}\label{eqn:fixed_point_obstruction_bound}
|\mc{K}_\epsilon(\mu) - \mu_*| = |\mu_*||\ell(\mc{G}_{\epsilon, \mu}(w_{\epsilon, \mu}))| \lesssim \epsilon,
\end{equation}
we can choose $\epsilon$ small so that $\mc{K}_\epsilon$ maps the closed ball $\ov{B}$ into itself. By the Brouwer fixed point theorem, there exists a fixed point $\mu_\epsilon \in \ov{B}$ so that $\mc{K}_\epsilon(\mu_\epsilon) = \mu_\epsilon$ and $|\mu_\epsilon - \mu_*| \lesssim \epsilon$. This gives a solution $(w_{\epsilon, \mu_\epsilon}, \mu_\epsilon)$ to \eqref{eqn:fixed_point_system} and therefore a solution $r_{\epsilon, \mu_\epsilon} = \mc{U}_\epsilon^{-1}(w_{\epsilon, \mu_\epsilon})$ to \eqref{eqn:fixed_point_correction}. The $\epsilon$ bounds in \eqref{eqn:fixed_point_correction_bound} for $r_{\epsilon, \mu_\epsilon}$ follow from the bounds for $w_{\epsilon, \mu_\epsilon}$ in \eqref{eqn:fixed_point_solution_bound} and the uniform bounds for $\mc{U}_\epsilon^{-1}$.
\end{proof}

Finally, we are able to prove that the profiles constructed in \cref{thm:fixed_point_approximation,thm:fixed_point_correction} are smooth by a bootstrapping argument. The proof of this theorem is given in \cref{sec:analysis_equation}.

\begin{theorem}[Smoothness of profiles]\label{thm:fixed_point_smoothness}
    Let $0<p<1$, $\alpha=p+1$, and $\epsilon\geq0$. Suppose that
    \[f\in W^{2,\infty}_{\textup{loc}}(\R) \quad \text{and} \quad f'\in L^\infty(\R)\]
    satisfies $\mc{E}_\epsilon(f)=0$ in the sense of distributions. Then
    \[f \in C^\infty(\R).\] 
    In particular, the profile
    $f=\ov{f}_{\epsilon,\mu_\epsilon}+r_\epsilon$ and the correction
    $r_\epsilon$ constructed in \cref{thm:fixed_point_correction} are smooth.
\end{theorem}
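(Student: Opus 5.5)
The plan is a bootstrap in local Hölder/Sobolev regularity, using the semilinear structure of $\mc{E}_\epsilon(f)=0$. We rewrite the equation as
\[
|D|^{p}\partial_y f = -\frac{1}{A_\epsilon(f')}\Bigl(f - y f' + \sq{\mc{J}}_\epsilon(f)\Bigr) =: \mc{M}(f).
\]
The left side is a Fourier multiplier of order $\alpha = 1+p$. The right side should be "of order one", in the sense that it costs at most one derivative of $f$ locally, plus a harmless nonlocal tail. The claim we aim for is that, whenever $f\in C^{k,s}_{\textup{loc}}$ with $k\geq 2$ and $f'\in L^\infty$, we have $\mc{M}(f)\in C^{k-1,s'}_{\textup{loc}}$ for some $s'>0$. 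Inverting $|D|^p\partial_y$ locally then gains $1+p$ derivatives and gives $f\in C^{k,s'+p}_{\textup{loc}}$. Each step therefore gains roughly $p>0$ derivatives, up to an arbitrarily small loss of Hölder exponent, and iterating gives $f\in C^\infty$.

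\textbf{Step 1: localization.} We work on a bounded interval $I$. Take cutoffs $\chi\in C_c^\infty$ equal to $1$ on $2I$ and supported in $4I$. Decompose $\sq{\mc{J}}_\epsilon(f)(y)$ for $y\in I$ into a near part ($|y-z|<1$) and a far part ($|y-z|\geq 1$).

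For the far part, the kernel $|y-z|^{-\alpha}$ is smooth in $y$ when $|y-z|\geq 1$ and is integrable since $\alpha>1$. Its $y$-derivatives fall either on the kernel, or on the factors $A_\epsilon(S_f(y,z))$, $A_\epsilon(f'(y))$, $f'(y)$. Since $f'\in L^\infty$, the secant satisfies $|S_f|\leq \|f'\|_\infty$, and its $y$-derivatives are bounded by local norms of $f$ plus $|y-z|^{-1}\|f'\|_\infty$. So the far part is as smooth in $y$ as $f'$ is locally.

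Similarly, the commutator between $|D|^p\partial_y$ and the cutoff $\chi$, applied to $(1-\chi)f$, is a smooth function on $I$. Its kernel $|y-z|^{-\alpha-1}$ is smooth away from the diagonal and integrable at infinity, and only $f'\in L^\infty$ is needed there.

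\textbf{Step 2: mapping of the near part.} Write $f'(y)-f'(z)=(y-z)\int_0^1 f''$. Also
\[
S_f(y,z)-f'(y) = -(y-z)\int_0^1(1-t)\,f''\bigl(y+t(z-y)\bigr)\,dt .
\]
Using these, the near integrand becomes $|y-z|^{2-\alpha}$ times a smooth function of $f'(y)$, of averaged secants of $f$, and of averaged values of $f''$. This is the cancellation recorded in \cref{rmk:analysis_vanishing_diagonal} and used in \cref{prop:analysis_semilinear_kernel}.

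Since $2-\alpha>-1$, the kernel $|w|^{2-\alpha}$ is integrable on $|w|<1$. After substituting $z=y+w$, differentiating in $y$ $m$ times falls on functions of the form $f^{(j)}(y+tw)$. Hence if $f\in C^{k,s}_{\textup{loc}}$ with $k\geq2$, the near part lies in $C^{k-2,s}_{\textup{loc}}$. This appears to gain nothing, which is too weak.

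To improve it, we do not differentiate the $f''$ averages directly. Instead, we note that $\int_{|w|<1}|w|^{-\alpha}\,(\dots)\,dw$ with the vanishing factors is a singular integral of order $\alpha-2<0$ applied to $f''$. It therefore gains $2-\alpha=1-p$ derivatives. So the near part lies in $C^{k-1,s+1-p}_{\textup{loc}}$ when $s+1-p<1$, adjusting the exponents otherwise. The terms $f-yf'$ and $1/A_\epsilon(f')$ are clearly in $C^{k-1,s}_{\textup{loc}}$.

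\textbf{Step 3: local inversion and base case.} Set $g=\chi f$. Then
\[
|D|^p\partial_y g = \chi\,\mc{M}(f) + (\text{smooth on } I).
\]
Since the symbol $i\xi|\xi|^p$ is elliptic away from $\xi=0$, Schauder estimates for $|D|^{-p}\partial_y^{-1}$ on high frequencies give $g\in C^{k,s'+p}$ on $I$, with low frequencies being smooth.

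The base case starts from $f\in W^{2,\infty}_{\textup{loc}}\subset C^{1,1}_{\textup{loc}}$. Step 2 gives $\mc{M}(f)\in C^{0,\theta}_{\textup{loc}}$ for some $\theta>0$, since $f'$ is Lipschitz and the near part gains $1-p$ derivatives. Inversion then yields $f\in C^{1+p+\theta}_{\textup{loc}}$. We iterate from there.

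\textbf{Main obstacle.} The main difficulty is Step 2: showing precisely that the nonlinear near-diagonal operator $\sq{\mc{J}}_\epsilon$ maps $C^{k,s}_{\textup{loc}}$ into $C^{k-1,s'}_{\textup{loc}}$ with a genuine gain over two derivatives, uniformly under composition with the nonlinearity $A_\epsilon$. We would first try to commute $\partial_y^{k-1}$ through the $w$-integral. This produces a leading term linear in $f^{(k+1)}(y+tw)$, multiplied by smooth coefficients, plus lower-order products handled by the induction hypothesis. The leading term would then be treated as a fractional integral of order $1-p$ via a Hölder difference-quotient estimate in $y$.

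The distributional formulation also needs a small justification: since $f\in W^{2,\infty}_{\textup{loc}}$, all the integrals converge pointwise, so the equation holds almost everywhere.
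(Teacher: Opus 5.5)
\textbf{Gap: Step 2 is not established.} Your bootstrap skeleton is reasonable, but Step 2, which you yourself flag as the main obstacle, is not proved, and the justification you give does not support the conclusion you draw.

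If the near part really were a singular integral of order $\alpha-2$ applied to $f''\in C^{k-2,s}$, its output would have total Hölder regularity $k-2+s+(1-p)=k-1+s-p$. Inverting $|D|^p\partial_y$ would then return exactly $f\in C^{k,s}$, so there is zero gain and the iteration stalls. The bound $C^{k-1,s+1-p}$ you then assert corresponds to an operator of order $\alpha-2$ acting on $f'$, i.e.\ of order $p$ on $f$. That is what linearizing about a smooth background suggests, but for the actual nonlinear term it would require a genuine bilinear, paraproduct-type argument.

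Your proposed route does not supply this. Commuting $\partial_y^{k-1}$ into the $w$-integral produces terms linear in $f^{(k+1)}(y+tw)$. For $f\in C^{k,s}$ this is only a distribution of negative Hölder order, and it is multiplied by coefficients no smoother than $f''$. So the ``fractional integral of order $1-p$'' step is not available as stated. The root cause is expanding $f'(y)-f'(z)$ and $S_f(y,z)-f'(y)$ as averages of $f''$: this costs a full extra derivative (your own first count gives $C^{k-2,s}$), which you then have to win back.

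\textbf{Missing idea: no gain is needed.} The transport term $B_\epsilon(f')(f-yf')$ is exactly as regular as $f'$. So it suffices to show that the nonlinear term lies in $C^s_{\textup{loc}}$ whenever $f'\in C^s_{\textup{loc}}$, for noninteger $s$. An interior Schauder estimate for $|D|^p$ applied to $f'\in L^\infty(\R)$ then gives $f'\in C^{s+p}_{\textup{loc}}$, a gain of $p$ per step.

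This follows by keeping the differences unexpanded and using the fixed $W^{2,\infty}_{\textup{loc}}$ hypothesis only for $L^\infty$ smallness. With $z=y-t$, the factors $f'(y)-f'(y-t)$ and $A_\epsilon(S_f(y,y-t))-A_\epsilon(f'(y))$ are $O(|t|)$ in $L^\infty$ on compacts. They are also bounded in $C^s$ uniformly in $t$, because $S_f(y,y-t)=\int_0^1 f'(y-t't)\,dt'$. The product estimate $\|ab\|_{C^s}\lesssim\|a\|_{L^\infty}\|b\|_{C^s}+\|a\|_{C^s}\|b\|_{L^\infty}$ then bounds the $C^s$ norm of the near integrand by $O(|t|^{1-\alpha})=O(|t|^{-p})$, which is integrable.

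The paper runs exactly this argument in the equivalent form $\mc{I}_\epsilon(f)=\int\Gamma_\epsilon(f'(y),S_f(y,z))|y-z|^{-\alpha}\,dz$, with $\Gamma_\epsilon=(u-v)^2\sq{\Gamma}_\epsilon$ from \cref{prop:analysis_semilinear_kernel}, together with a far-field bound like yours. It then applies an interior Schauder estimate for $|D|^p$ directly to $f'$.

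\textbf{Two smaller points.}
\begin{itemize}
\item Your Step 3 shows the commutator is smooth only on $I$. A global multiplier inversion of $|D|^p\partial_y(\chi f)$ needs control on all of $\R$; applying the interior estimate directly to $f'$, as the paper does, sidesteps this.
\item Your base case gives $f\in C^{1+p+\theta}$ for ``some $\theta>0$''. This is weaker than the starting $C^{1,1}$ unless $\theta>1-p$. You must take $\theta\in(1-p,1)$, which is available since $f'$ is Lipschitz.
\end{itemize}
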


Using the previous theorems, we can now prove the main theorem of the paper.

\begin{proof}[Proof of \cref{thm:main}]
    By \cref{thm:fixed_point_correction}, for every $\epsilon$ small enough, there exists a solution $f_\epsilon = \ov{f}_{\epsilon, \mu_\epsilon} + r_\epsilon$ to the front equation \eqref{eqn:main_ideas_self_similar_front_equation_semilinear} satisfying the estimates in the theorem. Moreover, by \cref{thm:fixed_point_smoothness}, $f_\epsilon$ is smooth. It follows that 
    \[F_\epsilon(x, t) = \sqrt{\epsilon}\,L(t)f_\epsilon\left(\frac{x}{L(t)}\right), \qquad L(t) = (\alpha(T-t))^{1/\alpha}\]
    solves the front equation \eqref{eqn:front_equation}. The convergence claims of the theorem follow from the self-similar ansatz.
\end{proof}

\section{Analysis of the Equation}\label{sec:analysis_equation}

In this section, we will write the nonlinear operators in a more convenient form for the purpose of doing the nonlinear estimates. We will also prove the smoothness of the profiles constructed in \cref{thm:fixed_point_approximation,thm:fixed_point_correction}. Recall that the semilinear equation can be written as
\[\mc{E}_\epsilon(f) := |D|^{\alpha - 1}\partial_y f + \frac{1}{A_\epsilon(\partial_y f)}\left(f - y\partial_yf + \sq{\mc{J}}_\epsilon(f)\right) = 0.\]
In the following proposition, we will write the integral operator in a more convenient form for the purpose of doing the nonlinear estimates.

\begin{proposition}\label{prop:analysis_semilinear_kernel}
    Let $f\in W^{2,\infty}_{\textup{loc}}(\R)$ with $f'\in L^\infty(\R)$. We can write
    \[\mc{E}_{\epsilon}(f) = |D|^p f' + B_\epsilon(f')\left(f-yf'\right) + \mc{I}_\epsilon(f)\]
    where $B_\epsilon(u)=A_\epsilon(u)^{-1}=(1+\epsilon u^2)^{\alpha/2}$ and $\mc{I}_\epsilon$ is the nonlinear operator defined by
    \[\mc{I}_\epsilon(f) = \int \frac{1}{|y - z|^\alpha} \Gamma_\epsilon\left(f'(y), \frac{f(y) - f(z)}{y - z} \right) \, dz.\]
    Here, $\Gamma_\epsilon: \R^2 \to \R$ is the smooth function
    \begin{equation}\label{eq:G_epsilon}
    \begin{aligned}
    \Gamma_\epsilon(u, v) & = c_\alpha\Bigg[p \int_u^v
    \left(1-\frac{A_\epsilon(s)}{A_\epsilon(u)}\right)\,ds + (v-u)\left(1-\frac{A_\epsilon(v)}{A_\epsilon(u)}\right)\Bigg] 
    \end{aligned}
    \end{equation}
\end{proposition}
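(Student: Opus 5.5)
The identity is purely algebraic once we divide the lower-order term $\sq{\mc{J}}_\epsilon$ by $A_\epsilon(f')$ and integrate by parts once in $z$. The first two pieces are immediate from the definition of $\mc{E}_\epsilon$ with $B_\epsilon = A_\epsilon^{-1}$:
\[\frac{1}{A_\epsilon(f')}\bigl(f - yf' + \sq{\mc{J}}_\epsilon(f)\bigr) = B_\epsilon(f')(f - yf') + \frac{\sq{\mc{J}}_\epsilon(f)}{A_\epsilon(f')}.\]
So it suffices to show that $A_\epsilon(u)^{-1}\sq{\mc{J}}_\epsilon(f)(y) = \mc{I}_\epsilon(f)(y)$, where $u = f'(y)$ is fixed and $S = S_f(y,z)$. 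Writing $\sq{\mc{J}}_\epsilon$ with this notation gives
\[\frac{\sq{\mc{J}}_\epsilon(f)(y)}{A_\epsilon(u)} = c_\alpha \int \frac{u - f'(z)}{|y-z|^\alpha}\Bigl(\frac{A_\epsilon(S)}{A_\epsilon(u)} - 1\Bigr)\,dz.\]

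The key step is to eliminate $f'(z)$ using the secant. A direct computation gives $\partial_z S = (S - f'(z))/(y-z)$, and hence
\[u - f'(z) = (u - S) + (y-z)\,\partial_z S.\]
The first part, $(u-S)\bigl(A_\epsilon(S)/A_\epsilon(u) - 1\bigr) = (S-u)\bigl(1 - A_\epsilon(S)/A_\epsilon(u)\bigr)$, is exactly the second term of $\Gamma_\epsilon(u,S)$.

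For the second part, set $H(v) = \int_u^v \bigl(A_\epsilon(s)/A_\epsilon(u) - 1\bigr)\,ds$, so that $\bigl(A_\epsilon(S)/A_\epsilon(u)-1\bigr)\partial_z S = \partial_z[H(S)]$. The remaining integrand is then $(y-z)|y-z|^{-\alpha}\,\partial_z[H(S)]$. Since $\partial_z\bigl[(y-z)|y-z|^{-\alpha}\bigr] = p\,|y-z|^{-\alpha}$ for $z \neq y$, integration by parts turns this into
\[-p\int |y-z|^{-\alpha} H(S)\,dz = c_\alpha^{-1}\cdot c_\alpha\, p \int |y-z|^{-\alpha}\int_u^S\Bigl(1 - \frac{A_\epsilon(s)}{A_\epsilon(u)}\Bigr)\,ds\,dz,\]
which is the first term of $\Gamma_\epsilon$. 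Smoothness of $\Gamma_\epsilon$ is clear because $A_\epsilon$ is smooth and positive.

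The main obstacle is justifying the integration by parts and the convergence of all integrals. I would integrate over $\{\delta < |y-z| < M\}$ and let $\delta \to 0$, $M \to \infty$. The relevant bounds are as follows.

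\emph{Near the diagonal.} Since $f \in W^{2,\infty}_{\textup{loc}}$, we have $|S - u| \lesssim |y-z|$. Because $H(u) = H'(u) = 0$, this gives $|H(S)| \lesssim |y-z|^2$. The boundary terms are therefore $O(\delta^{1-\alpha}\delta^2) = O(\delta^{3-\alpha}) \to 0$. The same bounds give $|\Gamma_\epsilon(u,S)| \lesssim |y-z|^2$ and $|u - f'(z)| \lesssim |y-z|$, so the integrands are $O(|y-z|^{2-\alpha})$ and $O(|y-z|^{1-\alpha}\cdot|y-z|)$, both integrable since $\alpha < 2$.

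\emph{At infinity.} Since $f' \in L^\infty$, both $S$ and $f'(z)$ are bounded, so $H(S)$ and $\Gamma_\epsilon(u,S)$ are bounded. The boundary term is $O(M^{1-\alpha}) \to 0$, and the integrands are $O(|y-z|^{-\alpha})$, which is integrable since $\alpha > 1$.

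These bounds also show that the original integral defining $\sq{\mc{J}}_\epsilon$ converges absolutely, so splitting it into the two parts is legitimate.
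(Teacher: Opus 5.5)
Your proposal is correct and follows the paper's proof essentially step for step. You use the same secant identity $\partial_z S=(S-f'(z))/(y-z)$, the same splitting of $u-f'(z)$, and the same integration by parts against $\partial_z[(y-z)|y-z|^{-\alpha}]=p|y-z|^{-\alpha}$; your $H$ is just $-P_\epsilon(u,\cdot)$ in the paper's notation. Your truncation to $\{\delta<|y-z|<M\}$ is a slightly more explicit version of the paper's remark that the boundary terms vanish, relying on the same $O(|y-z|^2)$ bound near the diagonal and on the boundedness of $f'$ at infinity.
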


\begin{remark}\label{rmk:analysis_vanishing_diagonal}
    Note that we can equivalently write
    \begin{equation}\label{eqn:analysis_kernel_quadratic_factor}
    \begin{aligned}
        \Gamma_\epsilon(u,v)&=(u-v)^2\sq{\Gamma}_\epsilon(u,v),\\
        \sq{\Gamma}_\epsilon(u,v)&=-\frac{c_\alpha}{A_\epsilon(u)}
        \int_0^1\bigl[1+p(1-t)\bigr]A_\epsilon'\bigl(u+t(v-u)\bigr)\,dt
    \end{aligned}
    \end{equation}
    where $\sq{\Gamma}_\epsilon$ is smooth and satisfies, for $|u|, |v| \leq R$,
    \[|\sq{\Gamma}_\epsilon(u, v)| \lesssim_{R} \epsilon (|u| + |v|).\]
    Therefore for $f \in W^{2, \infty}_{\textup{loc}}$, the integrand in $\mc{I}_\epsilon(f)$ vanishes to order $|y - z|^{2 - \alpha}$ as $z \to y$. Moreover, for each $0 \leq \epsilon \leq 1$, $j \geq 0$, for $|u|, |v| \leq R$, we have that 
    \begin{equation}\label{eqn:analysis_kernel_derivative_bounds}|\nabla \Gamma_\epsilon(u, v)| \lesssim_{j, R} \epsilon |u - v|(|u| + |v|), \quad |\nabla^2 \Gamma_\epsilon| \lesssim_{j, R} \epsilon(|u| + |v|), \quad |\nabla^j \Gamma_\epsilon| \lesssim_{j, R} \epsilon\end{equation}
    and 
    \begin{equation}\label{eqn:analysis_kernel_diagonal_derivative_bounds}|(\partial_1 + \partial_2)^j \Gamma_\epsilon(u, v)| \lesssim_{j, R} \epsilon|u - v|^2, \quad |(\partial_1 + \partial_2) \nabla \Gamma_\epsilon(u, v)| \lesssim_{j, R} \epsilon|u - v|.\end{equation}
\end{remark}

\begin{proof}
    Dividing the remainder $\sq{\mc{J}}_\epsilon(f)$ by $A_\epsilon(f'(y))$, we define
    \begin{equation}\label{eqn:analysis_divided_remainder}
        \mc{I}_\epsilon(f)(y)
        :=\frac{\sq{\mc{J}}_\epsilon(f)(y)}{A_\epsilon(f'(y))}
        =c_\alpha\int_{\R}\frac{f'(y)-f'(z)}{|y-z|^\alpha}
        \left(\frac{A_\epsilon(S_f(y,z))}{A_\epsilon(f'(y))}-1\right)\,dz.
    \end{equation}
    Define the antiderivative
    \[
        P_\epsilon(v,q):=\int_v^q\left(1-\frac{A_\epsilon(u)}{A_\epsilon(v)}\right)\,du.
    \]
    Since 
    \[\partial_z S_f(y, z) = \frac{S_f(y, z) - f'(z)}{y - z},\]
    the chain rule gives
    \begin{equation}\label{eqn:analysis_secant_integration_by_parts}
    \begin{aligned}
        (y-z)\partial_zP_\epsilon(f'(y),S_f(y,z)) & = \left(S_f(y, z) - f'(z)\right)\left(1-\frac{A_\epsilon(S_f(y,z))}{A_\epsilon(f'(y))}\right) \\
        & = \left(S_f(y, z) - f'(y)\right)\left(1-\frac{A_\epsilon(S_f(y,z))}{A_\epsilon(f'(y))}\right) \\
        & + \left(f'(y) - f'(z)\right)\left(1-\frac{A_\epsilon(S_f(y,z))}{A_\epsilon(f'(y))}\right)
    \end{aligned}
    \end{equation}
    We substitute this equation into the integral defining $\mc{I}_\epsilon$ and integrate by parts in $z$. Indeed, 
    \begin{align*}
        \mc{I}_\epsilon(f)(y)
        &=c_\alpha\int_{\R}\frac{1}{|y-z|^\alpha}
        \left[(S_f(y, z) - f'(y)) \left(1 - \frac{A_\epsilon(S_f(y,z))}{A_\epsilon(f'(y))}\right)\right]\,dz \\
        & -c_\alpha \int  \frac{y - z}{|y - z|^\alpha} \partial_z P_\epsilon(f'(y),S_f(y,z))\,dz.
    \end{align*}
    We use the identity
    \[
        \partial_z\left(\frac{y-z}{|y-z|^\alpha}\right)
        =\frac{p}{|y-z|^\alpha},\qquad z\ne y.
    \]
    Note that the boundary terms vanish because
    \[f'(y) - \frac{f(y) - f(z)}{y - z} \to 0 \quad \text{as } z \to y\]
    since $f \in W^{2,\infty}_{\textup{loc}}$ and thus
    \[
        P_\epsilon(f'(y),S_f(y,z)) = O(|y-z|^{2}).
    \]
    At infinity, boundedness of $f'$ implies boundedness of $P_\epsilon(f'(y),S_f(y,z))$, so the same boundary term is $O(|y-z|^{-p})\to0$. We therefore obtain
    \begin{align*}
        \mc{I}_\epsilon(f)(y)
        &=\int_{\R}\frac{\Gamma_\epsilon(f'(y),S_f(y,z))}{|y-z|^\alpha}\,dz,
    \end{align*}
    where $\Gamma_\epsilon$ is the function in \eqref{eq:G_epsilon}.
\end{proof}

Using the proposition above, we can prove \cref{thm:fixed_point_smoothness}.

\begin{proof}[Proof of \cref{thm:fixed_point_smoothness}]
    In this proof, all implicit constants can depend on the norms of $f$ given in the assumptions. We will prove the implication
    \[f' \in C^s_{\textup{loc}} \Rightarrow \mc{I}_\epsilon(f) \in C^s_{\textup{loc}}\]
    for every noninteger $s$. Indeed, suppose $f' \in C_{\textup{loc}}^s$. Then we can write
    \[\mc{I}_\epsilon(f) = \int \frac{\Gamma_\epsilon(f'(y), S_f(y, y - t))}{|t|^{\alpha}}\,dt.\]
    Let $\chi$ be a smooth cutoff function equal to $1$ for $|t| \leq 1$ and equal to $0$ for $|t| \geq 2$. We can split the integral into a near part and a far part:
    \[\mc{I}_\epsilon(f) = \int \frac{\chi(t)\Gamma_\epsilon(f'(y), S_f(y, y - t))}{|t|^{\alpha}}\,dt + \int \frac{(1 - \chi(t))\Gamma_\epsilon(f'(y), S_f(y, y - t))}{|t|^{\alpha}}\,dt.\]
We first control the singular part of the integral when $|t| \leq 1$. Fix some number $R > 1$. We have that
    \[|f'(y) - S_{f}(y, y - t)| \lesssim_ R |t|\]
    for all $|y| \leq R$ and $|t| \leq 2$. We also have that 
    \[f' \in C_{\textup{loc}}^s, \quad (y \mapsto S_f(y, y - t)) \in C_{\textup{loc}}^s\]
    uniformly for $|t| \leq 1$, which can be seen by writing
    \[S_f(y, y - t) = \int_{0}^{1} f'(y - t't) \, dt'.\]
    Using the representation $\Gamma_\epsilon(u, v) = (u - v)^2\sq{\Gamma}_\epsilon(u, v)$ and the product estimate
    \[\|ab\|_{C^s} \lesssim \|a\|_{L^\infty} \|b\|_{C^s} + \|b\|_{L^\infty} \|a\|_{C^s},\]
    we can bound
    \begin{align*}
        \|\Gamma_\epsilon(f', S_f(\cdot, \cdot - t))\|_{C^s([-R,R])} & \lesssim_R |t|^2 \|\sq{\Gamma}_\epsilon(f', S_f(\cdot, \cdot - t))\|_{C^s([-R,R])} \\
        & + |t|\|\sq{\Gamma}_\epsilon(f', S_f(\cdot, \cdot - t))\|_{L^\infty} \\
        & \lesssim_R |t|
    \end{align*}
    where we used that $\sq{\Gamma}_\epsilon$ is smooth. It follows that we can bound the $C^s([-R,R])$ norm of the near part of the integral by 
    \[\int_{|t| \leq 1}|t|^{-p} \, dt < \infty.\]
    For the far part, observe that the function 
    \[y \mapsto S_f(y, z)(1 - \chi(y - z)) = \frac{f(y) - f(z)}{y - z}(1 - \chi(y - z))\]
    has a uniform $C^s([-R,R])$ bound for all $|y - z| \geq 1$. Therefore we can bound the $C^s([-R,R])$ norm of 
    \[y \mapsto \frac{S_f(y, z)(1 - \chi(y - z))}{|y - z|^\alpha}\]
    by a constant times $\ang{z}^{-\alpha}$. Here we used that the $C^s([-R,R])$ norm of $y \mapsto |y - z|^{-\alpha}(1 - \chi(y - z))$ is bounded by a constant times $\ang{z}^{-\alpha}$ for all $|y - z| \geq 1$. Therefore using that $\Gamma_\epsilon$ is smooth, we can bound the $C^s([-R,R])$ norm of the far part by a constant. Therefore we have shown that $\mc{I}_\epsilon(f) \in C^s_{\textup{loc}}$ whenever $f' \in C^s_{\textup{loc}}$. \par
    To perform the bootstrap, observe that if $f' \in W_{\textup{loc}}^{1,\infty}$ then we know that $f' \in C_{\textup{loc}}^{s_0}$ for any $0<s_0<1$. By writing 
    \[|D|^pf' = -B_\epsilon(f')(f - yf') - \mc{I}_\epsilon(f),\]
    we can use the assumptions above and the regularity of the local term to see that $|D|^pf'\in C_{\textup{loc}}^{s_0}$.
    By the interior Schauder estimate (see \cite[Corollary~2.4]{RosOton-Serra:dirichlet-fractional-laplacian}), we obtain $f'\in C_{\textup{loc}}^{s_0+p}$ whenever $s_0,s_0+p\notin\N$. \footnote{The theorem we are citing assumes that the function is smooth, but we can apply the theorem to $f'*\rho_\delta$ and then pass to the limit as $\delta\to0$. The other assumptions of the theorem are a consequence of the fact that $\|f'\|_{L^\infty} < \infty$.}
\end{proof}

\begin{lemma}\label{lem:analysis_coefficient}
    Consider the function $b_\epsilon(y)$ defined by 
    \[b_\epsilon(y) = \chi_+(y)\int \frac{\partial_1\Gamma_\epsilon(W'(y), S_W(y,z))}{|y-z|^{\alpha}}\,dz, \quad W(y) = \max\{-y,0\},\]
    where $\chi_+$ is a smooth cutoff function equal to $1$ for $y \geq 2$ and equal to $0$ for $y \leq 1$. Then we can equivalently write 
    \[b_\epsilon(y) = c_\epsilon\chi_+(y)y^{-p}, \quad |c_\epsilon| \lesssim \epsilon.\]
\end{lemma}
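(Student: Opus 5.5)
The identity follows from the homogeneity of $W$; only the bound $|c_\epsilon|\lesssim\epsilon$ needs an estimate. For $y\ge 1$ (where $\chi_+\ne0$) we have $W'(y)=0$. The secant is
\[S_W(y,z)=\frac{W(y)-W(z)}{y-z}=\frac{-W(z)}{y-z},\]
so $S_W(y,z)=0$ for $z\ge0$ and $S_W(y,z)=\frac{z}{y-z}$ for $z<0$. Hence
\[b_\epsilon(y)=\chi_+(y)\int_{\R}\frac{\partial_1\Gamma_\epsilon\bigl(0,S_W(y,z)\bigr)}{|y-z|^\alpha}\,dz.\]
The key observation is that the integrand depends on $(y,z)$ only through the ratio $z/y$, apart from the homogeneous weight $|y-z|^{-\alpha}$. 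We substitute $z=y\zeta$ for $y>0$. Then $S_W(y,y\zeta)=\frac{\zeta}{1-\zeta}\mathbbm{1}_{\zeta<0}$ and $|y-z|^{-\alpha}\,dz=y^{1-\alpha}|1-\zeta|^{-\alpha}\,d\zeta=y^{-p}|1-\zeta|^{-\alpha}\,d\zeta$. This gives $b_\epsilon(y)=c_\epsilon\chi_+(y)y^{-p}$ with
\[c_\epsilon:=\int_{\R}\frac{\partial_1\Gamma_\epsilon\bigl(0,\frac{\zeta}{1-\zeta}\mathbbm{1}_{\zeta<0}\bigr)}{|1-\zeta|^{\alpha}}\,d\zeta .\]

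Next we check that $c_\epsilon$ is finite and bounded by $\epsilon$. For $\zeta\ge0$ the second argument is $0$, and $\partial_1\Gamma_\epsilon(0,0)=0$ by \eqref{eqn:analysis_kernel_derivative_bounds}, since $|\nabla\Gamma_\epsilon(u,v)|\lesssim\epsilon|u-v|(|u|+|v|)$. So only $\zeta<0$ contributes. There $v=\frac{\zeta}{1-\zeta}\in(-1,0)$, so $|v|\le1$ and we may take $R=1$ in \cref{rmk:analysis_vanishing_diagonal}, obtaining $|\partial_1\Gamma_\epsilon(0,v)|\lesssim\epsilon v^2\le\epsilon$. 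The weight $|1-\zeta|^{-\alpha}$ is integrable on $(-\infty,0)$ because $\alpha>1$. There is no singularity at $\zeta=1$, since that region has zero integrand. Therefore
\[|c_\epsilon|\lesssim\epsilon\int_{-\infty}^0(1-\zeta)^{-\alpha}\,d\zeta=\frac{\epsilon}{p}.\]

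The only point needing care is that the integral defining $b_\epsilon$ converges absolutely. This justifies the change of variables and shows that no principal-value interpretation is needed near $z=y$. Absolute convergence holds because the integrand vanishes identically for $z\ge0$, and $y\ge1$ stays away from $z<0$. For extra decay one can use the sharper bound $v^2\sim\zeta^2$ near $\zeta=0$, but it is not needed.
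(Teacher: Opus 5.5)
Your proof is correct and follows essentially the same route as the paper: restrict to $y>1$, observe that only $z<0$ contributes because $\partial_1\Gamma_\epsilon(0,0)=0$, and use the homogeneity of $W$ through a change of variables. The paper's substitution is $t=-z/(y-z)$ rather than your $z=y\zeta$, and it computes $\partial_1\Gamma_\epsilon(0,v)=c_\alpha(A_\epsilon(v)-1)$ explicitly instead of citing the bound in \cref{rmk:analysis_vanishing_diagonal}; this gives the closed form $c_\epsilon=c_\alpha\int_0^1[(1+\epsilon t^2)^{-\alpha/2}-1](1-t)^{p-1}\,dt$, which the paper reuses later to identify $\mathcal{Q}_\epsilon(W)(y)=c_\epsilon y^{-p}$.
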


\begin{remark}\label{rmk:analysis_coefficient}
    The coefficient $b_\epsilon$ arises from the nonlocal interaction between the negative tail of the approximate solution $\ov{f}$ and the correction $r$ for $y > 0$. Indeed, the linearization $D\mc{I}_\epsilon(\ov{f})[r]$ contains the term
    \[\left(\int \frac{\partial_1\Gamma_\epsilon(\ov{f}'(y), S_{\ov{f}}(y,z))}{|y-z|^\alpha}\,dz\right)r'(y).\]
    This coefficient is nonperturbative because for $y > 0$ each derivative loses a power $\ang{y}^{1/p}$ while the gain $\ang{y}^{-p}$ is not enough to compensate for it. Therefore it must be included in the linear inverse problem.
\end{remark}

\begin{proof}
    It suffices to consider $y>1$, where $W'(y)=0$. For $z\geq0$, we have $S_W(y,z)=0$ and $\partial_1\Gamma_\epsilon(0,0)=0$, so this region contributes nothing. For $z<0$, we have
    \[S_W(y, z) = \frac{z}{y - z}.\]
    Therefore the integral equals
    \[\int_{z < 0} \frac{\partial_1\Gamma_\epsilon(0, z / (y - z))}{|y - z|^\alpha} \, dz.\] 
    Differentiating \eqref{eq:G_epsilon} gives
    \begin{equation}\label{eqn:analysis_kernel_first_partial}
    \begin{aligned}
        \partial_1\Gamma_\epsilon(u,v)
        &=c_\alpha\left(\frac{A_\epsilon(v)}{A_\epsilon(u)}-1\right)\\
        &\quad+c_\alpha\frac{A_\epsilon'(u)}{A_\epsilon(u)^2}
        \left[p\int_u^v A_\epsilon(s)\,ds+(v-u)A_\epsilon(v)\right].
    \end{aligned}
    \end{equation}
    In particular, since $A_\epsilon(0)=1$ and $A_\epsilon'(0)=0$,
    \[
        \partial_1\Gamma_\epsilon(0,v)
        =c_\alpha\bigl(A_\epsilon(v)-1\bigr)
        =c_\alpha\left[(1+\epsilon v^2)^{-\alpha/2}-1\right].
    \]
    We make the change of variables
    \[
        t=\frac{-z}{y-z},\qquad z=-\frac{yt}{1-t},\qquad
        y-z=\frac{y}{1-t},\qquad dz=-\frac{y}{(1-t)^2}\,dt.
    \]
    Therefore the integral equals
    \[c_\alpha y^{-p}\int_0^1\left[(1+\epsilon t^2)^{-\alpha/2}-1\right](1-t)^{p-1}\,dt = c_\epsilon y^{-p}\]
    where 
    \[c_\epsilon = c_\alpha\int_0^1\left[(1+\epsilon t^2)^{-\alpha/2}-1\right](1-t)^{p-1}\,dt, \quad |c_\epsilon| \lesssim \epsilon.\]
\end{proof}

\section{Construction of the Approximate Solution}\label{sec:approximate_solution}

In this section, we will prove \cref{thm:fixed_point_approximation}. We will find a family of approximate solutions $\ov{f}_{\epsilon, \mu}$ so that the residual 
\[\sq{\mc{E}}_{\epsilon}(f) := A_\epsilon(\partial_y f) \mc{E}_\epsilon(f) = \mc{L}_0(f) + \mc{Q}_\epsilon(f)\]
is small. Recall that the nonlinear operator $\mc{Q}_\epsilon$ is given by
\[\mc{Q}_\epsilon(f) = c_\alpha \int \frac{f'(y) - f'(z)}{|y - z|^\alpha}\left[A_\epsilon\left(\frac{f(y) - f(z)}{y - z}\right) - 1\right] \, dz\]
which only contains terms of cubic order and higher. There is no reason to work with the divided/semilinear equation in this section because the undivided equation has a simpler form, and we don't need to worry about quasilinear structure when constructing the approximate solution.

\subsection{Study of the Linear Solution}\label{sec:approx_linear_solution}

In this section, we will study the solution to the linearized equation $\mc{L}_0(f) = 0$. Let $f_{0}$ denote the solution to the linearized equation $\mc{L}_0(f_{0}) = 0$ satisfying the boundary conditions
\[\lim_{y \to -\infty} f_{0}'(y) = -1, \quad \lim_{y \to \infty} f_{0}'(y) = 0.\]
We will see in the next lemma that we can write $f_{0}$ explicitly as
\begin{equation}\label{eqn:fixed_point_linear_solution}
f_{0}(y) = -\frac{y}{2} + \frac{1}{\pi}\int_0^\infty
\frac{1-\cos\bigl(y\xi-\Phi_0(\xi)\bigr)}{\xi^2}\,d\xi,
\qquad \Phi_0(\xi)=\frac{\xi|\xi|^p}{p+1}.
\end{equation}
Let $\chi_+$ be a smooth cutoff function equal to $1$ for $y \geq 2$ and equal to $0$ for $y \leq 1$. Similarly $\chi_-(\xi) = \chi_+(-\xi)$. Define the positive high-frequency part of $f_{0}$ by
\[f_{0, +}(y) = -\frac{1}{2\pi}\int_{0}^{\infty}\frac{\chi_+(\xi)}{\xi^2}e^{i(y\xi - \Phi_0(\xi))} \, d\xi\]
and let 
\[f_{0, \textup{high}}(y) = 2 \text{Re} f_{0, +}(y) , \quad f_{0, \textup{low}}(y) = f_{0}(y) - f_{0, \textup{high}}(y)\]
denote the high- and low-frequency portions of $f_{0}$, respectively. We also define the physical space phase and slowly varying amplitude 
\[\lambda_0(y)=y^{1/p},\quad
 \Psi_0(y)=\frac{p}{p+1}y^{1+1/p},\quad
 a_*(y)=\frac{e^{-i(\Psi_0(y)-\pi/4)}}
    {\mu_*\lambda_0(y)^{-(p+1)/2}}f_{0,+}'(y), \quad \mu_* = - \frac{i}{\sqrt{2\pi p}}.\]
Therefore we can write the positive high-frequency part of $f_{0}'$ as
\[f_{0, +}'(y) = \mu_* \lambda_0^{-(p+1)/2} a_* e^{i(\Psi_0(y) - \pi/4)}, \quad y \gg 1.\]
In the following lemma, we will study the asymptotic behavior of $f_{0}$. 
\begin{lemma}[The linear solution]\label{lem:approx_linear_seed}
The function $f_0$ is smooth and satisfies
$\mc L_0(f_0)=0$. For every $j \geq 0$, we have the following estimates:
\begin{enumerate}
    \item The coefficient $a_*$ is slowly varying:
    \[|\partial_y^j(a_*-1)(y)|\lesssim_j y^{-2\beta-j}, \quad y \gg 1.\]
    \item The low-frequency portion of  $f_0$ satisfies
    \[|\partial_y^j(f_{0,\textup{low}}-W)(y)|\lesssim_j |y|^{-p-j},\qquad |y|\gg1,\]
    and $f_0$ satisfies
    \[|\partial_y^j(f_0(y)+y)|\lesssim_j |y|^{-p-j},\qquad y\ll-1.\]
    \item The function
        \begin{equation}\label{eqn:approx_amplitude_seed}
        f_{0,\mu}=f_0
        +2\operatorname{Re}\left[\left(\frac\mu{\mu_*}-1\right)f_{0, +}\right]
        \end{equation}
        satisfies $\mc L_0(f_{0,\mu}) = G_\mu$.
\end{enumerate}
\end{lemma}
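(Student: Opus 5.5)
The plan is to work on the Fourier side throughout, since $\mc L_0$ becomes a first-order ODE in $\xi$. Using $\widehat{yg}=i\partial_\xi\hat g$ and $\widehat{f'}=i\xi\hat f$, we get
\[\widehat{\mc L_0 f}(\xi)=i\xi|\xi|^p\hat f+\hat f+\partial_\xi(\xi\hat f)=\xi\partial_\xi\hat f+(2+i\xi|\xi|^p)\hat f .\]
Hence on $\xi>0$ the homogeneous solutions are $\hat f=C\xi^{-2}e^{-i\Phi_0(\xi)}$, because $\Phi_0'(\xi)=\xi^p$. This gives three checks.

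\textbf{Step 1: $f_0$ is smooth and solves $\mc L_0 f_0=0$.}
\begin{itemize}
\item Since $1-\cos(y\xi-\Phi_0)$ is $O(\xi^2)$ near $0$ and the integrand decays like $\xi^{-2}$, the integral in \eqref{eqn:fixed_point_linear_solution} converges absolutely.
\item For smoothness, differentiate once: $f_0'(y)=-\tfrac12+\tfrac1\pi\int_0^\infty \xi^{-1}\sin(y\xi-\Phi_0)\,d\xi$. Higher derivatives bring down powers of $\xi$. The phase $y\xi-\Phi_0(\xi)$ is nonstationary for $\xi\gg \langle y\rangle^{1/p}$, so after integrating by parts against $\xi^p-y$ (or rotating the contour slightly into $\operatorname{Im}\xi<0$, where $e^{i\Phi_0}$ decays) all derivatives converge locally uniformly.
\item For the equation, compute $\hat f_0$ distributionally. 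Up to terms supported at $\xi=0$, it equals $c\,\mathrm{p.v.}\,\xi^{-2}e^{-i\Phi_0(\xi)}$ on each half-line, with the $-y/2$ term supplying the $\delta'$ piece. The ODE above kills the smooth part on $\xi\neq0$.
\item It remains to check that the $\delta,\delta'$ contributions at the origin cancel, or equivalently that $\mc L_0 f_0$ is a polynomial that must vanish. I expect to do this by direct physical-space verification instead. Writing $f_0'$ as an oscillatory integral, each of $|D|^pf_0'$, $f_0$ and $yf_0'$ is a convergent integral, and $\xi\partial_\xi\big[\xi^{-2}(1-\cos(y\xi-\Phi_0))\big]$ integrates to zero with boundary terms vanishing at $0$ and $\infty$. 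That argument also gives $\mc L_0 f_0=0$ without distribution theory.
\end{itemize}

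\textbf{Step 2: part (3).}
\begin{itemize}
\item $\mc L_0$ is real-linear, so $\mc L_0 f_{0,\mu}=2\operatorname{Re}\big[(\mu/\mu_*-1)\mc L_0 f_{0,+}\big]$.
\item Here $\hat f_{0,+}=-\chi_+(\xi)\xi^{-2}e^{-i\Phi_0}$ on $\xi>0$, and it vanishes for $\xi<0$.
\item By the ODE identity, $\widehat{\mc L_0 f_{0,+}}=\xi\,\partial_\xi(\chi_+)\cdot(-\xi^{-2}e^{-i\Phi_0})=-\chi_+'\xi^{-1}e^{-i\Phi_0}$.
\item Taking $2\operatorname{Re}$ and matching the Hermitian extension in the definition of $G_\mu$ gives exactly $\widehat G_\mu=(1-\mu/\mu_*)\chi_+'\xi^{-1}e^{-i\Phi_0}$. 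The only care needed is the normalization of $\operatorname{Re}$ against the Fourier convention.
\end{itemize}

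\textbf{Step 3: asymptotics, parts (1) and (2).}

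For $f_{0,\mathrm{low}}$:
\begin{itemize}
\item $f_{0,\mathrm{low}}$ has Fourier transform $(1-\chi_+(|\xi|))\hat f_0$, which is compactly supported apart from the singularity at $0$.
\item Near $\xi=0$ it equals $\xi^{-2}(1-i\Phi_0(\xi)+O(|\xi|^{2+2p}))$ plus the $\delta'$ term, with the $\xi^{-2}$ and $\delta'$ parts producing $W=y_-$.
\item The first non-smooth term $\xi^{-2}\Phi_0\sim \mathrm{sgn}(\xi)|\xi|^{p-1}$ is homogeneous. Its inverse Fourier transform gives $c|y|^{-p}$, with distinct constants on each side.
\item The remaining terms are smooth or more regular at $0$, so they decay faster. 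Each derivative gains a factor $|y|^{-1}$. This is the standard correspondence between homogeneous non-smooth symbols and power tails.
\end{itemize}

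For $f_{0,\mathrm{high}}$ as $y\ll-1$:
\begin{itemize}
\item The phase $y\xi-\Phi_0$ has no stationary point, since $\xi^p-y\ge |y|$.
\item Repeated integration by parts therefore gives $O(|y|^{-N})$ decay.
\item Combined with the low-frequency part, this yields the bound for $f_0+y$.
\end{itemize}

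For $a_*$ as $y\gg1$:
\begin{itemize}
\item Apply stationary phase to $f_{0,+}'=-\tfrac{i}{2\pi}\int\chi_+\xi^{-1}e^{i(y\xi-\Phi_0)}d\xi$ at $\xi_0=y^{1/p}=\lambda_0$.
\item There $y\xi_0-\Phi_0(\xi_0)=\Psi_0(y)$ and $-\Phi_0''(\xi_0)=-p\,y^{(p-1)/p}$.
\item The leading term is $-\tfrac{i}{2\pi}\xi_0^{-1}\sqrt{2\pi/(p\,y^{(p-1)/p})}\,e^{i(\Psi_0-\pi/4)}=\mu_*\lambda_0^{-(p+1)/2}e^{i(\Psi_0-\pi/4)}$, which confirms $\mu_*=-i/\sqrt{2\pi p}$.
\item The next term in the expansion is smaller by $(\xi_0^{2}\,|\Phi_0''(\xi_0)|)^{-1}=\xi_0^{-(p+1)}=y^{-2\beta}$, which gives $|a_*-1|\lesssim y^{-2\beta}$.
\end{itemize}

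The main obstacle I expect is the derivative bounds $\partial_y^j(a_*-1)=O(y^{-2\beta-j})$. Differentiating $f_{0,+}'$ in $y$ costs a factor $\xi\sim y^{1/p}$, not $y^{-1}$, so the gain must come from stripping off the phase first. The first approach I will try is to rescale $\xi=\lambda_0(y)\,s$, so that the integral becomes $\lambda_0^{-1}\int\chi_+(\lambda_0 s)s^{-1}e^{i y\lambda_0 (s-s^{p+1}/(p+1))}ds$. This is a stationary phase integral with large parameter $y\lambda_0=y^{(p+1)/p}$ and fixed stationary point $s=1$. The full asymptotic expansion in powers of $y^{-(p+1)/p}$, with smooth dependence on $y$, can then be differentiated term by term. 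The tail regions near $s\to0$ (cut off by $\chi_+$) and $s\to\infty$ are handled by nonstationary integration by parts, which gives rapid decay in $y$.
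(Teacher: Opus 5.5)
Your proposal is correct. Part (3), the nonstationary-phase bound on $y\ll-1$, and the stationary-phase treatment of $a_*$ follow the paper. For $a_*$ the paper simply quotes \cref{lem:appendix_homogeneous_stationary_phase} with $a(\xi)=\chi_+(\xi)/\xi$, which gives $a_*-1=\lambda_0 r$ with $|\partial_y^j r|\lesssim\lambda_0^{-p-2}y^{-j}$; that lemma is exactly your rescaling argument. One slip there: $\xi=\lambda_0 s$ gives $\chi_+(\xi)\xi^{-1}\,d\xi=\chi_+(\lambda_0 s)s^{-1}\,ds$, with no prefactor $\lambda_0^{-1}$.

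Two steps take a different route from the paper.

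\textbf{The equation $\mc L_0 f_0=0$.} The paper differentiates first. Since $\partial_y\mc L_0=(|D|^p-y)\partial_y^2$ and $\widehat{f_0''}=e^{-i\Phi_0}$ is a bounded function, the identity $(|\xi|^p-\Phi_0')e^{-i\Phi_0}=0$ shows $\mc L_0 f_0$ is constant, with no $\delta,\delta'$ bookkeeping at the origin. The constant is then killed at $y=0$ via $\int_0^\infty\xi^{-2}(1-\cos\Phi_0)\,d\xi=\int_0^\infty\xi^{p-1}\sin\Phi_0\,d\xi$.

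Your direct check is this same integration by parts at every $y$, but you have the exact derivative wrong. With $g=\xi^{-2}(1-\cos(y\xi-\Phi_0))$, the integrand of $\pi\,\mc L_0 f_0$ is $g-\xi^{-1}(y-\xi^p)\sin(y\xi-\Phi_0)=-\partial_\xi(\xi g)$. By contrast, $\xi\partial_\xi g$ integrates to $-\int_0^\infty g\,d\xi\neq 0$. You also need to justify $|D|^p f_0'(y)=\frac1\pi\int_0^\infty\xi^{p-1}\sin(y\xi-\Phi_0)\,d\xi$ for every $y$, for instance with a convergence factor. The paper uses this formula pointwise only at $y=0$.

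\textbf{The low-frequency bound.} The paper uses neither distributions nor an expansion of $e^{-i\Phi_0}$. Subtracting the integral formula for $W$ gives $f_{0,\textup{low}}-W=\frac1\pi\operatorname{Re}\int_0^\infty e^{iy\xi}b(\xi)\,d\xi$ plus a rapidly decaying term, where $b=(1-\chi_+)\xi^{-2}(1-e^{-i\Phi_0})$. The single symbol bound $|\partial^k b|\lesssim\xi^{p-1-k}$, with a split at $\xi=|y|^{-1}$, then gives $|y|^{-p-j}$ directly. Your homogeneous-symbol expansion additionally identifies the tail $c_\pm|y|^{-p}$. However, the remainder $\xi^{-2}(e^{-i\Phi_0}-1+i\Phi_0)$ still needs the same splitting estimate, so your route is more work for the stated bound.
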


\begin{proof}
    To show that $f_0$ solves the equation $\mc{L}_0(f_0) = 0$, observe that $\partial \mc{L}_0 = (|D|^p - y)\partial^2$. Write
    \[f_0''(y) = \frac{1}{\pi}\int_{0}^{\infty}\cos(y \xi - \Phi_0(\xi)) \, d\xi = \frac{1}{2\pi} \int e^{i(y \xi - \Phi_0(\xi))} \, d\xi\]
    so that $\widehat{f_0''} = e^{-i\Phi_0}$. Therefore
    \[\widehat{\partial_y \mc{L}_0 f_0}(\xi) = |\xi|^pe^{-i\Phi_0} - i \partial_\xi e^{-i\Phi_0} = (|\xi|^p - \Phi_0'(\xi))e^{-i\Phi_0(\xi)} = 0.\]
    It follows that $\mc{L}_0(f_0)$ is constant. To evaluate this constant, note that
    \[f_0(0) = \frac{1}{\pi}\int_{0}^{\infty}\frac{1 - \cos(\Phi_0(\xi))}{\xi^2} \, d\xi = \frac{1}{\pi}\int_{0}^{\infty}\xi^{p-1}\sin(\Phi_0(\xi)) \, d\xi = -(|D|^pf_0')(0)\]
    and thus $\mc{L}_0(f_0)(0) = 0$, which implies that $\mc{L}_0(f_0) = 0$. \par
    For the assertion about the slowly varying amplitude $a_*$, we apply \cref{lem:appendix_homogeneous_stationary_phase} to the integral defining $f_{0, +}'$ with $a(\xi) = \chi_+(\xi) / \xi$ to obtain 
    \begin{align*}
        f_{0, +}'(y) & = - \frac{i}{2\pi} \int_{0}^{\infty} \frac{\chi_+(\xi)}{\xi} e^{i(y\xi - \Phi_0(\xi))} \, d\xi \\
        & = -\frac{i}{2\pi} e^{i(\Psi_0(y) - \pi/4)}\left(\frac{2\pi}{p \lambda_0(y)^{p - 1}}\right)^{1/2}(a(\lambda_0(y)) + r(y)) \\
        & = \mu_* e^{i(\Psi_0(y) - \pi/4)}\lambda_0^{-(p-1)/2}(a(\lambda_0(y)) + r(y)). 
    \end{align*}
    Therefore we can write $a_*$ as
    \[a_*(y) = \lambda_0(y)\bigl(a(\lambda_0(y)) + r(y)\bigr).\]
    Since $a(\lambda_0(y)) = \lambda_0(y)^{-1}$ for $y \geq 2^p$, we have that
    \[a_*(y) - 1 = \lambda_0(y)r(y).\]
    The bounds for $a_*$ follow immediately. \par
    For the second assertion, using the integral representation
    \[W(y)=-\frac{y}{2}+\frac1\pi\int_0^\infty\frac{1-\cos(y\xi)}{\xi^2}\,d\xi,\]
    we write
    \begin{align*}
        f_{0,\textup{low}}(y)-W(y)
        &=\frac1\pi\operatorname{Re}\int_0^\infty e^{iy\xi}b(\xi)\,d\xi
        +\frac1\pi\operatorname{Re}\int_0^\infty e^{iy\xi}\frac{\chi_+(\xi)}{\xi^2}\,d\xi\\
        &=\mc{I}_1(y)+\mc{I}_2(y),
    \end{align*}
    where
    \[b(\xi)=(1-\chi_+(\xi))\frac{1-e^{-i\Phi_0(\xi)}}{\xi^2}.\]
    Since
    \[1-e^{-i\Phi_0(\xi)}=i\Phi_0(\xi)\int_0^1e^{-it\Phi_0(\xi)}\,dt,\]
    we have
    \[|\partial_\xi^k b(\xi)|\lesssim_k\xi^{p-1-k},\qquad 0<\xi\leq2,\quad k\geq0,\]
    and $b$ vanishes for $\xi\geq2$.
    Fix $j\geq0$ and set $\rho=|y|^{-1}$. Differentiating the compactly supported integral first and then splitting at $\rho$, we obtain
    \[
        |\partial_y^j\mc{I}_1(y)|
        \lesssim\int_0^\rho\xi^{p+j-1}\,d\xi
        +\left|\int_\rho^\infty e^{iy\xi}\xi^jb(\xi)\,d\xi\right|.
    \]
    The first term is $O(|y|^{-p-j})$. Integrating the second term by parts $j+1$ times gives
    \begin{align*}
        \left|\int_\rho^\infty e^{iy\xi}\xi^jb(\xi)\,d\xi\right|
        &\lesssim_j\sum_{k=0}^{j}|y|^{-k-1}\rho^{p+j-1-k}
        +|y|^{-j-1}\int_\rho^2\xi^{p-2}\,d\xi\\
        &\lesssim_j |y|^{-p-j},
    \end{align*}
    where we used $0<p<1$.
    For $\mc{I}_2$, integrate by parts sufficiently many times before differentiating in $y$. Since
    \[
        \left|\partial_\xi^N\left(\frac{\chi_+(\xi)}{\xi^2}\right)\right|
        \lesssim_N\xi^{-2-N},\qquad \xi\geq1,
    \]
    the resulting differentiated integrals converge absolutely, and
    \[|\partial_y^j\mc{I}_2(y)|\lesssim_{j,M}|y|^{-M},\qquad |y|\gg1,\quad M\geq0.\]
    This proves the estimate for $f_{0,\textup{low}}-W$.
    Finally, for $y<0$,
    \[
        f_{0,+}(y)=-\frac1{2\pi}\int_0^\infty
        \frac{\chi_+(\xi)}{\xi^2}e^{-i(|y|\xi+\Phi_0(\xi))}\,d\xi.
    \]
    The phase derivative has magnitude $|y|+\xi^p$. Repeated integration by parts, again before differentiating in $y$, yields
    \[|\partial_y^j f_{0,+}(y)|\lesssim_{j,M}|y|^{-M},\qquad y\ll-1,\quad M\geq0.\]
    Since $W(y)=-y$ there and
    \[f_0+y=(f_{0,\textup{low}}-W)+2\operatorname{Re}f_{0,+},\]
    the negative-end estimate follows. \par
    For the last assertion, we can compute 
    \[\widehat{\mc{L}_0g}(\xi) = i\xi|\xi|^p\widehat{g}(\xi)+\widehat{g}(\xi)+\partial_\xi\bigl(\xi\widehat{g}(\xi)\bigr) = \xi\partial_\xi\widehat{g}(\xi)+(2+i\xi|\xi|^p)\widehat{g}(\xi).\]
    Since $\Phi_0'(\xi)=|\xi|^p$, we have
    \[\widehat{f_{0,+}}(\xi)=-\frac{\chi_+(\xi)}{\xi^2}e^{-i\Phi_0(\xi)}, \qquad \widehat{\mc{L}_0f_{0,+}}(\xi)=-\frac{\chi_+'(\xi)}{\xi}e^{-i\Phi_0(\xi)}.\]
    Using $\mc{L}_0f_0=0$ and the positive frequency support of $\widehat{f_{0,+}}$, it follows that
    \[\widehat{\mc{L}_0(f_{0,\mu})}(\xi)=\left(1-\frac{\mu}{\mu_*}\right)\frac{\chi_+'(\xi)}{\xi}e^{-i\Phi_0(\xi)}=\widehat{G_\mu}(\xi), \qquad \xi>0.\]
    The equality for $\xi<0$ follows by conjugate symmetry. Thus $\mc{L}_0f_{0,\mu}=G_\mu$.
\end{proof}

\subsection{Definition of the Approximate Solution Family}\label{sec:approximate_family}

In this subsection, we will define the family of approximate solutions $\ov{f}_{\epsilon, \mu}$. We will use the function $f_{0, \mu}$ as a seed to construct the approximate solution $\ov{f}_{\epsilon, \mu}$. We now introduce several terms that will be used to define the approximate solution family. Let $R$ be a large parameter and let $\chi_R$ be a smooth cutoff function supported in $(R, \infty)$ and equal to 1 for $y > 2R$, with $|\partial_y^j\chi_R|\lesssim_j R^{-j}$ for every $j\geq0$.
Consider the cubic part of the nonlinearity
\[\mc{Q}_{\epsilon,3}(v)(y)
    =-\frac{(p+1)\epsilon c_\alpha}{2}\int_{\R}
    \frac{v'(y)-v'(z)}{|y-z|^\alpha}
    \left(\frac{v(y)-v(z)}{y-z}\right)^2\,dz.\]

\begin{lemma}
    There are constants $c_1, c_2$ satisfying $|c_1|,|c_2| \lesssim \epsilon$ so that 
    \[\mc{Q}_{\epsilon, 3}(\sin(\cdot))(y) = c_1 \cos(y) + c_2 \cos(3y).\]
\end{lemma}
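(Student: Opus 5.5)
Change variables $z = y - t$ so that
\[
\mc{Q}_{\epsilon,3}(\sin)(y) = -\frac{(p+1)\epsilon c_\alpha}{2}\int_{\R}\frac{\cos y - \cos(y-t)}{|t|^\alpha}\left(\frac{\sin y - \sin(y-t)}{t}\right)^2 dt .
\]
The plan is to expand the integrand as a trigonometric polynomial in $y$ whose coefficients are functions of $t$, and then integrate term by term.

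The cubic expression $(\cos y - \cos(y-t))(\sin y - \sin(y-t))^2$ is a trigonometric polynomial of degree $3$ in $y$. It is built from $e^{\pm iy}$ and $e^{\pm i(y-t)}$ with exactly three factors, so only the frequencies $e^{\pm iy}$ and $e^{\pm 3iy}$ can appear.

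To pin down the coefficients, use the sum-to-product formulas
\[
\cos y - \cos(y-t) = -2\sin(t/2)\sin(y - t/2), \qquad \sin y - \sin(y-t) = 2\sin(t/2)\cos(y-t/2).
\]
With $u = y - t/2$ the integrand numerator becomes
\[
-8\sin^3(t/2)\,\sin u\cos^2 u = -2\sin^3(t/2)\,\bigl(\sin u + \sin 3u\bigr).
\]
Expanding $\sin(y - t/2)$ and $\sin(3y - 3t/2)$ gives terms $\sin y\cos(t/2)$, $-\cos y\sin(t/2)$, $\sin 3y\cos(3t/2)$ and $-\cos 3y\sin(3t/2)$.

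Integrate against the even weight $|t|^{-\alpha}t^{-2}$. The factor $\sin^3(t/2)$ is odd, so $\sin^3(t/2)\cos(kt/2)$ is odd and integrates to zero. This kills the $\sin y$ and $\sin 3y$ terms, which is why only cosines survive, as claimed. What remains is
\[
\mc{Q}_{\epsilon,3}(\sin)(y) = c_1\cos y + c_2\cos 3y,
\]
with
\[
c_1 = -(p+1)\epsilon c_\alpha\int_{\R}\frac{\sin^4(t/2)}{|t|^{\alpha}t^2}\,dt, \qquad c_2 = -(p+1)\epsilon c_\alpha\int_{\R}\frac{\sin^3(t/2)\sin(3t/2)}{|t|^{\alpha}t^2}\,dt ,
\]
up to the sign bookkeeping already fixed above.

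The only real check is convergence, to justify $|c_1|,|c_2|\lesssim\epsilon$. Near $t=0$ the numerators are $O(t^4)$, so the integrands are $O(|t|^{2-\alpha})$, which is integrable since $\alpha<2$. At infinity they are $O(|t|^{-2-\alpha})$. Both integrals are therefore finite constants depending only on $\alpha$, and the $O(\epsilon)$ bound follows.

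Absolute convergence also justifies expanding and splitting the integral. I expect no genuine obstacle; the main care point is the parity argument and the trigonometric bookkeeping.
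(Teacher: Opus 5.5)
Your argument is correct, and your signs need no further bookkeeping. Discarding the odd-in-$t$ terms gives exactly $c_1=-(p+1)\epsilon c_\alpha\int_{\R}\sin^4(t/2)\,|t|^{-\alpha-2}\,dt$ and $c_2=-(p+1)\epsilon c_\alpha\int_{\R}\sin^3(t/2)\sin(3t/2)\,|t|^{-\alpha-2}\,dt$. To justify splitting the integral, note that the discarded pieces $\sin^3(t/2)\cos(kt/2)\,|t|^{-\alpha-2}$ are $O(|t|^{1-\alpha})$ at the origin, which is integrable because $\alpha<2$.

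Your route differs from the paper's, which is structural. The paper observes that the cubic numerator can only generate the frequencies $\pm1,\pm3$. It then deduces that $\mc{Q}_{\epsilon,3}(\sin)$ is even from the reflection identity $\mc{Q}_{\epsilon,3}(f)(-y)=-\mc{Q}_{\epsilon,3}(f(-\cdot))(y)$, cubic homogeneity, and oddness of $\sin$. It gets $|c_1|,|c_2|\lesssim\epsilon$ from a crude $L^\infty$ bound on $\mc{Q}_{\epsilon,3}(\sin)$. Your cancellation of the $\sin y$ and $\sin 3y$ terms by oddness in $t$ is the concrete form of that evenness in $y$.

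The paper's version is shorter and applies verbatim to any translation-invariant cubic form with that reflection symmetry. Yours is fully explicit, and it avoids a looseness in the paper's frequency count. The paper phrases that count through $\mc{Q}_{\epsilon,3}(e^{i\sigma\cdot})$, but since $\mc{Q}_{\epsilon,3}$ is not additive, that computation alone only produces $e^{3i\sigma y}$. The $\cos y$ term comes from the mixed trilinear terms, which your sum-to-product identity handles automatically.

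You also get closed forms $n_\epsilon=4c_1$ and $c_{\textup{cubic}}=-4c_2/(3^p-1)$ for the constants used later. This includes the sign $n_\epsilon<0$ for $\epsilon>0$, since $c_\alpha>0$.
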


\begin{proof}
    Let $\sigma \in \{\pm 1\}$. When we compute $\mc{Q}_{\epsilon, 3}(e^{i\sigma \cdot})$, the numerator will involve $(e^{i\sigma y} - e^{i\sigma z})^3$. It follows that the response will be a linear combination of the four terms $e^{\pm i y}$ and $e^{\pm 3i y}$. On the other hand, since $\mc{Q}_{\epsilon,3}(f)(-z) = -\mc{Q}_{\epsilon,3}(f(-\cdot))(z)$, we know that $\mc{Q}_{\epsilon, 3}(\sin(\cdot))$ is even. Therefore we can write
    \[\mc{Q}_{\epsilon, 3}(\sin(\cdot))(y) = c_1 \cos(y) + c_2 \cos(3y).\]
    The bounds for $c_1$ and $c_2$ follow from the fact that
    \[\|\mc{Q}_{\epsilon,3}(\sin(\cdot))\|_{L^\infty} \lesssim \epsilon \left\|\int \frac{\min\{1, |\cdot - z|^3\}}{|\cdot - z|^{\alpha + 2}} \, dz \right\|_{L^\infty} \lesssim \epsilon.\]
\end{proof}
It follows from this lemma that we can define $n_\epsilon$ and $c_{\textup{cubic}}$ as the constants defined so that 
\begin{equation}\label{eqn:approx_cubic_normalization}
\mc{Q}_{\epsilon,3}(2 \sin(\cdot))(y) = 2n_\epsilon \cos(y) - 2(3^p - 1)c_{\textup{cubic}}\cos(3y).
\end{equation}
Also define the constants
\begin{equation}\label{eqn:approx_correction_constants}
\begin{gathered}
c_{\textup{log}}(\mu)=-\frac{n_\epsilon|\mu|^2}{p}, \quad c_{\textup{power}} =-\frac{c_\epsilon}{1-p^2}\\
c_+=-\frac{c_\epsilon}{p+1}, \quad c_- =\frac{c_\alpha}{p+1}\int_0^1[A_\epsilon(t)-1]t^{p-1}\,dt
\end{gathered}
\end{equation}
and for $y \geq R$, let
\begin{equation}\label{eqn:approx_modified_frequency}
\lambda_{\epsilon, \mu}(y) := \Psi_{\epsilon,\mu}'(y)
   =y^{1/p} + c_{\textup{power}}\left(\frac{1 - p^2}{p}\right) y^{1/p-p-1} +\frac{c_{\textup{log}}(\mu)}{y}, \quad \Psi_{\epsilon,\mu}(R) = \Psi_{0}(R)
\end{equation}
Define the modified leading oscillatory term 
\begin{equation}\label{eqn:approx_Z_definition}
Z_{\epsilon, \mu}(y) = \mu a_*(y) \lambda_{\epsilon, \mu}(y)^{-(p + 1)/2}e^{i(\Psi_{\epsilon,\mu}(y) - \pi/4)}.
\end{equation}
We will abbreviate the notation in the following way:
\[f=\ov{f}_{\epsilon,\mu},\qquad Z=Z_{\epsilon,\mu},\qquad \lambda=\lambda_{\epsilon,\mu},\qquad \Psi=\Psi_{\epsilon,\mu},\]
and, for these quantities, we will use the subscript $0$ to denote the corresponding quantities when $\epsilon=0$ and $\mu=\mu_*$. We remark that $Z_{0, \mu_*} = f_{0,+}'$.

\begin{definition}(Approximate solution family)\label{def:approximate_solution_family}
For $0 \leq \epsilon \leq \epsilon_0$ and $\mu \in \ov{B}$,
define the slow correction term 
\begin{equation}\label{eqn:approx_slow_correction}
f_{\textup{slow}}(y) = c_{+}y^{-p}\chi_R(y) + c_{-}|y|^{-p}\chi_R(-y)
\end{equation}
and the oscillatory term 
\begin{equation}\label{eqn:approx_oscillatory_correction}
f_{\textup{osc}}(y) = -2 \chi_R(y)\operatorname{Re} \int_{y}^{\infty}\left(Z_{\epsilon, \mu}(t) + c_{\textup{cubic}}Z_{\epsilon, \mu}(t)^3 - \frac{\mu}{\mu_*}Z_0(t)\right) \, dt.
\end{equation}
The approximate solution family is then defined as
\begin{equation}\label{eqn:approx_family_definition}
    \ov{f}_{\epsilon, \mu} = f_{0, \mu} + f_{\textup{slow}} + f_{\textup{osc}}.
\end{equation}
In other words, when $y \geq 2R$, we can write
\begin{equation}\label{eqn:approx_family_positive}
\begin{aligned}
    \ov{f}_{\epsilon, \mu}(y) & = f_{0} - 2 \operatorname{Re}f_{0,+} + c_+y^{-p} - 2\operatorname{Re} \int_{y}^{\infty}\left(Z_{\epsilon, \mu}(t) + c_{\textup{cubic}}Z_{\epsilon, \mu}(t)^3\right) \, dt \\
    & = f_{0,\textup{low}} + c_+y^{-p} - 2\operatorname{Re} \int_{y}^{\infty}\left(Z_{\epsilon, \mu}(t) + c_{\textup{cubic}}Z_{\epsilon, \mu}(t)^3\right) \, dt
\end{aligned}
\end{equation}
and when $y \leq -2R$, we can write
\begin{equation}\label{eqn:approx_family_negative}
\ov{f}_{\epsilon, \mu}(y) = f_{0}(y) + 2\operatorname{Re}\left[\left(\frac{\mu}{\mu_*} - 1\right)f_{0,+}(y)\right] + c_-|y|^{-p}.
\end{equation}
\end{definition}

Given \cref{def:approximate_solution_family}, we can now prove the properties of the approximate solution family in \cref{thm:fixed_point_approximation}. 

\begin{lemma}\label{lem:approx_phase_differences}
    For every integer $j\geq0$ and $y\geq R$, we have the estimates
    \begin{align}
        \left|\partial_y^j\bigl(\Psi_{\epsilon,\mu}(y)-\Psi_0(y)\bigr)\right|
        &\lesssim_j\epsilon y^{\frac1p-p-j},
        \label{eqn:approx_phase_difference}\\
        \left|\partial_y^j\bigl(\lambda_{\epsilon,\mu}(y)-\lambda_0(y)\bigr)\right|
        &\lesssim_j\epsilon y^{\frac1p-p-1-j},
        \label{eqn:approx_lambda_difference}\\
        \left|\partial_y^j\left(Z_{\epsilon,\mu}(y)-\frac{\mu}{\mu_*}Z_0(y)\right)\right|
        &\lesssim_j\epsilon y^{\frac1p-p-\beta+\frac{j}{p}}.
        \label{eqn:approx_Z_difference}
    \end{align}
    Moreover, for $y \geq R$,
    \begin{equation}\label{eqn:approx_fosc_bounds}
        |\partial_y^j f_{\textup{osc}}(y)|
        \lesssim_j \min\{\epsilon\ang{y}^{-p-\beta+\frac{j}{p}}, \ang{y}^{-\beta + \frac{j-1}{p}} \}, \qquad j \geq 0
    \end{equation}
    In particular,
    \begin{equation}\label{eqn:approx_fosc_L1_bound}
        \|f_{\textup{osc}}\|_{L^1(\R)}
        \lesssim\epsilon R^{1-p-\beta}\lesssim\epsilon.
    \end{equation}
    The implicit constants are uniform in $0\leq\epsilon\leq\epsilon_0$, $\mu\in\ov{B}$, and $R$ sufficiently large.
\end{lemma}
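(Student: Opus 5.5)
The plan is to prove the three pointwise difference estimates in order, each from the previous one, and then integrate to get the bounds on $f_{\textup{osc}}$.

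\textbf{Frequency and phase.} By \eqref{eqn:approx_modified_frequency},
\[\lambda_{\epsilon,\mu}-\lambda_0 = c_{\textup{power}}\tfrac{1-p^2}{p}\,y^{1/p-p-1} + c_{\textup{log}}(\mu)\,y^{-1}.\]
Here $|c_{\textup{power}}|\lesssim\epsilon$ by \cref{lem:analysis_coefficient}, and $|c_{\textup{log}}(\mu)|\lesssim\epsilon$ because $|n_\epsilon|\lesssim\epsilon$ and $\mu\in\ov B$ is bounded. Since $1/p-p>0$ for $p<1$, the power term dominates the logarithmic one, and differentiating monomials gives \eqref{eqn:approx_lambda_difference}. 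For the phase, we have
\[\Psi-\Psi_0 = c_{\textup{power}}\bigl(y^{1/p-p}-R^{1/p-p}\bigr) + c_{\textup{log}}\log(y/R),\]
using the normalization $\Psi_{\epsilon,\mu}(R)=\Psi_0(R)$. For $j\ge1$ the bound \eqref{eqn:approx_phase_difference} follows from \eqref{eqn:approx_lambda_difference}. For $j=0$ we bound $\log(y/R)\lesssim y^{1/p-p}$ and $R^{1/p-p}\le y^{1/p-p}$.

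\textbf{Amplitude.} Write
\[Z-\tfrac{\mu}{\mu_*}Z_0=\mu a_*\,e^{i(\Psi_0-\pi/4)}\Bigl[\lambda^{-(p+1)/2}e^{i(\Psi-\Psi_0)}-\lambda_0^{-(p+1)/2}\Bigr].\]
The bracket splits as
\[\bigl(\lambda^{-(p+1)/2}-\lambda_0^{-(p+1)/2}\bigr)e^{i(\Psi-\Psi_0)} + \lambda_0^{-(p+1)/2}\bigl(e^{i(\Psi-\Psi_0)}-1\bigr).\]

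\emph{Size.} By \eqref{eqn:approx_lambda_difference}, the first piece is $\lesssim\epsilon y^{-\beta-p-1}$, using $\lambda_0^{-(p+1)/2}=y^{-\beta}$. By \eqref{eqn:approx_phase_difference} and $|e^{i\theta}-1|\le|\theta|$, the second is $\lesssim\epsilon y^{1/p-p-\beta}$, which is the larger.

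\emph{Derivatives.} Each $\partial_y$ falling on $e^{i\Psi_0}$ costs $\lambda_0=y^{1/p}$. Derivatives falling on $e^{i(\Psi-\Psi_0)}$ cost only $\epsilon y^{1/p-p-1}\lesssim y^{1/p}$. Derivatives on $a_*$ or on powers of $\lambda$ cost $y^{-1}$, using \cref{lem:approx_linear_seed}(1). The Leibniz rule then gives \eqref{eqn:approx_Z_difference}.

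\textbf{Bounds on $f_{\textup{osc}}$.} For \eqref{eqn:approx_fosc_bounds}, the integrand is
\[Z-\tfrac{\mu}{\mu_*}Z_0 + c_{\textup{cubic}}Z^3.\]
By \cref{lem:analysis_coefficient}-type bounds and the lemma preceding \eqref{eqn:approx_cubic_normalization}, $|c_{\textup{cubic}}|\lesssim\epsilon$, so the cubic term is $\lesssim\epsilon y^{-3\beta}$ with phase $3\Psi$.

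For $j\ge1$, $\partial_y^jf_{\textup{osc}}$ is (up to $\chi_R$ terms) $\partial^{j-1}$ of the integrand. This gives $\epsilon y^{1/p-p-\beta+(j-1)/p}=\epsilon y^{-p-\beta+j/p}$ for the first term. The cubic term is smaller, since $3\beta>\beta+p$ because $2\beta=(p+1)/p>p$.

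The case $j=0$ is the main obstacle: the tail $\int_y^\infty$ must be bounded without absolute integrability, since $1/p-p-\beta$ may exceed $-1$. The plan is to integrate by parts once against the oscillation. Write the integrand as $b(t)e^{i\Psi_0(t)}$ with $\partial_t^k b\lesssim\epsilon t^{1/p-p-\beta-k(1-1/p)}$ sub-oscillatory, and use
\[e^{i\Psi_0}=\frac{\partial_t e^{i\Psi_0}}{i\lambda_0}.\]
The boundary term is $\epsilon y^{-p-\beta}$. The new integrand has size $\epsilon t^{-p-\beta-1}\cdot(\text{derivative loss of }b)$, which is only comparable to $\epsilon t^{1/p-p-\beta}\cdot t^{-1/p}$. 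So one would iterate the integration by parts as needed, using that $\partial_t\bigl(b/\lambda_0\bigr)\lesssim \epsilon t^{-p-\beta-1+(1/p-p-1)\cdot0}$ once one notes the $t$-derivative of $b$ costs at most $\epsilon t^{1/p-p-1}+t^{-1}$, which is less than $\lambda_0$. For the cubic term use $3\lambda$ instead of $\lambda_0$.

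The alternative bound $\ang{y}^{-\beta+(j-1)/p}$, with no factor of $\epsilon$, follows the same way from the crude bounds $|Z|,|Z_0|\lesssim y^{-\beta}$ and one integration by parts at $j=0$. Finally, \eqref{eqn:approx_fosc_L1_bound} follows by integrating $\epsilon y^{-p-\beta}$ over $y\ge R$. This requires $p+\beta>1$, which holds since $\beta=(p+1)/(2p)>1$.
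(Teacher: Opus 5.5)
Your proposal is correct and follows the paper's proof essentially step for step: the explicit formula for $\lambda_{\epsilon,\mu}-\lambda_0$, integration to get the phase, the same two-piece splitting of $Z_{\epsilon,\mu}-\frac{\mu}{\mu_*}Z_0$, direct differentiation for $j\geq1$, and one integration by parts for $j=0$. The only variation is that you integrate the difference by parts against the common phase $e^{i\Psi_0}$, whereas the paper integrates $Z_{\epsilon,\mu}$ and $Z_0$ separately against their own phases and then subtracts; this makes no difference. Your $j=0$ paragraph is muddled, but no iteration is needed: the term in $b'$ coming from $e^{i(\Psi-\Psi_0)}$ is $i(\lambda_{\epsilon,\mu}-\lambda_0)$ times the full amplitude $a_*\lambda^{-(p+1)/2}\approx t^{-\beta}$, not times the small quantity $b$. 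Hence $|b'|\lesssim\epsilon t^{\frac1p-p-\beta-1}$, so $|\partial_t(b/\lambda_0)|\lesssim\epsilon t^{-p-\beta-1}$, which integrates to the required $\epsilon y^{-p-\beta}$.
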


\begin{proof}
    The estimate for $\lambda$ follows immediately from \eqref{eqn:approx_modified_frequency}. The estimate for $\Psi$ follows from integrating the estimate for $\lambda$:
    \[\Psi_{\epsilon,\mu}(y) - \Psi_0(y) = \int_R^y (\lambda_{\epsilon,\mu}(t) - \lambda_0(t)) \, dt.\]
    The estimate for $Z_{\epsilon,\mu}$ follows from \eqref{eqn:approx_Z_definition} by writing 
    \[
    \begin{aligned}
        Z_{\epsilon,\mu}(y)-\frac{\mu}{\mu_*}Z_0(y)
        &=\mu a_*(y)\Bigl[
        \bigl(\lambda_{\epsilon,\mu}(y)^{-(p+1)/2}-\lambda_0(y)^{-(p+1)/2}\bigr)
        e^{i(\Psi_{\epsilon,\mu}(y)-\pi/4)}\\
        &\qquad+\lambda_0(y)^{-(p+1)/2}
        \bigl(e^{i(\Psi_{\epsilon,\mu}(y)-\pi/4)}-e^{i(\Psi_0(y)-\pi/4)}\bigr)
        \Bigr]
    \end{aligned}
    \]
    and using the estimates for $\lambda$ and $\Psi$. To prove \eqref{eqn:approx_fosc_bounds}, using \eqref{eqn:approx_phase_difference}--\eqref{eqn:approx_lambda_difference} and the bounds for $a_*$ in \cref{lem:approx_linear_seed}, we obtain
    \[
    \begin{aligned}
        \left|\int_y^\infty\left(Z_{\epsilon,\mu}(t)-\frac{\mu}{\mu_*}Z_0(t)\right)\,dt\right|
        &\lesssim
        \left|\frac{Z_{\epsilon,\mu}(y)}{\lambda_{\epsilon,\mu}(y)}
        -\frac{\mu}{\mu_*}\frac{Z_0(y)}{\lambda_0(y)}\right|
        +\epsilon\int_y^\infty t^{-p-\beta-1}\,dt\\
        &\lesssim\epsilon y^{-p-\beta},\qquad y\geq R.
    \end{aligned}
    \]
    Here the integral bounds the difference of the remainder integrands, in which the derivative falls on the slow amplitude $a_*\lambda^{-(p+3)/2}$. The cubic contribution satisfies
    \[
        \left|c_{\textup{cubic}}\int_y^\infty Z_{\epsilon,\mu}(t)^3\,dt\right|
        \lesssim\epsilon\int_y^\infty t^{-3\beta}\,dt
        \lesssim\epsilon y^{1-3\beta}
        \lesssim\epsilon y^{-p-\beta},
    \]
    since $2\beta=1+1/p$ and $p<1$. For the other estimate in \eqref{eqn:approx_fosc_bounds}, we bound $f_{\textup{osc}}$ as
    \[|f_{\textup{osc}}| \lesssim \left|\int_{y}^{\infty}Z(t) \, dt\right| + \left|\int_{y}^{\infty}Z_0(t) \, dt\right| \lesssim y^{-\beta - 1/p}\]
    by integrating by parts and using the bounds for $a_*$ and $\lambda$. This proves \eqref{eqn:approx_fosc_bounds} for $j=0$. For $j\geq1$, we differentiate \eqref{eqn:approx_oscillatory_correction} and use \eqref{eqn:approx_Z_difference}, $|\partial_y^{j-1}(Z_{\epsilon,\mu}^3)|\lesssim_j y^{-3\beta+(j-1)/p}$, and the bounds for $\chi_R$. Finally, integrating the $j=0$ estimate over $(R,\infty)$ proves \eqref{eqn:approx_fosc_L1_bound}, since $p+\beta>1$.
\end{proof}

\begin{proposition}\label{prop:approx_profile_bounds}
    Let $\ov{f}_{\epsilon, \mu}$ be the family of approximate profiles from \cref{def:approximate_solution_family}. We have the following estimates for $y \geq 0$:
        \[|\ov{f}_{\epsilon, \mu}(y)| \lesssim \ang{y}^{-p}, \quad |\partial^{j+1}\ov{f}_{\epsilon, \mu}(y)| \lesssim \ang{y}^{-\beta + \frac{j}{p}}, \quad j \geq 0\]
        and for $y \leq 0$,
        \[|\partial^{j}(\ov{f}_{\epsilon, \mu}(y) + y)| \lesssim \ang{y}^{-p - j}.\]
    Moreover, for each $\epsilon$, the map $\mu \mapsto \ov{f}_{\epsilon, \mu}$ is continuous as a map into $C_{\textup{loc}}^2$.
\end{proposition}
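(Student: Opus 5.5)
We split $\ov{f}_{\epsilon,\mu} = f_{0,\mu} + f_{\textup{slow}} + f_{\textup{osc}}$ as in \eqref{eqn:approx_family_definition} and bound each piece separately in the three regions $y \geq 2R$, $y \leq -2R$, and the compact set $|y| \leq 2R$.

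\textbf{Compact region $|y| \leq 2R$.} Every piece is smooth, so all derivatives are bounded uniformly in $\epsilon$ and $\mu \in \ov{B}$. This holds because $f_0$ and $f_{0,+}$ are smooth by \cref{lem:approx_linear_seed}, $f_{\textup{slow}}$ is an explicit cutoff function with $|c_\pm| \lesssim \epsilon$, and $f_{\textup{osc}}$ is controlled by \eqref{eqn:approx_fosc_bounds}. The weights $\ang{y}$ are comparable to constants there.

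\textbf{Negative region $y \leq -2R$.} We use \eqref{eqn:approx_family_negative}. By \cref{lem:approx_linear_seed}(2), $|\partial^j(f_0+y)| \lesssim_j |y|^{-p-j}$. The term $2\operatorname{Re}[(\mu/\mu_*-1) f_{0,+}]$ is $O_{j,M}(|y|^{-M})$ by the rapid decay of $f_{0,+}$ for $y \ll -1$, which was shown in that proof by repeated integration by parts. Finally, $c_-|y|^{-p}$ satisfies the bound directly.

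\textbf{Positive region $y \geq 2R$.} We use \eqref{eqn:approx_family_positive}.
\begin{itemize}
\item Since $W = 0$ there, \cref{lem:approx_linear_seed}(2) gives $|\partial^j f_{0,\textup{low}}| \lesssim |y|^{-p-j}$, which is better than needed because $-p-j-1 \le -\beta + j/p$. To check this, note $\beta = (p+1)/(2p) < 1 + p$ holds since $p > 3/4$.
\item The term $c_+ y^{-p}$ is handled the same way.
\item For the oscillatory integral, write $-2\operatorname{Re}\int_y^\infty (Z + c_{\textup{cubic}} Z^3)$. Its derivative is $2\operatorname{Re}(Z + c_{\textup{cubic}}Z^3)$, and differentiating $Z = \mu a_* \lambda^{-(p+1)/2} e^{i(\Psi-\pi/4)}$ $j$ times costs at most $\lambda \approx y^{1/p}$ per derivative. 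Here we use \cref{lem:approx_linear_seed}(1) for $a_*$ and \cref{lem:approx_phase_differences} for $\lambda$ and $\Psi$. Since $\lambda^{-(p+1)/2} = y^{-\beta}$, this yields $|\partial^j Z| \lesssim y^{-\beta + j/p}$, and $Z^3$ is smaller still.
\item For the zeroth-order bound, integrate by parts once using $e^{i\Psi} = (i\lambda)^{-1}\partial_y e^{i\Psi}$. This gives $|\int_y^\infty Z| \lesssim y^{-\beta-1/p} \le y^{-p}$, because $\beta + 1/p > p$.
\end{itemize}
On the transition region $[R, 2R]$, we instead use \eqref{eqn:approx_fosc_bounds} together with the seed bounds for $f_{0,\mu}$. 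The seed bounds come from the stationary-phase form $f_{0,+}' = \mu_* \lambda_0^{-(p+1)/2} a_* e^{i(\Psi_0-\pi/4)}$.

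\textbf{Continuity in $\mu$.} Continuity into $C^2_{\textup{loc}}$ follows because $f_{0,\mu}$ depends affinely on $\mu$, $f_{\textup{slow}}$ is independent of $\mu$, and $f_{\textup{osc}}$ depends on $\mu$ only through $\mu$ and $c_{\textup{log}}(\mu) = -n_\epsilon|\mu|^2/p$. On compact sets, $Z_{\epsilon,\mu}$ and its derivatives are continuous in $\mu$. The tail integral $\int_y^\infty$ converges uniformly in $\mu$ after one integration by parts, with an integrand of size $t^{-\beta-1/p-1}$, so dominated convergence applies.

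\textbf{Main obstacle.} The step requiring care is the uniformity of the oscillatory integration by parts in $\mu$ and $\epsilon$. This needs $\lambda_{\epsilon,\mu} \approx y^{1/p}$ uniformly for $y \geq R$, which holds once $R$ is large, by \eqref{eqn:approx_lambda_difference}.
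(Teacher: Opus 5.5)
Your proposal is correct and follows essentially the same route as the paper. On $y\ge 2R$ and $y\le -2R$ you use the explicit formulas \eqref{eqn:approx_family_positive}--\eqref{eqn:approx_family_negative}, bound the oscillatory primitive by one integration by parts ($y^{-\beta-1/p}\lesssim y^{-p}$), read the derivative bounds off $|\partial^j Z|\lesssim y^{-\beta+j/p}$ and \cref{lem:approx_linear_seed}, and get continuity in $\mu$ from that of $Z_{\epsilon,\mu}$. You are in fact slightly more explicit than the paper, which leaves both the compact region $|y|\le 2R$ and the dominated-convergence justification for the tail integral $\int_y^\infty$ implicit; that justification is needed because the phase term $c_{\textup{log}}(\mu)\log(t/R)$ does not converge uniformly in $t$.
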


\begin{proof}
    For $y \geq 2R$, we use the bounds
    \[|\partial^{j}(Z_{\epsilon, \mu})(y)| \lesssim \ang{y}^{-\beta + \frac{j}{p}},\]
    to obtain 
    \begin{align*}
        |\ov{f}_{\epsilon,\mu}(y)| \lesssim |f_{0, \textup{low}}| + |y|^{-p} + \left|\int_{y}^{\infty}a_*(t) \lambda(t)^{-(p+1)/2}e^{i\Psi_{\epsilon,\mu}(t)}\, dt\right|.
    \end{align*}
    Here the cubic term is bounded by $|y|^{1-3\beta} \lesssim |y|^{-p}$, so it was absorbed into the $|y|^{-p}$ term. Integrating by parts in the last term bounds it as 
    \begin{align*}
        &\lesssim \int_{y}^{\infty}|\partial_t(a_* \lambda^{-(p+1)/2 - 1})| \, dt + |a_*(y)\lambda(y)^{-(p+1)/2 - 1}|\\
        &\lesssim \int_y^\infty t^{-\beta-\frac1p-1}\,dt+y^{-\beta-\frac1p}\\
        &\lesssim y^{-\beta-\frac1p} \\
        & \lesssim y^{-p}.
    \end{align*}
    Using the estimates on $f_{0,\textup{low}}$ from \cref{lem:approx_linear_seed}, we obtain the desired bound for $\ov{f}_{\epsilon, \mu}$. The bounds for the derivatives of $\ov{f}_{\epsilon, \mu}$ follow from the bounds for $Z_{\epsilon,\mu}$. On the other hand, for $y \leq -2R$, \eqref{eqn:approx_family_negative} gives 
    \[\ov{f}_{\epsilon,\mu}(y) = f_0(y) + 2\text{Re}\left[\left(\frac{\mu}{\mu_*} - 1\right)f_{0,+}(y)\right] + c_- |y|^{-p}.\]
    The desired bounds for $y \leq -2R$ then follow from the bounds for $f_0$ in \cref{lem:approx_linear_seed}. The assertion of continuity in the parameters follows immediately from \eqref{eqn:approx_family_definition} and the continuity in $\mu$ of $Z_{\epsilon, \mu}$ defined in \eqref{eqn:approx_Z_definition}.
\end{proof}

\subsection{Residual Bounds for \texorpdfstring{$y \geq 8R$}{y >= 8R}} 
In this section, we will prove estimates on the residual $\sq{\mc{E}}$ in the positive region $(8R, \infty)$. 

\begin{proposition}\label{prop:approx_residual_positive}
    For $y \geq 8R$, we have the estimate
    \[|\partial^j(\sq{\mc{E}}_\epsilon(f)-G_\mu)(y)| \lesssim \epsilon \ang{y}^{-\gamma + \frac{j}{p}}, \quad j = 0,1.\]
\end{proposition}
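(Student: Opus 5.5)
For $y\geq 8R$ I would compute $\sq{\mc{E}}_\epsilon(f)=\mc{L}_0(f)+\mc{Q}_\epsilon(f)$ using the representation \eqref{eqn:approx_family_positive}. Since $\mc{L}_0(f_{0,\mu})=G_\mu$ by \cref{lem:approx_linear_seed}, it suffices to show
\[
\mc{L}_0(f_{\textup{slow}}+f_{\textup{osc}})+\mc{Q}_\epsilon(f)=O\bigl(\epsilon y^{-\gamma+j/p}\bigr)
\]
after $j\le 1$ derivatives.

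\textbf{Linear part.} I would compute $\mc{L}_0(f_{\textup{slow}}+f_{\textup{osc}})$ term by term. The slow term $c_+y^{-p}$ produces $(1+p)c_+y^{-p}=-c_\epsilon y^{-p}$ plus $|D|^p\partial_y$ of it. The latter decays like $y^{-2p-1}$, with a far-field contribution, and is bounded by $\epsilon y^{-\gamma}$ since $\gamma<2p+1$.

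For the oscillatory term, $f_{\textup{osc}}'=2\operatorname{Re}[Z-\frac{\mu}{\mu_*}Z_0+c_{\textup{cubic}}Z^3]$. I would apply $|D|^p$ to $Z$ with a WKB/stationary-phase symbol expansion: $|D|^pZ\approx\lambda^pZ+(\text{transport terms})$. Because $\lambda^p-y\approx p\,\lambda_0^{p-1}\bigl(c_{\textup{power}}\tfrac{1-p^2}{p}y^{1/p-p-1}+c_{\textup{log}}/y\bigr)$, the linear operator acting on the phase-modified wave yields a term proportional to $\bigl(c_{\textup{power}}(1-p^2)y^{-p}+p\,c_{\textup{log}}y^{-1}\cdot y^{?}\bigr)Z$, with powers to be matched. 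The amplitude factor $\lambda^{-(p+1)/2}$ is chosen so that the first-order transport terms cancel, exactly as for $Z_0$. Similarly, $|D|^p(Z^3)\approx(3\lambda)^pZ^3$, so $\mc{L}_0$ on the cubic correction gives approximately $(3^p-1)y\,c_{\textup{cubic}}Z^3$ to leading order.

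\textbf{Nonlinear part.} I would split $\mc{Q}_\epsilon(f)$ into three pieces. The first is the tail interaction $D\mc{Q}_\epsilon(W)$ applied to the oscillation, which by \cref{lem:analysis_coefficient}-type computations equals $c_\epsilon y^{-p}f'$ plus lower order, together with $\mc{Q}_\epsilon(W)$ itself. The latter equals $-(p+1)c_+y^{-p}$-type terms that cancel the slow linear term through the choice of $c_+$. The second piece is the local cubic self-interaction $\mc{Q}_{\epsilon,3}$ of the oscillation, approximated by freezing the frequency at $\lambda(y)$ and using \eqref{eqn:approx_cubic_normalization} with homogeneity of degree $\alpha$. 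This produces the resonant term $n_\epsilon\lambda^{p}|Z|^2Z$-type, cancelled by $c_{\textup{log}}$, and the $3\Psi$ harmonic, cancelled by $c_{\textup{cubic}}$. The third piece consists of the remaining quintic and higher terms, interactions with $f_{0,\textup{low}}-W$ (which is $O(y^{-p})$ by \cref{lem:approx_linear_seed}), and cross terms. These would be bounded crudely using \cref{lem:approx_phase_differences}, \cref{prop:approx_profile_bounds}, and a near/far split of the integrals.

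\textbf{Main obstacle.} The hardest step is making the frozen-frequency approximations quantitative, that is, controlling commutator errors in the symbol expansion of $|D|^p$ and of $\mc{Q}_{\epsilon,3}$ acting on $a\,e^{i\Psi}$ with variable $\lambda$. This requires showing the errors are $\epsilon y^{-\gamma}$ with one extra $y^{1/p}$ per derivative. I would first localize to a window $|z-y|\lesssim y^{1-\delta}$, Taylor expand $\Psi$ there, and treat the outside by nonstationary integration by parts. The remaining resonant cancellation then reduces to checking that the coefficients \eqref{eqn:approx_correction_constants} match, and that the surviving remainders decay like $\epsilon y^{-p-\beta-\dots}$, which beats $y^{-\gamma}$ given $\gamma<2p+1$.
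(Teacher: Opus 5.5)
Your outline is the paper's argument. The paper splits $\sq{\mc{E}}_\epsilon(f)-G_\mu$ into nine pieces $\mc{E}_1,\dots,\mc{E}_9$, which are exactly your pieces: the $f_{\textup{slow}}$ / $\mc{Q}_\epsilon(W)$ cancellation, the phase-corrected WKB response of $f_{\textup{osc}}$ (via the oscillatory multiplier lemma), $D\mc{Q}_\epsilon(W)[f-W]$ absorbed by $c_{\textup{power}}$, the plane-wave cubic response absorbed by $c_{\textup{log}}$ and $c_{\textup{cubic}}$, and crude near/far bounds on the cross, quintic and far-field terms. Two details remain to fill in: your missing power is $p\lambda_0^{p-1}c_{\textup{log}}/y=p\,c_{\textup{log}}y^{-1/p}$, and the factor $\epsilon$ in the linear-response remainders only appears after comparing the multiplier expansions of $Z$ and $\frac{\mu}{\mu_*}Z_0$ (the paper's difference estimate with $\delta=\epsilon Y^{1/p-p}$), since neither expansion alone has $O(\epsilon)$ errors.
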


To prove this proposition, we will decompose the residual $\sq{\mc{E}}_\epsilon(f)-G_\mu$ into nine terms, each controlled by a different mechanism. Fix $Y\geq8R$ and suppose $Y\leq y\leq2Y$. Choose a smooth cutoff $\chi_Y$ supported in $(Y/4,4Y)$ and equal to $1$ on $[Y/2,3Y]$, with $|\partial^j\chi_Y|\lesssim_j Y^{-j}$.
We set
\begin{equation}\label{eqn:approx_local_oscillatory_front}
    v_Y(z)=-2\chi_Y(z)\operatorname{Re}\int_z^\infty
    \left(Z(t)+c_{\textup{cubic}}Z(t)^3\right)\,dt.
\end{equation}
Denote the cubic part of $\mc{Q}_\epsilon$ by
\begin{equation}\label{eqn:approx_cubic_operator}
    \mc{Q}_{\epsilon,3}(v)(y)
    =-\frac{(p+1)\epsilon c_\alpha}{2}\int_{\R}
    \frac{v'(y)-v'(z)}{|y-z|^\alpha}
    \left(\frac{v(y)-v(z)}{y-z}\right)^2\,dz.
\end{equation}
Since $\mc{L}_0(f_{0,\mu})=G_\mu$, we have the exact decomposition
\begin{equation}\label{eqn:approx_residual_decomposition}
    \sq{\mc{E}}_\epsilon(f)-G_\mu
    =\mc{L}_0(f_{\textup{slow}})+\mc{L}_0(f_{\textup{osc}})+\mc{Q}_\epsilon(f)
    =\sum_{k=1}^9\mc{E}_k,
\end{equation}
where
\begingroup
\allowdisplaybreaks[4]
\begin{align}
    \mc{E}_1={}&\mc{L}_0(f_{\textup{slow}})+\mc{Q}_\epsilon(W),
        \label{eqn:approx_E1}\\
    \mc{E}_2={}&2\left(\lambda^p-y-p\lambda_0^{p-1}\frac{c_{\textup{log}}(\mu)}{y}
        +c_\epsilon y^{-p}\right)\operatorname{Re}Z,
        \label{eqn:approx_E2}\\
    \mc{E}_3={}&2\left(p\lambda_0^{p-1}\frac{c_{\textup{log}}(\mu)}{y}
        +n_\epsilon\lambda^p|Z|^2\right)\operatorname{Re}Z,
        \label{eqn:approx_E3}\\
    \mc{E}_4={}&2c_{\textup{cubic}}\left(\lambda^p-y+c_\epsilon y^{-p}\right)
        \operatorname{Re}(Z^3),
        \label{eqn:approx_E4}\\
    \mc{E}_5={}&\mc{L}_0(f_{\textup{osc}})+\mc{Q}_{\epsilon,3}(v_Y)\notag\\*
        &-2\left(\lambda^p-y+n_\epsilon\lambda^p|Z|^2\right)\operatorname{Re}Z
        -2c_{\textup{cubic}}(\lambda^p-y)\operatorname{Re}(Z^3),
        \label{eqn:approx_E5}\\
    \mc{E}_6={}&D\mc{Q}_\epsilon(W)[f-W]
        -2c_\epsilon y^{-p}\operatorname{Re}\left(Z+c_{\textup{cubic}}Z^3\right),
        \label{eqn:approx_E6}\\
    \mc{E}_7={}&\mc{Q}_{\epsilon,3}(\chi_Y f)-\mc{Q}_{\epsilon,3}(v_Y),
        \label{eqn:approx_E7}\\
    \mc{E}_8={}&\mc{Q}_\epsilon(\chi_Y f)-\mc{Q}_{\epsilon,3}(\chi_Y f),
        \label{eqn:approx_E8}\\
    \mc{E}_9={}&\mc{Q}_\epsilon(f)-\mc{Q}_\epsilon(W)
        -D\mc{Q}_\epsilon(W)[f-W]-\mc{Q}_\epsilon(\chi_Y f).
        \label{eqn:approx_E9}
\end{align}
\endgroup
The identity \eqref{eqn:approx_residual_decomposition} follows by adding these definitions. 

\subsection{Control of \texorpdfstring{$\mc{E}_1$}{E1}}
In this subsection, we will control the term 
\[\mc{E}_1 = \mc{L}_0(f_{\textup{slow}}) + \mc{Q}_\epsilon(W).\] Observe that we can compute, for $y > 0$, 
\[\mc{Q}_\epsilon(W)(y) =  c_\alpha\int_{z < 0}\frac{A_\epsilon(z/(y-z)) - 1}{|y - z|^\alpha} \, dz = c_\epsilon y^{-p}\]
exactly as in \cref{lem:analysis_coefficient}. On the other hand, we can compute
\[\mc{L}_0(f_{\textup{slow}})(y) -|D|^p f_{\textup{slow}}'(y) = f_{\textup{slow}}(y) - y f_{\textup{slow}}'(y) = c_+(1 + p)y^{-p}.\]
Since $c_+ = -c_\epsilon / (1 + p)$ by \eqref{eqn:approx_correction_constants}, we have that
\[\mc{E}_1(y) = |D|^p f_{\textup{slow}}'(y).\]
\begin{proposition}\label{prop:approx_E1}
    The residual $\mc{E}_1$ satisfies the estimate
    \[|\partial^j \mc{E}_1(y)| \lesssim \epsilon \ang{y}^{-\gamma + \frac{j}{p}}, \quad y \geq 2R, \quad j = 0,1.\]
\end{proposition}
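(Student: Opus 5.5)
The plan is to use the identity already derived above: for $y \geq 2R$ we have $\mc{E}_1 = |D|^p f_{\textup{slow}}'$. The claimed bound then reduces to a weighted pointwise estimate of a fractional derivative of an explicit, smooth, $\mc{O}(\epsilon)$ function. I will use the singular-integral representation fixed by the normalization of $c_\alpha$:
\[
|D|^p g'(y) = c_\alpha \int_{\R} \frac{g'(y)-g'(z)}{|y-z|^{\alpha}}\,dz, \qquad g = f_{\textup{slow}} .
\]
From \eqref{eqn:approx_slow_correction} and $|c_\pm| \lesssim \epsilon$, the function $g$ is smooth, vanishes on $[-R,R]$, and satisfies $|\partial^k g(z)| \lesssim_k \epsilon \ang{z}^{-p-k}$. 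Since $R$ is fixed, it suffices to prove $|\partial^j\mc{E}_1(y)| \lesssim \epsilon y^{-2p-1-j}$ for $y \geq 2R$. This implies the claim, because $\gamma < 2p+1$ and $1 < 1/p$.

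For $y \geq 2R$ I will split the $z$-integral into three regions.

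\emph{Near region} $|z-y| < y/2$. Here I will symmetrize, or equivalently Taylor expand $g'(z)$ around $y$, and use $|g''|+|g'''| \lesssim \epsilon y^{-p-2}$ on $(y/2,3y/2)$. This bounds the contribution by $\epsilon y^{-p-2}\int_{|t|<y/2}|t|^{1-\alpha}\,dt \lesssim \epsilon y^{-2p-1}$. The first-order Taylor term vanishes on the symmetric interval, so only the second-order remainder matters; bounding the first-order term absolutely would already suffice, since $\int |t|^{-p}$ converges.

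\emph{Far region, local term}. The piece $g'(y)\int_{|z-y|>y/2}|y-z|^{-\alpha}\,dz$ is bounded by $\epsilon y^{-p-1}y^{-p} = \epsilon y^{-2p-1}$.

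\emph{Far region, nonlocal term} $\int_{|z-y|>y/2} g'(z)|y-z|^{-\alpha}\,dz$. This is the step I expect to be the main obstacle. A crude bound only gives $\epsilon y^{-p-1}\|g'\|_{L^1}$, which is too weak. The fix is the cancellation $\int_\R g'(z)\,dz = g(+\infty) - g(-\infty) = 0$, which holds because $g$ decays at both ends. I will further split into $|z| < y/2$ and the remaining set $\{|z|\ge y/2,\ |z-y|>y/2\}$.
\begin{itemize}
\item On $|z|<y/2$, I subtract $y^{-\alpha}\int_{|z|<y/2} g'(z)\,dz$. This subtracted quantity equals $-y^{-\alpha}\int_{|z|\geq y/2} g'$, which is $\mc{O}(\epsilon y^{-\alpha-p})$.
\item The difference $|y-z|^{-\alpha} - y^{-\alpha}$ is $\mc{O}(|z| y^{-\alpha-1})$. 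It therefore contributes $\epsilon y^{-\alpha-1}\int_{|z|<y/2}\ang{z}^{-p}\,dz \lesssim \epsilon y^{-2p-1}$.
\item On the remaining set, $|g'(z)| \lesssim \epsilon y^{-p-1}$. Near $z=y$ this set is excluded, and for $|z|\gtrsim y$ we have $|y-z|^{-\alpha}$ integrable, so it contributes $\epsilon y^{-2p-1}$ as well.
\end{itemize}

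For $j=1$ I will differentiate under the integral, writing it as $\int (g''(y)-g''(y-t))|t|^{-\alpha}\,dt$, and repeat the same three-region splitting with $g''$ in place of $g'$.
\begin{itemize}
\item The near region gains one power: $\epsilon y^{-p-3}\cdot y^{1-p} = \epsilon y^{-2p-2}$.
\item The local far term gives $\epsilon y^{-p-2}\cdot y^{-p}$.
\item For the nonlocal term, $\int g'' = 0$ again. Alternatively, one can integrate by parts in $z$ to move the derivative onto the kernel $\partial_z |y-z|^{-\alpha}$, which is of size $y^{-\alpha-1}$ in the far region. This yields $\epsilon y^{-2p-2} \lesssim \epsilon y^{-\gamma+1/p}$.
\end{itemize}
All constants depend only on $p$, $R$ and the cutoffs, and they carry the factor $\epsilon$ coming from $c_\pm$.
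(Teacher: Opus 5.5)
Your proposal is correct and follows essentially the paper's proof. Both use the near/far splitting at scale $|y-z|\sim y$, the mean-value bound $|g'(y)-g'(z)|\lesssim \epsilon y^{-p-2}|y-z|$ near the diagonal, and the trivial bound on the local far term; for the nonlocal far term, your subtraction of $y^{-\alpha}\int g'$ using $\int_{\mathbb{R}} g'=0$ is a variant of the paper's integration by parts onto the kernel, which the paper performs twice for $j=1$.

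One minor correction for $j=1$: repeating your first-order kernel expansion with $g''$, or integrating by parts only once and using $\|g'\|_{L^1}\lesssim\epsilon$, gives only $\epsilon y^{-p-2}$, not the $\epsilon y^{-2p-2}$ you claim, because $|z|\,|g''(z)|$ is integrable. This is harmless for the statement, since $\gamma-1/p<2p+1-1/p<p+2$.
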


\begin{proof}
    In fact, we will show the stronger estimate
    \begin{equation}\label{eqn:approx_E1_bound}
        |\partial^j \mc{E}_1(y)| \lesssim \epsilon \ang{y}^{-2p-1-j}, \qquad j=0,1.
    \end{equation}
    Recall from above that $\mc{E}_1(y) = |D|^p f_{\textup{slow}}'(y)$. Write $h = f_{\textup{slow}}$ and let $\chi$ be a smooth cutoff function supported in $[-1/2,1/2]$ and equal to one on $[-1/4,1/4]$. Then $h$ satisfies the estimates
    \[|\partial^j h(y)| \lesssim \epsilon \ang{y}^{-p  - j}.\]
    We write 
    \begin{align*}
        |D|^p h'(y) & = c_\alpha \int \frac{h'(y) - h'(z)}{|y - z|^{1 + p}} \, dz \\
        & = c_\alpha \int \chi\left(\frac{y - z}{y}\right) \frac{h'(y) - h'(z)}{|y - z|^{1 + p}} \, dz  \\
        & + c_\alpha \int \left(1 - \chi\left(\frac{y - z}{y}\right)\right) \frac{h'(y) - h'(z)}{|y - z|^{1 + p}} \, dz \\
        & = I_{\textup{near}} + I_{\textup{far}}.
    \end{align*}
    In the near region, we use 
    \[|h'(y) - h'(z)| \lesssim \epsilon \ang{y}^{-p-2}|y - z|\]
    and thus
    \[|I_{\textup{near}}| \lesssim \epsilon \ang{y}^{-p - 2} \int_{|y - z| \lesssim |y|} |y - z|^{- p} \, dz \lesssim \epsilon \ang{y}^{-2p - 1}.\]
    In the far region, we use the triangle inequality and bound
    \[\int_{|y - z| \gtrsim |y|} \frac{|h'(y)|}{|y - z|^{1 + p}} \, dz \lesssim \epsilon \ang{y}^{-2p - 1}.\]
    For the last term involving $h'(z)$, we integrate by parts to obtain 
    \begin{align*}
        \left|\int \left(1 - \chi\left(\frac{y - z}{y}\right)\right) \frac{h'(z)}{|y - z|^{1 + p}} \, dz\right| & \lesssim \ang{y}^{-1}\int_{|y - z| \gtrsim |y|} \frac{|h(z)|}{|y - z|^{p+1}} \, dz \\
        & + \int_{|y - z| \gtrsim |y|} \frac{|h(z)|}{|y - z|^{p + 2}} \, dz \\
        & \lesssim \epsilon\ang{y}^{-1}\int_{|y - z| \gtrsim |y|}\frac{\ang{z}^{-p}}{|y - z|^{p+1}} \, dz + \epsilon\int_{|y - z| \gtrsim |y|}\frac{\ang{z}^{-p}}{|y - z|^{p + 2}} \, dz \\
        & \lesssim \epsilon\ang{y}^{-2p - 1}.
    \end{align*}
    We can repeat the same argument with $h'$ in place of $h$ for the derivative of $\mc{E}_1$ to obtain the desired estimate. The only change is that we integrate by parts twice in the far integral.
\end{proof}

\subsection{Control of \texorpdfstring{$\mc{E}_2$}{E2}}

In this section, we will control the term
\[\mc{E}_2 = 2\left(\lambda^p-y-p\lambda_0^{p-1}\frac{c_{\textup{log}}(\mu)}{y}+c_\epsilon y^{-p}\right)\operatorname{Re}Z.\]

\begin{proposition}\label{prop:approx_E2}
    The residual $\mc{E}_2$ satisfies the estimate
    \[|\partial^j \mc{E}_2(y)| \lesssim \epsilon^2 \ang{y}^{-2p - 1 - \beta + \frac{j}{p}}, \quad y \geq 2R, \quad j = 0,1.\]
\end{proposition}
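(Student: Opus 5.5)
The plan is to show that the scalar prefactor
\[
m(y) := \lambda^p - y - p\lambda_0^{p-1}\frac{c_{\textup{log}}(\mu)}{y} + c_\epsilon y^{-p}
\]
in $\mc{E}_2 = 2m\operatorname{Re}Z$ is quadratically small in $\epsilon$, and then to combine this with the size bounds on $Z$. Both the frequency corrections $c_{\textup{power}}$ and $c_{\textup{log}}(\mu)$ were chosen in \eqref{eqn:approx_correction_constants} precisely so that the \emph{first-order} Taylor expansion of $\lambda^p$ about $y = \lambda_0^p$ cancels the other terms of $m$. Note first that $|c_{\textup{power}}| \lesssim |c_\epsilon| \lesssim \epsilon$ by \cref{lem:analysis_coefficient}, and $|c_{\textup{log}}(\mu)| \lesssim |n_\epsilon| \lesssim \epsilon$ uniformly for $\mu \in \ov{B}$, by the lemma defining $n_\epsilon$.

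For $y \geq R$, write $\lambda = y^{1/p}(1+\delta)$ using \eqref{eqn:approx_modified_frequency}, where
\[
\delta(y) = c_{\textup{power}}\tfrac{1-p^2}{p}\,y^{-p-1} + c_{\textup{log}}(\mu)\,y^{-1-1/p}.
\]
We have $|\partial^j\delta| \lesssim_j \epsilon y^{-p-1-j}$, so $|\delta| \leq 1/2$ once $R$ is large. Taylor's theorem with integral remainder gives
\[
\lambda^p = y(1+\delta)^p = y + p y\delta + y\delta^2 g(\delta),
\]
with $g$ smooth on $[-1/2,1/2]$. We then compute the linear term:
\[
p y\delta = c_{\textup{power}}(1-p^2)y^{-p} + p\,c_{\textup{log}}(\mu)\,y^{-1/p}.
\]
Since $c_{\textup{power}}(1-p^2) = -c_\epsilon$, the first piece cancels $c_\epsilon y^{-p}$. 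Since $p\lambda_0^{p-1}/y = p\,y^{(p-1)/p-1} = p\,y^{-1/p}$, the second piece cancels the $c_{\textup{log}}$ term. Hence $m = y\delta^2 g(\delta)$ exactly.

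The remainder then satisfies $|m| \lesssim \epsilon^2 y\cdot y^{-2p-2} = \epsilon^2 y^{-2p-1}$. The $c_{\textup{log}}$ contributions to $\delta^2$ are smaller, since $1/p > p$. Differentiating the product $y\delta^2 g(\delta)$ and using the symbol-type bounds on $\delta$ gives $|m'| \lesssim \epsilon^2 y^{-2p-2}$.

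Finally, from \eqref{eqn:approx_Z_definition}, \cref{lem:approx_linear_seed}(1) and \eqref{eqn:approx_lambda_difference}, we have $|Z| \lesssim y^{-\beta}$ and $|Z'| \lesssim y^{-\beta+1/p}$; the dominant contribution to $Z'$ comes from the phase derivative $\lambda \approx y^{1/p}$. The product rule then yields
\[
|\mc{E}_2| \lesssim \epsilon^2 y^{-2p-1-\beta}, \qquad |\mc{E}_2'| \lesssim \epsilon^2\bigl(y^{-2p-2-\beta} + y^{-2p-1-\beta+1/p}\bigr) \lesssim \epsilon^2 y^{-2p-1-\beta+1/p}.
\]
This is the claim for $y \geq 2R$, where $\ang{y} \approx y$.

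The only delicate point is verifying the exact cancellation of the first-order terms. This reduces to the bookkeeping identities $c_{\textup{power}}(1-p^2) = -c_\epsilon$ and $p\lambda_0^{p-1}y^{-1} = p\,y^{-1/p}$, together with checking that all constants are uniform in $\mu \in \ov{B}$ and $\epsilon \leq \epsilon_0$.
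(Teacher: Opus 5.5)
Your proposal is correct and matches the paper's proof. The paper writes $\lambda=\lambda_0(1+\sq{\lambda})$ with $\sq{\lambda}$ equal to your $\delta$, uses the exact cancellation of the first-order Taylor term against the $c_\epsilon$ and $c_{\textup{log}}(\mu)$ terms to get $\mc{E}_2=2y\bigl[(1+\sq{\lambda})^p-1-p\sq{\lambda}\bigr]\operatorname{Re}Z$, and then concludes by the product rule with $|\partial^j Z|\lesssim \ang{y}^{-\beta+j/p}$, exactly as you do.
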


\begin{proof}
    Using \eqref{eqn:approx_modified_frequency} and \eqref{eqn:approx_correction_constants}, write 
    \[\lambda = \lambda_0(1 + \sq{\lambda}), \quad \sq{\lambda} = -\frac{c_\epsilon}{p}y^{-p - 1} + c_{\textup{log}}(\mu)y^{-1 - \frac{1}{p}}.\]
    Then we can express the residual as
    \[\mc{E}_2 = 2y[(1 + \sq{\lambda})^p - 1 - p\sq{\lambda}]\operatorname{Re}Z.\]
    By Taylor's formula, we have that
    \[(1 + \sq{\lambda})^p - 1 - p\sq{\lambda} = p(p - 1)(\sq{\lambda})^2\int_{0}^{1}(1 - t)(1 + t \sq{\lambda})^{p - 2} \, dt.\]
    It follows that 
    \[y|(1 + \sq{\lambda})^p - 1 - p\sq{\lambda}| \lesssim \epsilon^2 y(y^{-p-1})^{2} = \epsilon^2 y^{-2p - 1}\]
    and
    \[|\partial(y((1 + \sq{\lambda})^p - 1 - p\sq{\lambda}))| \lesssim y|\sq{\lambda}||\sq{\lambda}'| +  |\sq{\lambda}|^2 \lesssim \epsilon^2 y^{-2p-2}.\]
    Using the asymptotics for $a_*$ provided in \cref{lem:approx_linear_seed} and the fact that $\lambda \approx y^{1/p}$, we also obtain 
    \[|\partial^{j}Z| \lesssim \ang{y}^{-\beta + \frac{j}{p}}.\]
    Combining these estimates with the product rule gives the conclusion of the proposition.
\end{proof}

\subsection{Control of \texorpdfstring{$\mc{E}_3$}{E3}}

In this section, we will control the term
\[\mc{E}_3 = 2\left(p\lambda_0^{p-1}\frac{c_{\textup{log}}(\mu)}{y}
        +n_\epsilon\lambda^p|Z|^2\right)\operatorname{Re}Z.\]
\begin{proposition}\label{prop:approx_E3}
    The residual $\mc{E}_3$ satisfies the estimate
    \begin{equation}\label{eqn:approx_E3_bound}
        |\partial^j \mc{E}_3(y)|
        \lesssim \epsilon^2\ang{y}^{-p-3\beta+\frac{j}{p}}
        +\epsilon\ang{y}^{1-5\beta+\frac{j}{p}},
        \qquad y\geq2R,\quad j=0,1.
    \end{equation}
\end{proposition}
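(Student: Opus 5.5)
The plan is to exploit the choice $c_{\textup{log}}(\mu)=-n_\epsilon|\mu|^2/p$ in \eqref{eqn:approx_correction_constants}: to leading order the two terms in the bracket of $\mc{E}_3$ cancel, and the leftover is a product of small relative errors. Concretely, I would first compute $|Z|^2$ from \eqref{eqn:approx_Z_definition}, which gives
\[|Z|^2=|\mu|^2|a_*|^2\lambda^{-(p+1)}.\]
Using $\lambda_0^{p-1}\lambda_0=y$, the first bracket term is
\[p\lambda_0^{p-1}\frac{c_{\textup{log}}}{y}=-n_\epsilon|\mu|^2\lambda_0^{-2}=-n_\epsilon|\mu|^2\lambda_0^{p}\lambda_0^{-(p+1)}.\]
The bracket therefore equals
\[n_\epsilon|\mu|^2\bigl(\lambda^{p}|a_*|^2\lambda^{-(p+1)}-\lambda_0^{p}\lambda_0^{-(p+1)}\bigr)=n_\epsilon|\mu|^2\bigl(|a_*|^2\lambda^{-1}-\lambda_0^{-1}\bigr).\]
I would split this as
\[n_\epsilon|\mu|^2\Bigl[(|a_*|^2-1)\lambda^{-1}+(\lambda^{-1}-\lambda_0^{-1})\Bigr].\]

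Next, I would bound each piece with earlier results. By \cref{lem:approx_linear_seed}, $|\partial^j(a_*-1)|\lesssim y^{-2\beta-j}$, so the first piece is $\lesssim \epsilon\, y^{-2\beta-1/p}$, with each derivative gaining $y^{-1}$. By \eqref{eqn:approx_lambda_difference}, $|\lambda-\lambda_0|\lesssim \epsilon y^{1/p-p-1}$, so the second piece is
\[\lesssim \epsilon\cdot\epsilon\, y^{1/p-p-1}y^{-2/p}=\epsilon^2 y^{-p-1-1/p}=\epsilon^2y^{-p-2\beta}.\]
Here $|n_\epsilon|\lesssim\epsilon$ by the lemma preceding \eqref{eqn:approx_cubic_normalization}. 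Multiplying by $|\operatorname{Re}Z|\lesssim y^{-\beta}$ turns the second piece into the claimed $\epsilon^2 y^{-p-3\beta}$. For the first piece, $2\beta+1/p=\beta+\tfrac12+\tfrac{3}{2p}$, and I want this at least $5\beta-1=\tfrac32+\tfrac{5}{2p}$ after multiplying by $y^{-\beta}$. That is, I need $3\beta+1/p\ge 5\beta-1$, i.e. $1+1/p\ge 2\beta$, which holds with equality. So the first piece gives exactly $\epsilon y^{1-5\beta}$.

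For $j=1$, the derivative either lands on the slowly varying bracket, gaining a factor $y^{-1}$ (which is better), or on $\operatorname{Re}Z$, costing $y^{1/p}$ by $|\partial Z|\lesssim y^{-\beta+1/p}$ as in the proof of \cref{prop:approx_E2}. This yields the $+j/p$ loss in \eqref{eqn:approx_E3_bound}.

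The main obstacle is bookkeeping the exact cancellation. It requires that the normalization of $n_\epsilon$ in \eqref{eqn:approx_cubic_normalization} and the prefactor $\mu a_*\lambda^{-(p+1)/2}$ in $Z$ combine so that $n_\epsilon\lambda^p|Z|^2$ matches $|\mu|^2 n_\epsilon\lambda_0^{-2}$ at leading order; the relation $\lambda_0^{p}=y$ is what makes this work. I would check this identity carefully first, since any mismatched constant would leave an $\epsilon y^{-2/p}$ term that is not integrable against the target weight. The remaining estimates are routine product-rule bounds, uniform in $\mu\in\ov{B}$ since $|\mu|$ is bounded there.
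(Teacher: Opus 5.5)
Your proposal is correct and matches the paper's proof: you use the same cancellation $\mc{E}_3=2n_\epsilon|\mu|^2(|a_*|^2\lambda^{-1}-\lambda_0^{-1})\operatorname{Re}Z$ and the same coefficient bound $y^{-1/p}(y^{-2\beta}+\epsilon y^{-p-1})$, and your additive split is only a cosmetic variant of the paper's factorization $\lambda=\lambda_0(1+\sq{\lambda})$. One slip: $p\lambda_0^{p-1}c_{\textup{log}}(\mu)/y=-n_\epsilon|\mu|^2\lambda_0^{-1}$, not $\lambda_0^{-2}$ (your next expression $\lambda_0^{p}\lambda_0^{-(p+1)}$ already equals $\lambda_0^{-1}$), and so a failed cancellation would leave $\epsilon y^{-\beta-1/p}$, which is exactly the borderline excluded by $\gamma>\beta+1/p$, rather than $\epsilon y^{-2/p}$.
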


\begin{proof}
    Since $c_{\textup{log}}(\mu)=-n_\epsilon|\mu|^2/p$ by \eqref{eqn:approx_correction_constants} and $\lambda_0^p=y$, \eqref{eqn:approx_Z_definition} gives
    \[p\lambda_0^{p-1}\frac{c_{\textup{log}}(\mu)}{y}
      =-\frac{n_\epsilon|\mu|^2}{\lambda_0}.
      \qquad
      \lambda^p|Z|^2=\frac{|\mu|^2|a_*|^2}{\lambda}.\]
    Thus the leading cubic term cancels, and
    \begin{equation}\label{eqn:approx_E3_cancellation}
        \mc{E}_3=2n_\epsilon|\mu|^2
        \left(\frac{|a_*|^2}{\lambda}-\frac{1}{\lambda_0}\right)
        \operatorname{Re}Z.
    \end{equation}
    As in the estimate for $\mc{E}_2$, write $\lambda=\lambda_0(1+\sq{\lambda})$. Then
    \[\frac{|a_*|^2}{\lambda}-\frac{1}{\lambda_0}
      =\frac{1}{\lambda_0}\frac{|a_*|^2-1-\sq{\lambda}}{1+\sq{\lambda}}.\]
    By \cref{lem:approx_linear_seed} and the explicit formula for $\sq{\lambda}$,
    \[|\partial_y^k(|a_*|^2-1)|\lesssim y^{-2\beta-k},\qquad
      |\partial_y^k\sq{\lambda}|\lesssim\epsilon y^{-p-1-k},\qquad k=0,1,\]
    where we used $2\beta=1+1/p>p+1$. For sufficiently small $\epsilon$, we have $|\sq{\lambda}|\leq1/2$. Therefore,
    \begin{equation}\label{eqn:approx_E3_coefficient_bounds}
        \left|\partial_y^k\left(\frac{|a_*|^2}{\lambda}-\frac{1}{\lambda_0}\right)\right|
        \lesssim y^{-1/p-k}\left(y^{-2\beta}+\epsilon y^{-p-1}\right),
        \qquad k=0,1.
    \end{equation}
    Combining \eqref{eqn:approx_E3_cancellation} and \eqref{eqn:approx_E3_coefficient_bounds} with $|n_\epsilon|\lesssim\epsilon$, and $|\partial_y^jZ|\lesssim y^{-\beta+j/p}$, the product rule gives
    \[|\partial_y^j\mc{E}_3(y)|
      \lesssim\epsilon y^{-3\beta-1/p+j/p}
      +\epsilon^2y^{-\beta-1/p-p-1+j/p},\qquad j=0,1.\]
    The identities $3\beta+1/p=5\beta-1$ and $\beta+1/p+p+1=p+3\beta$ prove \eqref{eqn:approx_E3_bound}.
\end{proof}

\subsection{Control of \texorpdfstring{$\mc{E}_4$}{E4}}

In this section, we will control the term
\[\mc{E}_4=2c_{\textup{cubic}}\left(\lambda^p-y+c_\epsilon y^{-p}\right)\operatorname{Re}(Z^3).\]
\begin{proposition}\label{prop:approx_E4}
    The residual $\mc{E}_4$ satisfies the estimate
    \begin{equation}\label{eqn:approx_E4_bound}
        |\partial^j\mc{E}_4(y)|
        \lesssim\epsilon^2\ang{y}^{1-5\beta+\frac{j}{p}},
        \qquad y\geq2R,\quad j=0,1.
    \end{equation}
\end{proposition}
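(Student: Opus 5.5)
The estimate follows the template of \cref{prop:approx_E2}. The coefficient $\lambda^p - y + c_\epsilon y^{-p}$ is of size $\epsilon y^{-p}$, and the cubic factor $\operatorname{Re}(Z^3)$ decays like $y^{-3\beta}$. Combined with $|c_{\textup{cubic}}|\lesssim\epsilon$ from \eqref{eqn:approx_cubic_normalization}, these give a bound of order $\epsilon^2$ times $y^{-p-3\beta}$, which we then convert into the stated exponent.

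As in the proof of \cref{prop:approx_E2}, write $\lambda=\lambda_0(1+\sq{\lambda})$ with
\[\sq{\lambda}=-\frac{c_\epsilon}{p}y^{-p-1}+c_{\textup{log}}(\mu)y^{-1-\frac1p}.\]
Since $\lambda_0^p=y$, we have $\lambda^p-y=y[(1+\sq{\lambda})^p-1]$. By Taylor's formula, $(1+\sq{\lambda})^p-1=p\sq{\lambda}+O(\sq{\lambda}^2)$. Using $|c_\epsilon|,|c_{\textup{log}}(\mu)|\lesssim\epsilon$ (the latter because $|n_\epsilon|\lesssim\epsilon$ and $\mu\in\ov{B}$), we get $y|\sq{\lambda}|\lesssim\epsilon y^{-p}$, since $y^{-1/p}\le y^{-p}$. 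Together with $|c_\epsilon y^{-p}|\lesssim\epsilon y^{-p}$, this gives
\[\bigl|\lambda^p-y+c_\epsilon y^{-p}\bigr|\lesssim\epsilon y^{-p},\qquad \bigl|\partial_y(\lambda^p-y+c_\epsilon y^{-p})\bigr|\lesssim\epsilon y^{-p-1}.\]
We do not need the cancellation between $-c_\epsilon y^{-p}$ and $p y\sq{\lambda}$ here, only the crude size. Next, from $|\partial^j Z|\lesssim y^{-\beta+j/p}$ (used in \cref{prop:approx_E2}), the product rule gives $|\partial^j (Z^3)|\lesssim y^{-3\beta+j/p}$ for $j=0,1$.

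Combining these with the product rule yields
\[|\partial^j\mc{E}_4|\lesssim\epsilon^2 y^{-p-3\beta+j/p}.\]
The derivative falling on the coefficient gives the smaller term $\epsilon^2 y^{-p-1-3\beta}$. It remains to compare exponents: we need $-p-3\beta\le 1-5\beta$, i.e.\ $2\beta\le p+1$. But $2\beta=1+1/p>p+1$, so this inequality fails. The crude bound therefore gives $\epsilon^2\ang{y}^{-p-3\beta+j/p}$, which is the form appearing in \cref{prop:approx_E3}.

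The main obstacle is thus the exponent $1-5\beta$. To reach it, I would first try to exploit the cancellation in the coefficient. The relation $c_{\textup{power}}(1-p^2)/p\cdot p=-c_\epsilon(1+p)$ should make the $y^{-p}$ parts of $p y\sq{\lambda}$ and $c_\epsilon y^{-p}$ nearly cancel, leaving $\epsilon y^{-1/p}$ plus $O(\epsilon^2 y^{-2p-1})$. Since $-1/p-3\beta=1-5\beta$ exactly (because $2\beta=1+1/p$), the term $\epsilon y^{-1/p}$ would give precisely $\epsilon^2 y^{1-5\beta+j/p}$. The remaining $\epsilon^2 y^{-2p-1}$ piece would give $\epsilon^3 y^{-2p-1-3\beta}$, which is also acceptable for the downstream $Y_\gamma$ bound. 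So I would verify the constants in \eqref{eqn:approx_correction_constants} to confirm that the $y^{-p}$ terms cancel. If they do not cancel exactly, I would fall back on the weaker bound $\epsilon^2\ang{y}^{-p-3\beta+j/p}$, which still exceeds $\gamma$ in decay since $p+3\beta>2p+1$.
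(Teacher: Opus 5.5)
Your final paragraph is exactly the paper's argument, but the proposal stops short of proving the statement. The only estimate you carry out gives $\epsilon^2\ang{y}^{-p-3\beta+j/p}$, which you correctly note is weaker than the claim. The route to the exponent $1-5\beta$ is left conditional (``if they do not cancel exactly, I would fall back\ldots''), and the fallback does not prove the proposition.

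The relation you quote is also wrong. By \eqref{eqn:approx_correction_constants}, $c_{\textup{power}}(1-p^2)=-c_\epsilon$, not $-c_\epsilon(1+p)$; with your version the $y^{-p}$ terms would not cancel. No further checking is needed, because your own (correct) formula for $\sq{\lambda}$ gives
\[
p\,y\,\sq{\lambda}=-c_\epsilon y^{-p}+p\,c_{\textup{log}}(\mu)\,y^{-1/p}.
\]
So the $y^{-p}$ terms cancel exactly, and the computation of \cref{prop:approx_E2} yields
\[
\lambda^p-y+c_\epsilon y^{-p}=p\,c_{\textup{log}}(\mu)\,y^{-1/p}+y\bigl[(1+\sq{\lambda})^p-1-p\sq{\lambda}\bigr].
\]
Here the bracketed term is $O(\epsilon^2y^{-2p-1})$ and its derivative is $O(\epsilon^2y^{-2p-2})$.

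Second, the quadratic remainder cannot be deferred to the ``downstream $Y_\gamma$ bound''. The proposition asserts a specific pointwise bound, so you must check that $\epsilon^3y^{-2p-1-3\beta+j/p}\lesssim\epsilon^2y^{1-5\beta+j/p}$. This holds because $1-5\beta=-1/p-3\beta$ and $2p+1>1/p$; the latter is equivalent to $(2p-1)(p+1)>0$, which is true since $p>3/4$. The paper does this by absorbing the remainder into the main term, which gives $|\partial_y^k(\lambda^p-y+c_\epsilon y^{-p})|\lesssim\epsilon y^{-1/p-k}$ for $k=0,1$. Then $|c_{\textup{cubic}}|\lesssim\epsilon$, $|\partial^j(Z^3)|\lesssim y^{-3\beta+j/p}$ and the product rule give \eqref{eqn:approx_E4_bound} directly. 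With these two points fixed, your argument is complete and coincides with the paper's.
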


\begin{proof}
    The calculation for $\mc{E}_2$ gives
    \[\lambda^p-y+c_\epsilon y^{-p}
      =p\,c_{\textup{log}}(\mu)y^{-1/p}+O(\epsilon^2y^{-2p-1}),\]
    with the derivative bound on the remainder. Since $c_{\textup{log}}(\mu)=O(\epsilon)$ and $2p+1>1/p$, it follows that
    \[\left|\partial_y^k\left(\lambda^p-y+c_\epsilon y^{-p}\right)\right|
      \lesssim\epsilon y^{-1/p-k},\qquad k=0,1.\]
    Combining this with $|c_{\textup{cubic}}|\lesssim\epsilon$ and $|\partial_y^j(Z^3)|\lesssim y^{-3\beta+j/p}$, the product rule proves \eqref{eqn:approx_E4_bound}, since $3\beta+1/p=5\beta-1$.
\end{proof}

\subsection{Control of \texorpdfstring{$\mc{E}_5$}{E5}}

In this section, we will control the term
\[\mc{E}_5=\mc{L}_0(f_{\textup{osc}})+\mc{Q}_{\epsilon,3}(v_Y)-2\left(\lambda^p-y+n_\epsilon\lambda^p|Z|^2\right)\operatorname{Re}Z
        -2c_{\textup{cubic}}(\lambda^p-y)\operatorname{Re}(Z^3).\]

The idea is that $\mc{E}_5$ is the error in computing the linear and cubic response to the oscillatory term $f_{\textup{osc}}$. We will write
\begin{equation}\label{eqn:approx_E5_linear_response}
\mc{L}_0(f_{\textup{osc}}) = 2(\lambda^p - y)\operatorname{Re}(Z) + 2c_{\textup{cubic}}(3^p\lambda^p - y)\operatorname{Re}(Z^3) + \mc{R}_{\textup{lin}}
\end{equation}
and 
\begin{equation}\label{eqn:approx_E5_cubic_response}
\mc{Q}_{\epsilon,3}(v_Y) = 2n_\epsilon\lambda^p|Z|^2\operatorname{Re}(Z) - 2c_{\textup{cubic}}(3^p-1)\lambda^p\operatorname{Re}(Z^3) + \mc{R}_{\textup{cub}}.
\end{equation}
From here, we can see that 
\[\mc{E}_5 = \mc{R}_{\textup{lin}} + \mc{R}_{\textup{cub}}.\]
We will estimate each remainder separately. 

\begin{proposition}\label{prop:approx_E5_linear}
    Let $Y\geq8R$. For $Y\leq y\leq2Y$ and $j=0,1$, the remainder $\mc{R}_{\textup{lin}}$ in \eqref{eqn:approx_E5_linear_response} satisfies
    \begin{align}
        |\partial_y^j\mc{R}_{\textup{lin}}(y)|
        &\lesssim\epsilon\ang{y}^{-2p-1+\frac{j}{p}},
        \label{eqn:approx_E5_linear_remainder}
    \end{align}
\end{proposition}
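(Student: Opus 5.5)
We need to bound $\mc{R}_{\textup{lin}} = \mc{L}_0(f_{\textup{osc}}) - 2(\lambda^p-y)\operatorname{Re}Z - 2c_{\textup{cubic}}(3^p\lambda^p-y)\operatorname{Re}(Z^3)$ on $[Y,2Y]$. The plan is to split $\mc{L}_0 = |D|^p\partial_y + (I - y\partial_y)$ and treat the local and nonlocal parts separately.

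\textbf{Local part.} For $y\geq 2R$ we have $f_{\textup{osc}}' = 2\operatorname{Re}(Z + c_{\textup{cubic}}Z^3 - \tfrac{\mu}{\mu_*}Z_0)$. Hence $(I-y\partial_y)f_{\textup{osc}} = f_{\textup{osc}} - 2y\operatorname{Re}(Z+c_{\textup{cubic}}Z^3) + 2y\operatorname{Re}(\tfrac{\mu}{\mu_*}Z_0)$. The $-y$ terms match the target formula exactly. The leftover $f_{\textup{osc}}$ is $O(\epsilon y^{-p-\beta+j/p})$ by \eqref{eqn:approx_fosc_bounds}, which is better than $\epsilon y^{-2p-1+j/p}$ only if $\beta \geq p+1$. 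That is false, since $\beta=(p+1)/(2p)<p+1$ for $p>3/4$.

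So $f_{\textup{osc}}$ must be kept and paired with the nonlocal part. Precisely, for a primitive $V$ of an oscillatory $Z$, the leading symbol computation gives $|D|^pV' \approx \lambda^p V'$, and the linear pieces $f_{\textup{osc}}$ and $y\frac{\mu}{\mu_*}Z_0$ must be organized to cancel with $\mc{L}_0$ acting on the $\frac{\mu}{\mu_*}Z_0$ part. I would therefore rewrite $f_{\textup{osc}} = u - \frac{\mu}{\mu_*}u_0$ on the support of $\chi_Y$. Here $u=-2\operatorname{Re}\int_y^\infty(Z+c_{\textup{cubic}}Z^3)$, and $u_0 = -2\operatorname{Re}\int_y^\infty Z_0 = 2\operatorname{Re} f_{0,+}$ on $y\ge 2R$. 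Since $\mc{L}_0(\chi f_{0,+})$ equals $\mc{L}_0 f_{0,+}$ up to Schwartz-type terms, and the latter is Schwartz supported at low frequency, hence rapidly decaying, the $u_0$ part contributes $O(y^{-M})$ (after controlling the cutoff $\chi_R$ by rapid decay of the commutator far from $R$). This reduces everything to computing $\mc{L}_0(u)$ for $y\in[Y,2Y]$.

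\textbf{Nonlocal part.} Localize $u$ with $\chi_Y$. The far piece $(1-\chi_Y)u$ contributes to $|D|^p\partial_y$ at $y\in[Y,2Y]$ through a smooth kernel $|y-z|^{-2-p}$ after integration by parts. Using $|u|\lesssim z^{-\beta-1/p}$ for $z>0$ and the negative tail (absent here, since $u$ is supported in $z\ge R$), this gives $O(\epsilon y^{-2p-1})$-type bounds. Indeed the integral $\int |u(z)| |y-z|^{-2-p}$ over $|y-z|\gtrsim y$ is $\lesssim y^{-2-p}\cdot y^{1-\beta-1/p}$, which is more than enough.

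For the near piece, write $(\chi_Y u)' = 2\operatorname{Re}(\chi_Y Z + \dots) + O(Y^{-1}|u|)$. Then apply a stationary-phase/symbol expansion for $|D|^p$ acting on $b(z)e^{i\Psi(z)}$, with $b$ slowly varying in the symbol sense ($|\partial^k b|\lesssim |b|y^{-k}$) and $\Psi'=\lambda\approx y^{1/p}$. The expansion gives
\[
|D|^p(be^{i\Psi}) = \lambda^p b e^{i\Psi} + O\bigl(\lambda^{p-1}|b'| + \lambda^{p-2}|\Psi''||b|\bigr),
\]
and for $Z^3$ the frequency is $3\lambda$, producing $3^p\lambda^p$.

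The error terms are of size $y^{(p-1)/p}\,y^{-1}\,y^{-\beta} = y^{-\beta-1/p}$. We then need $y^{-\beta-1/p}\lesssim \epsilon y^{-2p-1}$, which fails because there is no $\epsilon$ factor. This is the key point: the $\epsilon$-independent part of these errors must cancel, because $Z_0$ solves the linear equation exactly. So I would compare with the $\epsilon=0$, $\mu=\mu_*$ case. Write $u = \frac{\mu}{\mu_*}u_0 + (u - \frac{\mu}{\mu_*}u_0)$, where $\mc{L}_0 u_0$ is exact, and apply the symbol expansion only to the difference. The difference has amplitude $\epsilon y^{1/p-p-\beta}$ by \eqref{eqn:approx_Z_difference}, while the cubic term carries an explicit $\epsilon$. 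The expansion error then becomes $\epsilon y^{1/p-p}\cdot y^{-\beta-1/p} = \epsilon y^{-p-\beta}$. This must be $\lesssim \epsilon y^{-2p-1}$, i.e.\ $\beta\geq p+1$, which again fails.

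So the main obstacle is the precise second-order expansion of $|D|^p$ on the phase-modulated difference. My first attempt would be to expand $\lambda^p$ for the true phase $\Psi$ rather than $\Psi_0$. The error then involves only derivatives of the amplitude $a_*\lambda^{-(p+1)/2}$ and of $\lambda$, and the leading amplitude-derivative term should cancel exactly against the $f_{\textup{osc}}$ term from $I-y\partial_y$. This is the WKB transport equation, and it fixes the exponent $-(p+1)/2$. After that cancellation, the remainders are of order $\epsilon y^{-2p-1}$. Derivatives ($j=1$) cost $\lambda\sim y^{1/p}$.
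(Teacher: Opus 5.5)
There is a genuine gap in how you produce the factor $\epsilon$ for the fundamental harmonic. Your transport observation is correct. The exponent $-(p+1)/2$ makes the first-order term of $|D|^p(\chi_Y Z)$ equal to $i\frac{(\lambda^p)'}{\lambda}Z-ip\mu a_*'\lambda^{(p-3)/2}e^{i(\Psi-\pi/4)}$, and cancelling $i\frac{(\lambda^p)'}{\lambda}Z$ against the boundary term $-iZ/\lambda$ of $\int_y^\infty Z$ leaves $2\operatorname{Re}[i((\lambda^p)'-1)Z/\lambda]=O(\epsilon y^{-p-3\beta})$.

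However, every other leftover of the $Z$-part is independent of $\epsilon$:
\begin{itemize}
\item the amplitude-derivative term $-ip\mu a_*'\lambda^{(p-3)/2}e^{i(\Psi-\pi/4)}$, of size $y^{-3\beta-1/p}$;
\item the second-order remainder of the multiplier expansion, of size $Y^{-\beta-1-2/p}$;
\item the remainder left after integrating $\int_y^\infty Z$ by parts.
\end{itemize}
So your claim that after the transport cancellation ``the remainders are of order $\epsilon y^{-2p-1}$'' is false as stated.

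Each of these terms becomes $O(\epsilon)$ only after you subtract the corresponding term of the two-term expansion of $|D|^p(\chi_Y\frac{\mu}{\mu_*}Z_0)$. That expansion must use its own phase $\Psi_0$, for which $(\lambda_0^p)'=1$ makes the first-order cancellation exact. You then estimate the differences:
\begin{itemize}
\item the $a_*'$ terms, via $|e^{i(\Psi-\Psi_0)}-1|\lesssim\epsilon Y^{1/p-p}$ and $|\lambda-\lambda_0|\lesssim\epsilon Y^{1/p-p-1}$;
\item the multiplier remainders, via a stability estimate under perturbations with $|\partial^k(a_1-a_0)|\lesssim\delta AY^{-k}$ and $|\partial^k(\Psi-\Psi_0)|\lesssim\delta Y^{-k}$, where $\delta=\epsilon Y^{1/p-p}$. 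This gives $\epsilon Y^{-p-3\beta}\lesssim\epsilon Y^{-2p-1}$.
\end{itemize}
That stability estimate is the second half of Lemma~\ref{lem:appendix_oscillatory_multiplier}, and nothing in your proposal replaces it.

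Your intermediate attempt failed for a related reason. You treated $Z-\frac{\mu}{\mu_*}Z_0$ as one amplitude against one phase and estimated only its first-order error. Since $e^{i(\Psi-\Psi_0)}-1$ is not a small symbol once $\epsilon y^{1/p-p}\gtrsim1$, the comparison has to be made between the second-order remainders of two separate expansions, not at first order on the difference.

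The nonlocal tails have the same problem, and here your explicit claims are incorrect. Splitting $f_{\textup{osc}}$ into $\chi_R u$ and a $Z_0$-part destroys the smallness.
\begin{itemize}
\item For $y\sim Y$, $\mc{L}_0\big((1-\chi_R)f_{0,+}\big)(y)$ is not $O(y^{-M})$. After integrating by parts it equals $c_\alpha(p+1)\int(1-\chi_R)f_{0,+}(z)\,(y-z)^{-2-p}\,dz$, a power-law tail of order $y^{-p-2}$ whose leading coefficient $\int(1-\chi_R)f_{0,+}$ is independent of $\epsilon$.
\item The far part of $\chi_R u$ is bounded by $y^{-p-2}\int_R^\infty z^{-\beta-1/p}\,dz$. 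This is dominated by $z\sim R$, not $z\sim y$ as you wrote, and it also carries no $\epsilon$.
\end{itemize}
Since $C_Ry^{-p-2}\lesssim\epsilon y^{-2p-1}$ fails for $8R\le y\ll (C_R/\epsilon)^{1/(1-p)}$, these tails are not acceptable separately. You must localize the combination $f_{\textup{osc},1}=-2\chi_R\operatorname{Re}\int_y^\infty(Z-\frac{\mu}{\mu_*}Z_0)$, whose $L^1$ norm is $O(\epsilon)$ by \eqref{eqn:approx_fosc_L1_bound}; then the far remainder is $O(\epsilon y^{-p-2-j})$.

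Also, $f_{\textup{osc}}\neq u-\frac{\mu}{\mu_*}u_0$, because $\mu/\mu_*$ is complex; the correct term is $2\chi_R\operatorname{Re}[\frac{\mu}{\mu_*}f_{0,+}]$.

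Your treatment of the third harmonic is fine: there $c_{\textup{cubic}}=O(\epsilon)$ supplies the factor directly, and $\|f_{\textup{osc},3}\|_{L^1}\lesssim\epsilon$.
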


\begin{proof}
    To estimate $\mc{R}_{\textup{lin}}$, we compute 
    \begin{align*}
        \mc{R}_{\textup{lin}} & = \mc{L}_0(f_{\textup{osc}}) - 2(\lambda^p - y)\operatorname{Re}(Z) - 2c_{\textup{cubic}}(3^p\lambda^p - y)\operatorname{Re}(Z^3) \\
        & = |D|^pf_{\textup{osc}}' + f_{\textup{osc}} - 2\lambda^p\operatorname{Re}(Z) - 2c_{\textup{cubic}}3^p\lambda^p\operatorname{Re}(Z^3) + 2y\operatorname{Re}\left(\frac{\mu}{\mu_*}Z_0\right).
    \end{align*}
    Here we used $f_{\textup{osc}}'=2\operatorname{Re}\left(Z+c_{\textup{cubic}}Z^3-\frac{\mu}{\mu_*}Z_0\right)$, since $y\geq2R$.
    We split $f_{\textup{osc}}=f_{\textup{osc},1}+f_{\textup{osc},3}$ into its fundamental and third-harmonic components, where
    \begin{equation}\label{eqn:approx_E5_oscillatory_split}
        \begin{aligned}
            f_{\textup{osc},1}(y)&=-2\chi_R(y)\operatorname{Re}\int_y^\infty\left(Z(t)-\frac{\mu}{\mu_*}Z_0(t)\right)\,dt,\\
            f_{\textup{osc},3}(y)&=-2c_{\textup{cubic}}\chi_R(y)\operatorname{Re}\int_y^\infty Z(t)^3\,dt.
        \end{aligned}
    \end{equation}
    Substituting \eqref{eqn:approx_E5_oscillatory_split} gives
    \begin{equation}\label{eqn:approx_E5_linear_split}
        \mc{R}_{\textup{lin}}=\mc{R}_{\textup{lin},1}+\mc{R}_{\textup{lin},3},
    \end{equation}
    where
    \begin{equation}\label{eqn:approx_E5_linear_components}
        \begin{aligned}
            \mc{R}_{\textup{lin},1}(y)
            &=|D|^pf_{\textup{osc},1}'(y)+f_{\textup{osc},1}(y)
            -2\lambda(y)^p\operatorname{Re}Z(y)
            +2y\operatorname{Re}\left(\frac{\mu}{\mu_*}Z_0(y)\right),\\
            \mc{R}_{\textup{lin},3}(y)
            &=|D|^pf_{\textup{osc},3}'(y)+f_{\textup{osc},3}(y)
            -2c_{\textup{cubic}}3^p\lambda(y)^p\operatorname{Re}\bigl(Z(y)^3\bigr).
        \end{aligned}
    \end{equation}
    We estimate these two remainders separately. We write
    \begin{equation}\label{eqn:approx_E5_fundamental_localization}
    |D|^pf_{\textup{osc},1}' = 2\text{Re}|D|^p \left[\chi_Y\left(Z - \frac{\mu}{\mu_*}Z_0\right)\right] + \mc{R}_{\textup{far},Y}
    \end{equation}
    where
    \begin{equation}\label{eqn:approx_E5_far_remainder}
    \mc{R}_{\textup{far},Y}(y) = c_\alpha \int f_{\textup{osc},1}(z)\partial_z \left[\frac{1 - \chi_Y(z)}{|y - z|^\alpha}\right] \, dz.
    \end{equation}
    It's not hard to see that $|\partial^j \mc{R}_{\textup{far},Y}(y)| \lesssim \epsilon \ang{y}^{-p-2-j}$ since $\|f_{\textup{osc},1}\|_{L^1} \lesssim \epsilon$ by \cref{lem:approx_phase_differences}. The remaining term can be estimated by appealing to \cref{lem:appendix_oscillatory_multiplier}. We write
    \begin{equation}\label{eqn:approx_E5_multiplier_expansions}
    \begin{aligned}
        |D|^p(\chi_Y Z)
        &=\lambda^p Z+i\frac{(\lambda^p)'}{\lambda}Z
        -ip\mu a_*'\lambda^{(p-3)/2}e^{i(\Psi_{\epsilon,\mu}-\pi/4)}+\mc{E}_{\textup{mult}},\\
        |D|^p\left(\chi_Y\frac{\mu}{\mu_*}Z_0\right)
        &=y\frac{\mu}{\mu_*}Z_0+\frac{i}{\lambda_0}\frac{\mu}{\mu_*}Z_0
        -ip\mu a_*'\lambda_0^{(p-3)/2}e^{i(\Psi_0-\pi/4)}+\mc{E}_{\textup{mult},0}.
    \end{aligned}
    \end{equation}
    Here $\mc{E}_{\textup{mult}}$ and $\mc{E}_{\textup{mult},0}$ are the corresponding remainders in \cref{lem:appendix_oscillatory_multiplier}.
    For $Y\leq y\leq2Y$, integration by parts in $f_{\textup{osc},1}$ gives
    \begin{equation}\label{eqn:approx_E5_fundamental_primitive}
        f_{\textup{osc},1}
        =-2\operatorname{Re}\left[i\left(\frac{Z}{\lambda}
        -\frac{\mu}{\mu_*}\frac{Z_0}{\lambda_0}\right)\right]+\mc{R}_{\textup{prim}},
    \end{equation}
    where
    \begin{equation}\label{eqn:approx_E5_primitive_remainder}
    \begin{aligned}
        \mc{R}_{\textup{prim}}(y)=2\operatorname{Re}\int_y^\infty\Bigg[
        &e^{i(\Psi_{\epsilon,\mu}(s)-\pi/4)}
        \partial_s\left(\frac{\mu a_*(s)}{i\lambda(s)^{(p+3)/2}}\right)\\
        &-e^{i(\Psi_0(s)-\pi/4)}
        \partial_s\left(\frac{\mu a_*(s)}{i\lambda_0(s)^{(p+3)/2}}\right)
        \Bigg]\,ds.
    \end{aligned}
    \end{equation}
    Substituting \eqref{eqn:approx_E5_multiplier_expansions} and \eqref{eqn:approx_E5_fundamental_primitive} into \eqref{eqn:approx_E5_linear_components}, the leading terms cancel and we obtain
    \begin{equation}\label{eqn:approx_E5_fundamental_error_identity}
    \begin{aligned}
        \mc{R}_{\textup{lin},1}
        ={}&2\operatorname{Re}\Bigg[
        i\bigl((\lambda^p)'-1\bigr)\frac{Z}{\lambda} -ip\mu a_*'\left(
        \lambda^{(p-3)/2}e^{i(\Psi_{\epsilon,\mu}-\pi/4)}
        -\lambda_0^{(p-3)/2}e^{i(\Psi_0-\pi/4)}
        \right)\Bigg]\\
        &+2\operatorname{Re}(\mc{E}_{\textup{mult}}-\mc{E}_{\textup{mult},0})+\mc{R}_{\textup{prim}}+\mc{R}_{\textup{far},Y}.
    \end{aligned}
    \end{equation}
    We now bound the terms in \eqref{eqn:approx_E5_fundamental_error_identity}. For $\mc{E}_{\textup{mult}}-\mc{E}_{\textup{mult},0}$, apply \cref{lem:appendix_oscillatory_multiplier} with phases $\Psi_{\epsilon,\mu}-\pi/4$, $\Psi_0-\pi/4$ and
    \[
    \begin{aligned}
        a_{1,\mu}&=\mu a_*\lambda^{-(p+1)/2},\qquad
        a_{0,\mu}=\mu a_*\lambda_0^{-(p+1)/2},\\
        A&=Y^{-\beta},\qquad \Lambda=Y^{1/p},\qquad
        \delta=\epsilon Y^{1/p-p}.
    \end{aligned}
    \]
    By \cref{lem:approx_linear_seed,lem:approx_phase_differences}, the individual symbol bounds hold, and on $(Y/4,4Y)$,
    \[
    \begin{aligned}
        |\partial_y^k(a_{1,\mu}-a_{0,\mu})|&\lesssim_k\epsilon Y^{-\beta-p-1-k}
        \lesssim_k\delta A Y^{-k},\\
        |\partial_y^k(\Psi_{\epsilon,\mu}-\Psi_0)|&\lesssim_k\delta Y^{-k}.
    \end{aligned}
    \]
    Hence, for $j=0,1$,
    \begin{equation}\label{eqn:approx_E5_multiplier_difference_bound}
        |\partial_y^j(\mc{E}_{\textup{mult}}-\mc{E}_{\textup{mult},0})|
        \lesssim\delta A Y^{-2}\Lambda^{p-2+j}
        =\epsilon Y^{-p-3\beta+j/p}.
    \end{equation}
    The same estimates give
    \[
        |\partial_y^k[(\lambda^p)'-1]|\lesssim_k\epsilon Y^{-p-1-k},
        \qquad |\partial_y^k a_*'|\lesssim_kY^{-2\beta-1-k}.
    \]
    Since $|Z/\lambda|\lesssim Y^{-\beta-1/p}$, the first term in square brackets in \eqref{eqn:approx_E5_fundamental_error_identity} is bounded by $\epsilon Y^{-p-1-\beta-1/p}=\epsilon Y^{-p-3\beta}$. For the second term, use the frequency-difference bounds and
    \[|e^{i(\Psi_{\epsilon,\mu}-\Psi_0)}-1|\lesssim\epsilon Y^{1/p-p}\]
    to obtain the bound $\epsilon Y^{-2\beta-1+(p-3)/(2p)+1/p-p}=\epsilon Y^{-p-3\beta}$. The product rule gives an additional factor at most $Y^{1/p}$ for one derivative in both estimates.

    To estimate $\mc{R}_{\textup{prim}}$ in \eqref{eqn:approx_E5_primitive_remainder}, set
    \[
        b(s)=\frac{1}{i\lambda(s)}
        \partial_s\left(\frac{\mu a_*(s)}{i\lambda(s)^{(p+3)/2}}\right),
    \]
    and let $b_0$ denote the same expression with $\lambda_0$ in place of $\lambda$. For $s\geq y$ and $k=0,1$, the amplitude and frequency bounds give
    \[
    \begin{aligned}
        |\partial_s^kb(s)|+|\partial_s^kb_0(s)|
        &\lesssim s^{-\beta-1-2/p-k},\\
        |\partial_s^k(b-b_0)(s)|
        &\lesssim\epsilon s^{-p-\beta-2-2/p-k}.
    \end{aligned}
    \]
    Integrating once more by parts in \eqref{eqn:approx_E5_primitive_remainder} produces the boundary difference of $be^{i(\Psi_{\epsilon,\mu}-\pi/4)}$ and $b_0e^{i(\Psi_0-\pi/4)}$, and the integral of the corresponding difference with $b',b_0'$. Using these bounds and $|e^{i(\Psi_{\epsilon,\mu}(s)-\Psi_0(s))}-1|\lesssim\epsilon s^{1/p-p}$, we obtain
    \[
        |\mc{R}_{\textup{prim}}(y)|
        \lesssim\epsilon y^{-p-3\beta}
        +\epsilon\int_y^\infty s^{-p-3\beta-1}\,ds
        \lesssim\epsilon y^{-p-3\beta}.
    \]
    Differentiating \eqref{eqn:approx_E5_primitive_remainder} instead gives
    \[
        |\mc{R}_{\textup{prim}}'(y)|\lesssim\epsilon y^{-p-\beta-1}
        =\epsilon y^{-p-3\beta+1/p}.
    \]
    Combining these estimates with \eqref{eqn:approx_E5_multiplier_difference_bound} and the bound for $\mc{R}_{\textup{far},Y}$, we conclude that
    \begin{equation}\label{eqn:approx_E5_fundamental_bound}
        |\partial_y^j\mc{R}_{\textup{lin},1}(y)|
        \lesssim\epsilon\bigl(Y^{-p-3\beta+j/p}+Y^{-p-2-j}\bigr)
        \lesssim\epsilon Y^{-2p-1+j/p},\qquad j=0,1,
    \end{equation}
    since $3\beta>p+1$ and $p<1$. \par
    The proof of the bound for $\mc{R}_{\textup{lin},3}$ is similar. We write
    \[
        Z^3=a_3e^{3i(\Psi_{\epsilon,\mu}-\pi/4)},\qquad
        a_3=\mu^3a_*^3\lambda^{-3(p+1)/2},
        \qquad |\partial_y^ka_3|\lesssim_kY^{-3\beta-k}.
    \]
    Applying \cref{lem:appendix_oscillatory_multiplier} with phase $3(\Psi_{\epsilon,\mu}-\pi/4)$, $A=Y^{-3\beta}$ and $\Lambda=Y^{1/p}$ gives, for $j=0,1$,
    \[
    \begin{aligned}
        \left|\partial_y^j\left[|D|^p(\chi_YZ^3)-3^p\lambda^pZ^3\right]\right|
        &\lesssim Y^{-3\beta-1}\Lambda^{p-1+j}
        +Y^{-3\beta-2}\Lambda^{p-2+j}\\
        &\lesssim Y^{1-5\beta+j/p}.
    \end{aligned}
    \]
    Here the first term bounds the two first-order corrections in the lemma, and the second bounds its remainder. Integrating once by parts in $\int_y^\infty Z^3$ gives
    \[
        \left|\int_y^\infty Z(s)^3\,ds\right|
        \lesssim y^{-3\beta-1/p}
        +\int_y^\infty s^{-3\beta-1/p-1}\,ds
        \lesssim y^{-3\beta-1/p},\qquad y\geq R.
    \]
    Since $c_{\textup{cubic}}=O(\epsilon)$ and $f_{\textup{osc},3}'=2c_{\textup{cubic}}\operatorname{Re}(Z^3)$ on $[Y,2Y]$, it follows that
    \[
        |\partial_y^jf_{\textup{osc},3}(y)|\lesssim\epsilon Y^{1-5\beta+j/p},
        \qquad j=0,1,\qquad \|f_{\textup{osc},3}\|_{L^1}\lesssim\epsilon.
    \]
    The localization argument in \eqref{eqn:approx_E5_fundamental_localization} gives
    \[
        \mc{R}_{\textup{lin},3}
        =2c_{\textup{cubic}}\operatorname{Re}\left[|D|^p(\chi_YZ^3)-3^p\lambda^pZ^3\right]
        +f_{\textup{osc},3}+\mc{R}_{\textup{far},Y,3},
    \]
    where $\mc{R}_{\textup{far},Y,3}$ is given by \eqref{eqn:approx_E5_far_remainder} with $f_{\textup{osc},3}$ in place of $f_{\textup{osc},1}$. Thus
    \[
        |\partial_y^j\mc{R}_{\textup{far},Y,3}(y)|
        \lesssim Y^{-p-2-j}\|f_{\textup{osc},3}\|_{L^1}
        \lesssim\epsilon Y^{-p-2-j},\qquad j=0,1.
    \]
    Combining these estimates, we obtain
    \begin{equation}\label{eqn:approx_E5_third_harmonic_bound}
        |\partial_y^j\mc{R}_{\textup{lin},3}(y)|
        \lesssim\epsilon\bigl(Y^{1-5\beta+j/p}+Y^{-p-2-j}\bigr)
        \lesssim\epsilon Y^{-2p-1+j/p},\qquad j=0,1,
    \end{equation}
    since $5\beta>2p+2$ and $p<1$. Together with \eqref{eqn:approx_E5_fundamental_bound}, this proves \eqref{eqn:approx_E5_linear_remainder}.

\end{proof}

\begin{proposition}\label{prop:approx_E5_cubic}
    Let $Y\geq8R$. For $Y\leq y\leq2Y$ and $j=0,1$, the remainder $\mc{R}_{\textup{cub}}$ in \eqref{eqn:approx_E5_cubic_response} satisfies
    \begin{align}
        |\partial_y^j\mc{R}_{\textup{cub}}(y)|
        &\lesssim\epsilon\ang{y}^{1-5\beta+\frac{j}{p}}.
        \label{eqn:approx_E5_cubic_remainder}
    \end{align}
    Therefore, $\mc{E}_5=\mc{R}_{\textup{lin}}+\mc{R}_{\textup{cub}}$ satisfies
    \begin{equation}\label{eqn:approx_E5_bound}
        |\partial_y^j\mc{E}_5(y)|
        \lesssim\epsilon\ang{y}^{-2p-1+\frac{j}{p}},
        \qquad Y\leq y\leq2Y,\quad j=0,1.
    \end{equation}
\end{proposition}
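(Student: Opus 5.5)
Since $v_Y$ is real and, on $(Y/4,4Y)$, a localized primitive of $2\operatorname{Re}(Z+c_{\textup{cubic}}Z^3)$, the plan is to expand $\mc{Q}_{\epsilon,3}(v_Y)$ as a trilinear form in the harmonics $e^{\pm i\Psi}$ and treat it as a multilinear oscillatory multiplier, in the same spirit as \cref{lem:appendix_oscillatory_multiplier}. Integrating by parts once in $\int_z^\infty$, as in \eqref{eqn:approx_E5_fundamental_primitive}, gives
\[
v_Y=2\chi_Y\operatorname{Re}\bigl[iZ/\lambda\bigr]+\rho_Y,\qquad |\partial^k\rho_Y|\lesssim Y^{-\beta-2/p-1+k/p}.
\]
So to leading order $v_Y$ is $2\operatorname{Re}[w_Y]$, where $w_Y=i\chi_Y Z/\lambda$ has amplitude of size $Y^{-\beta-1/p}$ and frequency $\lambda\approx Y^{1/p}$. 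The cubic correction $c_{\textup{cubic}}Z^3$ and $\rho_Y$ enter $\mc{Q}_{\epsilon,3}$ only through cross terms. Each such term carries an extra factor $\epsilon Y^{-2\beta}$, or $Y^{-1-1/p}$ relative to the main amplitude, which is at least $Y^{1-5\beta}$ after the cubic scaling below. I would bound these crudely, using the trilinear estimate that follows from the kernel bound in the lemma preceding \eqref{eqn:approx_cubic_normalization} after rescaling.

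\textbf{Model computation.} Change variables $z=y+s/\lambda(y)$. If the amplitude were constant and the phase exactly linear, $w=Ae^{i\lambda z}$, then $\mc{Q}_{\epsilon,3}(2\operatorname{Re}w)$ is a scaled copy of $\mc{Q}_{\epsilon,3}(2\sin)$. Homogeneity of the kernel gives the factor $\lambda^{p}\cdot\lambda\cdot\lambda^{0}\cdot$ (amplitude)$^3$. With $A\lambda=Z$, \eqref{eqn:approx_cubic_normalization} then yields exactly
\[
2n_\epsilon\lambda^p|Z|^2\operatorname{Re}Z-2c_{\textup{cubic}}(3^p-1)\lambda^p\operatorname{Re}(Z^3),
\]
after accounting for the phase shift $-\pi/4$ and the factor $i$, which rotate $\sin$ into the appropriate combination. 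This identifies the main terms in \eqref{eqn:approx_E5_cubic_response}.

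\textbf{Error terms.} The remainder comes from freezing the amplitude and linearizing the phase at $y$. For the near region $|z-y|\lesssim Y$, I Taylor expand. The amplitude varies on scale $Y$, and $\Psi''\sim Y^{1/p-1}$ produces a quadratic phase error $Y^{1/p-1}(z-y)^2$. I would split at $|z-y|\sim\lambda^{-1}$. Below that scale, use the cancellation $\Gamma$-type structure of the integrand. Above it, integrate by parts in $z$ against the oscillation, so that each gain is $(\lambda Y)^{-1}=Y^{-1-1/p}$ relative to the leading size $\epsilon\lambda^pY^{-3\beta}=\epsilon Y^{1-3\beta}$. Since $Y^{1-3\beta}\cdot Y^{-1-1/p}=Y^{-3\beta-1/p}=Y^{1-5\beta}$ by the identity $3\beta+1/p=5\beta-1$, this gives the claimed bound $\epsilon Y^{1-5\beta}$. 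For the far region, or where $\chi_Y$ cuts off, $v_Y(z)$ is either absent or the kernel is at least $Y^{-\alpha-2}$. Combining this with $\|v_Y\|_{L^1}\lesssim Y^{1-\beta-1/p}$ and the sup bounds gives a rapidly decaying contribution, which I expect to be $\lesssim\epsilon Y^{1-5\beta}$ once I integrate by parts onto $v_Y$. One $y$-derivative costs at most $\lambda\sim Y^{1/p}$, giving the $j=1$ case.

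\textbf{Main obstacle and conclusion.} The main obstacle is making the first-order Taylor corrections rigorous, that is, proving an analogue of \cref{lem:appendix_oscillatory_multiplier} for the trilinear operator. The first-order terms in amplitude slope and phase curvature must be shown to be of relative size $Y^{-1-1/p}$, not merely $Y^{-1}$. I would try writing $\mc{Q}_{\epsilon,3}$ as a trilinear Fourier multiplier with symbol homogeneous of degree $p+1$, and use that each input is frequency-localized near $\pm\lambda$ with width $Y^{1/p-1}$. Then $\eqref{eqn:approx_E5_bound}$ follows by adding \eqref{eqn:approx_E5_cubic_remainder} and \cref{prop:approx_E5_linear}, using $5\beta-1>2p+1$.
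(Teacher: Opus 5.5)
Your model computation is the one the paper uses. You freeze at $y$ the plane wave $P_y(z)=2\operatorname{Re}\bigl[Z(y)e^{i\lambda(z-y)}/(i\lambda)\bigr]$, use cubic homogeneity and the rescaling $s\mapsto\lambda s$, and read off the main terms from \eqref{eqn:approx_cubic_normalization}. Two small slips there. The leading term of $v_Y$ is $2\chi_Y\operatorname{Re}[Z/(i\lambda)]=-2\chi_Y\operatorname{Re}[iZ/\lambda]$. And the rescaling produces $\lambda^{p+3}$ times the cube of the amplitude $|Z|/\lambda$, which is what gives $\lambda^p|Z|^3$. Your exponent bookkeeping, $\epsilon Y^{1-3\beta}\cdot(\lambda Y)^{-1}\approx\epsilon Y^{1-5\beta}$ and $5\beta-1>2p+1$, is right.

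The gap is that you never bound $\mc{R}_{\textup{cub}}(y)=\mc{Q}_{\epsilon,3}(v_Y)(y)-\mc{Q}_{\epsilon,3}(P_y)(y)$, which is the content of the proposition. You call the relative gain $(\lambda Y)^{-1}$ the main obstacle and defer it to an unproved trilinear analogue of \cref{lem:appendix_oscillatory_multiplier}, or to integrating by parts against the oscillation on $|s|\gtrsim\lambda^{-1}$. The latter does not work as described, for two reasons. First, integrating by parts against $e^{i\lambda s}$ gains $(\lambda|s|)^{-1}$, not $(\lambda Y)^{-1}$. Second, the integrand is built from increments $v_Y(y)-v_Y(y-s)$ of a real function, so it contains pieces that do not oscillate in $s$ at all: those involving only values at $y$, and the $|Z(y-s)|^2$-type products coming from real parts.

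The missing observation is that no cancellation in $s$ is needed. In the translated form \eqref{eqn:approx_cubic_remainder_translated}, write $a_1b_1^2-a_2b_2^2=(a_1-a_2)b_1^2+a_2(b_1-b_2)(b_1+b_2)$, where $a$ denotes the slope increments and $b$ the value increments of $v_Y$ and $P_y$. For $|s|\le Y/4$ one has $|b|\lesssim Y^{-\beta}\min\{|s|,\lambda^{-1}\}$, $|a|\lesssim Y^{-\beta}\min\{1,\lambda|s|\}$, and
\[
|b_1-b_2|\lesssim Y^{-\beta-1}s^2+\epsilon Y^{-3\beta}\min\{|s|,\lambda^{-1}\},\qquad
|a_1-a_2|\lesssim Y^{-\beta-1}(|s|+\lambda s^2)+\epsilon Y^{-3\beta}\min\{1,\lambda|s|\}.
\]
These are exactly your Taylor errors: amplitude varying on scale $Y$, phase curvature $\lambda/Y$.

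Taking absolute values, the part of the integrand without $c_{\textup{cubic}}$ is $\lesssim\epsilon Y^{-3\beta-1}|s|^{-p}$ for $|s|\le\lambda^{-1}$. It is $\lesssim\epsilon Y^{-3\beta-1}\lambda^{-1}|s|^{-p-1}$ for $|s|\ge\lambda^{-1}$. Both integrate to $\epsilon Y^{-3\beta-1}\lambda^{p-1}\approx\epsilon Y^{1-5\beta}$. The $c_{\textup{cubic}}$ terms give $\epsilon^2Y^{-5\beta}\lambda^p$, and the region $|s|\ge Y/4$ needs only sup bounds.

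So the gain $(\lambda Y)^{-1}$ is automatic. The weighted integral concentrates at $|s|\sim\lambda^{-1}$, where both Taylor errors have that relative size. For larger $|s|$, the saturated value increments and the kernel $|s|^{-p-3}$ beat the quadratic growth of the phase error.

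The one place oscillation is genuinely used is the value comparison. $|b_1-b_2|$ must be controlled by the phase mismatch of the primitive. Integrating the slope mismatch instead gives only $Y^{-\beta-1}(s^2+\lambda|s|^3)$, which loses a factor $(\lambda Y)^{1-p}$. The paper gets the sharper bound by one integration by parts in $t$. Taylor expanding the amplitude and phase of your primitive $Z/(i\lambda)$ gives the equivalent bound for $|s|\ge\lambda^{-1}$, with $\rho_Y$ handled by your crude trilinear estimate.
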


\begin{proof}
    Set $\lambda=\lambda(y)>0$ and consider the plane wave $P_y$ defined by
    \[P_y(z) = 2\text{Re}\left[\frac{Z(y)}{i\lambda}e^{i\lambda(z - y)}\right].\]
    If we write $Z(y) = |Z(y)|e^{i\theta}$, then 
    \[P_y(z) = \frac{2|Z(y)|}{\lambda}\sin\left(\lambda(z-y) + \theta\right).\]
    Using the cubic homogeneity and translation invariance of $\mc{Q}_{\epsilon,3}$, and changing variables by $s\mapsto\lambda s$ in its integral, the definition \eqref{eqn:approx_cubic_normalization} of $n_\epsilon$ and $c_{\textup{cubic}}$ gives
    \begin{align}
        \mc{Q}_{\epsilon,3}(P_y)(z)
        &=\left(\frac{|Z(y)|}{\lambda}\right)^3\lambda^{p+3}
        \mc{Q}_{\epsilon,3}(2\sin(\cdot))\bigl(\lambda(z-y)+\theta\bigr)\notag\\
        &=2n_\epsilon|Z(y)|^3\lambda^p\cos\bigl(\lambda(z-y)+\theta\bigr)\notag\\
        &-2(3^p-1)c_{\textup{cubic}}|Z(y)|^3\lambda^p\cos\bigl(3\lambda(z-y)+3\theta\bigr).
        \label{eqn:approx_cubic_plane_wave_response}
    \end{align}
    Evaluating \eqref{eqn:approx_cubic_plane_wave_response} at $z=y$ and using $|Z|^3\cos\theta=|Z|^2\operatorname{Re}Z$ and $|Z|^3\cos(3\theta)=\operatorname{Re}(Z^3)$ gives
    \begin{equation}\label{eqn:approx_cubic_plane_wave_frozen}
        \mc{Q}_{\epsilon,3}(P_y)(y)
        =2n_\epsilon\lambda^p|Z|^2\operatorname{Re}Z
        -2c_{\textup{cubic}}(3^p-1)\lambda^p\operatorname{Re}(Z^3).
    \end{equation}
    It follows that the remainder $\mc{R}_{\textup{cub}}$ can be expressed as
    \begin{equation}\label{eqn:approx_cubic_remainder_plane_wave}
        \begin{aligned}
            \mc{R}_{\textup{cub}}(y)
            &=\mc{Q}_{\epsilon,3}(v_Y)(y)-\mc{Q}_{\epsilon,3}(P_y)(y)\\
            &=-\frac{(p+1)\epsilon c_\alpha}{2}\int_{\R}\frac{1}{|y-z|^{p+3}}\Big[
            \bigl(v_Y'(y)-v_Y'(z)\bigr)\bigl(v_Y(y)-v_Y(z)\bigr)^2\\
            &\hspace{130pt}-\bigl(P_y'(y)-P_y'(z)\bigr)\bigl(P_y(y)-P_y(z)\bigr)^2\Big]\,dz.
        \end{aligned}
    \end{equation}
    Making the change of variables $s=y-z$ in \eqref{eqn:approx_cubic_remainder_plane_wave}, we obtain
    \begin{equation}\label{eqn:approx_cubic_remainder_translated}
        \begin{aligned}
            \mc{R}_{\textup{cub}}(y)
            &=-\frac{(p+1)\epsilon c_\alpha}{2}\int_{\R}\frac{1}{|s|^{p+3}}\Big[
            \bigl(v_Y'(y)-v_Y'(y-s)\bigr)\bigl(v_Y(y)-v_Y(y-s)\bigr)^2\\
            &\hspace{130pt}-\bigl(P_y'(y)-P_y'(y-s)\bigr)\bigl(P_y(y)-P_y(y-s)\bigr)^2\Big]\,ds.
        \end{aligned}
    \end{equation}
    To control this term, we do a smooth near-far decomposition in $s$. The near region is $|s| \leq Y/4$ and the far region is $|s| \geq Y/4$. In the near region, we use the algebraic identity
    \[a_1b_1^2 - a_2b_2^2 = (a_1-a_2)b_1^2 + a_2(b_1^2-b_2^2)\]
    to write the integrand as
    \begin{align*}
        &\frac{v_Y'(y)-v_Y'(y-s)-P_y'(y)+P_y'(y-s)}{|s|^{p+3}}
        \bigl(v_Y(y)-v_Y(y-s)\bigr)^2\\
        &\quad+\frac{P_y'(y)-P_y'(y-s)}{|s|^{p+3}}
        \bigl(v_Y(y)-v_Y(y-s)-P_y(y)+P_y(y-s)\bigr)\\
        &\qquad\qquad\times\bigl(v_Y(y)-v_Y(y-s)+P_y(y)-P_y(y-s)\bigr).
    \end{align*}
   Observe that we have the estimates
    \begin{align*}
        |v_Y(y)-v_Y(y-s)|+|P_y(y)-P_y(y-s)|
        &\lesssim Y^{-\beta}\min\{|s|,\lambda^{-1}\},\\
        |v_Y'(y)-v_Y'(y-s)|+|P_y'(y)-P_y'(y-s)|
        &\lesssim Y^{-\beta}\min\{1,\lambda|s|\}.
    \end{align*}
    Indeed, for the plane wave, we have
    \begin{align*}
        |P_y(y) - P_y(y - s)| & \leq \frac{2|Z(y)|}{\lambda}|1 - e^{-i\lambda s}| \\
        & \lesssim Y^{-\beta}\lambda^{-1}\min\{1,\lambda|s|\},\\
        & \lesssim Y^{-\beta} \min\{|s|, \lambda^{-1}\}
    \end{align*}
    For $v_Y$, the cutoff is identically one between $y$ and $y-s$, so $v_Y'=2\operatorname{Re}(Z+c_{\textup{cubic}}Z^3)$ on this interval. The mean value theorem therefore gives
    \[|v_Y(y)-v_Y(y-s)|\lesssim Y^{-\beta}|s|.\]
    To obtain the other bound we integrate by parts using $\Psi'=\lambda$:
    \begin{align*}
        \int_z^\infty Z(t)\,dt
        &=-\frac{Z(z)}{i\lambda(z)}
        -\int_z^\infty e^{i(\Psi(t)-\pi/4)}
        \partial_t\left(\frac{\mu a_*(t)}{i\lambda(t)^{(p+3)/2}}\right)\,dt.
    \end{align*}
    Using the estimates on $a_*$ from \cref{lem:approx_linear_seed}, we have
    \begin{align*}
        \left|\int_z^\infty Z(t)\,dt\right|
        &\lesssim z^{-\beta-1/p}+\int_z^\infty t^{-\beta-1/p-1}\,dt
        \lesssim z^{-\beta-1/p}.
    \end{align*}
    The same integration by parts with phase $3\Psi$ gives
    \[\left|\int_z^\infty Z(t)^3\,dt\right|\lesssim z^{-3\beta-1/p}.\]
    Since $|c_{\textup{cubic}}|\lesssim\epsilon$ and $z \approx Y$ on the support of $\chi_Y$, the definition \eqref{eqn:approx_local_oscillatory_front} yields
    \begin{equation}\label{eqn:approx_vY_amplitude_bound}
        \|v_Y\|_{L^\infty}
        \lesssim Y^{-\beta-1/p}+\epsilon Y^{-3\beta-1/p}
        \lesssim Y^{-\beta}\lambda^{-1}.
    \end{equation}
    Combining \eqref{eqn:approx_vY_amplitude_bound} with the mean value estimate proves
    \[|v_Y(y)-v_Y(y-s)|\lesssim Y^{-\beta}\min\{|s|,\lambda^{-1}\}.\]
    A similar argument gives the derivative bound, so we omit the details.
    We also have the following estimates for $|s|\leq Y/4$:
    \begin{equation}\label{eqn:approx_vY_plane_wave_comparison}
        \begin{aligned}
            &|v_Y(y)-v_Y(y-s)-P_y(y)+P_y(y-s)|\lesssim Y^{-\beta-1}s^2
            +\epsilon Y^{-3\beta}\min\{|s|,\lambda^{-1}\},\\
            &|v_Y'(y)-v_Y'(y-s)-P_y'(y)+P_y'(y-s)| \lesssim Y^{-\beta-1}\bigl(|s|+\lambda s^2\bigr)
            +\epsilon Y^{-3\beta}\min\{1,\lambda|s|\}.
        \end{aligned}
    \end{equation}
    To estimate the second term, we write
    \begin{equation}\label{eqn:approx_vY_slope_comparison_identity}
        \begin{aligned}
            &v_Y'(y)-v_Y'(y-s)-P_y'(y)+P_y'(y-s)\\
            &\qquad=2\operatorname{Re}\left[
            Z(y)e^{-i\lambda s}-Z(y-s)
            +c_{\textup{cubic}}\bigl(Z(y)^3-Z(y-s)^3\bigr)
            \right].
        \end{aligned}
    \end{equation}
    We have that
    \begin{equation}\label{eqn:approx_Z_frozen_frequency_identity}
        Z(y)e^{-i\lambda s} - Z(y - s) = \int_{y-s}^{y}e^{-i\lambda(t-y+s)}\bigl[Z'(t)-i\lambda Z(t)\bigr]\,dt.
    \end{equation}
    Here $\lambda=\lambda(y)$. For $t$ between $y-s$ and $y$, the bounds on $a_*$ in \cref{lem:approx_linear_seed} and $|\lambda'(t)|\lesssim\lambda/Y$ give
    \begin{align*}
        |Z'(t)-i\lambda(t)Z(t)|
        &=\left|\mu\partial_t\left(a_*(t)\lambda(t)^{-(p+1)/2}\right)\right|
        \lesssim Y^{-\beta-1},\\
        |\lambda(t)-\lambda|&\lesssim\frac{\lambda}{Y}|t-y|.
    \end{align*}
    Therefore,
    \begin{align*}
        |Z'(t)-i\lambda Z(t)|
        &\leq |Z'(t)-i\lambda(t)Z(t)|+|\lambda(t)-\lambda|\,|Z(t)|\\
        &\lesssim Y^{-\beta-1}\bigl(1+\lambda|t-y|\bigr).
    \end{align*}
    Therefore,
    \begin{equation}\label{eqn:approx_Z_plane_wave_comparison}
        \begin{aligned}
            |Z(y)e^{-i\lambda s}-Z(y-s)|
            &\lesssim Y^{-\beta-1}\int_0^{|s|}(1+\lambda t)\,dt\\
            &\lesssim Y^{-\beta-1}\bigl(|s|+\lambda s^2\bigr).
        \end{aligned}
    \end{equation}
    On the other hand, we can estimate the cubic term in \eqref{eqn:approx_vY_slope_comparison_identity} using the mean value theorem:
    \[|Z(y)^3-Z(y-s)^3| \lesssim Y^{-3\beta}\min\{1,\lambda|s|\}.\]
    The derivative estimate in \eqref{eqn:approx_vY_plane_wave_comparison} follows. For the other estimate, we write
    \begin{equation}\label{eqn:approx_vY_value_comparison_identity}
        \begin{aligned}
            &v_Y(y)-v_Y(y-s)-P_y(y)+P_y(y-s)\\
            &\qquad=2\operatorname{Re}\int_{y-s}^{y}
            \left[Z(t)-Z(y)e^{i\lambda(t-y)}+c_{\textup{cubic}}Z(t)^3\right]\,dt.
        \end{aligned}
    \end{equation}
    Integrating by parts and then using \eqref{eqn:approx_Z_frozen_frequency_identity}, we obtain
    \begin{align*}
        &\int_{y-s}^{y}\left[Z(t)-Z(y)e^{i\lambda(t-y)}\right]\,dt\\
        &\quad=\frac{Z(y)e^{-i\lambda s}-Z(y-s)}{i\lambda}
        -\frac{1}{i\lambda}\int_{y-s}^{y}\bigl[Z'(t)-i\lambda Z(t)\bigr]\,dt\\
        &\quad=\frac{1}{i\lambda}\int_{y-s}^{y}
        \left[e^{-i\lambda(t-y+s)}-1\right]\bigl[Z'(t)-i\lambda Z(t)\bigr]\,dt.
    \end{align*}
    For $t$ between $y-s$ and $y$, we have
    \[\frac{|e^{-i\lambda(t-y+s)}-1|}{\lambda}
    \lesssim\min\{|s|,\lambda^{-1}\}.\]
    Using the bound $|Z'(t)-i\lambda Z(t)|\lesssim Y^{-\beta-1}(1+\lambda|t-y|)$ proved above, it follows that
    \begin{align*}
        \left|\int_{y-s}^{y}\left[Z(t)-Z(y)e^{i\lambda(t-y)}\right]\,dt\right|
        &\lesssim Y^{-\beta-1}\min\{|s|,\lambda^{-1}\}
        \int_0^{|s|}(1+\lambda t)\,dt\\
        &\lesssim Y^{-\beta-1}\min\{|s|,\lambda^{-1}\}
        \bigl(|s|+\lambda s^2\bigr)\\
        &\lesssim Y^{-\beta-1}s^2.
    \end{align*}
    The last inequality follows by considering $\lambda|s|\leq1$ and $\lambda|s|\geq1$. For the cubic term, the pointwise bound on $Z$ and the primitive estimate proved above give, respectively,
    \begin{align*}
        \left|\int_{y-s}^{y}Z(t)^3\,dt\right|
        &\lesssim Y^{-3\beta}|s|,\\
        \left|\int_{y-s}^{y}Z(t)^3\,dt\right|
        &\leq\left|\int_{y-s}^{\infty}Z(t)^3\,dt\right|
        +\left|\int_y^{\infty}Z(t)^3\,dt\right| \lesssim Y^{-3\beta}\lambda^{-1}.
    \end{align*}
    Since $|c_{\textup{cubic}}|\lesssim\epsilon$, we conclude that
    \[\left|c_{\textup{cubic}}\int_{y-s}^{y}Z(t)^3\,dt\right|
    \lesssim\epsilon Y^{-3\beta}\min\{|s|,\lambda^{-1}\}.\]
    Substituting these bounds into \eqref{eqn:approx_vY_value_comparison_identity} proves the first estimate in \eqref{eqn:approx_vY_plane_wave_comparison}. We are now able to estimate the near contribution of the integral in \eqref{eqn:approx_cubic_remainder_translated}. Substituting the previous estimates, we obtain
    \begin{equation}\label{eqn:approx_cubic_near_bound}
        \begin{aligned}
            &\epsilon\int_{|s|\leq Y/4}\frac{1}{|s|^{p+3}}\Big|
            \bigl(v_Y'(y)-v_Y'(y-s)\bigr)\bigl(v_Y(y)-v_Y(y-s)\bigr)^2 -\bigl(P_y'(y)-P_y'(y-s)\bigr)\bigl(P_y(y)-P_y(y-s)\bigr)^2\Big|\,ds\\
            &\quad\lesssim\epsilon Y^{-3\beta-1}\int_{|s|\leq Y/4}
            \frac{(|s|+\lambda s^2)\min\{|s|,\lambda^{-1}\}^2}{|s|^{p+3}}\,ds +\epsilon^2Y^{-5\beta}\int_{|s|\leq Y/4}
            \frac{\min\{1,\lambda|s|\}\min\{|s|,\lambda^{-1}\}^2}{|s|^{p+3}}\,ds\\
            &\quad\lesssim\epsilon Y^{-3\beta-1}\left[
            \int_0^{\lambda^{-1}}s^{-p}\,ds
            +\lambda^{-1}\int_{\lambda^{-1}}^{Y/4}s^{-p-1}\,ds\right] +\epsilon^2Y^{-5\beta}\left[
            \lambda\int_0^{\lambda^{-1}}s^{-p}\,ds
            +\lambda^{-2}\int_{\lambda^{-1}}^{Y/4}s^{-p-3}\,ds\right]\\
            &\quad\lesssim\epsilon Y^{-3\beta-1}\lambda^{p-1}
            +\epsilon^2Y^{-5\beta}\lambda^p\\
            &\quad\lesssim\epsilon Y^{1-5\beta}.
        \end{aligned}
    \end{equation}
    For the far region where $|s| \geq Y/4$, we simply use the triangle inequality to bound the far contribution as 
    \begin{align*}
        \int_{|s| \geq Y/4}\frac{|v_Y'(y)-v_Y'(y-s)||v_Y(y)-v_Y(y-s)|^2}{|s|^{p+3}}\,ds & \lesssim Y^{-3\beta}\lambda^{-2} \int_{|s| \geq Y/4}|s|^{-p-3} \\
        & \lesssim Y^{-3\beta - p - 2 - 2/p}.
    \end{align*}
    Using that $2\beta = 1 + 1/p$, we can bound this by $Y^{1 - 5\beta}$. The same argument applies to the $P_y$ term as well, so we can conclude that 
    \[|\mc{R}_{\textup{cub}}(y)| \lesssim \epsilon \ang{y}^{1 - 5\beta}.\]
    The estimate for the derivative follows similarly, and we omit the details. 
\end{proof}

\subsection{Control of \texorpdfstring{$\mc{E}_6$}{E6}}

In this section, we will control the term
\[\mc{E}_6=D\mc{Q}_\epsilon(W)[f-W]
  -2c_\epsilon y^{-p}\operatorname{Re}\left(Z+c_{\textup{cubic}}Z^3\right).\]
\begin{proposition}\label{prop:approx_E6}
    The residual $\mc{E}_6$ satisfies the estimate
    \begin{equation}\label{eqn:approx_E6_bound}
        |\partial^j\mc{E}_6(y)|
        \lesssim\epsilon\ang{y}^{-2p-1+\frac{j}{p}},
        \qquad y\geq2R,\quad j=0,1.
    \end{equation}
\end{proposition}

\begin{proof}
    We first compute $D\mc{Q}_\epsilon(W)$. For $y>0$ and $t\geq0$, set
    \[K(y,t)=\frac{A_\epsilon(-t/(y+t))-1}{(y+t)^{1+p}}.\]
    Since $A_\epsilon(0)=1$ and $A_\epsilon'(0)=0$, the region $z>0$ contributes nothing to $D\mc{Q}_\epsilon(W)[h](y)$. Therefore we can write
    \begin{align*}
        D\mc{Q}_\epsilon(W)[h](y) & = c_\alpha\int_{-\infty}^{0}\frac{h'(y) - h'(z)}{|y - z|^\alpha}[A_\epsilon(S_W(y,z)) - 1] \, dz \\
        & + c_\alpha\int_{-\infty}^{0}\frac{W'(y) - W'(z)}{|y - z|^\alpha}A_\epsilon'(S_W(y,z))S_h(y,z) \, dz
    \end{align*}
    Setting $z=-t$ and using $S_{W}(y,z) = z / (y - z)$ gives
    \begin{align*}
        D\mc{Q}_\epsilon(W)[h](y)
        ={}&c_\alpha\int_0^\infty K(y,t)\bigl(h'(y)-h'(-t)\bigr)\,dt\\
        &+c_\alpha\int_0^\infty
        \frac{A_\epsilon'(-t/(y+t))}{(y+t)^{p+2}}
        \bigl(h(y)-h(-t)\bigr)\,dt.
    \end{align*}
    Observe that
    \[\partial_yK(y,t)-\partial_tK(y,t)
      =\frac{A_\epsilon'(-t/(y+t))}{(y+t)^{p+2}},\qquad
      \partial_t\bigl(h(y)-h(-t)\bigr)=h'(-t).\]
    We can integrate by parts to obtain
    \begin{equation}\label{eqn:approx_wedge_first_variation}
        \begin{aligned}
            D\mc{Q}_\epsilon(W)[h](y)
            &=c_\alpha\int_0^\infty
            \left[K(y,t)h'(y)+\partial_yK(y,t)\bigl(h(y)-h(-t)\bigr)\right]\,dt\\
            &=c_\alpha\partial_y\int_0^\infty
            K(y,t)\bigl(h(y)-h(-t)\bigr)\,dt.
        \end{aligned}
    \end{equation}
    The boundary term vanishes because $K(y, 0) = 0$ and $|K(y, t)| \lesssim_y t^{-1-p}$. It follows that 
    \[D\mc{Q}_\epsilon(W)[f - W] = c_\alpha\partial_y\int_0^\infty
            K(y,t)\bigl(f(y) - f(-t) + t\bigr)\,dt.\]
    Next, by the proof of \cref{lem:analysis_coefficient}, we have that 
    \[c_\alpha\int_{0}^{\infty}K(y, t) \, dt = c_\epsilon y^{-p}.\]
    Therefore we have the exact decomposition
    \[\mc{E}_6 = c_\epsilon y^{-p}[f'(y) - 2\, \text{Re}(Z + c_{\textup{cubic}}Z^3)] - pc_\epsilon y^{-p-1}f(y) - c_\alpha \int_{0}^{\infty}\partial_yK(y,t)[f(-t) - t] \, dt.\]
    By \eqref{eqn:approx_family_positive}, we can write 
    \[f' - 2\, \text{Re}(Z + c_{\textup{cubic}}Z^3) = f_{0, \textup{low}}' - pc_+ y^{-p-1}.\]
    Therefore applying \cref{lem:approx_linear_seed}, we have that
    \[|\partial^{j}(f' - 2\, \text{Re}(Z + c_{\textup{cubic}}Z^3))| \lesssim y^{-p-1-j}, \quad j = 0,1.\]
    Since $A_\epsilon-1$ and its first two derivatives are $O(\epsilon)$ on $[-1,0]$, differentiating the kernel gives
    \begin{equation}\label{eqn:approx_E6_kernel_bound}
        |\partial_y^kK(y,t)|\lesssim\epsilon(y+t)^{-1-p-k},
        \qquad k=0,1,2.
    \end{equation}
    Using $|f(-t)-t|\lesssim\ang{t}^{-p}$, we obtain for $j=0,1$,
    \begin{equation}\label{eqn:approx_E6_remote_bound}
        \begin{aligned}
            \left|\partial_y^j\int_0^\infty\partial_yK(y,t)[f(-t)-t]\,dt\right|
            &\lesssim\epsilon\int_0^\infty\frac{\ang{t}^{-p}}{(y+t)^{p+2+j}}\,dt\\
            &\lesssim\epsilon\left(y^{-p-2-j}\int_0^y\ang{t}^{-p}\,dt
                +\int_y^\infty t^{-2p-2-j}\,dt\right)\\
            &\lesssim\epsilon y^{-2p-1-j}.
        \end{aligned}
    \end{equation}
    Combining the preceding estimates with $|c_\epsilon|\lesssim\epsilon$, $|f(y)|\lesssim y^{-p}$, and $|f'(y)|\lesssim y^{-\beta}$ in the exact decomposition of $\mc{E}_6$ gives
    \[|\mc{E}_6(y)|\lesssim\epsilon y^{-2p-1},\qquad
      |\partial_y\mc{E}_6(y)|\lesssim\epsilon\left(y^{-2p-2}+y^{-p-1-\beta}\right).\]
    Since $\beta+1/p>p$, the derivative bound is at most $C\epsilon y^{-2p-1+1/p}$. This concludes the proof of \eqref{eqn:approx_E6_bound}.
\end{proof}

\subsection{Control of \texorpdfstring{$\mc{E}_7$}{E7}}

\begin{lemma}\label{lem:approx_E7}
    Let $Y\geq8R$. The residual $\mc{E}_7$ satisfies
    \begin{equation}\label{eqn:approx_E7_bound}
        |\partial_y^j\mc{E}_7(y)|
        \lesssim\epsilon\ang{y}^{-p-2\beta+\frac{j}{p}}
        \lesssim\epsilon\ang{y}^{-2p-1+\frac{j}{p}},
        \qquad Y\leq y\leq2Y,\quad j=0,1.
    \end{equation}
\end{lemma}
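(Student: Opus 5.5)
We need to bound $\mc{E}_7=\mc{Q}_{\epsilon,3}(\chi_Y f)-\mc{Q}_{\epsilon,3}(v_Y)$ for $Y\le y\le 2Y$. The plan is to write $\chi_Y f = v_Y + h_Y$ with
\[h_Y=\chi_Y\bigl(f_{0,\textup{low}}+c_+y^{-p}\bigr),\]
which follows from \eqref{eqn:approx_family_positive}, since $\chi_Y$ is supported in $(Y/4,4Y)\subset(2R,\infty)$. By \cref{lem:approx_linear_seed}, $|\partial^j(f_{0,\textup{low}}-W)|\lesssim |y|^{-p-j}$, and $W=0$ there. Together with $|c_+|\lesssim\epsilon$ this gives
\[|\partial^k h_Y|\lesssim Y^{-p-k}, \qquad k\le 3.\]
So $h_Y$ is a slowly varying term with amplitude $Y^{-p}$ and frequency $Y^{-1}$. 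Since $\mc{Q}_{\epsilon,3}$ is a trilinear form $Q(u,u,u)$, we expand
\[\mc{Q}_{\epsilon,3}(v_Y+h_Y)-\mc{Q}_{\epsilon,3}(v_Y)\]
into the terms with at least one $h_Y$ slot. The worst terms are those with exactly one $h_Y$ and two $v_Y$ factors. The expected size is $\epsilon\,(Y^{-\beta})^2\cdot Y^{-p}\cdot$(a gain from the kernel), which gives the target $\epsilon Y^{-p-2\beta}$. The second inequality in the statement then follows from $2\beta=1+1/p>p+1$.

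To estimate each mixed term, I would proceed as in \cref{prop:approx_E5_cubic}: substitute $s=y-z$ and split smoothly into $|s|\le Y/4$ and $|s|\ge Y/4$. In the near region, use the difference bounds already established there:
\[|v_Y(y)-v_Y(y-s)|\lesssim Y^{-\beta}\min\{|s|,\lambda^{-1}\},\qquad |v_Y'(y)-v_Y'(y-s)|\lesssim Y^{-\beta}\min\{1,\lambda|s|\},\]
along with $|h_Y(y)-h_Y(y-s)|\lesssim Y^{-p-1}|s|$ and $|h_Y'(y)-h_Y'(y-s)|\lesssim Y^{-p-2}|s|$. The factors appear in two configurations.
\begin{enumerate}
\item If the derivative factor is $v_Y'$ and the two differences are one $v_Y$ and one $h_Y$, the integrand is bounded by $Y^{-2\beta-p-1}\min\{1,\lambda|s|\}\min\{|s|,\lambda^{-1}\}|s|\,|s|^{-p-3}$. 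Integrating gives roughly $Y^{-2\beta-p-1}\lambda^{p}=Y^{-2\beta-p}$, since $\lambda^p\approx Y$. I expect this configuration to be the main obstacle, because it is precisely critical: the integral $\int_{\lambda^{-1}}^{Y}s^{-p-2}\,ds$ converges at the lower end and must be evaluated at $s\sim\lambda^{-1}$, while the small-$s$ part gives $\lambda^{1}\int_0^{\lambda^{-1}}s^{-p}\,ds\sim \lambda^{p}$. Both pieces must be checked to produce exactly $Y^{-p-2\beta}$.
\item If the derivative factor is $h_Y'$, the bound is better: $Y^{-p-2-2\beta}\int |s|\min\{|s|,\lambda^{-1}\}^2|s|^{-p-3}\,ds\lesssim Y^{-p-2-2\beta}\lambda^{p}$.
\end{enumerate}
The terms with two or three $h_Y$ factors are smaller because $Y^{-p}\ll Y^{-\beta}\lambda^{-1}\cdot\lambda$ is not needed; they are bounded crudely by $Y^{-2p-\beta}$ or by smaller quantities. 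This relies on $p>\beta-p$, i.e. $2p>\beta$, which holds because $p>3/4$.

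In the far region $|s|\ge Y/4$, use the $L^\infty$ bounds $\|v_Y\|_\infty\lesssim Y^{-\beta-1/p}$, $\|v_Y'\|_\infty\lesssim Y^{-\beta}$, $\|h_Y\|_\infty\lesssim Y^{-p}$ together with $\int_{|s|\ge Y/4}|s|^{-p-3}\,ds\lesssim Y^{-p-2}$. This yields at most $Y^{-\beta-p}\,Y^{-\beta-1/p}\,Y^{-p-2}$, which is far better than needed. For $j=1$, differentiate in $y$ under the translated integral. Each derivative costs at most $\lambda\approx Y^{1/p}$ when it falls on $v_Y$, and $Y^{-1}$ when it falls on $h_Y$ or the cutoff. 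Rerunning the same near/far bookkeeping gives the extra factor $Y^{j/p}$.
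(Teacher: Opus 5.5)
Your splitting $\chi_Y f=v_Y+h_Y$ and your estimate of the leading term (derivative on $v_Y$, one $v_Y$ difference and one $h_Y$ difference, giving $\epsilon Y^{-p-2\beta}$) are correct and agree with the paper. The paper reaches the same bound more directly. It uses $a_1b_1^2-a_2b_2^2=(a_1-a_2)b_1^2+a_2(b_1-b_2)(b_1+b_2)$ together with only the global bounds $|S_{v_Y}+S_{h_Y}|\lesssim Y^{-\beta}$, $|S_{h_Y}|\lesssim Y^{-p-1}$, $|v_Y'(y)-v_Y'(z)|\lesssim Y^{-\beta}\min\{1,Y^{1/p}|y-z|\}$ and $|h_Y'(y)-h_Y'(z)|\lesssim Y^{-p-1}\min\{1,|y-z|/Y\}$. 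So it needs neither a near/far split nor the oscillation bound $\|v_Y\|_{L^\infty}\lesssim Y^{-\beta}\lambda^{-1}$. Indeed, replacing your $Y^{-\beta}\min\{|s|,\lambda^{-1}\}$ by $Y^{-\beta}|s|$ still leaves the leading integral of size $\lambda^p$.

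The step that fails as written is your treatment of the terms with two or three $h_Y$ factors.
\begin{enumerate}
\item A bound $\epsilon Y^{-2p-\beta}$ is not enough for the first inequality of the lemma. Since $\beta-p=(2p+1)(1-p)/(2p)>0$, we have $2p+\beta<p+2\beta$, so $Y^{-2p-\beta}\gg Y^{-p-2\beta}$. Such a bound only yields the weaker $\epsilon\ang{y}^{-2p-1+j/p}$ (via $\beta>1$), and the condition $2p>\beta$ you invoke is not the relevant comparison.
\item The $h_Y$ factors are not smaller at the level of values: $h_Y$ has size $Y^{-p}$, much larger than the size $Y^{-\beta-1/p}$ of $v_Y$.
\end{enumerate}
What is true, and what you need, is that $h_Y$ is smaller at the level of slopes. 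Precisely, $|h_Y(y)-h_Y(y-s)|\lesssim Y^{-p-1}|s|$ with $Y^{-p-1}\leq Y^{-\beta}$, and $|h_Y'(y)-h_Y'(y-s)|\lesssim Y^{-p-1}\min\{1,|s|/Y\}\lesssim Y^{-\beta}\min\{1,\lambda|s|\}$.

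With these bounds, the worst such term (derivative on $v_Y$, both value differences from $h_Y$) is at most $\epsilon Y^{-\beta-2p-2}\int_{\R}\min\{1,\lambda|s|\}|s|^{-p-1}\,ds\lesssim\epsilon Y^{-\beta-2p-1}$. This is $\lesssim\epsilon Y^{-p-2\beta}$ because $\beta\leq p+1$, and the remaining terms are smaller still. Your far-region bound $Y^{-\beta-p}Y^{-\beta-1/p}Y^{-p-2}$ likewise covers only the one-$h_Y$ term; for the term above your $L^\infty$ bounds give $\epsilon Y^{-\beta-3p-2}$, which is larger but still admissible. With this correction the argument closes, including the $j=1$ bookkeeping.
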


\begin{proof}
    Write $h_{Y} = \chi_Y f - v_Y$ and observe that
    \[h_Y = \chi_Y(f_{0,\textup{low}} + c_+ y^{-p})\]
    which is slowly varying. Recall that 
    \[\mc{Q}_{\epsilon,3}(f)(y)
    =-\frac{(p+1)\epsilon c_\alpha}{2}\int_{\R}
    \frac{f'(y)-f'(z)}{|y-z|^\alpha}
    \left(\frac{f(y)-f(z)}{y-z}\right)^2\,dz.\]
    Using the algebraic identity
    \[a_1b_1^2 - a_2b_2^2 = (a_1-a_2)b_1^2 + a_2(b_1-b_2)(b_1+b_2),\]
    we obtain
    \begin{equation}\label{eqn:approx_E7_cubic_difference}
        \begin{aligned}
            \mc{E}_7(y) & = \mc{Q}_{\epsilon,3}(v_Y+h_Y)(y)-\mc{Q}_{\epsilon,3}(v_Y)(y) \\
            & = -\frac{(p+1)\epsilon c_\alpha}{2}\int_{\R}\frac{h_Y'(y)-h_Y'(z)}{|y-z|^\alpha}\bigl(S_{v_Y}(y,z)+S_{h_Y}(y,z)\bigr)^2\,dz \\
            & \quad -\frac{(p+1)\epsilon c_\alpha}{2}\int_{\R}\frac{v_Y'(y)-v_Y'(z)}{|y-z|^\alpha}S_{h_Y}(y,z)\bigl(2S_{v_Y}(y,z)+S_{h_Y}(y,z)\bigr)\,dz.
        \end{aligned}
    \end{equation}
    By \cref{lem:approx_linear_seed}, we have the bounds 
    \[|\partial^{j} h_Y| \lesssim Y^{-p - j}, \quad 0 \leq j \leq 3\]
    and 
    \[|\partial^{j+1} v_Y| \lesssim Y^{-\beta + j/p}, \quad 0 \leq j \leq 2.\]
    Using the corresponding secant bounds, it follows that we can bound the first term in \eqref{eqn:approx_E7_cubic_difference} by 
    \[\begin{aligned}&\lesssim \epsilon Y^{-2\beta} \int_{\R} \frac{|h_Y'(y) - h_Y'(z)|}{|y-z|^{1+p}}\,dz \\ &\lesssim \epsilon Y^{-p-1-2\beta}\int_{\R}\frac{\min\{1,|y-z|/Y\}}{|y-z|^{1+p}}\,dz \\ &\lesssim \epsilon Y^{-2p-1-2\beta}.\end{aligned}\]
    Similarly, we can bound the second term by 
    \[\begin{aligned}&\lesssim \epsilon Y^{-p-1-\beta}\int_{\R}\frac{|v_Y'(y)-v_Y'(z)|}{|y-z|^{1+p}}\,dz \\ &\lesssim \epsilon Y^{-p-1-2\beta}\int_{\R}\frac{\min\{1,Y^{1/p}|y-z|\}}{|y-z|^{1+p}}\,dz \\ &\lesssim \epsilon Y^{-p-2\beta}.\end{aligned}\]
    Next, to estimate $\partial \mc{E}_7$, we make the change of variables $z=y-s$ and write
    \begin{equation}\label{eqn:approx_E7_translated}
        \begin{aligned}
            \mc{E}_7(y) & = -\frac{(p+1)\epsilon c_\alpha}{2}\int_{\R}\frac{h_Y'(y)-h_Y'(y-s)}{|s|^{1+p}}S_{v_Y+h_Y}(y,y-s)^2\,ds \\
            & \quad -\frac{(p+1)\epsilon c_\alpha}{2}\int_{\R}\frac{v_Y'(y)-v_Y'(y-s)}{|s|^{1+p}}S_{h_Y}(y,y-s)S_{2v_Y+h_Y}(y,y-s)\,ds.
        \end{aligned}
    \end{equation}
    Observe that
    \begin{equation}\label{eqn:approx_E7_secant_derivative}
        \partial_y S_w(y,y-s)=\frac{w'(y)-w'(y-s)}{s}=S_{w'}(y,y-s).
    \end{equation}
    The derivative bounds above give $|S_{v_Y+h_Y}|\lesssim Y^{-\beta}$ and $|S_{v_Y'+h_Y'}|\lesssim Y^{-\beta+1/p}$. Differentiating the first term in \eqref{eqn:approx_E7_translated} and using \eqref{eqn:approx_E7_secant_derivative} we obtain the estimate
    \begin{align*}
        &\lesssim \epsilon Y^{-2\beta}\int_{\R}\frac{|h_Y''(y)-h_Y''(y-s)|}{|s|^{1+p}}\,ds
        +\epsilon Y^{-2\beta+1/p}\int_{\R}\frac{|h_Y'(y)-h_Y'(y-s)|}{|s|^{1+p}}\,ds\\
        &\lesssim \epsilon\left(Y^{-p-2-2\beta}+Y^{-p-1-2\beta+1/p}\right)\int_{\R}\frac{\min\{1,|s|/Y\}}{|s|^{1+p}}\,ds\\
        &\lesssim \epsilon\left(Y^{-2p-2-2\beta}+Y^{-2p-1-2\beta+1/p}\right)\\
        &\lesssim \epsilon Y^{-2p-1-2\beta+1/p}.
    \end{align*}
    Similarly, we can estimate the derivative of the second term as
    \begin{align*}
        &\lesssim \epsilon Y^{-p-1-\beta}\int_{\R}\frac{|v_Y''(y)-v_Y''(y-s)|}{|s|^{1+p}}\,ds\\
        &\quad +\epsilon\left(Y^{-p-2-\beta}+Y^{-p-1-\beta+1/p}\right)\int_{\R}\frac{|v_Y'(y)-v_Y'(y-s)|}{|s|^{1+p}}\,ds\\
        &\lesssim \epsilon\left(Y^{-p-1-2\beta+1/p}+Y^{-p-2-2\beta}\right)\int_{\R}\frac{\min\{1,Y^{1/p}|s|\}}{|s|^{1+p}}\,ds\\
        &\lesssim \epsilon\left(Y^{-p-2\beta+1/p}+Y^{-p-1-2\beta}\right)\\
        &\lesssim \epsilon Y^{-p-2\beta+1/p}.
    \end{align*}
    Combining these estimates with the bounds for $\mc{E}_7$ proves \eqref{eqn:approx_E7_bound}, since $y\approx Y$ and $p+2\beta-(2p+1)=1/p-p>0$.
\end{proof}

\subsection{Control of \texorpdfstring{$\mc{E}_8$}{E8}}

In this section we control the term
\[\mc{E}_8 = \mc{Q}_\epsilon(\chi_Y f) - \mc{Q}_{\epsilon, 3}(\chi_Y f).\]
We will achieve good estimates because the difference is of quintic or higher order. 

\begin{lemma}\label{lem:approx_E8}
    The residual $\mc{E}_8$ satisfies
    \begin{equation}\label{eqn:approx_E8_bound}
        |\partial_y^j\mc{E}_8(y)|
        \lesssim\epsilon^2\ang{y}^{-2p-1+\frac{j}{p}},
        \qquad Y\leq y\leq2Y,\quad j=0,1.
    \end{equation}
\end{lemma}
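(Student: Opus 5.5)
We bound $\mc{E}_8$ by writing it as a quintic-order integral and repeating the near--far analysis used for $\mc{E}_7$ and $\mc{R}_{\textup{cub}}$. Set $g=\chi_Y f$ and write $S=S_g(y,z)$. Since $A_\epsilon(u)=(1+\epsilon u^2)^{-\alpha/2}$, Taylor's formula gives
\[A_\epsilon(u)-1+\tfrac{(p+1)\epsilon}{2}u^2=\epsilon^2u^4\,\Theta_\epsilon(u),\qquad |\partial_u^k\Theta_\epsilon(u)|\lesssim_k 1 \text{ for } |u|\lesssim 1 .\]
Hence
\[\mc{E}_8(y)=c_\alpha\epsilon^2\int_{\R}\frac{g'(y)-g'(z)}{|y-z|^{\alpha}}\,S^4\,\Theta_\epsilon(S)\,dz .\]
On the support of $\chi_Y$ the bounds from \cref{prop:approx_profile_bounds} give $|g|\lesssim Y^{-p}$, $|g'|\lesssim Y^{-\beta}$ and $|g''|\lesssim Y^{-\beta+1/p}$. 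The cutoff derivatives contribute only $Y^{-1-p}\lesssim Y^{-\beta}$, using $\beta<p+1$.

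For $j=0$ we split the integral into $|y-z|\le\lambda^{-1}$, $\lambda^{-1}\le|y-z|\le Y/4$ and $|y-z|\ge Y/4$, where $\lambda\approx Y^{1/p}$. The secant bound $|S|\lesssim Y^{-\beta}$ holds everywhere by the mean value theorem. In the first region we use $|g'(y)-g'(z)|\lesssim Y^{-\beta}\lambda|y-z|$, which gives
\[\epsilon^2Y^{-5\beta}\lambda\int_0^{\lambda^{-1}}s^{-p}\,ds\lesssim\epsilon^2 Y^{-5\beta+1}.\]
In the middle region we use $|g'(y)-g'(z)|\lesssim Y^{-\beta}$ together with the improved secant bound from the primitive estimate, $|S|\lesssim \|g\|_\infty/|y-z|\lesssim Y^{-\beta}\lambda^{-1}|y-z|^{-1}$ (as in \eqref{eqn:approx_vY_amplitude_bound}). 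This gives
\[\epsilon^2Y^{-5\beta}\lambda^{-4}\int_{\lambda^{-1}}^\infty s^{-p-5}\,ds\lesssim \epsilon^2Y^{-5\beta}\lambda^{p},\]
which is again $\epsilon^2Y^{1-5\beta}$ since $\lambda^p\approx Y$. In the far region $g(z)=0$ or $|y-z|\gtrsim Y$, so $|S|\lesssim Y^{-p-1}$ and the contribution is at most $\epsilon^2Y^{-\beta-5(p+1)-p}$, which is much smaller. Finally, $1-5\beta\le -2p-1$ is equivalent to $5\beta\ge 2p+2$, which holds because $\beta>1$. This proves \eqref{eqn:approx_E8_bound} for $j=0$.

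For $j=1$ we substitute $z=y-s$ and differentiate under the integral using \eqref{eqn:approx_E7_secant_derivative}, i.e. $\partial_yS_g(y,y-s)=S_{g'}(y,y-s)$. Each derivative either falls on $g'(y)-g'(y-s)$, turning it into a difference of $g''$, or on a secant, turning it into $S_{g'}$ with $|S_{g'}|\lesssim Y^{-\beta}\min\{\lambda,|s|^{-1}\}$. In both cases the gain is one factor of $\lambda\approx Y^{1/p}$ compared with the $j=0$ bounds, so the same three-region splitting gives $\epsilon^2Y^{1-5\beta+1/p}$.

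The main obstacle is the middle region when the derivative lands on the $g''$-difference. There $|g''(y)-g''(y-s)|$ is only bounded by $Y^{-\beta+1/p}$ and not by anything decaying in $s$. The decay must therefore come entirely from the four secant factors, so the primitive-based bound $|S|\lesssim Y^{-\beta}\lambda^{-1}|s|^{-1}$ is essential. We expect it to hold uniformly because $\|g\|_{L^\infty}\lesssim Y^{-\beta-1/p}+Y^{-p}$. The slowly varying part of size $Y^{-p}$ does not have the $\lambda^{-1}$ gain, so we will split $g=v_Y+h_Y$ as in \cref{lem:approx_E7}. Terms containing $h_Y$ lose oscillation but gain $Y^{-p-1}$ per secant, and we will check that these terms are still at most $\epsilon^2Y^{-2p-1+j/p}$.
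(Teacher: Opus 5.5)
Your scheme is the paper's argument, and the final exponents are right: write $\mc{E}_8$ using the quartic Taylor remainder of $A_\epsilon$, bound the secants pointwise, and gain one factor $\lambda\approx Y^{1/p}$ per $y$-derivative. However, the step you single out as essential is false as stated, and it is also unnecessary.

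\textbf{The improved secant bound fails.} The bound $|S_g(y,y-s)|\lesssim Y^{-\beta}\lambda^{-1}|s|^{-1}$ would need $\|\chi_Y f\|_{L^\infty}\lesssim Y^{-\beta-1/p}$. On the support of $\chi_Y$, however, $f$ contains the non-oscillatory part $f_{0,\textup{low}}+c_+y^{-p}$, which has size $\approx Y^{-p}\gg Y^{-\beta-1/p}$. You use $\|g\|_\infty\lesssim Y^{-p}$ yourself in the far region. The estimate \eqref{eqn:approx_vY_amplitude_bound} is a statement about $v_Y$, not about $\chi_Y f$.

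\textbf{Nothing needs it.} The crude bound $|S_g|\lesssim Y^{-\beta}$, from the mean value theorem, holds everywhere. In the middle region it gives
\[
\epsilon^2Y^{-5\beta}\int_{|s|\geq\lambda^{-1}}|s|^{-1-p}\,ds\approx\epsilon^2Y^{-5\beta}\lambda^p\approx\epsilon^2Y^{1-5\beta}.
\]
This is the same as your computation, because the integral is dominated by $|s|\approx\lambda^{-1}$, where the two secant bounds agree.

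\textbf{The same holds for $j=1$.} When the derivative lands on $g''(y)-g''(y-s)$, the product of $Y^{-\beta+1/p}$, $Y^{-4\beta}$ and $\lambda^p$ is exactly $Y^{1-5\beta+1/p}$. The decay in $s$ comes from the kernel $|s|^{-1-p}$, not from the secants. So your ``main obstacle'' does not exist, and the $v_Y+h_Y$ splitting is not needed.

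\textbf{What remains is the paper's proof.} Drop the detour and use the following bounds:
\begin{itemize}
\item $|\sq{A}_\epsilon(S)|\lesssim\epsilon^2Y^{-4\beta}$, $|\sq{A}_\epsilon'(S)|\lesssim\epsilon^2Y^{-3\beta}$, and $|S_{g'}|\lesssim Y^{-\beta+1/p}$;
\item $|g^{(k)}(y)-g^{(k)}(y-s)|\lesssim Y^{-\beta+(k-1)/p}\min\{1,Y^{1/p}|s|\}$ for $k=1,2$;
\item $\int_{\mathbb{R}}\min\{1,Y^{1/p}|s|\}\,|s|^{-\alpha}\,ds\lesssim Y$.
\end{itemize}
The case $k=2$ uses $|g'''|\lesssim Y^{-\beta+2/p}$ from \cref{prop:approx_profile_bounds}, which you did not list but do need in the near region for $j=1$. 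Separately, your far-region exponent should read $-\beta-4(p+1)-p$ rather than $-\beta-5(p+1)-p$; this is still harmless.
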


\begin{proof}
    In general, we can write
    \[\mc{Q}_{\epsilon}(h)(y) - \mc{Q}_{\epsilon, 3}(h)(y) = \int \frac{h'(y) - h'(z)}{|y - z|^\alpha}\sq{A}_\epsilon\left(\frac{h(y) - h(z)}{y - z}\right) \, dz.\]
    Here, $\sq{A}_\epsilon$ is a smooth function satisfying 
    \[|\sq{A}_\epsilon(u)| \lesssim \epsilon^2 |u|^4, \quad |\sq{A}_\epsilon'(u)| \lesssim \epsilon^2 |u|^3.\]
    Therefore, using the bounds for $f \chi_Y$, we have that 
    \[|\mc{E}_8| \lesssim \epsilon^2 Y^{-4\beta} \int \frac{|(\chi_Y f)'(y) - (\chi_Y f)'(z)|}{|y - z|^\alpha} \, dz \lesssim \epsilon^2 Y^{-5\beta} \int \frac{\min\{1, Y^{1/p}|s|\}}{|s|^\alpha} \, ds \lesssim \epsilon^2 Y^{1 - 5\beta}.\]
    For the derivative, we make the change of variables $z=y-s$ and write
    \begin{equation}\label{eqn:approx_E8_translated}
        \mc{E}_8(y)=\int_{\R}\frac{(\chi_Y f)'(y)-(\chi_Y f)'(y-s)}{|s|^\alpha}\sq{A}_\epsilon\bigl(S_{\chi_Y f}(y,y-s)\bigr)\,ds.
    \end{equation}
    Using \eqref{eqn:approx_E7_secant_derivative}, we obtain
    \begin{equation}\label{eqn:approx_E8_derivative}
        \begin{aligned}
            \partial_y\mc{E}_8(y)
            & = \int_{\R}\frac{(\chi_Y f)''(y)-(\chi_Y f)''(y-s)}{|s|^\alpha}\sq{A}_\epsilon\bigl(S_{\chi_Y f}(y,y-s)\bigr)\,ds\\
            & \quad +\int_{\R}\frac{(\chi_Y f)'(y)-(\chi_Y f)'(y-s)}{|s|^\alpha}\sq{A}_\epsilon'\bigl(S_{\chi_Y f}(y,y-s)\bigr)S_{(\chi_Y f)'}(y,y-s)\,ds.
        \end{aligned}
    \end{equation}
    The derivative bounds for $\chi_Y f$ give $|S_{\chi_Y f}|\lesssim Y^{-\beta}$ and $|S_{(\chi_Y f)'}|\lesssim Y^{-\beta+1/p}$. Thus we can estimate \eqref{eqn:approx_E8_derivative} by
    \begin{align*}
        |\partial_y\mc{E}_8(y)|
        &\lesssim\epsilon^2Y^{-4\beta}\int_{\R}\frac{|(\chi_Y f)''(y)-(\chi_Y f)''(y-s)|}{|s|^\alpha}\,ds\\
        &\quad+\epsilon^2Y^{-4\beta+1/p}\int_{\R}\frac{|(\chi_Y f)'(y)-(\chi_Y f)'(y-s)|}{|s|^\alpha}\,ds\\
        &\lesssim\epsilon^2Y^{-5\beta+1/p}\int_{\R}\frac{\min\{1,Y^{1/p}|s|\}}{|s|^\alpha}\,ds\\
        &\lesssim\epsilon^2Y^{1-5\beta+1/p}
        \lesssim\epsilon^2\ang{y}^{-2p-1+1/p}.
    \end{align*}
    Together with the estimate for $\mc{E}_8$, this proves \eqref{eqn:approx_E8_bound} since $y\approx Y$ and $5\beta>2p+2$.
\end{proof}

\subsection{Control of \texorpdfstring{$\mc{E}_9$}{E9}}

In this section we control the term
\[\mc{E}_9 = \mc{Q}_\epsilon(f)-\mc{Q}_\epsilon(W)
        -D\mc{Q}_\epsilon(W)[f-W]-\mc{Q}_\epsilon(\chi_Y f).\]

\begin{lemma}\label{lem:approx_E9}
    The residual $\mc{E}_9$ satisfies
    \begin{equation}\label{eqn:approx_E9_bound}
        |\partial_y^j\mc{E}_9(y)|
        \lesssim\epsilon\ang{y}^{-p-2+\frac{j}{p}},
        \qquad Y\leq y\leq2Y,\quad j=0,1.
    \end{equation}
\end{lemma}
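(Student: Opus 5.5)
We will view $\mc{E}_9$ as a remainder that is \emph{nonlocal} in nature. Write $f = W + g$, so $g = f - W$, and split $g = \chi_Y f + g_{\textup{far}}$ near $y\in[Y,2Y]$. On the support of $\chi_Y$ we have $W=0$, so $\chi_Y f = \chi_Y g$. The quantity $\mc{Q}_\epsilon(W+g) - \mc{Q}_\epsilon(W) - D\mc{Q}_\epsilon(W)[g]$ is the Taylor remainder of order two around $W$. Subtracting $\mc{Q}_\epsilon(\chi_Y f)$, which is the purely local self-interaction, leaves only interactions in which at least one of the two points $y,z$ feels either $W$ or $g_{\textup{far}}$ with $z$ far from $y$.

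Concretely, we write the integrand of $\mc{Q}_\epsilon$ at the fixed point $y$ and split $z$ into a near region $|y-z|\le Y/4$ and a far region $|y-z|\ge Y/4$ using a smooth cutoff.

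\textbf{Near region.} For $|y-z|\le Y/4$, both $y$ and $z$ lie in $[Y/2,3Y]$, where $\chi_Y=1$ and $W=0$, so $f=\chi_Y f=g$ there. The near integrands of $\mc{Q}_\epsilon(f)$ and $\mc{Q}_\epsilon(\chi_Y f)$ therefore coincide. The near integrands of $\mc{Q}_\epsilon(W)$ and $D\mc{Q}_\epsilon(W)[g]$ vanish, since $A_\epsilon(0)=1$ and $A_\epsilon'(0)=0$ with $S_W=0$. Hence the near contribution to $\mc{E}_9$ is exactly zero. For the $y$-derivative we translate $z=y-s$ as in \eqref{eqn:approx_E7_translated}, so the cutoff depends only on $s$ and the same cancellation persists.

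\textbf{Far region.} Here the kernel is smooth: $|y-z|^{-\alpha}\lesssim Y^{-\alpha}$ together with $\langle$tail decay$\rangle$. We bound each of the four terms separately. The argument uses the following ingredients.
\begin{enumerate}
\item $|A_\epsilon(u)-1|\lesssim\epsilon u^2$ and $|A_\epsilon'(u)|\lesssim\epsilon|u|$ on bounded sets; secants of $f$, $W$ and $\chi_Y f$ are bounded by $1$.
\item For $z\ge0$ far from $y$, the bound $|S_f(y,z)|\lesssim Y^{-1}(|f(y)|+|f(z)|)\lesssim Y^{-1-p}$, with an extra gain from $\int_z^\infty Z$ type primitives.
\item For $z<0$, the secants of $f$ and of $W$ both equal $z/(y-z)+O(Y^{-1}\langle z\rangle^{-p})$, so $S_f-S_W=S_g$ is small.
\end{enumerate}
The potentially dangerous term is the one where $f'(z)$ appears inside the integral, for instance $\int f'(z)|y-z|^{-\alpha}[A_\epsilon(S_f)-1]\,dz$ over the far region. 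Pointwise bounds on $f'(z)$ give only $\langle z\rangle^{-\beta}$ oscillation on $z>0$, which is not integrable enough. We therefore integrate by parts in $z$, moving the derivative onto the kernel $|y-z|^{-\alpha}(A_\epsilon(S_f(y,z))-1)$, whose $z$-derivative gains $Y^{-1}$. This uses the identity $\partial_z S_f=(S_f-f'(z))/(y-z)$, handled as in the proof of \cref{prop:analysis_semilinear_kernel}.

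After these manipulations, the $W$-terms cancel to first order in $g$ by construction of the Taylor remainder. What is left is quadratic in $g$ or linear in $g$ times $\epsilon\,(\text{secant})^2$. The expected bound for such terms is
\[
\epsilon\int_{|s|\ge Y/4}|s|^{-\alpha-1}\langle y-s\rangle^{-p}\,ds \;+\; \epsilon Y^{-\alpha}\|f_{\textup{osc}}\|_{L^1}\ \text{type terms}\;\lesssim\;\epsilon Y^{-p-2}.
\]

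\textbf{Derivative bound.} The derivative estimate follows by differentiating in $y$ after the translation $z=y-s$, using \eqref{eqn:approx_E7_secant_derivative}. Derivatives landing on $f'(y)$ cost at most $Y^{1/p}$, which accounts for the $j/p$ in the statement.

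\textbf{Main obstacle.} We expect the main difficulty to be the terms in which the far point $z$ is negative and large. There $S_W$ is of size $O(1)$, not small, so no smallness comes from the secants. We must instead extract exact cancellation between $\mc{Q}_\epsilon(f)-\mc{Q}_\epsilon(W)-D\mc{Q}_\epsilon(W)[g]$ and second-order Taylor terms in $S_g$. Our first attempt will be to expand $A_\epsilon(S_W+S_g)$ to second order in $S_g\lesssim Y^{-1}\langle z\rangle^{-p}$, and then perform the same $t$-integration as in the proof of \cref{prop:approx_E6}, which produced $\epsilon y^{-2p-1}$ there.
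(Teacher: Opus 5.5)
Your near-field step is correct. It is exactly the paper's observation: for $z\in[Y/2,3Y]$ the integrands of $\mc{Q}_\epsilon(f)$ and $\mc{Q}_\epsilon(\chi_Y f)$ coincide, and those of $\mc{Q}_\epsilon(W)$ and $D\mc{Q}_\epsilon(W)[f-W]$ vanish.

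The gap is in the far field, where your estimates do not give the rate $Y^{-p-2}$. The integral you display as the expected bound is not $\lesssim\epsilon Y^{-p-2}$. The part with $|z|=|y-s|\le Y/2$ alone gives
\[
\epsilon\int_{|s|\ge Y/4}|s|^{-\alpha-1}\ang{y-s}^{-p}\,ds\gtrsim\epsilon Y^{-p-2}\int_{|z|\le Y/2}\ang{z}^{-p}\,dz\approx\epsilon Y^{-2p-1},
\]
and $Y^{-2p-1}\gg Y^{-p-2}$ because $p<1$. Likewise, $\epsilon Y^{-\alpha}\|f_{\textup{osc}}\|_{L^1}\lesssim\epsilon^2Y^{-p-1}$ does not give $\epsilon Y^{-p-2}$ uniformly in $Y$. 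Your integrand is what the $z<0$ cross term
\[
\int_{-\infty}^0\frac{g'(y)-g'(z)}{|y-z|^\alpha}\bigl[A_\epsilon(S_f(y,z))-A_\epsilon(S_W(y,z))\bigr]\,dz,\qquad g=f-W,
\]
produces if you only use $|g'|\lesssim1$ and $|S_g|\lesssim|y-z|^{-1}(Y^{-p}+\ang{z}^{-p})$. To reach $Y^{-p-2}$ you must put the decay on the other factor. Use $|g'(y)|\lesssim Y^{-\beta}$ and, crucially, the integrable decay $|g'(z)|=|f'(z)+1|\lesssim\ang{z}^{-p-1}$ on $z<0$. For the secant, the crude bound $|S_g|\lesssim|y-z|^{-1}$ then suffices.

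For the same reason, the $t$-integration of \cref{prop:approx_E6} is the wrong model. There the term linear in $f-W$ survives and produces exactly $\epsilon y^{-2p-1}$; in $\mc{E}_9$ that term has been subtracted off by $D\mc{Q}_\epsilon(W)[f-W]$. (A bound $\epsilon Y^{-2p-1+j/p}$ would still suffice for \cref{prop:approx_residual_positive}, since $\gamma<2p+1$, but it is not the stated lemma.)

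You have also misplaced the difficulty. For $z>0$ in the far region you need no oscillation, primitives, or integration by parts. Since $|f(y)|+|f(z)|\lesssim1$, one has $|S_f(y,z)|\lesssim|y-z|^{-1}$. Your bound $Y^{-1-p}$ fails for $z=O(1)$, where $|f(z)|\lesssim\ang{z}^{-p}$ gives nothing better than $O(1)$. Hence $|A_\epsilon(S_f)-1|\lesssim\epsilon|y-z|^{-2}$. Together with the bounded factor $f'(y)-f'(z)$, the integrand is $\lesssim\epsilon|y-z|^{-p-3}$, which integrates to $\epsilon Y^{-p-2}$; the same holds for the $\chi_Yf$ term. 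Integration by parts would not remove $f'(z)$ anyway, since $\partial_zS_f=(S_f-f'(z))/(y-z)$.

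For $z<0$ no exact cancellation is needed either. Write the integrand of $\mc{E}_9$ as $c_\alpha|y-z|^{-\alpha}$ times
\[
\bigl[A_\epsilon(S_f)-A_\epsilon(S_W)-A_\epsilon'(S_W)S_g\bigr]+(g'(y)-g'(z))\bigl[A_\epsilon(S_f)-A_\epsilon(S_W)\bigr]-f'(y)\Bigl[A_\epsilon\Bigl(\frac{f(y)}{y-z}\Bigr)-1\Bigr].
\]
The first bracket is a second-order Taylor remainder, of size $O(\epsilon S_g^2)=O(\epsilon|y-z|^{-2})$. The second term is handled as above. The third is the $z<0$ part of $\mc{Q}_\epsilon(\chi_Yf)$ and is $O(\epsilon|f'(y)||f(y)|^2|y-z|^{-2})$. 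Each integrates to $\lesssim\epsilon Y^{-p-2}$.

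The derivative bound follows by differentiating at fixed $z$ with $\partial_yS_f=(f'(y)-S_f)/(y-z)$. The bound $|f''(y)|\lesssim Y^{-\beta+1/p}$ accounts for the loss $Y^{1/p}$.
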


\begin{proof}
    We start by expressing $\mc{E}_9$ as the sum of the contributions from $z > 0$ and $z < 0$:
    \begin{equation}\label{eqn:approx_E9_integral_decomposition}
        \begin{aligned}
            \mc{E}_9(y)
            & = c_\alpha\int_0^\infty\frac{f'(y)-f'(z)}{|y-z|^\alpha}\bigl[A_\epsilon(S_f(y,z))-1\bigr]\,dz\\
            & \quad-c_\alpha\int_0^\infty\frac{f'(y)-(\chi_Yf)'(z)}{|y-z|^\alpha}\bigl[A_\epsilon(S_{\chi_Yf}(y,z))-1\bigr]\,dz\\
            & \quad+c_\alpha\int_{-\infty}^0\frac{A_\epsilon(S_f(y,z))-A_\epsilon(S_W(y,z))-A_\epsilon'(S_W(y,z))S_{f-W}(y,z)}{|y-z|^\alpha}\,dz\\
            & \quad+c_\alpha\int_{-\infty}^0\frac{f'(y)-f'(z)-1}{|y-z|^\alpha}\bigl[A_\epsilon(S_f(y,z))-A_\epsilon(S_W(y,z))\bigr]\,dz\\
            & \quad-c_\alpha\int_{-\infty}^0\frac{f'(y)}{|y-z|^\alpha}\left[A_\epsilon\left(\frac{f(y)}{y-z}\right)-1\right]\,dz \\
            & = \mc{E}_{9,+}(y) + \mc{E}_{9,-}(y).
        \end{aligned}
    \end{equation}
    For the contribution of the $z > 0$ term, observe that if $z \in [Y/2, 3Y]$ then the integrands cancel and equal zero. Therefore the $\mc{E}_{9,+}$ integral is supported where $|y - z| \gtrsim Y$. In this region, we can bound 
    \[|S_f(y, z)| + |\partial_y S_f(y, z)| \lesssim |y - z|^{-1}\]
    and therefore
    \[\left|\partial_y^j \left[\frac{f'(y) - f'(z)}{|y - z|^\alpha}[A_\epsilon(S_f(y,z)) - 1]\right]\right| \lesssim \epsilon Y^{j/p}|y - z|^{-2 - \alpha}, \quad j = 0,1.\]
    The same estimates hold for $\chi_Y f$ in place of $f$. Therefore, we can estimate
    \[|\partial_y^j \mc{E}_{9,+}| \lesssim \epsilon Y^{j/p}\int_{|y - z| \gtrsim Y}|y - z|^{-3 - p} \, dz \lesssim \epsilon Y^{-p - 2 + j/p}, \quad j = 0,1.\]
    For the contribution of the $z < 0$ term, we further decompose 
    \[\mc{E}_{9,-}(y) = \mc{I}_1 + \mc{I}_2 + \mc{I}_3\]
    where 
    \begin{equation}\label{eqn:approx_E9_negative_terms}
        \begin{aligned}
            \mc{I}_1 & = c_\alpha\int_{-\infty}^0\frac{A_\epsilon(S_f(y,z))-A_\epsilon(S_W(y,z))-A_\epsilon'(S_W(y,z))S_{f-W}(y,z)}{|y-z|^\alpha}\,dz,\\
            \mc{I}_2 & = c_\alpha\int_{-\infty}^0\frac{f'(y)-f'(z)-1}{|y-z|^\alpha}\bigl[A_\epsilon(S_f(y,z))-A_\epsilon(S_W(y,z))\bigr]\,dz,\\
            \mc{I}_3 & = -c_\alpha\int_{-\infty}^0\frac{f'(y)}{|y-z|^\alpha}\left[A_\epsilon\left(\frac{f(y)}{y-z}\right)-1\right]\,dz.
        \end{aligned}
    \end{equation}
    For the first term, we use Taylor's theorem to write
    \begin{equation}\label{eqn:approx_E9_I1_taylor}
        \mc{I}_1=c_\alpha\int_{-\infty}^0\frac{S_{f-W}(y,z)^2}{|y-z|^\alpha}\left[\int_0^1(1-t)A_\epsilon''\bigl(S_W(y,z)+tS_{f-W}(y,z)\bigr)\,dt\right]\,dz.
    \end{equation}
    Since $f - W$ is bounded, we have $|S_{f-W}(y,z)| \lesssim |y - z|^{-1}$. Using that $|A_{\epsilon}''| \lesssim \epsilon$, we have 
    \[|\mc{I}_1| \lesssim \epsilon\int_{-\infty}^{0}|y - z|^{-2 - \alpha} \, dz \lesssim \epsilon Y^{-2 - p}.\]
    For the derivative, we use that
    \[\partial_y S_{f - W}(y, z) = \frac{f'(y) - S_{f-W}(y,z)}{y - z}, \quad \partial_y S_W(y, z) = - \frac{S_{W}(y,z)}{y - z}.\]
    Therefore 
    \[|\partial_y S_{f - W}(y, z)| \lesssim \frac{Y^{-\beta}}{y - z} + |y - z|^{-2}, \quad |\partial_y S_W(y, z)| \lesssim |y - z|^{-1}.\]
    It follows that, using $|A_\epsilon''|+|A_\epsilon'''|\lesssim\epsilon$, we can bound $\partial \mc{I}_1(y)$ as
    \begin{align*}
        |\partial_y\mc{I}_1(y)|
        &\lesssim\epsilon\int_{-\infty}^0\frac{|S_{f-W}|^2}{|y-z|^{\alpha+1}}\,dz
        +\epsilon\int_{-\infty}^0\frac{|S_{f-W}|\,|\partial_yS_{f-W}|}{|y-z|^\alpha}\,dz\\
        &\quad+\epsilon\int_{-\infty}^0\frac{|S_{f-W}|^2\bigl(|\partial_yS_W|+|\partial_yS_{f-W}|\bigr)}{|y-z|^\alpha}\,dz\\
        &\lesssim\epsilon\int_{-\infty}^0\left[|y-z|^{-p-4}+Y^{-\beta}|y-z|^{-p-3}\right]\,dz\\
        &\lesssim\epsilon\left(Y^{-p-3}+Y^{-p-2-\beta}\right)
        \lesssim\epsilon Y^{-p-3}.
    \end{align*}
    Here all secants are evaluated at $(y,z)$, and the last inequality uses $\beta>1$. \par 
    For $\mc{I}_2$, we use Taylor's theorem to write
    \begin{equation}\label{eqn:approx_E9_I2_taylor}
        A_\epsilon(S_f)-A_\epsilon(S_W)
        =S_{f-W}\int_0^1A_\epsilon'\bigl(S_W+tS_{f-W}\bigr)\,dt.
    \end{equation}
    Using $|A_\epsilon'|\lesssim\epsilon$ and the secant bound above in \eqref{eqn:approx_E9_I2_taylor}, we obtain
    \[|A_\epsilon(S_f)-A_\epsilon(S_W)|\lesssim\frac{\epsilon}{y-z},\qquad
    |f'(y)-f'(z)-1|\lesssim Y^{-\beta}+\ang{z}^{-p-1}.\]
    Therefore, we can estimate
    \begin{align*}
        |\mc{I}_2|
        &\lesssim\epsilon\int_{-\infty}^0\frac{Y^{-\beta}+\ang{z}^{-p-1}}{(y-z)^{p+2}}\,dz\\
        &\lesssim\epsilon\left(Y^{-p-1-\beta}+Y^{-p-2}\int_{-\infty}^0\ang{z}^{-p-1}\,dz\right)\\
        &\lesssim\epsilon Y^{-p-2}.
    \end{align*}
    For the derivative, we use that
    \begin{equation}\label{eqn:approx_E9_I2_coefficient_derivative}
        \begin{aligned}
            \partial_y\bigl[A_\epsilon(S_f)-A_\epsilon(S_W)\bigr]
            & = \bigl[A_\epsilon'(S_f)-A_\epsilon'(S_W)\bigr]\partial_yS_W\\
            & \quad+A_\epsilon'(S_f)\partial_yS_{f-W}.
        \end{aligned}
    \end{equation}
    Applying the secant bounds above and $|A_\epsilon'|+|A_\epsilon''|\lesssim\epsilon$ to \eqref{eqn:approx_E9_I2_coefficient_derivative} gives
    \[\left|\partial_y\bigl[A_\epsilon(S_f)-A_\epsilon(S_W)\bigr]\right|
    \lesssim\epsilon\left(\frac{Y^{-\beta}}{y-z}+\frac{1}{(y-z)^2}\right).\]
    Differentiating the integral defining $\mc{I}_2$ and using $|f''(y)|\lesssim Y^{-\beta+1/p}$, we obtain
    \begin{align*}
        |\partial_y\mc{I}_2(y)|
        &\lesssim\epsilon Y^{-\beta+1/p}\int_{-\infty}^0\frac{dz}{(y-z)^{p+2}}\\
        &\quad+\epsilon\int_{-\infty}^0\frac{Y^{-\beta}+\ang{z}^{-p-1}}{(y-z)^{p+3}}\,dz\\
        &\quad+\epsilon Y^{-\beta}\int_{-\infty}^0\frac{Y^{-\beta}+\ang{z}^{-p-1}}{(y-z)^{p+2}}\,dz\\
        &\lesssim\epsilon\left(Y^{-p-1-\beta+1/p}+Y^{-p-2-\beta}+Y^{-p-3}+Y^{-p-1-2\beta}\right)\\
        &\lesssim\epsilon Y^{-p-2+1/p}.
    \end{align*}
    In both estimates, the last inequality uses $\beta>1$.

    Finally, for $\mc{I}_3$, we use the bounds
    \begin{equation}\label{eqn:approx_E9_I3_coefficient_bounds}
        |A_\epsilon(u)-1|\lesssim\epsilon|u|^2,\qquad
        |A_\epsilon'(u)|\lesssim\epsilon|u|.
    \end{equation}
    It follows that
    \begin{align*}
        |\mc{I}_3|
        &\lesssim\epsilon|f'(y)|\,|f(y)|^2\int_{-\infty}^0\frac{dz}{(y-z)^{p+3}}\\
        &\lesssim\epsilon Y^{-\beta-2p}Y^{-p-2}
        \lesssim\epsilon Y^{-p-2}.
    \end{align*}
    For the derivative, we use that
    \[\partial_y\left(\frac{f(y)}{y-z}\right)=\frac{f'(y)}{y-z}-\frac{f(y)}{(y-z)^2}.\]
    Differentiating the integral defining $\mc{I}_3$ and using \eqref{eqn:approx_E9_I3_coefficient_bounds}, we obtain
    \begin{align*}
        |\partial_y\mc{I}_3(y)|
        &\lesssim\epsilon\left(|f''(y)|\,|f(y)|^2+|f'(y)|^2|f(y)|\right)\int_{-\infty}^0\frac{dz}{(y-z)^{p+3}}\\
        &\quad+\epsilon|f'(y)|\,|f(y)|^2\int_{-\infty}^0\frac{dz}{(y-z)^{p+4}}\\
        &\lesssim\epsilon\left(Y^{-\beta-2p+1/p}+Y^{-2\beta-p}\right)Y^{-p-2}
        +\epsilon Y^{-\beta-2p}Y^{-p-3}\\
        &\lesssim\epsilon Y^{-p-2+1/p}.
    \end{align*}
    Combining the estimates for $\mc{I}_1$, $\mc{I}_2$, and $\mc{I}_3$ with the bound for $\mc{E}_{9,+}$ proves \eqref{eqn:approx_E9_bound}, since $y\approx Y$.
\end{proof}

\subsection{Residual Bounds for \texorpdfstring{$y \leq -8R$}{y <= -8R}} 

In this section, we will prove estimates on the residual $\sq{\mc{E}}$ in the negative region $(-\infty, -8R)$. The estimates are much simpler than in the positive region.

\begin{proposition}\label{prop:approx_residual_negative}
    For $y \leq -8R$, we have the estimate
    \[|\partial^j(\sq{\mc{E}}_\epsilon(f)-G_\mu)(y)| \lesssim \epsilon \ang{y}^{-\gamma - j}, \quad j = 0,1.\]
\end{proposition}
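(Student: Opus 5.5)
For $y\le -8R$ we use \eqref{eqn:approx_family_negative}. Since $\mc L_0(f_{0,\mu})=G_\mu$ and $f_{\textup{osc}}$ is supported in $(R,\infty)$, the residual has the exact form
\[
\sq{\mc{E}}_\epsilon(f)-G_\mu=\mc L_0(f_{\textup{slow}})+\mc L_0(f_{\textup{osc}})+\mc Q_\epsilon(f).
\]
We split it as $\mc E_1^-+\mc E_{\textup{osc}}^-+(\mc Q_\epsilon(f)-\mc Q_\epsilon(W))$, where $\mc E_1^-=\mc L_0(f_{\textup{slow}})+\mc Q_\epsilon(W)$. The target is the stronger bound $\epsilon\ang{y}^{-2p-1-j}$, which suffices because $\gamma<2p+1$.

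\emph{The wedge term.} For $y<0$ we have $W'(y)=-1$, and only $z>0$ contributes to $\mc Q_\epsilon(W)(y)$. There $S_W(y,z)=-y/(z-y)=t$ with $t\in(0,1)$. The change of variables $t=|y|/(z+|y|)$, as in \cref{lem:analysis_coefficient}, gives
\[
\mc Q_\epsilon(W)(y)=-c_\alpha|y|^{-p}\int_0^1[A_\epsilon(t)-1]t^{p-1}\,dt=-(p+1)c_-|y|^{-p}.
\]
On the other hand, $f_{\textup{slow}}=c_-|y|^{-p}$ yields $f_{\textup{slow}}-yf_{\textup{slow}}'=(1+p)c_-|y|^{-p}$. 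These cancel, so the choice of $c_-$ in \eqref{eqn:approx_correction_constants} leaves $\mc E_1^-=|D|^pf_{\textup{slow}}'$. The near/far argument of \cref{prop:approx_E1}, applied verbatim, bounds this by $\epsilon\ang{y}^{-2p-1-j}$.

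\emph{Remote oscillatory term.} For $y\le -8R$ the local part of $\mc L_0(f_{\textup{osc}})$ vanishes, so $\mc L_0(f_{\textup{osc}})(y)=-c_\alpha\int f_{\textup{osc}}'(z)|y-z|^{-\alpha}\,dz$. Integrating by parts once, $|\partial^j\mc L_0(f_{\textup{osc}})|\lesssim \|f_{\textup{osc}}\|_{L^1}|y|^{-p-2-j}\lesssim\epsilon|y|^{-p-2-j}$ by \eqref{eqn:approx_fosc_L1_bound}. This is better than needed since $p+2>2p+1$.

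\emph{Nonlinear remainder.} Set $h=f-W$. This function is bounded, satisfies $|\partial^jh|\lesssim\ang{y}^{-p-j}$ for $y<0$, and on $y>0$ obeys $|h'|\lesssim1$, $|h''|\lesssim\ang{y}^{-\beta+1/p}$. Split $\mc Q_\epsilon(f)-\mc Q_\epsilon(W)$ with a cutoff at $|y-z|\sim|y|/2$.

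In the near region, $f=-y+O(|y|^{-p})$ is slowly varying. The integrand difference is controlled by $|h'(y)-h'(z)|\lesssim|y|^{-p-2}|y-z|$ times $|A_\epsilon(S_f)-1|\lesssim\epsilon$, plus $|f'(y)-f'(z)|\cdot|A_\epsilon(S_f)-A_\epsilon(S_W)|$. In that region $W'$ is constant, so the second product is $O(\epsilon|y|^{-p-2}|y-z|\cdot\ang{y}^{-p-1}|y-z|)$. Integrating against $|y-z|^{-\alpha}$ over $|y-z|\lesssim|y|$ gives $\epsilon|y|^{-2p-1}$ or better.

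In the far region we use $|S_h(y,z)|\lesssim|y-z|^{-1}$ and $|A_\epsilon'|\lesssim\epsilon$, and write the $h'(z)$ contributions via integration by parts as in \cref{prop:approx_E6}, i.e. $D\mc Q_\epsilon(W)[h]=\partial_y\int K\,(h(y)-h(z))$. The region $z>0$, where $h$ is oscillatory, is handled by moving the derivative off $h'(z)$ onto the kernel, using boundedness of $h$. This yields terms of size $\epsilon\int_{|y-z|\gtrsim|y|}\ang{z}^{-p}|y-z|^{-p-2}\,dz\lesssim\epsilon|y|^{-2p-1}$.

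For $j=1$, differentiate under the integral in the translated variable $z=y-s$, using \eqref{eqn:approx_E7_secant_derivative}. Derivatives landing on $z>0$ quantities are again moved by parts to avoid the $Y^{1/p}$ growth.

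The main obstacle is the far contribution from $z>0$ when $h'(z)$ is rapidly oscillating and only $O(\ang{z}^{-\beta})$, since a naive bound would give only $|y|^{-p-1}$. I would handle it by always integrating by parts so that only $h(z)=O(\ang{z}^{-p})$ appears, as in the exact identity \eqref{eqn:approx_wedge_first_variation}, and treating the quadratic-in-$h$ part with the Taylor expansion used for $\mc I_1$ in \cref{lem:approx_E9}.
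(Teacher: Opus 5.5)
Your proposal is correct and follows the paper's proof. It uses the same three-term split, the same cancellation from the choice of $c_-$ leaving $|D|^p f_{\textup{slow}}'$, the same $\|f_{\textup{osc}}\|_{L^1}$ bound for the remote oscillatory term, and the same near/far split of $\mc{Q}_\epsilon(f)-\mc{Q}_\epsilon(W)$, where the far-field $h'(z)$ term is integrated by parts so that only $h(z)=O(\ang{z}^{-p})$ appears. The paper does this with the full kernel $A_\epsilon(S_f)-1$ over the whole far region on both sides of $z=0$, where the boundary terms cancel because $f-W$ is continuous; it bounds the remaining term $\int_0^\infty [A_\epsilon(S_f)-A_\epsilon(S_W)](z-y)^{-\alpha}\,dz$ by the mean value theorem with $|S_{f-W}|\lesssim(|y|^{-p}+\ang{z}^{-p})/(|y|+z)$, so your linearization around $W$ and Taylor treatment of the quadratic part are not needed.
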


\begin{proof}
    As in the positive region, we write
    \[\sq{\mc{E}}_\epsilon(f)-G_\mu = \mc{L}_0(f_{\textup{slow}}) + \mc{L}_0(f_{\textup{osc}}) + \mc{Q}_\epsilon(f).\]
    We now split
    \begin{align*}
        \sq{\mc{E}}_\epsilon(f)-G_\mu & = \mc{L}_0(f_{\textup{slow}}) + \mc{Q}_\epsilon(W) \\
        & + |D|^p f_{\textup{osc}}' \\
        & + \mc{Q}_\epsilon(f) - \mc{Q}_\epsilon(W)
    \end{align*}
    and estimate each term separately. For $y\leq-8R$, the choice of $c_-$ gives
    \[
        \mc{Q}_\epsilon(W)(y)
        =-c_\alpha|y|^{-p}\int_0^1[A_\epsilon(t)-1]t^{p-1}\,dt
        =-(p+1)c_-|y|^{-p},
    \]
    so $\mc{L}_0(f_{\textup{slow}})+\mc{Q}_\epsilon(W)=|D|^pf_{\textup{slow}}'$. The same argument as in the proof of \cref{prop:approx_E1}, with $|y|$ in place of $y$, gives the bound \eqref{eqn:approx_E1_bound} on the negative side:
    \[
        \left|\partial_y^j\left(\mc{L}_0(f_{\textup{slow}})+\mc{Q}_\epsilon(W)\right)(y)\right|
        \lesssim\epsilon\ang{y}^{-2p-1-j}
        \lesssim\epsilon\ang{y}^{-\gamma-j},\qquad j=0,1.
    \]
    To bound $|D|^p f_{\textup{osc}}'$, by \cref{lem:approx_phase_differences}, we know that 
    \[\|f_{\textup{osc}}\|_{L^1} \lesssim \epsilon, \quad |f_{\textup{osc}}(z)| \lesssim \epsilon z^{-p - \beta}. \]
    Therefore for $y \leq -8R$, we can calculate
    \[|D|^p f_{\textup{osc}}'(y) = -c_\alpha \int_{R}^{\infty}\frac{f_{\textup{osc}}'(z)}{(z - y)^\alpha} \, dz = -c_\alpha\alpha \int_{R}^{\infty}\frac{f_{\textup{osc}}(z)}{(z - y)^{\alpha+1}} \, dz\]
    and bound
    \begin{align*}
        \left|\partial^j|D|^p f_{\textup{osc}}'(y)\right|
        &\lesssim\int_R^\infty\frac{|f_{\textup{osc}}(z)|}{(z-y)^{p+2 + j}}\,dz
        \lesssim\ang{y}^{-p-2 - j}\|f_{\textup{osc}}\|_{L^1}
        \lesssim\epsilon\ang{y}^{-\gamma - j}
    \end{align*}
    where we used $\gamma<2p+1<p+2$. \par
    Finally, to bound the difference 
    \[\mc{Q}_\epsilon(f) - \mc{Q}_\epsilon(W),\]
    we use a near-far splitting of the integral. We write
    \begin{equation}\label{eqn:approx_negative_Q_split}
    \begin{aligned}
        &\mc{Q}_\epsilon(f)(y)-\mc{Q}_\epsilon(W)(y)\\
        &\quad=
        c_\alpha\int_{\R}\chi\left(\frac{y-z}{|y|}\right)
        \frac{f'(y)-f'(z)}{|y-z|^\alpha}
        [A_\epsilon(S_f(y,z))-1]\,dz\\
        &\quad+c_\alpha\int_{\R}\left[1-\chi\left(\frac{y-z}{|y|}\right)\right]
        \frac{(f-W)'(y)-(f-W)'(z)}{|y-z|^\alpha}
        [A_\epsilon(S_f(y,z))-1]\,dz\\
        &\quad-c_\alpha\int_0^\infty
        \frac{A_\epsilon(S_f(y,z))-A_\epsilon(S_W(y,z))}
        {(z-y)^\alpha}\,dz,
    \end{aligned}
    \end{equation}
    where $\chi$ is a smooth cutoff supported in $(-1/2,1/2)$ and equal to one on $[-1/4,1/4]$. Denote the three terms in \eqref{eqn:approx_negative_Q_split} by $\mc{I}_1$, $\mc{I}_2$, and $\mc{I}_3$, respectively. By \cref{prop:approx_profile_bounds},
    \[
        |(f-W)(z)|\lesssim\ang{z}^{-p},\qquad
        |\partial^j(f-W)(y)|\lesssim |y|^{-p-j},\quad y\leq-8R.
    \]
    Moreover, $f'$ and the secants are bounded, so $|A_\epsilon-1|+|A_\epsilon'|+|A_\epsilon''|\lesssim\epsilon$ on their range. \par
    For the first term, change variables to $s=y-z$. On the support of $\chi(s/|y|)$, we have $y-s<0$ and $|y-s|\approx|y|$, so
    \[
        |\partial_y^j(f'(y)-f'(y-s))|\lesssim |y|^{-p-2-j}|s|,
        \qquad j=0,1.
    \]
    Also, $\partial_y S_f(y,y-s)=(f'(y)-f'(y-s))/s$ is bounded by $|y|^{-p-2}$, while differentiating the cutoff costs $|y|^{-1}$. Therefore
    \[
        |\partial_y^j\mc{I}_1(y)|
        \lesssim\epsilon |y|^{-p-2-j}\int_{|s|\leq|y|/2}|s|^{-p}\,ds
        \lesssim\epsilon |y|^{-2p-1-j},\qquad j=0,1.
    \]
    For the second term, integrate by parts in $z$ to write
    \begin{equation}\label{eqn:approx_negative_Q_far_ibp}
    \begin{aligned}
        \mc{I}_2(y)
        & =c_\alpha(f-W)'(y)\int_\R
        \left[1-\chi\left(\frac{y-z}{|y|}\right)\right]
        \frac{A_\epsilon(S_f(y,z))-1}{|y-z|^\alpha}\,dz\\
        &\quad+c_\alpha\int_\R(f-W)(z)\partial_z\left\{
        \left[1-\chi\left(\frac{y-z}{|y|}\right)\right]
        \frac{A_\epsilon(S_f(y,z))-1}{|y-z|^\alpha}\right\}\,dz.
    \end{aligned}
    \end{equation}
    The boundary terms at infinity vanish, and those at $z=0$ cancel because $f-W$ is continuous. The identities
    \[
        \partial_y S_f=\frac{f'(y)-S_f}{y-z},\qquad
        \partial_z S_f=\frac{S_f-f'(z)}{y-z},\qquad
        \partial_y\partial_z S_f=\frac{f'(y)+f'(z)-2S_f}{(y-z)^2}
    \]
    and the cutoff bounds imply, for $j,\ell\in\{0,1\}$,
    \[
        \left|\partial_y^j\partial_z^\ell\left\{
        \left[1-\chi\left(\frac{y-z}{|y|}\right)\right]
        \frac{A_\epsilon(S_f(y,z))-1}{|y-z|^\alpha}\right\}\right|
        \lesssim\epsilon\frac{\mathbf{1}_{\{|y-z|\geq|y|/4\}}}
        {|y-z|^{p+1+j+\ell}}.
    \]
    Applying these bounds to \eqref{eqn:approx_negative_Q_far_ibp}, we obtain, for $j=0,1$,
    \begin{align*}
        |\partial_y^j\mc{I}_2(y)|
        &\lesssim\epsilon |y|^{-2p-1-j}
        +\epsilon\int_{|y-z|\geq|y|/4}
        \frac{\ang{z}^{-p}}{|y-z|^{p+2+j}}\,dz\\
        &\lesssim\epsilon |y|^{-2p-1-j}
        +\epsilon |y|^{-p-2-j}\int_{|z|\leq2|y|}\ang{z}^{-p}\,dz
        +\epsilon\int_{|z|>2|y|}|z|^{-2p-2-j}\,dz\\
        &\lesssim\epsilon |y|^{-2p-1-j},
    \end{align*}
    where we used $p<1$. \par
    Finally, for $z>0$ we have $z-y=|y|+z$. Since $S_f-S_W=S_{f-W}$, the mean value theorem gives
    \[
        |A_\epsilon(S_f(y,z))-A_\epsilon(S_W(y,z))|
        \lesssim\epsilon\frac{|y|^{-p}+\ang{z}^{-p}}{|y|+z}.
    \]
    Differentiating the difference and using
    \[
        \partial_y(A_\epsilon(S_f)-A_\epsilon(S_W))
        =A_\epsilon'(S_f)\partial_y S_{f-W}
        +[A_\epsilon'(S_f)-A_\epsilon'(S_W)]\partial_y S_W,
    \]
    we also obtain
    \[
        |\partial_y(A_\epsilon(S_f(y,z))-A_\epsilon(S_W(y,z)))|
        \lesssim\epsilon\left[
        \frac{|y|^{-p-1}}{|y|+z}
        +\frac{|y|^{-p}+\ang{z}^{-p}}{(|y|+z)^2}\right].
    \]
    Therefore, splitting the integrals at $z=|y|$ as above,
    \begin{align*}
        |\mc{I}_3(y)|
        &\lesssim\epsilon\int_0^\infty
        \frac{|y|^{-p}+\ang{z}^{-p}}{(|y|+z)^{p+2}}\,dz
        \lesssim\epsilon |y|^{-2p-1},\\
        |\partial_y\mc{I}_3(y)|
        &\lesssim\epsilon\int_0^\infty\left[
        \frac{|y|^{-p-1}}{(|y|+z)^{p+2}}
        +\frac{|y|^{-p}+\ang{z}^{-p}}{(|y|+z)^{p+3}}\right]\,dz
        \lesssim\epsilon |y|^{-2p-2}.
    \end{align*}
    Combining the three estimates gives
    \begin{equation}\label{eqn:approx_negative_Q_difference_bound}
        |\partial_y^j(\mc{Q}_\epsilon(f)-\mc{Q}_\epsilon(W))(y)|
        \lesssim\epsilon\ang{y}^{-2p-1-j}
        \lesssim\epsilon\ang{y}^{-\gamma-j},\qquad j=0,1.
    \end{equation}
    Together with the previous bounds, this proves the proposition.
\end{proof}

\subsection{Residual Bounds for \texorpdfstring{$|y| \leq 8R$}{|y| <= 8R}} 

In this section, we will prove estimates on the residual $\sq{\mc{E}}$ in the compact region $[-8R, 8R]$.

\begin{proposition}\label{prop:approx_residual_compact}
    We have the estimate 
    \[\|\sq{\mc{E}}_\epsilon(\ov{f}_{\epsilon, \mu}) - G_\mu\|_{C^1([-8R, 8R])} \lesssim_R \epsilon.\]
\end{proposition}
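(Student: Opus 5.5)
On the compact region we will use the same exact decomposition as in the positive region, namely
\[
\sq{\mc{E}}_\epsilon(f)-G_\mu=\mc{L}_0(f_{\textup{slow}})+\mc{L}_0(f_{\textup{osc}})+\mc{Q}_\epsilon(f),
\]
which is valid for all $y$ because $\mc{L}_0(f_{0,\mu})=G_\mu$ by \cref{lem:approx_linear_seed}. We then bound each of the three terms in $C^1([-8R,8R])$ by a constant depending on $R$ times $\epsilon$. No decay is needed here, so every estimate only has to be uniform in $\mu\in\ov{B}$ and $\epsilon\le\epsilon_0$, with arbitrary dependence on $R$.

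\textbf{The two linear terms.} Both $f_{\textup{slow}}$ and $f_{\textup{osc}}$ carry an explicit factor of $\epsilon$.
\begin{itemize}
\item For $f_{\textup{slow}}$, we have $|c_\pm|\lesssim\epsilon$, and $f_{\textup{slow}}$ is smooth and supported in $|y|\ge R$. It satisfies $|\partial^j f_{\textup{slow}}|\lesssim\epsilon\ang{y}^{-p-j}$.
\item For $f_{\textup{osc}}$, \cref{lem:approx_phase_differences} gives $|\partial^j f_{\textup{osc}}|\lesssim_j\epsilon\ang{y}^{-p-\beta+j/p}$ and $\|f_{\textup{osc}}\|_{L^1}\lesssim\epsilon$.
\end{itemize}
The local parts $h-yh'$ and their derivatives on $[-8R,8R]$ are therefore $O_R(\epsilon)$.

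For $|D|^p h'$ and its derivative we split the singular integral into $|y-z|\le1$ and $|y-z|\ge1$.
\begin{itemize}
\item In the near region we use $\|h\|_{C^3([-8R-1,8R+1])}\lesssim_R\epsilon$. The $y$-derivative is handled with two extra derivatives of $h$, which the bounds above also give.
\item In the far region we integrate by parts in $z$ so that only $h$ appears, as in \eqref{eqn:approx_E5_far_remainder}. The integrand is then controlled by $\ang{z}^{-p}|y-z|^{-p-2}$ for $f_{\textup{slow}}$, and by $\|f_{\textup{osc}}\|_{L^1}$ for $f_{\textup{osc}}$.
\end{itemize}

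\textbf{The nonlinear term.} We write $\mc{Q}_\epsilon(f)$ with the integrand
\[
\frac{f'(y)-f'(z)}{|y-z|^\alpha}\bigl[A_\epsilon(S_f(y,z))-1\bigr].
\]
By \cref{prop:approx_profile_bounds}, $f'$ and $S_f$ are bounded uniformly, so $|A_\epsilon-1|+|A_\epsilon'|+|A_\epsilon''|\lesssim\epsilon$ on their range.

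\emph{Near region} $|y-z|\le1$. Since $A_\epsilon(u)-1=O(\epsilon u^2)$, the integrand is $O_R(\epsilon|y-z|^{1-\alpha})=O_R(\epsilon|y-z|^{-p})$, which is integrable. Here we use $\|f\|_{C^3([-8R-1,8R+1])}\lesssim_R1$ from \cref{prop:approx_profile_bounds}. For the $y$-derivative we change variables to $s=y-z$ and use \eqref{eqn:approx_E7_secant_derivative}. The derivative of $f'(y)-f'(y-s)$ is $O_R(|s|)$, and $\partial_yS_f(y,y-s)$ is bounded, so the differentiated integrand is again $O_R(\epsilon|s|^{-p})$.

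\emph{Far region} $|y-z|\ge1$. We integrate by parts in $z$, exactly as in \eqref{eqn:approx_negative_Q_far_ibp}, after subtracting $W$. This uses that $\mc{Q}_\epsilon(W)$ is explicitly computable, or at least smooth and $O(\epsilon)$ on compacts, via the formula in \cref{lem:analysis_coefficient}. With $|f-W|\lesssim\ang{z}^{-p}$, the remainder kernels decay like $\epsilon|y-z|^{-p-2}$, which is integrable.

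\textbf{Main obstacle.} The far region of $\mc{Q}_\epsilon(f)$ for $z\to+\infty$ is where care is needed, since there $f'$ oscillates with growing frequency $z^{1/p}$. A naive bound of $|f'(z)|\lesssim z^{-\beta}$ against $|y-z|^{-\alpha}$ is still integrable, because $\beta+\alpha>1$. However, the $y$-derivative falling on $A_\epsilon(S_f)$ produces only further decay, never $f''(z)$. So the direct absolute bound
\[
\epsilon\,(1+z^{-\beta})\,\ang{z}^{-\alpha}
\]
suffices, and we expect this step to close without integration by parts.

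The near region in $C^1$ is routine once the secant identities are used. This proves the estimate with a constant depending on $R$.
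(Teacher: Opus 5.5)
Your decomposition and your treatment of $\mc{L}_0(f_{\textup{slow}})$ and $\mc{L}_0(f_{\textup{osc}})$ are the paper's argument. The paper's proof rests on the local bound $\bigl\||D|^pu'\bigr\|_{C^1([-8R,8R])}\lesssim_R\|u\|_{C^3([-10R,10R])}+\|u\|_{L^\infty(\R)}$, which is exactly your near/far split. The paper only asserts that $\mc{Q}_\epsilon(\ov{f}_{\epsilon,\mu})$ is smooth and $O(\epsilon)$ locally, so your treatment of the nonlinear term fills in that step.

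One correction: delete the far-region plan ``subtract $W$ and integrate by parts as in \eqref{eqn:approx_negative_Q_far_ibp}''. It is unnecessary, and the justification you give for it is false.
\begin{itemize}
\item $\mc{Q}_\epsilon(W)$ is homogeneous of degree $-p$. By the computation preceding \cref{prop:approx_E1}, it equals $c_\epsilon y^{-p}$ for $y>0$, with $c_\epsilon\neq0$ when $\epsilon>0$. So it is unbounded at the origin, which lies in $[-8R,8R]$, and is certainly not smooth there.
\item The identity \eqref{eqn:approx_negative_Q_split} used $y\le-8R$, so that $W'(y)=W'(z)$ on the near region. This fails for $|y|\lesssim1$, so the identity does not carry over to the compact region.
\end{itemize}

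The far region is closed by the direct estimate in your last paragraph. Since $|f'|+|S_f|\lesssim1$, the integrand on $|y-z|\ge1$ is $\lesssim\epsilon|y-z|^{-\alpha}$. Its $y$-derivative at fixed $z$, computed with $\partial_yS_f=(f'(y)-S_f)/(y-z)$, is $\lesssim_R\epsilon|y-z|^{-\alpha}$ up to harmless cutoff terms. Both bounds are integrable simply because $\alpha>1$, so no decay of $f'(z)$ is needed at either end.
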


\begin{proof}
    Again, we write
    \[\sq{\mc{E}}_\epsilon(\ov{f}_{\epsilon, \mu}) - G_\mu = \mc{L}_0(f_{\textup{slow}}) + \mc{L}_0(f_{\textup{osc}}) + \mc{Q}_\epsilon(\ov{f}_{\epsilon, \mu}).\]
    Since $f_{\textup{slow}}$ in \eqref{eqn:approx_slow_correction} is smooth and $\mc{O}(\epsilon)$, the term $\mc{L}_0(f_{\textup{slow}})$ is smooth and $\mc{O}(\epsilon)$. For the oscillatory term, we recall \eqref{eqn:approx_oscillatory_correction}:
    \[f_{\textup{osc}}(y) = -2 \chi_R(y)\operatorname{Re} \int_{y}^{\infty}\left(Z_{\epsilon, \mu}(t) + c_{\textup{cubic}}Z_{\epsilon, \mu}(t)^3 - \frac{\mu}{\mu_*}Z_0(t)\right) \, dt.\]
    By \cref{lem:approx_phase_differences}, we can bound
    \[
        \left|\frac{Z_{\epsilon,\mu}(y)}{\lambda_{\epsilon,\mu}(y)}
        -\frac{\mu}{\mu_*}\frac{Z_0(y)}{\lambda_0(y)}\right|
        \lesssim\epsilon y^{-p-\beta},\qquad y\geq R.
    \]
    Integrating each primitive once by parts before subtracting, and using the bounds for $a_*$ in \cref{lem:approx_linear_seed} and the phase and frequency differences in \eqref{eqn:approx_phase_difference}--\eqref{eqn:approx_lambda_difference}, gives
    \begin{equation}\label{eqn:approx_oscillatory_primitive_difference}
    \begin{aligned}
        \left|\int_y^\infty\left(Z_{\epsilon,\mu}(t)-\frac{\mu}{\mu_*}Z_0(t)\right)\,dt\right|
        &\lesssim\epsilon y^{-p-\beta}
        +\epsilon\int_y^\infty t^{-p-\beta-1}\,dt\\
        &\lesssim\epsilon y^{-p-\beta}.
    \end{aligned}
    \end{equation}
    Here the first term is the boundary contribution, and the second bounds the difference of the remaining integrands. Also,
    \[
        \left|c_{\textup{cubic}}\int_y^\infty Z_{\epsilon,\mu}(t)^3\,dt\right|
        \lesssim\epsilon y^{1-3\beta}
        \lesssim\epsilon y^{-p-\beta}.
    \]
    Together with the derivative bounds in \cref{lem:approx_phase_differences}, this proves
    \begin{equation}\label{eqn:approx_compact_oscillatory_norms}
        \|f_{\textup{osc}}\|_{L^\infty(\R)}
        +\|f_{\textup{osc}}\|_{C^3([-10R,10R])}
        \lesssim_R\epsilon.
    \end{equation}
    We now use the local estimate
    \begin{equation}\label{eqn:approx_compact_fractional_bound}
        \bigl\||D|^pu'\bigr\|_{C^1([-8R,8R])}
        \lesssim_R\|u\|_{C^3([-10R,10R])}
        +\|u\|_{L^\infty(\R)}.
    \end{equation}
    This estimate follows from a near-field/far-field decomposition. Applying \eqref{eqn:approx_compact_fractional_bound} with $u=f_{\textup{osc}}$ and using \eqref{eqn:approx_compact_oscillatory_norms} also to bound the transport term, we obtain
    \begin{equation}\label{eqn:approx_compact_oscillatory_linear}
        \|\mc{L}_0(f_{\textup{osc}})\|_{C^1([-8R,8R])}
        \lesssim_R\epsilon.
    \end{equation}
    Finally, using that $\mc{Q}_{\epsilon}(\ov{f}_{\epsilon, \mu})$ is smooth and $\mc{O}(\epsilon)$, we can bound this term locally. The proof of the proposition follows.
\end{proof}

\subsection{The Proof of \texorpdfstring{\cref{thm:fixed_point_approximation}}{}}

In this section, we will combine the estimates for the residual $\sq{\mc{E}}$ in the three regions $y \leq -8R$, $|y| \leq 8R$, and $y \geq 8R$ to prove \cref{thm:fixed_point_approximation}.

\begin{proof}[Proof of \cref{thm:fixed_point_approximation}]
    The bounds for the family $\ov{f}_{\epsilon, \mu}$ and continuity of $\mu \mapsto \ov{f}_{\epsilon, \mu}$ into $C^{2}_{\textup{loc}}$ follow from \cref{prop:approx_profile_bounds}. We combine the three estimates in \cref{prop:approx_residual_negative,prop:approx_residual_compact,prop:approx_residual_positive} to obtain the residual bound
    \[\|\sq{\mc{E}}_\epsilon(\ov{f}_{\epsilon, \mu}) - G_\mu\|_{Y_\gamma} \lesssim \epsilon.\]
    It remains to prove the continuity of $\mu \mapsto \sq{\mc{E}}_\epsilon(\ov{f}_{\epsilon, \mu})$ into $C^{1}_{\textup{loc}}$. Let $\mu_n \to \mu$ and let $f_n = \ov{f}_{\epsilon, \mu_n}$ and $f = \ov{f}_{\epsilon, \mu}$. We can write
    \[\sq{\mc{E}}_\epsilon(f_n)(y) = f_n(y) - yf_n'(y) + c_\alpha \int \frac{f_n'(y) - f_n'(y-s)}{|s|^{\alpha}}A_\epsilon \left(\frac{f_n(y) - f_n(y - s)}{s}\right) \, ds.\]
    The first two terms converge to $f(y) - yf'(y)$ in $C^1_{\textup{loc}}$ by the continuity of $\mu \mapsto \ov{f}_{\epsilon, \mu}$ into $C^{2}_{\textup{loc}}$. For the integral term, we apply the dominated convergence theorem. Choose $M$ and let $|y| \leq M$. The integrand can be bounded by
    \[ \lesssim_M \begin{cases}
        |s|^{-p}, & |s| \leq 1 \\
        |s|^{-p - 1}, & |s| > 1
    \end{cases}\]
    by the local smoothness of $f_n$ and the bounds $\|f_n'\|_{L^\infty} \lesssim 1$. Similarly, we can bound the derivative of the integrand by
    \[ \lesssim_M \begin{cases}
        |s|^{-p}, & |s| \leq 1 \\
        |s|^{-p - 1}(1 + |s|^{1/p - \beta}), & |s| > 1
    \end{cases}.\]
    In either case, the integrand is integrable. By the dominated convergence theorem, we can pass to the limit under the integral sign and conclude that $\sq{\mc{E}}_\epsilon(f_n) \to \sq{\mc{E}}_\epsilon(f)$ in $C^1([-M,M])$. This proves the continuity of $\mu \mapsto \sq{\mc{E}}_\epsilon(\ov{f}_{\epsilon, \mu})$ into $C^{1}_{\textup{loc}}$. \par
    The last step is to go from the assertions about $\sq{\mc{E}}_\epsilon$ to $\mc{E}_\epsilon$. To do this, we simply write
    \[\mc{E}_\epsilon(\ov{f}_{\epsilon, \mu}) = B_\epsilon(\ov{f}_{\epsilon, \mu}')\sq{\mc{E}_\epsilon}(\ov{f}_{\epsilon, \mu}).\]
    Continuity of $\mu \mapsto \mc{E}_\epsilon(\ov{f}_{\epsilon, \mu})$ into $C^{1}_{\textup{loc}}$ follows from the continuity of $\mu \mapsto \sq{\mc{E}}_\epsilon(\ov{f}_{\epsilon, \mu})$ into $C^{1}_{\textup{loc}}$, the smoothness of $B_\epsilon$, and the bounds for $\ov{f}_{\epsilon, \mu}$. For the residual, we write
    \[\mc{E}_\epsilon(\ov{f}_{\epsilon, \mu}) - G_\mu = B_\epsilon(\ov{f}_{\epsilon, \mu}')[\sq{\mc{E}_\epsilon}(\ov{f}_{\epsilon, \mu}) - G_\mu] + [B_\epsilon(\ov{f}_{\epsilon, \mu}') - 1]G_\mu\]
    and again, using the bounds for $\ov{f}_{\epsilon, \mu}$ and the smoothness of $B_\epsilon$, we get that
    \[\|\mc{E}_\epsilon(\ov{f}_{\epsilon, \mu}) - G_\mu\|_{Y_\gamma} \lesssim \epsilon\]
    and continuity of $\mu \mapsto \mc{E}_\epsilon(\ov{f}_{\epsilon, \mu})$ into $C_{\textup{loc}}^1$. Continuity into $Y_\gamma$ follows from \cref{prop:appendix_convergence}, using the uniform residual bound with a slightly larger weight exponent. This completes the proof of \cref{thm:fixed_point_approximation}.
\end{proof}

\section{Linear Inverse}\label{sec:linear_inverse}

The purpose of this section is to prove \cref{thm:fixed_point_linear_inverse}. We will first use a conjugation argument to reduce the perturbed operator $\mc{L}_\epsilon$ to the free operator $\mc{L}_0$. Recall that the free linear operator is defined by
\[\mc{L}_0(r) = |D|^pr' + r - yr'\]
and the perturbed operator 
\[\mc{L}_\epsilon(r) = \mc{L}_0(r) + b_\epsilon r' + \mc{R}_\epsilon(r)\]
where $b_\epsilon(y) = c_\epsilon \chi_+(y)y^{-p}$ and $\mc{R}_\epsilon$ is a remainder term that will be defined in \cref{lem:linear_conjugation}. 

\subsection{Conjugation of the Linear Operator}

In the first lemma, we will construct the isomorphisms $\mc{U}_\epsilon$ and $\mc{V}_\epsilon$ that conjugate $\mc{L}_\epsilon$ to $\mc{L}_0$.

\begin{lemma}\label{lem:linear_conjugation}
    There exist linear operators $\mc{U}_\epsilon$ and $\mc{V}_\epsilon$ which are isomorphisms of $X_\gamma$ and $Y_\gamma$ respectively so that 
    \[\mc{V}_\epsilon \mc{L}_0 \mc{U}_\epsilon  = \mc{L}_0 + b_\epsilon \partial_y + \mc{R}_\epsilon\]
    where $\mc{R}_\epsilon: X_\gamma \to Y_\gamma$ is a bounded operator with norm $\lesssim \epsilon$. Moreover, we have the uniform bounds
    \[\sup_{0 \leq \epsilon \leq \epsilon_0}\|\mc{U}_{\epsilon}\|_{X_\gamma \to X_\gamma} < \infty, \quad \sup_{0 \leq \epsilon \leq \epsilon_0}\|\mc{V}_{\epsilon}\|_{Y_\gamma \to Y_\gamma} < \infty\]
    and
    \[\sup_{0 \leq \epsilon \leq \epsilon_0}\|\mc{U}_{\epsilon}^{-1}\|_{X_\gamma \to X_\gamma} < \infty, \quad \sup_{0 \leq \epsilon \leq \epsilon_0}\|\mc{V}_{\epsilon}^{-1}\|_{Y_\gamma \to Y_\gamma} < \infty\]
    and the estimate
    \[\|(\mc{V}_\epsilon^{-1} - I)G\|_{Y_\gamma} \lesssim \epsilon(\|\ang{y}^{\gamma}G\|_{L^\infty(1, \infty)} + \|\ang{y}^{\gamma}G'\|_{L^\infty(1, \infty)} + \|\ang{y}^{\gamma}G''\|_{L^\infty(1, \infty)}).\]
\end{lemma}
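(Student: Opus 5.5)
The plan is to remove the slow transport term $b_\epsilon\partial_y$ by a change of the spatial variable near $+\infty$, combined with a multiplicative correction. The guiding heuristic is that on the positive side the leading part of $\mc{L}_0$ acting on oscillatory functions is $(|D|^p - y)\partial_y$. Adding $b_\epsilon\partial_y = c_\epsilon y^{-p}\partial_y$ therefore amounts to replacing $y$ by $y - c_\epsilon y^{-p}$. Equivalently, we perturb the stationary relation $\lambda^p = y$, which is exactly what the phase correction $c_{\textup{power}}$ did in the construction of the approximate solution.

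Concretely, let $\phi_\epsilon(y) = y + \psi_\epsilon(y)$, where $\psi_\epsilon$ is smooth, vanishes for $y \le 1$, and satisfies $|\partial^j\psi_\epsilon| \lesssim \epsilon \ang{y}^{1-p-j}$. We then set
\[\mc{U}_\epsilon r = m_\epsilon\,(r\circ\phi_\epsilon^{-1}) \quad\text{or}\quad \mc{U}_\epsilon r = m_\epsilon\,(r\circ\phi_\epsilon),\]
with $m_\epsilon = 1 + O(\epsilon\ang{y}^{-p})$, and take $\mc{V}_\epsilon$ to be multiplication by a smooth function $v_\epsilon = 1 + O(\epsilon\ang{y}^{-p})$ supported in $y \ge 1$, possibly composed with a diffeomorphism as well.

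The key steps are as follows.

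\textbf{Step 1: conjugation formula.} Compute $\mc{L}_0(r\circ\phi)$. The term $r - yr'$ transforms to $r\circ\phi - y\phi'\,(r'\circ\phi)$. For the fractional term, we need a commutator estimate of the form
\[|D|^p\partial_y(r\circ\phi) = \phi'\,\bigl((\phi')^p |D|^p r'\bigr)\circ\phi + \text{error}.\]
Matching the coefficient of $r'\circ\phi$ with the one produced by $b_\epsilon$ gives a first-order ODE for $\psi_\epsilon$, namely $\psi_\epsilon - y\psi_\epsilon' \approx c_\epsilon y^{-p}$ at leading order. Its solution is $\psi_\epsilon = \frac{c_\epsilon}{1+p}y^{-p}$ plus corrections. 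The mismatch in the symbol, $(\phi')^p - 1 = O(\epsilon y^{-p-1})$, is absorbed through the multiplier $m_\epsilon$ and the factor $v_\epsilon$.

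\textbf{Step 2: the remainder.} Define $\mc{R}_\epsilon$ as everything left over and show $\|\mc{R}_\epsilon\|_{X_\gamma\to Y_\gamma} \lesssim \epsilon$. The key point is that every leftover term either carries no derivative of $r$, or carries $r'$ multiplied by a coefficient $O(\epsilon\ang{y}^{-p-1})$. The latter is harmless, since $\ang{y}^{-1}$ beats the derivative loss in the sense that $\ang{y}^{-p-1}\ang{y}^{1/p}\cdot\ang{y}^{-1/p} \le 1$ in the $Y_\gamma$ weight accounting. One checks the weighted derivative count using the definitions of the $X_\gamma$ and $Y_\gamma$ norms.

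\textbf{Step 3: boundedness and inverse.} Boundedness of $\mc{U}_\epsilon^{\pm1}$ and $\mc{V}_\epsilon^{\pm1}$ uniformly in $\epsilon$ follows because composition with $\phi_\epsilon^{\pm1}$ preserves $\ang{y}$-weights: we have $\phi_\epsilon(y) \approx y$ and $\phi_\epsilon' = 1 + O(\epsilon)$. The estimate for $\mc{V}_\epsilon^{-1} - I$ follows from the mean value theorem:
\[G\circ\phi_\epsilon^{-1} - G = O\bigl(|\psi_\epsilon|\,|G'|\bigr) = O\bigl(\epsilon\ang{y}^{1-p}|G'|\bigr),\]
and similarly for the derivative, which involves $G''$. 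This explains why the right-hand side contains $G'$ and $G''$ on $(1,\infty)$; the factor $\ang{y}^{1-p}$ must be compensated by the smallness available when $G$ is Schwartz (as $G_\mu$ is). If pure multiplication suffices for $\mc{V}_\epsilon$, the bound is even easier.

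\textbf{Main obstacle.} I expect the hardest step to be the commutator of $|D|^p\partial_y$ with the change of variables, estimated in the anisotropic weighted spaces. On the positive side, functions in $X_\gamma$ oscillate at frequency $y^{1/p}$, so a naive expansion loses powers of $y^{1/p}$. I would write
\[|D|^p\partial_y(r\circ\phi) - \bigl[\phi'(\phi')^p\,(|D|^p r')\bigr]\circ\phi\]
as a singular integral, with kernel
\[|y-z|^{-\alpha} - \phi'(y)^{p}\,\phi'(z)\,|\phi(y)-\phi(z)|^{-\alpha},\]
which equals $O(\epsilon y^{-p-1})|y-z|^{1-\alpha}$ in the near region $|y-z| < y/2$. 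In the far region I would use decay instead. This shows that the near part contributes $O(\epsilon y^{-p-1})$ times a fractional derivative of order $p - 1 + 1$ of $r'$, which costs at most $y^{(p)/p}\cdot y^{-p-1}$ relative to $|r'|$, and is therefore acceptable.
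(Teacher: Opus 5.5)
Your strategy is the paper's: conjugate by a near-identity diffeomorphism supported near $+\infty$, together with a Jacobian weight. However, three of your quantitative claims are wrong, and the last one breaks the key step.

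\textbf{1. Size of the shift.} The coefficient of $r'$ has to move by exactly $b_\epsilon=c_\epsilon y^{-p}$, so one needs $|\partial^j(\phi_\epsilon(y)-y)|\lesssim\epsilon\ang{y}^{-p-j}$, not $\epsilon\ang{y}^{1-p-j}$. Your own ODE solution is already $O(\epsilon y^{-p})$. The wrong size propagates into Step~3, where it makes you appeal to Schwartz decay. So you do not prove the bound on $\mc{V}_\epsilon^{-1}-I$ that the lemma asserts for every $G$ with finite right-hand side. With the correct size, $|G\circ\phi_\epsilon^{-1}-G|\lesssim\epsilon\ang{y}^{-p}|G'|$, and the same argument with $G''$, give it at once.

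\textbf{2. The ODE.} The factor $(\phi')^p$ from the principal symbol cannot be dropped from the ODE and ``absorbed'' afterwards. Renormalizing the principal part by the weight $(\phi')^{p+1}$ changes the coefficient of $r'$ by $O(\epsilon y^{-p})$. Any such error is exactly as nonperturbative as $b_\epsilon\partial_y$, since $\epsilon y^{-p}|r'|\sim\epsilon y^{-p+1/p-\gamma}$ is not $O(y^{-\gamma})$. This renormalization also forces $\mc{V}_\epsilon$ to contain the inverse composition; pure multiplication would leave the output evaluated at shifted points. The paper takes $\mc{U}_\epsilon r=r\circ\phi^{-1}$ and $\mc{V}_\epsilon G=(\phi')^{p+1}G\circ\phi$, and proves the exact identity
\[(\phi')^{p+1}\mc{T}_\phi\mc{L}_0\mc{T}_{\phi^{-1}}=|D|^p\partial_y+I-\phi(\phi')^p\partial_y+\mc{R}_1+\mc{R}_2,\qquad \mc{R}_1=\bigl((\phi')^{p+1}-1\bigr)I.\]
It then solves $\phi(\phi')^p=y-b_\epsilon$ exactly. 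The linearization $\psi+py\psi'=-c_\epsilon y^{-p}$ has decaying solution $-\frac{c_\epsilon}{1-p^2}y^{-p}=c_{\textup{power}}y^{-p}$. Up to the convention $\phi$ versus $\phi^{-1}$, this is not the $\frac{c_\epsilon}{1+p}y^{-p}$ produced by your equation $\psi-y\psi'=c_\epsilon y^{-p}$.

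\textbf{3. The main obstacle.} Your resolution here is wrong. The kernel difference vanishes to first order on the diagonal, so near it the kernel is $O(|\phi''|)\,|y-z|^{-p}$, which is integrable. The near-field operator is therefore smoothing, not a fractional derivative of order $p$, and no power of the frequency $y^{1/p}$ is lost. By charging a factor $y$, you arrive at a contribution of size $\epsilon y^{-p}$ relative to $|r'|$. That is again the nonperturbative term $\epsilon y^{-p}r'$, so ``therefore acceptable'' is false. Your kernel size $\epsilon y^{-p-1}$ also reflects the mis-scaled shift; the correct one is $\epsilon\ang{y}^{-p-2}$.

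In the paper, one adds the commutator $(\phi')^{-p}[|D|^p,1/\phi']\partial_y$ to the change-of-variables error. The coefficient of $r'(y)$ then becomes $c_\alpha\int[\phi'(y)^p\phi'(z)|\phi(y)-\phi(z)|^{-1-p}-|y-z|^{-1-p}]\,dz$, which vanishes by a change of variables. What is left is
\[\mc{R}_2r(y)=c_\alpha\int_{\R}\Bigl[1-\Bigl(\frac{\phi'(y)}{S_\phi(y,z)}\Bigr)^{p+1}\Bigr]\frac{r'(z)}{|y-z|^{1+p}}\,dz,\]
an operator acting on $r'(z)$ with kernel $K_\phi(y,z)$ and no cancellation needed.
\begin{itemize}
\item For $|y-z|\le\ang{y}/4$, one has $|K_\phi|\lesssim\epsilon\ang{y}^{-p-2}|y-z|^{-p}$. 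The near part is then $\lesssim\epsilon\|r\|_{X_\gamma}\ang{y}^{-2p-1-\gamma+1/p}$, which is acceptable because $1/p<2p+1$.
\item In the far field, $|K_\phi|\lesssim\epsilon(\ang{y}^{-p-1}|y-z|^{-1-p}+\ang{z}^{-p}|y-z|^{-2-p})$, and one uses $\|r'\|_{L^1}\lesssim\|r\|_{X_\gamma}$, which follows from $\gamma-1/p>1$.
\item You do not address the derivative bound demanded by $Y_\gamma$. It uses $(\partial_y+\partial_z)K_\phi$ near the diagonal and an integration by parts against a cutoff in the far field.
\end{itemize}
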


\begin{proof}
    Let $\phi$ be a smooth increasing diffeomorphism, and write
    \[
        \mc{T}_\phi u(y)=u(\phi(y)).
    \]
    The idea is to choose $\phi$ so that the conjugation of $\mc{L}_0$ by $\mc{T}_\phi$ resembles $\mc{L}_\epsilon$. We claim that 
    \begin{equation}\label{eqn:linear_conjugation}
        (\phi')^{p + 1}\mc{T}_\phi \mc{L}_0 \mc{T}_{\phi^{-1}} = |D|^p \partial_y + \left(I - \phi(\phi')^p\partial_y\right) + \mc{R}
    \end{equation}
    where $\mc{R} = \mc{R}_1 + \mc{R}_2$ with 
    \begin{equation}\label{eqn:linear_R1}
        \mc{R}_1 = \left((\phi')^{p + 1} - 1\right)I
    \end{equation}
    and
    \begin{equation}\label{eqn:linear_R2}
        \mc{R}_2r(y) = c_\alpha\int_{\R}
        \left[1-\left(\frac{\phi'(y)}{S_\phi(y,z)}\right)^{p+1}\right]
        \frac{r'(z)}{|y-z|^{1+p}}\,dz.
    \end{equation}
    Indeed, first observe that
    \[
        \mc{T}_\phi\partial_y\mc{T}_\phi^{-1}
        = \frac{1}{\phi'}\partial_y,
        \qquad
        \mc{T}_\phi(y\partial_y)\mc{T}_\phi^{-1}
        =\frac{\phi}{\phi'}\partial_y.
    \]
    For the fractional term, we perform a change of variables to obtain
    \[
        (\mc{T}_\phi|D|^p\mc{T}_\phi^{-1}v)(y)
        =c_\alpha\int_{\R}\frac{v(y)-v(z)}{|y-z|^{1+p}}
        \frac{\phi'(z)}{S_\phi(y,z)^{1+p}}\,dz.
    \]
    Define the remainder by
    \begin{equation}\label{eqn:linear_E1}
        \mc{C}_{\phi,1}v(y)
        :=c_\alpha\int_{\R}\frac{v(y)-v(z)}{|y-z|^{1+p}}
        \left(\frac{\phi'(z)}{S_\phi(y,z)^{1+p}}
        -\frac{1}{\phi'(y)^p}\right)\,dz.
    \end{equation}
    Thus
    \[
        \mc{T}_\phi|D|^p\mc{T}_\phi^{-1}
        =\frac{1}{(\phi')^p}|D|^p+\mc{C}_{\phi,1}.
    \]
    Combining this identity with the formula for $\partial_y$ gives
    \[
        \mc{T}_\phi|D|^p\partial_y\mc{T}_\phi^{-1}
        =\frac{1}{(\phi')^{p+1}}|D|^p\partial_y+\mc{C}_{\phi,2},
    \]
    where
    \begin{equation}\label{eqn:linear_E2}
        \mc{C}_{\phi,2}
        :=\frac{1}{(\phi')^p}
        [|D|^p,\frac{1}{\phi'}]\partial_y
        +\mc{C}_{\phi,1}\frac{1}{\phi'}\partial_y.
    \end{equation}
    To expand $\mc{C}_{\phi,2}$ in \eqref{eqn:linear_E2}, first note that
    \[
        \phi'(y)\left[|D|^p,\frac{1}{\phi'}\right]r'(y)
        =c_\alpha\int_{\R}\left(1-\frac{\phi'(y)}{\phi'(z)}\right)
        \frac{r'(z)}{|y-z|^{1+p}}\,dz.
    \]
    On the other hand, \eqref{eqn:linear_E1} gives
    \begin{align*}
        \phi'(y)^{p+1}\mc{C}_{\phi,1}\left(\frac{r'}{\phi'}\right)(y)
        &=c_{\alpha}r'(y)\int_{\R}
        \left(\frac{\phi'(y)^p\phi'(z)}{S_\phi(y,z)^{1+p}}-1\right)
        \frac{dz}{|y-z|^{1+p}}\\
        &\quad+c_\alpha\int_{\R}
        \left[\frac{\phi'(y)}{\phi'(z)}
        -\left(\frac{\phi'(y)}{S_\phi(y,z)}\right)^{p+1}\right]
        \frac{r'(z)}{|y-z|^{1+p}}\,dz.
    \end{align*}
    Adding these identities cancels the terms containing
    $\phi'(y)r'(z)/\phi'(z)$. Using
    $S_\phi(y,z)|y-z|=|\phi(y)-\phi(z)|$ and \eqref{eqn:linear_R2}, we obtain
    \begin{align*}
        \phi'(y)^{p+1}\mc{C}_{\phi,2}r(y)
        & =\mc{R}_2r(y)\\
        &\quad+c_{\alpha}r'(y)\int_{\R}\left[
        \frac{\phi'(y)^p\phi'(z)}{|\phi(y)-\phi(z)|^{1+p}}
        -\frac{1}{|y-z|^{1+p}}\right]\,dz.
    \end{align*}
    Using a change of variables, the integral term vanishes, and thus $(\phi')^{p+1}\mc{C}_{\phi,2}=\mc{R}_2$.
    The conjugated full operator is therefore
    \[
        \mc{T}_\phi\mc{L}_0\mc{T}_\phi^{-1}
        =\frac{1}{(\phi')^{p+1}}\bigl(|D|^p\partial_y+\mc{R}_2\bigr)
        +
        \left(I-\frac{\phi}{\phi'}\partial_y\right).
    \]
    Multiplying by $(\phi')^{p+1}$ and using \eqref{eqn:linear_R1} gives
    \[
        (\phi')^{p+1}\mc{T}_\phi\mc{L}_0\mc{T}_\phi^{-1}
        =|D|^p\partial_y
        +
        \left(I-\phi(\phi')^p\partial_y\right)
        +\mc{R}_1+\mc{R}_2,
    \]
    which proves \eqref{eqn:linear_conjugation}. We now claim that we can choose $\phi$ so that
    \begin{equation}\label{eqn:linear_phi_ode}
        \phi (\phi')^p = y - b_\epsilon.
    \end{equation}
    It follows from \eqref{eqn:linear_conjugation} and \eqref{eqn:linear_phi_ode} that, if we set
    \begin{equation}\label{eqn:linear_conjugating_maps}
        \mc{V} = (\phi')^{p+1}\mc{T}_\phi, \qquad \mc{U} = \mc{T}_\phi^{-1},
    \end{equation}
    then
    \[\mc{V}\mc{L}_0\mc{U} = \mc{L}_\epsilon.\]
    It remains to show that $\phi$ is indeed a well-defined diffeomorphism, $\mc{R}$ is a bounded operator from $X_\gamma$ to $Y_\gamma$ and that $\mc{U}$ and $\mc{V}$ are isomorphisms of $X_\gamma$ and $Y_\gamma$ respectively. \par
    \textbf{Definition and properties of $\phi$:} We first need to show that $\phi$ is a diffeomorphism and that it satisfies certain bounds. For $y < 1$, we have that $b_\epsilon(y) = 0$ and therefore $\phi(y) = y$. For $y > 1$, we can solve \eqref{eqn:linear_phi_ode} to obtain 
    \[\phi(y) = \left[1 + \frac{p+1}{p} \int_{1}^{y}(t - b_\epsilon(t))^{1/p}\,dt\right]^{p / (p + 1)}.\]
    For $\epsilon$ small enough, and $t > 1$, we have 
    \[t - b_\epsilon(t) \approx t.\]
    It follows that $\phi(y) \approx y$ and so 
    \[\phi'(y) = \left(\frac{y - b_\epsilon(y)}{\phi(y)}\right)^{1/p} \approx 1.\]
    Therefore $\phi$ is a diffeomorphism. We remark that we also have the bounds
    \begin{equation}\label{eqn:linear_phi_bounds}
        |\partial^j(\phi(y) - y)| \lesssim_j \epsilon\ang{y}^{-p-j}, \quad j \geq 0.
    \end{equation}
    Indeed, write 
    \[\phi(y) - y = y[(1 + E(y))^{p/(p+1)} - 1]\] 
    where
    \[E(y) = \frac{p+1}{p}y^{-(p+1)/p}\int_{1}^{y}[(t - b_\epsilon(t))^{1/p} - t^{1/p}]\,dt.\]
    Observe that
    \[|\partial^j((t - b_\epsilon(t))^{1/p} - t^{1/p})| \lesssim_j \epsilon t^{1/p - p - 1 - j}\]
    which follows by writing
    \[(t - b_\epsilon(t))^{1/p} - t^{1/p} = t^{1/p}\left(\left(1 - \frac{b_\epsilon(t)}{t}\right)^{1/p} - 1\right)\]
    and using the product rule and $p < 1$. Therefore we have that
    \[|\partial^j E(y)| \lesssim_j \epsilon y^{-p - 1 - j}\]
    from which \eqref{eqn:linear_phi_bounds} follows. \par

    \textbf{Estimates for $\mc{U}$ and $\mc{V}$:} We now show that $\mc{U}$ and $\mc{V}$ are isomorphisms of $X_\gamma$ and $Y_\gamma$ respectively.  The proof is fairly straightforward. For $\mc{U}$, \eqref{eqn:linear_conjugating_maps} gives 
    \[\mc{U}r(y) = r(\phi^{-1}(y)).\]
    Set $\psi=\phi^{-1}$. Since $\phi$ is the identity on $y\le1$,
    so is $\psi$, and hence $\mc{U}r(y)=r(y)$ there. For $y>1$,
    the bounds in \eqref{eqn:linear_phi_bounds} give
    \begin{equation}\label{eqn:linear_inverse_phi_bounds}
        \ang{\psi(y)}\approx\ang{y},
        \qquad \psi'(y)=\frac{1}{\phi'(\psi(y))}\approx1,
        \qquad
        \psi''(y)=-\frac{\phi''(\psi(y))}{\phi'(\psi(y))^3},
    \end{equation}
    so $|\psi''(y)|\lesssim\epsilon\ang{y}^{-p-2}$.
    Therefore,
    \[
        (\mc{U}r)'=(r'\circ\psi)\psi',
        \qquad
        (\mc{U}r)''=(r''\circ\psi)(\psi')^2+(r'\circ\psi)\psi''.
    \]
    The definition of $X_\gamma$ therefore gives
    \[
        |\partial_y^j(\mc{U}r)(y)|
        \lesssim\|r\|_{X_\gamma}\ang{y}^{-\gamma+j/p},
        \qquad y>1,\quad 0\le j\le2.
    \]
    We therefore conclude that
    $\|\mc{U}r\|_{X_\gamma}\lesssim\|r\|_{X_\gamma}$.
    The same argument with $\phi$ in place of $\psi$ gives
    $\|\mc{T}_\phi r\|_{X_\gamma}\lesssim\|r\|_{X_\gamma}$.
    Since $\mc{U}^{-1}=\mc{T}_\phi$, it follows that $\mc{U}$ is an
    isomorphism of $X_\gamma$, with both operator norms bounded
    uniformly for sufficiently small $\epsilon$. \par
    The proof for $\mc{V}$ is similar. Recall from \eqref{eqn:linear_conjugating_maps} that 
    \[\mc{V}G(y) = (\phi')^{p + 1} G(\phi(y)).\]
    For $G\in Y_\gamma$, we again have $\mc{V}G(y)=G(y)$ on $y\le1$.
    For $y>1$, we compute
    \[
        (\mc{V}G)'
        =(p+1)(\phi')^p\phi''(G\circ\phi)
        +(\phi')^{p+2}(G'\circ\phi).
    \]
    Since $\ang{\phi(y)}\approx\ang{y}$, $\phi'\approx1$, and
    $|\phi''(y)|\lesssim\epsilon\ang{y}^{-p-2}$ by \eqref{eqn:linear_phi_bounds}, we obtain
    \[
        |\partial_y^j(\mc{V}G)(y)|
        \lesssim\|G\|_{Y_\gamma}\ang{y}^{-\gamma+j/p},
        \qquad y>1,\quad 0\le j\le1.
    \]
    Together with the identity on $y\le1$, this proves
    $\|\mc{V}G\|_{Y_\gamma}\lesssim\|G\|_{Y_\gamma}$.
    Moreover, with $\psi=\phi^{-1}$ as above,
    \[
        \mc{V}^{-1}G(y)
        =\frac{G(\psi(y))}{\phi'(\psi(y))^{p+1}}
        =\psi'(y)^{p+1}G(\psi(y)).
    \]
    The same argument, using \eqref{eqn:linear_inverse_phi_bounds} with $\psi$ in place of $\phi$, gives
    $\|\mc{V}^{-1}G\|_{Y_\gamma}\lesssim\|G\|_{Y_\gamma}$.
    Thus $\mc{V}$ is an isomorphism of $Y_\gamma$, with both operator
    norms bounded uniformly for sufficiently small $\epsilon$. \par
    The assertion about $(\mc{V}_\epsilon^{-1} - I)$ follows by writing 
    \[(\mc{V}_\epsilon^{-1} - I)G = [\psi'(y)^{p + 1} - 1]G(\psi(y)) + G(\psi(y)) - G(y).\]
    Write $C = \|\ang{y}^{\gamma}G\|_{L^\infty(1, \infty)} + \|\ang{y}^{\gamma}G'\|_{L^\infty(1, \infty)} + \|\ang{y}^{\gamma}G''\|_{L^\infty(1, \infty)}$.
    It follows that 
    \begin{align*}
        |(\mc{V}_\epsilon^{-1} - I)G| & \lesssim |\psi'(y) - 1||G(\psi(y))| + \sup_{t \in [0,1]}|G'(ty + (1-t)\psi(y))||\psi(y) - y| \\
        & \lesssim \epsilon C\ang{y}^{-\gamma}
    \end{align*}
    and 
    \begin{align*}
        |\partial_y(\mc{V}_\epsilon^{-1} - I)G| & \lesssim |\psi'(y)^{p} \psi''(y)G(\psi(y))| + |\psi'(y)^{p+2}-1||G'(\psi(y))| + |G'(\psi(y)) - G'(y)| \\
        & \lesssim \epsilon C\ang{y}^{-\gamma-p - 2} + \epsilon C\ang{y}^{-\gamma-p - 1} + \sup_{t \in [0,1]}|G''(ty + (1-t)\psi(y))||\psi(y) - y| \\
        & \lesssim \epsilon C \ang{y}^{-\gamma + \frac{1}{p}}
    \end{align*}
    as wanted. \par
    \textbf{Estimates for $\mc{R}_1$:} Recall from \eqref{eqn:linear_R1} that 
    \[\mc{R}_1r(y) = \left((\phi')^{p + 1} - 1\right)r(y).\]
    Using \eqref{eqn:linear_phi_bounds}, we have
    \[
        |(\phi')^{p+1}-1|
        +\ang{y}\bigl|\partial_y((\phi')^{p+1}-1)\bigr|
        \lesssim\epsilon\ang{y}^{-p-1}.
    \]
    The product rule and the definition of $X_\gamma$ therefore give
    \[
        |\partial_y^j(\mc{R}_1r)(y)|
        \lesssim\epsilon\|r\|_{X_\gamma}
        \ang{y}^{-\gamma-p-1+j/p},
        \qquad y>1,\quad 0\le j\le1.
    \]
    Since $\mc{R}_1r=0$ on $y\le1$, it follows that
    \begin{equation}\label{eqn:linear_R1_bound}
        \|\mc{R}_1r\|_{Y_\gamma}
        \lesssim\epsilon\|r\|_{X_\gamma}.
    \end{equation}
\textbf{Estimates for $\mc{R}_2$:} We now show that $\mc{R}_2$ is a bounded operator from $X_\gamma$ to $Y_\gamma$ with norm $\lesssim \epsilon$. Recall from \eqref{eqn:linear_R2} that
\[\mc{R}_2r(y) = c_\alpha\int_{\R}
        \left[1-\left(\frac{\phi'(y)}{S_\phi(y,z)}\right)^{p+1}\right]
        \frac{r'(z)}{|y-z|^{1+p}}\,dz = \int K_\phi(y, z)r'(z) \, dz\]
where
\[K_\phi(y, z) = c_\alpha\left[1-\left(\frac{\phi'(y)}{S_\phi(y,z)}\right)^{p+1}\right]\frac{1}{|y-z|^{1+p}}.\]
To compute the derivative of $\mc{R}_2$, we change variables $u = y - z$, then differentiate, and then change variables back to $z$ to obtain
\[\partial_y(\mc{R}_2r)(y) = \int (\partial_y + \partial_z) K_\phi(y, z) r'(z) \, dz + \int K_\phi(y, z) r''(z) \, dz.\]
To bound $\mc{R}_2$ and its derivative, we will simply bound the kernel $K_\phi$ and its derivatives.
We compute 
\begin{align*}
    (\partial_y + \partial_z)K_\phi(y, z) & = -c_\alpha \frac{1}{|y - z|^{p + 1}}(\partial_y + \partial_z)\left(\frac{\phi'(y)}{S_\phi(y,z)}\right)^{p+1} \\
    & = -c_\alpha \frac{p + 1}{|y - z|^{p + 1}}\left(\frac{\phi'(y)}{S_\phi(y,z)}\right)^{p+1}\left(\frac{\phi''(y)}{\phi'(y)} - \frac{S_{\phi'}(y, z)}{S_\phi(y,z)}\right).
\end{align*}
In the near region when $|y - z| \leq \frac{1}{4}\ang{y}$ (and in particular, $\ang{y} \approx \ang{z}$), we have that
\[|K_\phi(y, z)| \lesssim |\phi'(y) - S_\phi(y, z)||y - z|^{-p-1} \lesssim \epsilon \ang{y}^{-p - 2}|y - z|^{-p}\]
and that
\begin{align*}
    |(\partial_y + \partial_z)K_\phi(y, z)| & = c_\alpha \frac{1}{|y - z|^{p + 1}}\left|(\partial_y + \partial_z)\left(\frac{\phi'(y)}{S_\phi(y,z)}\right)^{p+1}\right| \\
    & \lesssim |y - z|^{-p-1}(|\phi''(y)||S_\phi(y,z) - \phi'(y)| + |\phi'(y)||S_{\phi'}(y,z) - \phi''(y)|) \\
    & \lesssim \epsilon \ang{y}^{-p - 3}|y - z|^{-p}
\end{align*}
Since $\ang{z}\approx\ang{y}$ in this region, the definition of $X_\gamma$ gives
\begin{align*}
    \left|\int_{|y - z| \leq \frac{1}{4}\ang{y}} K_\phi(y, z) r'(z) \, dz\right| & \lesssim \epsilon \ang{y}^{-p-2}\int_{|y - z| \leq \frac{1}{4}\ang{y}} |y - z|^{-p}|r'(z)| \, dz \\
    & \lesssim \epsilon \|r\|_{X_\gamma}\ang{y}^{-p-2}\int_{|y - z| \leq \frac{1}{4}\ang{y}} |y - z|^{-p} \, dz \begin{cases}
        \ang{y}^{-\gamma+1/p}, & y>0,\\
        \ang{y}^{-\gamma-1}, & y<0,
    \end{cases}\\
    & \lesssim \epsilon \|r\|_{X_\gamma}\begin{cases}
        \ang{y}^{-\gamma+1/p-2p-1}, & y>0,\\
        \ang{y}^{-\gamma-2p-2}, & y<0,
    \end{cases}\\
    & \lesssim \epsilon \|r\|_{X_\gamma}\ang{y}^{-\gamma}.
\end{align*}
Here we used $\int_{|y - z| \leq \frac{1}{4}\ang{y}}|y-z|^{-p}\,dz\lesssim\ang{y}^{1-p}$, since $p<1$, and $1/p-2p-1<0$. For the near contribution to the derivative it will be more convenient to introduce a bump function to do the splitting. Let $\chi$ be a bump function such that $\chi(z) = 1$ for $|z| \leq \frac{1}{8}$ and $\chi(z) = 0$ for $|z| \geq \frac{1}{4}$. Then we similarly have that
\begin{align*}
    &\left|\int_{|y - z| \leq \frac{1}{4}\ang{y}} \chi\left(\frac{y - z}{\ang{y}}\right) \bigl[(\partial_y+\partial_z)K_\phi(y,z)r'(z)+K_\phi(y,z)r''(z)\bigr]\,dz\right|\\
    &\qquad\lesssim\epsilon\ang{y}^{-p-2}\int_{|y - z| \leq \frac{1}{4}\ang{y}}|y-z|^{-p}\bigl(\ang{y}^{-1}|r'(z)|+|r''(z)|\bigr)\,dz\\
    &\qquad\lesssim\epsilon\|r\|_{X_\gamma}\ang{y}^{-p-2}\int_{|y - z| \leq \frac{1}{4}\ang{y}}|y-z|^{-p}\,dz\begin{cases}
        \ang{y}^{-\gamma+2/p}, & y>0,\\
        \ang{y}^{-\gamma-2}, & y<0,
    \end{cases}\\
    &\qquad\lesssim\epsilon\|r\|_{X_\gamma}\begin{cases}
        \ang{y}^{-\gamma+2/p-2p-1}, & y>0,\\
        \ang{y}^{-\gamma-2p-3}, & y<0,
    \end{cases}\\
    &\qquad\lesssim\epsilon\|r\|_{X_\gamma}\begin{cases}
        \ang{y}^{-\gamma+1/p}, & y>0,\\
        \ang{y}^{-\gamma-1}, & y<0.
    \end{cases}
\end{align*}
In the far region when $|y - z| > \frac{1}{8}\ang{y}$, we have that
\begin{align*}
    |\phi'(y) - S_\phi(y, z)| &= \left|\phi'(y) - 1 - \frac{(\phi(y) - y) - (\phi(z) - z)}{y - z}\right| \\
    & \lesssim \epsilon\left(\ang{y}^{-p-1} + \frac{\ang{y}^{-p} + \ang{z}^{-p}}{|y - z|}\right) \\
    & \lesssim \epsilon\left(\ang{y}^{-p-1} + \frac{\ang{z}^{-p}}{|y - z|}\right)
\end{align*}
and therefore
\[|K_\phi(y, z)| \lesssim \epsilon\left(\frac{\ang{y}^{-p-1}}{|y - z|^{p+1}} + \frac{\ang{z}^{-p}}{|y - z|^{p+2}}\right).\]
It follows that we can bound the far contribution to $\mc{R}_2$ as
\begin{align*}
    \left|\int_{|y - z| > \frac{1}{4}\ang{y}} K_\phi(y, z) r'(z) \, dz\right| & \lesssim \epsilon\int_{|y - z| > \frac{1}{4}\ang{y}}\left(\frac{\ang{y}^{-p-1}}{|y - z|^{p+1}} + \frac{\ang{z}^{-p}}{|y - z|^{p+2}}\right)|r'(z)| \, dz\\
    & \lesssim \epsilon\|r\|_{X_\gamma}\int_{|y - z| > \frac{1}{4}\ang{y}}\left(\frac{\ang{y}^{-p-1}\ang{z}^{-\gamma+1/p}}{|y - z|^{p+1}} + \frac{\ang{z}^{-\gamma-p+1/p}}{|y - z|^{p+2}}\right) \, dz\\
    & \lesssim \epsilon\|r\|_{X_\gamma}\ang{y}^{-p-2} \\
    & \lesssim \epsilon\|r\|_{X_\gamma}\ang{y}^{-\gamma}
\end{align*}
where we used $\gamma-1/p>1$, so both integrals in $z$ are finite. In particular, $\|r'\|_{L^1}\lesssim\|r\|_{X_\gamma}$. For the far contribution to the derivative, first note that
\[
    \partial_yS_\phi(y,z)=\frac{\phi'(y)-S_\phi(y,z)}{y-z}.
\]
Thus, when $|y-z|\geq\ang{y}/8$, differentiating the kernel at fixed $z$ gives
\begin{equation}\label{eqn:linear_R2_far_kernel_derivative}
\begin{aligned}
    |\partial_yK_\phi(y,z)|
    &\lesssim\frac{|\phi''(y)|}{|y-z|^{p+1}}
    +\frac{|\phi'(y)-S_\phi(y,z)|}{|y-z|^{p+2}}\\
    &\lesssim\epsilon\left(
    \frac{\ang{y}^{-p-2}}{|y-z|^{p+1}}
    +\frac{\ang{z}^{-p}}{|y-z|^{p+3}}\right).
\end{aligned}
\end{equation}
Using $1-\chi$ for the far contribution and integrating by parts in $z$, we obtain
\begin{equation}\label{eqn:linear_R2_far_derivative_ibp}
\begin{aligned}
    &\int_{\R}\left[1-\chi\left(\frac{y-z}{\ang{y}}\right)\right]
    \bigl[(\partial_y+\partial_z)K_\phi(y,z)r'(z)+K_\phi(y,z)r''(z)\bigr]\,dz\\
    &\quad=\int_{\R}\left[1-\chi\left(\frac{y-z}{\ang{y}}\right)\right]
    \partial_yK_\phi(y,z)r'(z)\,dz\\
    &\qquad\quad-\frac{1}{\ang{y}}\int_{\R}
    \chi'\left(\frac{y-z}{\ang{y}}\right)K_\phi(y,z)r'(z)\,dz.
\end{aligned}
\end{equation}
Here the boundary terms vanish, and $\partial_z[1-\chi((y-z)/\ang{y})]=\chi'((y-z)/\ang{y})/\ang{y}$. Since $1-\chi((y-z)/\ang{y})$ vanishes when $|y-z|\leq\ang{y}/8$, \eqref{eqn:linear_R2_far_kernel_derivative} gives
\begin{align*}
    \left|\int_{\R}\left[1-\chi\left(\frac{y-z}{\ang{y}}\right)\right]\partial_yK_\phi(y,z)r'(z)\,dz\right| & \lesssim \epsilon\left(\ang{y}^{-2p-3}+\ang{y}^{-p-3}\right)\int_{\R}|r'(z)|\,dz\\
    & \lesssim \epsilon\|r\|_{X_\gamma}\ang{y}^{-p-3}.
\end{align*}
For the term containing $\chi'$, its support lies in $\ang{y}/8\leq|y-z|\leq\ang{y}/4$. The near kernel bound therefore gives
\begin{align*}
    \left|\frac{1}{\ang{y}}\int_{\R}\chi'\left(\frac{y-z}{\ang{y}}\right)K_\phi(y,z)r'(z)\,dz\right| & \lesssim \epsilon\ang{y}^{-p-3}\int_{\ang{y}/8\leq|y-z|\leq\ang{y}/4}|y-z|^{-p}|r'(z)|\,dz\\
    & \lesssim \epsilon\ang{y}^{-2p-3}\int_{\R}|r'(z)|\,dz\\
    & \lesssim \epsilon\|r\|_{X_\gamma}\ang{y}^{-2p-3}.
\end{align*}
Combining these estimates in \eqref{eqn:linear_R2_far_derivative_ibp} and using $\gamma<2p+1<p+2$, we obtain
\begin{align*}
    \left|\int_{\R}\left[1-\chi\left(\frac{y-z}{\ang{y}}\right)\right]\bigl[(\partial_y+\partial_z)K_\phi(y,z)r'(z)+K_\phi(y,z)r''(z)\bigr]\,dz\right| & \lesssim \epsilon\|r\|_{X_\gamma}\ang{y}^{-p-3}\\
    & \lesssim \epsilon\|r\|_{X_\gamma}\begin{cases}
        \ang{y}^{-\gamma+1/p}, & y>0,\\
        \ang{y}^{-\gamma-1}, & y<0.
    \end{cases}
\end{align*}
Together with the near estimates, this proves $\|\mc{R}_2r\|_{Y_\gamma}\lesssim\epsilon\|r\|_{X_\gamma}$.
\end{proof}

\subsection{Inverse of the Free Operator}

Now we focus on constructing the inverse of the free operator $\mc{L}_0$ given by 
\[\mc{L}_0(r) = |D|^pr' + r - yr'.\]
By taking the Fourier transform, we can explicitly define the formal inverse of this operator by the following formula:
\begin{equation}\label{eqn:linear_free_inverse}
\begin{aligned}
    \widehat{\mc{L}_0^{-1}G}(\xi)
    &=\frac{e^{-i\Phi_0(\xi)}}{\xi^2}
    \int_0^\xi \eta e^{i\Phi_0(\eta)}\widehat G(\eta)\,d\eta\\
    &=\int_0^1 t e^{-i(1-t^{p+1})\Phi_0(\xi)}
    \widehat G(t\xi)\,dt,
\end{aligned}
\end{equation}
where $\Phi_0(\xi)= \frac{1}{\alpha} \xi|\xi|^p$, as above. Recall that the linear functional $\ell$ is defined by
\[\ell(G) = \int_{0}^{\infty}\eta e^{i\Phi_0(\eta)}\widehat{G}(\eta)\,d\eta.\]
Observe that if $\ell(G) = 0$ then we can write for $\xi > 0$, 
\begin{equation}\label{eqn:linear_projected_tail}\widehat{\mc{L}_0^{-1}G}(\xi) = -\frac{e^{-i\Phi_0(\xi)}}{\xi^2} \int_\xi^\infty \eta e^{i\Phi_0(\eta)}\widehat G(\eta)\,d\eta\end{equation}
and we have a similar formula for $\xi < 0$. This is the observation that allows us to define the inverse of $\mc{L}_0$ on the subspace of $Y_\gamma$ where $\ell(G) = 0$.

\begin{definition} Define the subspace of $Y_\gamma$ by 
    \[\sq{Y}_\gamma = \{G \in Y_\gamma : \ell(G) = 0\}.\]
    For $G \in \sq{Y}_\gamma$, we define the linear operator $\mc{T}_0$ by
    \[\widehat{\mc{T}_0 G}(\xi) = \frac{e^{-i\Phi_0(\xi)}}{\xi^2}
    \int_0^\xi \eta e^{i\Phi_0(\eta)}\widehat G(\eta)\,d\eta.\]
\end{definition}

Before showing that the inverse is bounded, we will show that $\ell$ is actually a well-defined bounded linear functional on $Y_\gamma$.

\begin{proposition}\label{prop:linear_ell_continuity}
    Let $\ell$ be the functional defined by 
    \[\ell(G) = \int_{0}^{\infty}\xi e^{i\Phi_0(\xi)}\widehat{G}(\xi)\,d\xi.\]
    Then for $\gamma > \beta + \frac{1}{p}$, $\ell$ is a well-defined bounded linear functional on $Y_\gamma$.
\end{proposition}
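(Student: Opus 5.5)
The plan is to move the functional to physical space, where only the pointwise weights built into $Y_\gamma$ are needed. Since $\widehat{G'}(\xi)=i\xi\widehat G(\xi)$, we formally have
\[
\ell(G)=-i\int_0^\infty e^{i\Phi_0(\xi)}\widehat{G'}(\xi)\,d\xi=-i\int_{\R}G'(z)\,k(z)\,dz,\qquad k(z):=\int_0^\infty e^{i(\Phi_0(\xi)-z\xi)}\,d\xi .
\]
This is an Airy-type kernel. To make it rigorous, I would insert a cutoff $\chi(\xi/N)$ with $\chi\equiv1$ near $0$ and set
\[
\ell_N(G)=\int_0^\infty\chi(\xi/N)\,\xi e^{i\Phi_0(\xi)}\widehat G(\xi)\,d\xi=-i\int G'(z)k_N(z)\,dz,
\]
where $k_N$ is the same integral with the cutoff. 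Fubini is legitimate here because $G'\in L^1$: on $y\ge0$ we have $|G'|\lesssim\ang{y}^{-\gamma+1/p}$, $\gamma-1/p>\beta>1$, and $y\le0$ is even better. I would then define $\ell(G)=\lim_N\ell_N(G)$, and for Schwartz $G$ this agrees with the original definition.

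The main step is a pointwise bound on $k_N$ that is uniform in $N$, together with pointwise convergence $k_N\to k$. For $z\le -1$ the phase $\Phi_0(\xi)-z\xi$ has derivative $\xi^p+|z|\ge|z|$ and is nonstationary. Integrating by parts once, with the boundary term at $\xi=0$ giving $O(|z|^{-1})$, yields $|k_N(z)|\lesssim|z|^{-1}$ uniformly in $N$. For $|z|\le1$, splitting at $\xi\sim1$ and integrating by parts on $\xi\ge1$ (where the phase derivative is $\gtrsim\xi^p$) gives $|k_N|\lesssim1$.

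For $z\ge1$ there is a stationary point at $\xi_z=z^{1/p}$, where $\Phi_0''(\xi_z)=p\,\xi_z^{p-1}=p\,z^{(p-1)/p}$. I would use a smooth partition: a window $|\xi-\xi_z|\le\xi_z/2$, where van der Corput or \cref{lem:appendix_homogeneous_stationary_phase} gives a contribution $\lesssim(\xi_z^{p-1})^{-1/2}=z^{\frac{1-p}{2p}}$, plus the complement, handled by integration by parts. Uniformity in $N$ needs care when $N\approx\xi_z$, because the cutoff then enters the amplitude. Since $|\partial_\xi\chi(\xi/N)|\lesssim N^{-1}$, the cutoff is a symbol of order zero on the window of size $\sim\xi_z$, and the van der Corput bound with a bounded-variation amplitude still gives $z^{\frac{1-p}{2p}}$. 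The same decomposition shows $k_N(z)\to k(z)$ for each fixed $z$.

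With the bound $|k_N(z)|\lesssim\ang{z}^{\frac{1-p}{2p}}\mathbbm{1}_{z>0}+\ang{z}^{-1}\mathbbm{1}_{z\le0}$ in hand, estimate
\[
|\ell_N(G)|\lesssim\|G\|_{Y_\gamma}\Big(\int_0^\infty\ang{z}^{-\gamma+\frac1p+\frac{1-p}{2p}}\,dz+\int_{-\infty}^0\ang{z}^{-\gamma-2}\,dz\Big).
\]
The first integral converges exactly when $\gamma>\frac1p+\frac{1+p}{2p}=\beta+\frac1p$, which is the hypothesis. Dominated convergence then gives existence of the limit, linearity, and $|\ell(G)|\lesssim\|G\|_{Y_\gamma}$.

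I expect the uniform-in-$N$ stationary phase bound for $z>0$ to be the main obstacle, specifically the interaction between the frequency cutoff and the stationary point. If it becomes awkward, the fallback is to replace the sharp truncation by an Abel-type regularization $e^{-\delta\xi}$: this only shifts the phase to a complex one with a favorable sign, keeps the same stationary-point bound, and lets $\delta\to0$.
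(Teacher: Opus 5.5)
Your proposal is correct and follows essentially the paper's argument: rewrite $\ell(G)=-i\int G'(z)k(z)\,dz$ using $\xi\widehat G=-i\widehat{G'}$, bound the Airy-type kernel with van der Corput (this is \cref{lem:appendix_stationary}) by $\ang{z}^{\frac{1-p}{2p}}$ for $z>0$ and by $O(1)$ (or $\ang{z}^{-1}$) for $z\le0$, and integrate against the $Y_\gamma$ weights on $G'$; your cutoff $\chi(\xi/N)$ with dominated convergence just makes explicit the limiting procedure that the paper leaves implicit. The only slip is minor: for $|z|\le1$, split at $\xi=2$ rather than $\xi\sim1$, because for $z$ near $1$ the phase derivative $\xi^p-z$ vanishes near $\xi=1$, whereas on $\xi\ge2$ it is $\gtrsim\xi^p$ uniformly in $|z|\le1$.
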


\begin{proof}
    Using the fact that $G,G' \in L^1$, we can write $\xi \widehat{G}(\xi) = -i\widehat{G'}(\xi)$. Therefore
    \[\ell(G) = \int G'(y)K(y) \, dy, \quad K(y) = -i\int_{0}^{\infty} e^{i(\Phi_0(\xi) - y\xi)}\,d\xi.\]
    By \cref{lem:appendix_stationary}, we have that
    \[|K(y)| \lesssim \begin{cases}
        1, & y \leq 0 \\
        \ang{y}^{\frac{1 - p}{2p}}, & y \geq 0.
    \end{cases}\]
    Since we also have that
    \[|G'(y)| \lesssim \|G\|_{Y_\gamma}\begin{cases}
        \ang{y}^{-\gamma - 1}, & y \leq 0 \\
        \ang{y}^{- \gamma + \frac{1}{p}}, & y \geq 0
    \end{cases}\]
    we can conclude that $\ell$ is bounded. Here we used that $\gamma > \beta + \frac{1}{p}$ implies $G'K \in L^1$.
\end{proof}

To bound the free inverse, we decompose it into a low-frequency and high-frequency part. Choose some even smooth bump function $\chi$ supported on $[-2,2]$ and equal to one on $[-1,1]$. We write 
\begin{equation}\label{eqn:linear_frequency_split}
\begin{aligned}
    \widehat{\mc{T}_{\textup{l}}G}(\xi) &= \chi(\xi) \widehat{\mc{T}_{0}G}(\xi),\\
    \widehat{\mc{T}_{\textup{h}}G}(\xi) &= (1 - \chi(\xi)) \widehat{\mc{T}_{0}G}(\xi).
\end{aligned}
\end{equation}

\subsection{Low-Frequency Estimate}\label{sec:linear_low_frequency}

The only real complication in bounding the low-frequency part is the fact that the phase $\Phi_0$ is not smooth at the origin. 

\begin{proposition}\label{prop:linear_low_frequency}(Low-frequency estimate)
    The operator $\mc{T}_{\textup{l}}$ is bounded from $Y_\gamma$ to $X_\gamma$ for any $1 < \gamma < p + 2$.
\end{proposition}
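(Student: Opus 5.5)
The plan is to work from the second representation in \eqref{eqn:linear_free_inverse},
\[\widehat{\mc{T}_{\textup{l}}G}(\xi)=\chi(\xi)\int_0^1 t\,e^{-i(1-t^{p+1})\Phi_0(\xi)}\widehat G(t\xi)\,dt .\]
Note that no projection is needed at low frequency, so the bound holds on all of $Y_\gamma$.

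\textbf{Step 1: Taylor expansion in the phase.} Since $|\Phi_0(\xi)|\lesssim1$ on $\operatorname{supp}\chi$, I expand the exponential in powers of $\Phi_0(\xi)$:
\[e^{-i(1-t^{p+1})\Phi_0(\xi)}=\sum_{k=0}^{N-1}\frac{(-i)^k(1-t^{p+1})^k}{k!}\Phi_0(\xi)^k+\Phi_0(\xi)^N\rho_N(t,\xi).\]
This expansion is the intended \eqref{eqn:linear_low_frequency_series}. It gives
\[\widehat{\mc{T}_{\textup{l}}G}=\sum_{k<N}c_k\,\chi\,\Phi_0^k\,\widehat{H_k G}+(\text{remainder}),\qquad \widehat{H_kG}(\xi)=\int_0^1 t(1-t^{p+1})^k\widehat G(t\xi)\,dt.\]
In physical space, $H_kG(y)=\int_0^1 t^{0}(1-t^{p+1})^k\,G(y/t)\,dt/t\cdot t$. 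After the change of variables this becomes $\int_{|z|\ge|y|}(\cdots)G(z)\,dz/|z|$ up to powers of $|y|/|z|$. Explicitly, $H_kG(y)=\int_0^1(1-t^{p+1})^kG(y/t)\,dt$. This is an averaging operator over dilations, and it preserves weights $\ang{y}^{-\gamma}$ for $\gamma>1$: indeed $\int_0^1\ang{y/t}^{-\gamma}dt\lesssim\ang{y}^{-\gamma}+\ang{y}^{-1}\cdot(\text{bounded})$. This is where $\gamma>1$ enters; more precisely, $\int_0^1\ang{y/t}^{-\gamma}dt\approx|y|^{-1}\int_{|y|}^\infty s^{-\gamma}\dots$, giving $\ang{y}^{-\gamma}$ for $\gamma>1$. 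Derivatives of $H_kG$ behave like $y^{-1}$ times similar averages of $zG'(z)$, and $\ang{z}|G'|\lesssim\ang{z}^{-\gamma+1/p+1}$ is still integrable in the needed sense on each side.

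\textbf{Step 2: multipliers with symbol $\chi\Phi_0^k$.} The $k=0$ term is simply $\chi(D)H_0G$. This is a smooth convolution by a Schwartz kernel, which preserves $\ang{y}^{-\gamma}$ decay for any $\gamma$ together with all derivatives. That gives all of the $X_\gamma$ norm, since on the low-frequency side $|\partial^jf|\lesssim\ang{y}^{-\gamma}$ is stronger than both weights required in $X_\gamma$.

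For $k\ge1$, the symbol $\chi(\xi)\Phi_0(\xi)^k=\chi(\xi)\,\mathrm{sgn}(\xi)^k|\xi|^{k(p+1)}\alpha^{-k}$ is nonsmooth at the origin. Its kernel decays like $|y|^{-1-k(p+1)}$; for $k=1$ this is $\mathrm{sgn}(y)|y|^{-p-2}$. Invoking \cref{prop:appendix_nonsmooth_fourier_inverse}, convolution of such a kernel with a function decaying like $\ang{y}^{-\gamma}$, $\gamma>1$, yields decay $\ang{y}^{-\min(\gamma,p+2)}$. This is exactly the restriction $\gamma<p+2$, and it matches the $\big(\int G\big)\mathrm{sgn}(y)|y|^{-\alpha-1}$ obstruction from \cref{sec:choice_of_space}. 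Derivatives fall on the kernel and only improve the decay, so each term lands in $X_\gamma$. The remainder term, with $N$ large, has symbol in $C^{M}$ with enough vanishing at $0$ and compact support, so its kernel decays faster than $\ang{y}^{-p-2}$, and it is handled in the same way.

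\textbf{Main obstacle.} The main difficulty is Step 2, specifically proving the weighted bound for the nonsmooth multiplier uniformly, including for the derivative of $H_kG$. The issue is that $G'$ only has the weak weight $\ang{y}^{-\gamma+1/p}$ for $y>0$. To avoid needing it, I would not use $G'$ at all: the $X_\gamma$ derivative bounds follow from the frequency localization $\chi$, since $\partial_y^j$ of $\chi(D)(\cdot)$ costs nothing. I would therefore only use $|G|\lesssim\ang{y}^{-\gamma}$ together with $L^1$ integrability, which holds because $\gamma>1$.
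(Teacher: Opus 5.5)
There is a genuine gap on the negative half-line. You assert that $|\partial^j f|\lesssim\ang{y}^{-\gamma}$ is stronger than both weights in $X_\gamma$. For $y\le0$, however, the norm demands $|r'|\lesssim\ang{y}^{-\gamma-1}$ and $|r''|\lesssim\ang{y}^{-\gamma-2}$, so there derivatives must \emph{gain} decay; your remark is correct only for $y\ge0$.

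Frequency localization makes derivatives free in regularity, not in decay. With $m_k=\chi\Phi_0^k$, the function $\partial^j(\widecheck{m_k}*H)$ is in general only $O(\ang{y}^{-\gamma})$, because the part of the convolution with $|y-z|\lesssim1$ is governed by $H$ near $y$. The faster decay $\ang{y}^{-p-2-j}$ of the differentiated kernel helps only in the far zone. So the step ``derivatives fall on the kernel and only improve the decay'' fails, and an argument using only $|G|\lesssim\ang{y}^{-\gamma}$ and $G\in L^1$ cannot close.

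For instance, take $G(y)=\ang{y}^{-\gamma}\cos(y/2)$ for $y\ll-1$, smoothly cut off. At fixed nonzero frequency the term $-yr'$ dominates $\mc{L}_0$ as $y\to-\infty$, so to leading order $\mc{T}_{\textup{l}}G\approx H_0G\approx 2|y|^{-\gamma-1}\sin(y/2)$. Its second derivative has size $|y|^{-\gamma-1}$, not $O(\ang{y}^{-\gamma-2})$. This $G$ is excluded from $Y_\gamma$ exactly by the negative-side bound $|G'|\lesssim\ang{y}^{-\gamma-1}$, which you propose to discard. Your Step 1 remark about derivatives of $H_kG$ pointed the right way, but you then abandon it.

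The missing idea is that the dilation averages $H_kG(y)=\int_0^1(1-t^{\alpha})^kG(y/t)\,dt$ (the paper's $\mc{A}_k$) gain a derivative on $y\le-1$. Write $t^{-1}G'(y/t)=-\tfrac{t}{y}\partial_t[G(y/t)]$ and integrate by parts in $t$. This expresses $(H_kG)'$ as $y^{-1}$ times an average of $G$, minus $G(y)/y$ when $k=0$. It then gives $(H_kG)''=-\tfrac{\alpha k}{y}\int_0^1 t^{\alpha-1}(1-t^{\alpha})^{k-1}G'(y/t)\,dt$ for $k\ge1$, and $-G'(y)/y$ for $k=0$. Hence $|(H_kG)^{(j)}(y)|\lesssim(1+k)\|G\|_{Y_\gamma}\ang{y}^{-\gamma-j}$ for $y\le-1$ and $j=1,2$. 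This holds even though $Y_\gamma$ controls only one derivative of $G$, and it uses only the negative-side bound on $G'$.

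The paper records this in an intermediate space $Z_\gamma$. For $y\le-2$ it then splits $\partial^j\widecheck{m_k}*H_kG$ with the cutoff $\chi(8(y-z)/\ang{y})$:
\begin{itemize}
\item Near the diagonal, where $z\le-1$ and $\ang{z}\approx\ang{y}$, it integrates by parts to put the $j$ derivatives on $H_kG$.
\item Far from it, it uses $|\partial^j\widecheck{m_k}|\lesssim C^k\ang{y}^{-p-2-j}$ together with $\|G\|_{L^1}$. This yields $\ang{y}^{-\gamma-j}$ because $\gamma<p+2$.
\end{itemize}

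Two smaller corrections. First, the average preserves $\ang{y}^{-\gamma}$ for every $\gamma\ge0$, since $\ang{y/t}\ge\ang{y}$. So $\gamma>1$ enters only through $G\in L^1$ in the far zone, not in Step 1. Second, truncating the Taylor series leaves a remainder whose symbol depends jointly on $t$ and $\xi$ and is not of multiplier-times-average form. It is simpler to sum the full series: $H_k$ has norm $\lesssim1+k$ from $Y_\gamma$ to $Z_\gamma$, and the multiplier with symbol $m_k$ has norm $\lesssim C^k$ from $Z_\gamma$ to $X_\gamma$, both summable against $1/k!$.
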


\begin{remark}
    For the low-frequency part, we don't require the condition $\ell(G) = 0$. The proposition also holds for a larger range of $\gamma$ than we require.
\end{remark}

\begin{proof}
    We write 
    \[\widehat{\mc{T}_0 G}(\xi) = \int_0^1 t e^{-i(1-t^{p+1})\Phi_0(\xi)}
    \widehat G(t\xi)\,dt\]
    and we expand the Taylor series
    \[e^{-i(1 - t^{p + 1})\Phi_0(\xi)} = \sum_{k = 0}^{\infty} \frac{(-i)^k}{k!} (1 - t^{p + 1})^k \Phi_0(\xi)^k.\]
    If we define the averaging operator 
    \begin{equation}\label{eqn:linear_averaging_operator}\mc{A}_kG(y) = \int_{0}^{1}(1 - t^\alpha)^kG(y / t) \, dt\end{equation}
    and let 
    \begin{equation}\label{eqn:linear_low_frequency_multipliers}m_k(\xi) = \chi(\xi)\Phi_0(\xi)^k, \quad \widehat{\mc{M}_k G}(\xi) = m_k(\xi)\widehat{G}(\xi),\end{equation} 
    then we can write 
    \begin{equation}\label{eqn:linear_low_frequency_series}\mc{T}_{\textup{l}}(G) = \sum_{k = 0}^{\infty} \frac{(-i)^k}{k!} (\mc{M}_k \circ \mc{A}_k)(G).\end{equation}
    Define the intermediate space
    \[Z_\gamma := \{G : \|G\|_{Z_\gamma} < \infty\}\]
    where
    \[\|G\|_{Z_\gamma} = \sup_{y \in \R}\ang{y}^{\gamma}|G(y)| + \sup_{y \leq -1}\ang{y}^{\gamma}(\ang{y}|G'(y)| + \ang{y}^{2}|G''(y)|).\]
    We claim that
    \begin{equation}\label{eqn:linear_low_frequency_factor_bounds}\|\mc{A}_k\|_{Y_\gamma \to Z_\gamma} \lesssim 1 + k, \quad \text{and} \quad \|\mc{M}_k\|_{Z_\gamma \to X_\gamma} \lesssim C^k.\end{equation}
    Here, $C$ is a universal constant. It would follow immediately that $\mc{T}_{\textup{l}}$ is bounded from $Y_\gamma$ to $X_\gamma$. \par
    We start by estimating the norm of $\mc{A}_k$. It follows immediately from \eqref{eqn:linear_averaging_operator} that
    \[|\mc{A}_kG(y)| \lesssim \|G\|_{Y_\gamma} \ang{y}^{-\gamma}.\]
    For $y \leq - 1$, we need to take derivatives. We compute
    \begin{align*}
        (\mc{A}_kG)'(y) & = \int_{0}^{1}(1 - t^\alpha)^k t^{-1}G'(y / t) \, dt \\
        & = \frac{1}{y}\int_{0}^{1}\left[(1-t^\alpha)^k-\alpha k t^\alpha(1-t^\alpha)^{k-1}\right]G(y/t)\,dt - \begin{cases}
            \frac{G(y)}{y}, & k = 0,\\
            0, & k \geq 1.
        \end{cases}
    \end{align*}
    and so 
    \begin{align*}
        (\mc{A}_kG)''(y) & = \begin{cases}
            -\dfrac{G'(y)}{y}, & k = 0,\\[6pt]
            -\dfrac{\alpha k}{y}\displaystyle\int_0^1 t^{\alpha-1}(1-t^\alpha)^{k-1}G'(y/t)\,dt, & k \geq 1.
        \end{cases}
    \end{align*}
    From these formulas, we can immediately conclude that
    \[|(\mc{A}_kG)'(y)| \lesssim (1 + k) \|G\|_{Y_\gamma} \ang{y}^{-\gamma - 1}, \quad |(\mc{A}_kG)''(y)| \lesssim (1 + k) \|G\|_{Y_\gamma} \ang{y}^{-\gamma - 2}\]
    so that $\|\mc{A}_k\|_{Y_\gamma \to Z_\gamma} \lesssim 1 + k$. \par
    Now we estimate the norm of $\mc{M}_k$, defined in \eqref{eqn:linear_low_frequency_multipliers}.  By \cref{prop:appendix_nonsmooth_fourier_inverse}, we have that 
    \begin{equation}\label{eqn:linear_Hk_kernel}
        |\partial^{j}\widecheck{m_k}(y)| \lesssim C^k \ang{y}^{-p - 2 - j}, \quad j = 0, 1, 2.
    \end{equation}
    Therefore, for every $y \in \R$, we can estimate
    \begin{align*}
        |\partial^j\mc{M}_kG(y)| & \lesssim C^k \|G\|_{Z_\gamma} \int \ang{y - z}^{-p - 2 - j}\ang{z}^{-\gamma}\,dz \\
        & \lesssim C^k \|G\|_{Z_\gamma} \ang{y}^{-\gamma}.
    \end{align*}
    This gives the required estimates when $y \geq -2$ and the zeroth-order estimate for all $y$. When $y \leq -2$, we need a decay gain in the derivatives. For $j = 1, 2$, we split the convolution
    \[\int \partial^j \widecheck{m_k}(y-z)G(z)\,dz\]
   using the cutoff $\chi(8(y-z)/\ang{y})$. On its support, $|y-z| \leq \ang{y}/4$, so $z \leq -1$ and $\ang{z} \approx \ang{y}$. Integrating by parts $j$ times and using \eqref{eqn:linear_Hk_kernel}, we obtain
    \begin{align*}
        \left|\int \chi\left(\frac{8(y-z)}{\ang{y}}\right)\partial^j\widecheck{m_k}(y-z)G(z)\,dz\right| & = \left|\int \widecheck{m_k}(y-z)\partial_z^j\left[\chi\left(\frac{8(y-z)}{\ang{y}}\right)G(z)\right]\,dz\right| \\
        & \lesssim \|G\|_{Z_\gamma}\ang{y}^{-\gamma-j}\int |\widecheck{m_k}(y-z)|\,dz \\
        & \lesssim C^k\|G\|_{Z_\gamma}\ang{y}^{-\gamma-j}.
    \end{align*}
    Here each derivative falling on the cutoff contributes a factor $\ang{y}^{-1}$, while the derivatives falling on $G$ are controlled by its $Z_\gamma$ norm. For the far contribution, $|y-z| \geq \ang{y}/8$, and \eqref{eqn:linear_Hk_kernel} gives
    \begin{align*}
        \left|\int \left(1-\chi\left(\frac{8(y-z)}{\ang{y}}\right)\right)\partial^j\widecheck{m_k}(y-z)G(z)\,dz\right| & \lesssim C^k\|G\|_{Z_\gamma}\ang{y}^{-p-2-j}\int \ang{z}^{-\gamma}\,dz \\
        & \lesssim C^k\|G\|_{Z_\gamma}\ang{y}^{-\gamma-j},
    \end{align*}
    since $1 < \gamma < p+2$. Combining the estimates, we conclude that
    \begin{equation}\label{eqn:linear_Hk_bound}
        \|\mc{M}_kG\|_{X_\gamma} \lesssim C^k\|G\|_{Z_\gamma}.
    \end{equation}
    This completes \eqref{eqn:linear_low_frequency_factor_bounds}; substituting these bounds into \eqref{eqn:linear_low_frequency_series} proves the proposition.
\end{proof}

\subsection{High-Frequency Estimate}\label{sec:linear_high_frequency}

In this section, we control the high-frequency part of the inverse defined in \eqref{eqn:linear_frequency_split}: $\mc{T}_{\textup{h}}(G) = \mc{T}_0(G) - \mc{T}_{\textup{l}}(G)$.

\begin{definition}
    We define the positive high-frequency part of $\mc{T}_{\textup{h}}$ as:
    \[\widehat{\mc{T}_{+}G}(\xi) := \mathbbm{1}_{\{\xi > 0\}}(\xi)(1 - \chi(\xi)) \widehat{\mc{T}_{0}G}(\xi)\]
\end{definition}

\begin{remark}
    We remark that if $\ell(G) = 0$, for $\xi > 0$, \eqref{eqn:linear_projected_tail} gives 
    \begin{align*}
        \widehat{\mc{T}_{+}G}(\xi) & = (1 - \chi(\xi))\frac{e^{-i\Phi_0(\xi)}}{\xi^2}
        \int_0^\xi \eta e^{i\Phi_0(\eta)}\widehat G(\eta)\,d\eta \\
        & = -(1 - \chi(\xi))\frac{e^{-i\Phi_0(\xi)}}{\xi^2}
        \int_\xi^{\infty} \eta e^{i\Phi_0(\eta)}\widehat G(\eta)\,d\eta.
    \end{align*}
\end{remark}
This remark motivates the next definition of the kernel of the operator $\mc{T}_+$.

\begin{definition}
    For $0 \leq j \leq 2$, we define the kernel $K_{+,j}$ by
    \begin{equation}\label{eqn:linear_positive_kernel}K_{+,j}(y, z) = -\frac{i^{j+1}}{2\pi}\int_{1 < \xi < \eta}(\chi(\xi) - 1)\xi^{j - 2} e^{i(\Phi_0(\eta) - \Phi_0(\xi) - z\eta + y\xi)} \, d\xi d\eta.\end{equation}
\end{definition}

It follows from \eqref{eqn:linear_positive_kernel} that we can write 
\[\mc{T}_{+}G(y) = -\int \partial_zK_{+,0}(y, z) G(z)\,dz = \int K_{+,0}(y, z) G'(z)\,dz,\]
and, for $j=0,1,2$,
\begin{equation}\label{eqn:linear_positive_kernel_representation}\partial_y^j\mc{T}_{+}G(y) = \int K_{+,j}(y,z)G'(z)\,dz.\end{equation}

Our bounds on the kernel will be based on stationary phase estimates. To do this, observe that the phase in $\xi$ is given by $y\xi - \Phi_0(\xi)$ which is stationary when $y > 0$ and $\xi = y^{1/p}$. We split the kernel into its stationary and nonstationary components. 

\begin{definition}
    We define the decomposition of the kernel of the operator $\mc{T}_+$ into stationary and nonstationary parts as follows. 
    \begin{enumerate}
        \item Define the smooth cutoff function that localizes to the stationary region
        \begin{equation}\label{eqn:linear_stationary_cutoffs}\chi_{\textup{s}}(y,\xi) = \begin{cases}
            \chi\left(\dfrac{100(\xi - y^{1/p})}{y^{1/p}}\right), & y>0,\\
            0, & y\leq0,
        \end{cases}
        \qquad \chi_{\textup{ns}}(y,\xi)=1-\chi_{\textup{s}}(y,\xi).\end{equation}
        \item Define the symbols
        \[a_j(y, \xi) = -\frac{i^{j+1}}{2\pi}(\chi(\xi) - 1)\xi^{j-2}\]
        and
        \[a_j^{\textup{s}}(y, \xi) = \chi_{\textup{s}}(y,\xi)a_j(y,\xi), \quad a_j^{\textup{ns}}(y, \xi) = \chi_{\textup{ns}}(y,\xi)a_j(y,\xi)\]
        \item Define the stationary and nonstationary parts of the kernel by
        \begin{equation}\label{eqn:linear_stationary_kernel_split}K_{+,j} = K_{+,j}^{\textup{s}} + K_{+,j}^{\textup{ns}},\end{equation}
        where
        \[K_{+,j}^{\textup{s}}(y, z) = \int_{1 < \xi < \eta} a_j^{\textup{s}}(y, \xi) e^{i(\Phi_0(\eta) - \Phi_0(\xi) - z\eta + y\xi)} \, d\xi d\eta\]
        and similarly for $K_{+,j}^{\textup{ns}}$. We define the operators $\mc{T}_{+,j}^{\textup{s}}$ and $\mc{T}_{+,j}^{\textup{ns}}$ by
        \begin{equation}\label{eqn:linear_stationary_operators}\mc{T}_{+,j}^{\textup{s}}G(y)=\int K_{+,j}^{\textup{s}}(y,z)G'(z)\,dz,\qquad
        \mc{T}_{+,j}^{\textup{ns}}G(y)=\int K_{+,j}^{\textup{ns}}(y,z)G'(z)\,dz\end{equation}
        so that $\partial^j \mc{T}_+ = \mc{T}_{+,j}^{\textup{s}} + \mc{T}_{+,j}^{\textup{ns}}$.
    \end{enumerate}
\end{definition}

The following proposition gives the estimate for the kernel $K_{+,j}^{\textup{s}}$.

\begin{proposition}\label{prop:linear_stationary}(Stationary estimate)
    For a small enough constant $c > 0$, the kernel $K_{+,j}^{\textup{s}}$ satisfies the bound 
    \[|K_{+,j}^{\textup{s}}(y, z)| \lesssim \begin{cases}
        \ang{y}^{\frac{j}{p} - \frac{p + 3}{2p}} \ang{z}^{\frac{1 - p}{2p}}, & z \geq cy \\
        C_M\ang{y}^{-M}(\ang{y} + |z|)^{-M}, & z < cy
    \end{cases}.\]
\end{proposition}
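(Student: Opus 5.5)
The kernel $K_{+,j}^{\textup{s}}(y,z)$ is a double oscillatory integral over $1<\xi<\eta$. Since $a_j^{\textup{s}}$ vanishes for $y\le 0$, we may take $y>0$; for $y$ bounded the $\xi$-support is compact and the estimate is trivial, so we assume $y\gg1$ and set $\lambda_0=y^{1/p}$. I would first carry out the inner $\eta$-integral, for fixed $\xi$, in a one-sided form:
\[
I(\xi,z)=\int_\xi^\infty e^{i(\Phi_0(\eta)-z\eta)}\,d\eta .
\]
This is an oscillatory integral with phase $\Phi_0(\eta)-z\eta$, whose derivative is $\eta^p-z$. It is an improper integral, defined by oscillation, and the integration by parts below makes that precise. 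On the support of $\chi_{\textup s}$ we have $\xi\approx\lambda_0$, within a factor $1\pm 1/50$. Hence the lower endpoint satisfies $\xi^p\approx y$.

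\textbf{Case $z<cy$.} Here $\eta^p-z\ge \xi^p-cy\gtrsim y+|z|$ on the whole range $\eta\ge\xi$, once $c$ is small. So the $\eta$-phase is nonstationary. One integration by parts gives the boundary term
\[
-\frac{e^{i(\Phi_0(\xi)-z\xi)}}{i(\xi^p-z)}
\]
plus an absolutely convergent remainder. Repeating the integration by parts produces an expansion in inverse powers of $(\xi^p-z)$, each term carrying the factor $e^{i(\Phi_0(\xi)-z\xi)}$. Substituting into the $\xi$-integral, the factor $e^{i\Phi_0(\xi)}$ cancels $e^{-i\Phi_0(\xi)}$. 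What remains is $\int a_j^{\textup s}(y,\xi)\,b(\xi,z)\,e^{i(y-z)\xi}\,d\xi$ with a smooth symbol $b$.

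Since $y-z\gtrsim y+|z|$, repeated integration by parts in $\xi$ gains a factor $(y+|z|)^{-1}$ each time. Each $\xi$-derivative of $\chi_{\textup s}$, of $\xi^{j-2}$, or of $(\xi^p-z)^{-1}$ costs only $\xi^{-1}\lesssim y^{-1/p}$. The $\xi$-support has length about $\lambda_0$. Taking $N$ integrations by parts gives
\[
|K|\lesssim \lambda_0^{\,j-1}\,(y+|z|)^{-1}\,\bigl(\lambda_0(y+|z|)\bigr)^{-N}.
\]
This yields $C_M\ang{y}^{-M}(\ang{y}+|z|)^{-M}$. The remainder term in the $\eta$ expansion is handled the same way, after truncating at a high order so that it decays sufficiently.

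\textbf{Case $z\ge cy$.} Now $\eta$ may be stationary, at $\eta_z=z^{1/p}$. I would not integrate in $\eta$ first. Instead I write the kernel as $\int_1^\infty e^{i(\Phi_0(\eta)-z\eta)}J(y,\eta)\,d\eta$, where
\[
J(y,\eta)=\int_1^\eta a^{\textup s}_j(y,\xi)\,e^{i(y\xi-\Phi_0(\xi))}\,d\xi .
\]
The $\xi$-phase is stationary at $\lambda_0$, so I apply \cref{lem:appendix_homogeneous_stationary_phase} and the nonstationary bounds in \cref{lem:appendix_stationary}. For $\eta$ past the support of $\chi_{\textup s}$, $J(y,\eta)$ equals the full integral, whose size is
\[
|a_j(\lambda_0)|\,(\Phi_0''(\lambda_0))^{-1/2}\approx \lambda_0^{\,j-2}\,\lambda_0^{(1-p)/2}=y^{\frac{j}{p}-\frac{p+3}{2p}} .
\]
Thus $J(y,\eta)=J_\infty(y)\,\mathbbm 1_{\eta\gtrsim\lambda_0}$ plus a transition piece smooth on scale $\lambda_0$. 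The remaining $\eta$-integral $\int e^{i(\Phi_0(\eta)-z\eta)}J\,d\eta$ is then bounded by van der Corput applied to the second derivative: $\Phi_0''(\eta)=p\eta^{p-1}$ near $\eta_z$ gives a factor $\eta_z^{(1-p)/2}\approx \ang z^{\frac{1-p}{2p}}$. Monotonicity and smoothness of $J$ in $\eta$ are needed to apply the amplitude form of the bound. The product of the two factors is exactly the claimed bound.

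\textbf{Main obstacle.} The hardest step is making the second case rigorous. The $\eta$-integral is not absolutely convergent, and $J$ has a transition region near $\eta\approx\lambda_0$ that could coincide with $\eta_z$ when $z\approx y$. In that region I would use van der Corput with a variation-bounded amplitude: $J$ and $\partial_\eta J$ are controlled on the scale $\lambda_0$. I would also need uniformity in the cutoff constants, in particular the choice of $100$ in $\chi_{\textup s}$ relative to $c$.
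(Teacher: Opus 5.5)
Your treatment of the case $z<cy$ is correct and is the paper's argument. You integrate by parts repeatedly in $\eta$, the factors $e^{\pm i\Phi_0(\xi)}$ cancel to leave the linear phase $e^{i(y-z)\xi}$, you integrate by parts in $\xi$ using $y-z\gtrsim y+|z|$, and you bound the remainder directly.

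The case $z\ge cy$ has a genuine gap, and it sits exactly in the regime $z\approx y$ that you flag.

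\textbf{The claim about $J$ is false.} You assert that $J(y,\eta)$ equals $J_\infty$ for $\eta\gtrsim\lambda_0$ plus a transition piece smooth on scale $\lambda_0$. But $\partial_\eta J=a_j^{\textup s}(y,\eta)e^{i(y\eta-\Phi_0(\eta))}$ has modulus $\approx\lambda_0^{j-2}$ on the whole support. Set $\kappa=y^{(p-1)/p}$. Outside the window $|\eta-\lambda_0|\lesssim\kappa^{-1/2}$, the function $J$ (or $J-J_\infty$) behaves like $a_j^{\textup s}e^{i(y\eta-\Phi_0(\eta))}/(i(y-\eta^p))$, which oscillates at frequency $\approx\kappa|\eta-\lambda_0|$. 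Hence $\|\partial_\eta J\|_{L^1}\approx\lambda_0^{j-1}$, which exceeds $\|J\|_{L^\infty}\approx\lambda_0^{j-2}\kappa^{-1/2}$ by a factor $y^{\frac{1+p}{2p}}$. Van der Corput with a variation-bounded amplitude loses exactly that factor, and $J$ is complex-valued and not monotone anyway.

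\textbf{The loss is structural, not an artifact of the lemma.} Multiplying that oscillating tail by $e^{i(\Phi_0(\eta)-z\eta)}$ leaves only $e^{i(y-z)\eta}$, which does not oscillate when $z\approx y$. Any absolute-value bound on this piece therefore gives $\int_{\kappa^{-1/2}<|\eta-\lambda_0|\lesssim\lambda_0}\frac{\lambda_0^{j-2}}{\kappa|\eta-\lambda_0|}\,d\eta\approx\lambda_0^{j-2}\kappa^{-1}\log y$. This is a logarithmic loss at exactly the size of the claimed bound, and it would destroy the sharp $\ang{y}^{-\gamma+j/p}$ estimate in \cref{prop:linear_high_frequency}.

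\textbf{The missing idea is the cancellation in \cref{lem:appendix_triangle}.} Keep the order $\int a_j^{\textup s}e^{i\Phi(\xi)}E(\xi)\,d\xi$ with $E(\xi)=\int_\xi^\infty e^{i\Psi}$. By \cref{lem:appendix_stationary}, $E$ is bounded uniformly by $M=\ang{z}^{\frac{1-p}{2p}}$. Then:
\begin{enumerate}
\item Integrate by parts in $\xi$ off a $\kappa^{-1/2}$-window around $\lambda_0$.
\item The term where the derivative hits $E$ uses $E'=-e^{i\Psi}$ and $\Phi+\Psi=(y-z)\xi$, which produces $\int \frac{a}{\Phi'}e^{i(y-z)\xi}\,d\xi$.
\item Replace $a/\Phi'$ by its odd leading part $a(\lambda_0)/(\Phi''(\lambda_0)(\xi-\lambda_0))$. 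The error is $O(A/(\kappa L))$ on an interval of length $L\approx\lambda_0$, where $A=\lambda_0^{j-2}$.
\item On the symmetric set $\kappa^{-1/2}<|\xi-\lambda_0|<L$ the cosine part of the leading term cancels by oddness. What remains is a Dirichlet integral $\int_{\kappa^{-1/2}}^{L}\frac{\sin(\omega t)}{t}\,dt$, bounded uniformly in $\omega$.
\end{enumerate}
This gives $A(M\kappa^{-1/2}+\kappa^{-1})$ with no logarithm. The condition $z\ge cy$ then absorbs the $\kappa^{-1}$ term into $M\kappa^{-1/2}$, which yields the stated bound.
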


\begin{proof}
    By \eqref{eqn:linear_stationary_cutoffs}, on the support of $\chi_{\textup{s}}(y,\xi)$, we need $|\xi - y^{1/p}| \lesssim y^{1/p}$ and thus $y \gtrsim 1$ since $\xi > 1$. We first consider when $z \geq cy$. This in particular implies that $z \gtrsim 1$ and this includes the case when $y = z$. We write
    \[K_{+,j}^{\textup{s}}(y,z) = \int a_j^{\textup{s}}(y, \xi) e^{i(y\xi - \Phi_0(\xi))} \int_{\xi}^{\infty}e^{i(\Phi_0(\eta) - z\eta)} \, d\eta  \, d\xi.\]
    We apply \cref{lem:appendix_triangle} to this integral with
    \[I = (y^{1/p} - c'y^{1/p}, y^{1/p} + c'y^{1/p}), \quad a(\xi) = a_j^{\textup{s}}(y,\xi), \quad \Phi(\xi) = y\xi - \Phi_0(\xi), \quad \Psi(\eta) = \Phi_0(\eta) - z\eta.\]
    To check the hypotheses of \cref{lem:appendix_triangle}, we have 
    \[\Phi'(y^{1/p}) = 0, \quad |\Phi''(\xi)| \approx \ang{y}^{(p-1)/p}, \quad  |\Phi'''(\xi)| \approx \ang{y}^{(p - 2)/p}.\]
    Also, for $\ell = 0, 1, 2$, we have that
    \[|\partial^{\ell}a(\xi)| \lesssim \ang{y}^{\frac{j - 2 - \ell}{p}}.\]
    Also, by \cref{lem:appendix_stationary}, we have that
    \[\sup_{\xi > 0}\left|\int_{\xi}^{\infty} e^{i(\Phi_0(\eta) - z\eta)} \, d\eta\right| \lesssim \begin{cases}
        1, & z \leq 0 \\
        \ang{z}^{\frac{1-p}{2p}}, & z > 0.
    \end{cases}\]
    Therefore by \cref{lem:appendix_triangle}, we have that
     \[|K_{+,j}^{\textup{s}}(y,z)| \lesssim \ang{y}^{\frac{j - 2}{p}} \left(\ang{y}^{\frac{1-p}{p}} + \ang{y}^{\frac{1-p}{2p}}\ang{z}^{\frac{1 - p}{2p}} \right) \lesssim \ang{y}^{\frac{j}{p} - \frac{p + 3}{2p}} \ang{z}^{\frac{1 - p}{2p}}. \]
    In the case that $z < cy$, the phase in $\eta$ is nonstationary. Indeed,
    \[\partial_\eta (\Phi_0(\eta) - z\eta) = \eta^{p} - z \gtrsim \eta^{p} + |z|\]
    where we used that $\eta \geq \xi \approx y^{1/p}$ and $y \gtrsim z$. Write
    \[E_z(\xi) = \int_{\xi}^\infty e^{i(\Phi_0(\eta) - z\eta)} \, d\eta.\]
    Let $D_{\Phi_0}$ denote the operator
    \[D_{\Phi_0}h(\xi) = \frac{\partial_\xi h(\xi)}{i(\xi^p - z)}.\]
    We have that 
    \[\begin{aligned}E_z(\xi) & = e^{i(\Phi_0(\xi) - z\xi)}\sum_{k = 0}^{M}(-1)^{k+1}D_{\Phi_0}^k \left(\frac{1}{i(\xi^p - z)}\right) \\ & \quad + (-1)^{M+1}\int_{\xi}^{\infty}e^{i(\Phi_0(\eta) - z\eta)}\partial_\eta\left[D_{\Phi_0}^{M}\left(\frac{1}{i(\eta^p - z)}\right)\right]\,d\eta.\end{aligned}\]
    Therefore we can write
    \begin{align*}
        K_{+,j}^{\textup{s}}(y, z) & = \sum_{k = 0}^{M}(-1)^{k + 1}\int a_j^{\textup{s}}(y, \xi)e^{i\xi(y - z)}D_{\Phi_0}^k\left(\frac{1}{i(\xi^p - z)}\right)\,d\xi \\ & \quad + (-1)^{M+1}\int_{1 < \xi < \eta}a_j^{\textup{s}}(y, \xi)e^{i(\Phi_0(\eta) - \Phi_0(\xi) - z\eta + y\xi)}\partial_\eta\left[D_{\Phi_0}^{M}\left(\frac{1}{i(\eta^p - z)}\right)\right]\,d\xi d\eta.
    \end{align*}
    Since $y - z  \gtrsim y + |z|$, the phase in the first integral is nonstationary so we can integrate by parts to obtain 
    \begin{equation}\label{eqn:linear_stationary_boundary_ibp}
        \begin{aligned}\left|\int a_j^{\textup{s}}(y, \xi)e^{i\xi(y-z)}D_{\Phi_0}^k\left(\frac{1}{i(\xi^p-z)}\right)\,d\xi\right| 
            & \leq \frac{1}{|y-z|^M}\int\left|\partial_\xi^M\left[a_j^{\textup{s}}(y, \xi)D_{\Phi_0}^k\left(\frac{1}{i(\xi^p-z)}\right)\right]\right|\,d\xi \\ 
            & \lesssim \frac{\ang{y}^{\frac{j-1-k-M}{p}}}{(\ang{y}+|z|)^{k+M+1}}.\end{aligned}
    \end{equation} 
    In particular, the sum over $k$ is bounded by $\ang{y}^{\frac{j - 1 - M}{p}}(\ang{y} + |z|)^{-M-1}$. The second term can be estimated directly:
    \begin{equation}\label{eqn:linear_stationary_remainder_bound}
        \begin{aligned}&\left|\int_{1 < \xi < \eta}a_j^{\textup{s}}(y, \xi)e^{i(\Phi_0(\eta) - \Phi_0(\xi) - z\eta + y\xi)}\partial_\eta\left[D_{\Phi_0}^{M}\left(\frac{1}{i(\eta^p-z)}\right)\right]\,d\xi d\eta\right| \\ &\quad \lesssim_M \int_1^\infty |a_j^{\textup{s}}(y, \xi)|\int_\xi^\infty \frac{\eta^{p-M-1}}{(\eta^p+|z|)^{M+2}}\,d\eta\,d\xi \\ &\quad \lesssim_M \int_1^\infty |a_j^{\textup{s}}(y, \xi)|\frac{\xi^{-M}}{(\xi^p+|z|)^{M+1}}\,d\xi \\ &\quad \lesssim_M \frac{\ang{y}^{\frac{j-1-M}{p}}}{(\ang{y}+|z|)^{M+1}}.
        \end{aligned}
    \end{equation} 
    In the second inequality, we used $\eta^{-M}\leq\xi^{-M}$ and $\int_\xi^\infty\eta^{p-1}(\eta^p+|z|)^{-M-2}\,d\eta=[p(M+1)]^{-1}(\xi^p+|z|)^{-M-1}$. Integrating by parts sufficiently many times and relabeling $M$, we obtain the bound 
    \[|K_{+,j}^{\textup{s}}(y, z)| \lesssim_M \ang{y}^{-M}(\ang{y} + |z|)^{-M}, \quad M \geq 0.\]
\end{proof}

To estimate the nonstationary part of the kernel, the plan is to integrate by parts in $\xi$ to gain decay in $y$. Since the integration domain has the boundary $\xi = \eta$, the integration by parts will gain decay in the remainder term, but will also produce boundary terms that reduce to Fourier multipliers. The following lemma makes this precise.

\begin{lemma}\label{lem:linear_nonstationary_symbol_expansion}
    For $k \geq 0$ inductively define the symbols 
    \begin{equation}\label{eqn:linear_nonstationary_symbols}m_{j,0}(y, \xi) = \frac{a_j^{\textup{ns}}(y, \xi)}{i(y - \xi^p)}, \quad m_{j,k+1}(y, \xi) = -\frac{\partial_\xi m_{j,k}(y, \xi)}{i(y - \xi^p)}.\end{equation}
    Then, for $M \geq 1$, we can write 
    \[ \int_{1}^{\eta}a_j^{\textup{ns}}(y,\xi)e^{i(y\xi - \Phi_0(\xi))} \, d\xi
         = e^{i(y\eta - \Phi_0(\eta))}\sum_{k = 0}^{M - 1}m_{j,k}(y, \eta)  - \int_{1}^{\eta}e^{i(y\xi - \Phi_0(\xi))}\partial_\xi m_{j,M-1}(y, \xi) \, d\xi.\]
    Consequently, the nonstationary contribution to $\partial_y^j\mc{T}_{+}G$ is
    \begin{equation}\label{eqn:linear_nonstationary_expansion}\mc{T}_{+,j}^{\textup{ns}}G(y) = 2\pi\sum_{k = 0}^{M-1} [\widecheck{m_{j,k}(y, \cdot)} * G'](y) + \mc{R}_{j,M}(G)(y)\end{equation}
    where
    \begin{equation}\label{eqn:linear_nonstationary_remainder_definition}\mc{R}_{j,M}(G)(y) = -\int_{\R} \left[\int_{1<\xi<\eta}
    \partial_\xi m_{j,M-1}(y,\xi)
    e^{i(\Phi_0(\eta)-\Phi_0(\xi)-z\eta+y\xi)}\,d\xi\,d\eta \right] G'(z) \,dz.\end{equation}
\end{lemma}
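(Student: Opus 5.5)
The identity is a repeated integration by parts in $\xi$, carried out for fixed $y$ and $\eta$, with the phase $\Phi(\xi)=y\xi-\Phi_0(\xi)$. Since $\Phi'(\xi)=y-\xi^p$, we have
\[
e^{i\Phi(\xi)}=\frac{\partial_\xi e^{i\Phi(\xi)}}{i(y-\xi^p)}.
\]
On the support of $a_j^{\textup{ns}}(y,\cdot)$ we have $|\xi-y^{1/p}|\gtrsim y^{1/p}$ when $y>0$, and $y-\xi^p<0$ when $y\le0$. Hence $y-\xi^p$ does not vanish there, and every $m_{j,k}$ is a smooth symbol in $\xi$. Each $m_{j,k}$ is supported in $\xi\ge1$, because $a_j$ contains the factor $\chi(\xi)-1$, which vanishes for $|\xi|\le1$. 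In particular $m_{j,k}(y,1)=0$, and the lower boundary terms vanish.

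First I would integrate by parts once:
\[
\int_1^\eta a_j^{\textup{ns}}e^{i\Phi}\,d\xi
= e^{i\Phi(\eta)}m_{j,0}(y,\eta)-\int_1^\eta \partial_\xi m_{j,0}(y,\xi)\,e^{i\Phi(\xi)}\,d\xi .
\]
By definition $-\partial_\xi m_{j,k}=i(y-\xi^p)\,m_{j,k+1}$, so the remaining integrand equals $m_{j,1}\,\partial_\xi e^{i\Phi}$. Integrating by parts again produces the boundary term $e^{i\Phi(\eta)}m_{j,1}(y,\eta)$ and the remainder $-\int_1^\eta \partial_\xi m_{j,1}\,e^{i\Phi}\,d\xi$. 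An induction on $M$ then gives the stated formula, with boundary sum $\sum_{k=0}^{M-1}m_{j,k}(y,\eta)$ and remainder $-\int_1^\eta e^{i\Phi}\,\partial_\xi m_{j,M-1}\,d\xi$.

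Next I would substitute this into the definition of $K^{\textup{ns}}_{+,j}$. For $\eta>1$, with the outer factor $e^{i(\Phi_0(\eta)-z\eta)}$, the boundary terms combine to
\[
e^{i(\Phi_0(\eta)-z\eta)}\,e^{i(y\eta-\Phi_0(\eta))}m_{j,k}(y,\eta)=e^{i\eta(y-z)}m_{j,k}(y,\eta).
\]
The $\Phi_0$ phases cancel exactly, which is why the boundary terms are Fourier multipliers. Integrating over $\eta$ gives $\int e^{i\eta(y-z)}m_{j,k}(y,\eta)\,d\eta=2\pi\,\widecheck{m_{j,k}(y,\cdot)}(y-z)$. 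Pairing with $G'(z)$ yields the convolution $2\pi[\widecheck{m_{j,k}(y,\cdot)}*G'](y)$. The remainder terms give exactly $\mc{R}_{j,M}$, and this proves \eqref{eqn:linear_nonstationary_expansion}.

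The only delicate point is justifying the interchange of integrals and the meaning of the oscillatory integrals. The $\eta$-integral over $(\xi,\infty)$ is only conditionally convergent, and $m_{j,0}(y,\eta)\sim \eta^{j-2-p}$ is not integrable for $j=2$. I would handle this as the earlier kernel representations implicitly do: take $G$ Schwartz, insert a damping factor $e^{-\delta\eta}$ (or a cutoff $\chi(\eta/N)$), perform the computation, and let $\delta\to0$. The $\eta$-integral is taken in the distributional sense, which is consistent with $\widecheck{m}$ being a tempered inverse Fourier transform. For $G\in\sq{Y}_\gamma$ the identity then follows by density together with the continuity bounds proved later. I expect this regularization bookkeeping to be the only real obstacle; the algebra itself is routine.
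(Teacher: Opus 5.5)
Your proposal is correct and is essentially the paper's argument: repeated integration by parts in $\xi$ using $\partial_\xi(y\xi-\Phi_0(\xi))=y-\xi^p$, vanishing of the boundary terms at $\xi=1$ because of the factor $\chi(\xi)-1$, and exact cancellation of the $\Phi_0(\eta)$ phases, which turns the boundary terms into the multipliers $2\pi\,\widecheck{m_{j,k}(y,\cdot)}(y-z)$. Your regularization remarks go beyond the paper, which treats the kernel identity formally; if you carry them out, note that Schwartz functions are not norm-dense in $Y_\gamma$ (it is an $L^\infty$-type space), so the limiting step should use bounded sequences converging locally, as in \cref{prop:appendix_convergence}, rather than norm density.
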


\begin{proof}
    The proof of the identity involving $a_j^{\textup{ns}}$ essentially follows from integration by parts and 
    \[\partial_\xi(y\xi - \Phi_0(\xi)) = y - \xi^p.\]
    Therefore
    \begin{align*}
        K_{+,j}^{\textup{ns}}(y, z)
        & = \sum_{k=0}^{M-1}\int_1^\infty m_{j,k}(y,\eta)e^{i\eta(y-z)}\,d\eta\\
        &\quad-\int_{1<\xi<\eta}\partial_\xi m_{j,M-1}(y,\xi)
        e^{i(\Phi_0(\eta)-\Phi_0(\xi)-z\eta+y\xi)}\,d\xi\,d\eta.
    \end{align*}
    The final identity then follows.
\end{proof}

\begin{lemma}[Symbol bounds]\label{lem:linear_nonstationary_leading_kernels}
    The symbols $m_{j,k}$ satisfy the following estimates for every $L \geq 0$.
    \begin{enumerate}
        \item For $k=0$, 
    \begin{equation}\label{eqn:linear_nonstationary_leading_kernel_j0}
        \left|\operatorname{Re}\partial_z\widecheck{m_{0,0}(y,\cdot)}(z)\right|
        \lesssim_L \ang{y}^{-1}\ang{z}^{-L}
    \end{equation}
    and
    \begin{equation}\label{eqn:linear_nonstationary_leading_kernel_j12}
    \begin{aligned}
        \left|\operatorname{Re}\partial_z\widecheck{m_{1,0}(y,\cdot)}(z)\right| + \left|\operatorname{Re}\widecheck{m_{2,0}(y,\cdot)}(z)\right| 
        &\lesssim_L
        \begin{cases}
            |z|^{p-1}, & 0<|z|\leq \ang{y}^{-1/p},\\
            \ang{y}^{-2}|z|^{-p-1}+\ang{y}^{-1}, & \ang{y}^{-1/p}<|z|\leq1,\\
            \ang{y}^{-1}|z|^{-L}, & |z|>1.
        \end{cases}
    \end{aligned}
    \end{equation}
    \item For $k \geq 1$, we have that 
    \begin{equation}\label{eqn:linear_nonstationary_higher_kernels}
        \sum_{j=0}^1\left|\operatorname{Re}\partial_z\widecheck{m_{j,k}(y,\cdot)}(z)\right|
        +\left|\operatorname{Re}\widecheck{m_{2,k}(y,\cdot)}(z)\right|
        \lesssim_{k,L}\ang{y}^{-k-1}\ang{z}^{-L}.
    \end{equation}
    \end{enumerate}
\end{lemma}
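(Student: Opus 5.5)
The plan is to treat each symbol $m_{j,k}(y,\cdot)$ as a function of $\xi$ supported on $\{|\xi|\geq 1\}$ (since $\chi(\xi)-1$ vanishes for $|\xi|\le 1$) and away from the stationary set $\xi\approx y^{1/p}$. We then read off the kernel bounds from symbol estimates combined with integration by parts in $\xi$. Everything hinges on the elementary lower bound
\[
|y-\xi^p|\gtrsim \ang{y}+\xi^p \quad\text{on } \operatorname{supp}\chi_{\textup{ns}}(y,\cdot)\cap\{\xi\geq1\},
\]
which holds trivially for $y\le 0$ and, for $y>0$, because $|\xi-y^{1/p}|\geq y^{1/p}/100$ there. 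Together with $|\partial_\xi^\ell\chi_{\textup{ns}}|\lesssim_\ell \xi^{-\ell}$ (the cutoff varies on scale $y^{1/p}\approx\xi$), induction on \eqref{eqn:linear_nonstationary_symbols} gives the symbol-type bounds
\[
|\partial_\xi^\ell m_{j,k}(y,\xi)|\lesssim_{k,\ell}\xi^{j-2-k-\ell}(\ang{y}+\xi^p)^{-1-k}.
\]
Each step of the recursion gains $\xi^{-1}(\ang y+\xi^p)^{-1}$.

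For part (2), $k\ge1$: the quantity $\partial_z\widecheck{m_{j,k}}$ corresponds to multiplying the symbol by $i\xi$, and then $\xi^{j-1-k}(\ang y+\xi^p)^{-1-k}$ is integrable on $[1,\infty)$ for $j\le 1$. For $j=2$ we use $\widecheck{m_{2,k}}$ directly, with symbol $\lesssim \xi^{-k}(\ang y+\xi^p)^{-1-k}$, which is integrable when $k\geq1$. In either case the $L^1_\xi$ norm is $\lesssim\ang y^{-1-k}$, which gives the pointwise bound for $|z|\lesssim 1$. For $|z|\ge1$, integrate by parts $L$ times in $\xi$. Each derivative gains $\xi^{-1}$, and no boundary terms arise since the symbol is smooth and vanishes near $|\xi|=1$, so we get $\ang y^{-1-k}|z|^{-L}$. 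The real part is automatic here; we do not need it.

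For part (1), $k=0$: the symbol of $\partial_z\widecheck{m_{0,0}}$ is $\sim \xi^{-1}(\ang y+\xi^p)^{-1}$, which is not integrable uniformly. This is why only the real part is claimed. The plan is to pair $\xi$ with $-\xi$, using the extension of all symbols to negative frequencies by conjugate symmetry (as for $G_\mu$), so that $\operatorname{Re}$ of the inverse transform is a cosine or sine transform. The leading piece is then
\[
\int_1^\infty \frac{(1-\chi(\xi))\chi_{\textup{ns}}}{y-\xi^p}\cos(z\xi)\,d\xi,
\]
and after one integration by parts in $\xi$ (boundary term zero) we get $z^{-1}$ times an absolutely convergent integral of size $\ang y^{-1}$. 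Repeating gives $\ang y^{-1}|z|^{-L}$ for $|z|\ge1$. For $|z|\le 1$ we split at $\xi=|z|^{-1}$: the low part is bounded by $\int_1^{1/|z|}\xi^{-1}(\ang y+\xi^p)^{-1}d\xi$, and the high part is handled by one integration by parts. I expect the $\log$ from the low part to be absorbed because $(\ang y+\xi^p)^{-1}$ decays like $\xi^{-p}$ once $\xi\gtrsim y^{1/p}$, leaving $\lesssim\ang y^{-1}$ overall. If that fails, I will exploit the exact identity that $\frac{1}{y-\xi^p}$ minus its value on the symmetric region is odd, so the cosine transform of the odd part vanishes.

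For $j=1,2$ the symbol $\sim (\ang y+\xi^p)^{-1}$ is not decaying in $\xi$, so we split into three regimes:
\begin{itemize}
\item $\xi\le \ang y^{1/p}$, where the symbol is $\approx y^{-1}$: the cosine transform of the truncation gives $\ang y^{-1}\min\{\ang y^{1/p},|z|^{-1}\}$-type bounds.
\item $\xi\geq\ang y^{1/p}$, where the symbol is $\approx \xi^{-p}$: this is a homogeneous tail whose cosine transform is $\sim|z|^{p-1}$ for $|z|\le\ang y^{-1/p}$.
\item Expanding $\frac{1}{\xi^p-y}=\xi^{-p}+y\xi^{-2p}+\dots$ in the tail region: the term of order $\xi^{-2p}$ produces the $\ang y^{-2}|z|^{-p-1}$ correction in the intermediate range $\ang y^{-1/p}<|z|\le1$.
\end{itemize}
The decay for $|z|>1$ again comes from repeated integration by parts. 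The main obstacle is precisely this $j=1,2$, $k=0$ case: the symbol has only $\xi^{-p}$ decay, so it is not $L^1$, and bookkeeping the three scales (in particular matching the singular tail $|z|^{p-1}$ with the cutoff at the stationary region) is where I would be most careful, using dyadic decomposition in $\xi$ and summing $\min\{1,(|z|2^k)^{-2}\}2^{k}\cdot\text{symbol size}$ over dyadic scales $2^k$.
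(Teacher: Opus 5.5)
\textbf{There is a genuine gap.} Your framework is the right one: symbol bounds from the recursion, repeated integration by parts, and reading $\operatorname{Re}$ as a cosine or sine transform. But at the three delicate points you assert bounds that are false by a factor $\log\ang{y}$. This factor comes from the range $1\le\xi\le\ang{y}^{1/p}$, where $(y-\xi^p)^{-1}\approx y^{-1}$.

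\textbf{Part (2) at $k=1$.} Your symbol-class bound gives the integrand $\xi^{-1}(\ang{y}+\xi^p)^{-2}$ for both $i\xi m_{1,1}$ and $m_{2,1}$. Since $\int_1^\infty \xi^{-1}(\ang{y}+\xi^p)^{-2}\,d\xi\approx \ang{y}^{-2}\log\ang{y}$, the step ``integrable, hence $L^1_\xi$ norm $\lesssim\ang{y}^{-1-k}$'' fails here; it does hold for $k\ge2$. For $m_{1,1}$ this is not an artifact. Indeed, $\xi m_{1,1}$ contains $\frac{(1-\chi)\chi_{\textup{ns}}}{2\pi\xi(y-\xi^p)^2}$, so as $y\to-\infty$ one has $|\partial_z\widecheck{m_{1,1}(y,\cdot)}(0)|\approx\ang{y}^{-2}\log\ang{y}$. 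The real part is therefore \emph{not} automatic, contrary to what you wrote.

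\textbf{The case $(j,k)=(0,0)$.} Bounding the low part $\int_1^{1/|z|}\xi^{-1}(\ang{y}+\xi^p)^{-1}\,d\xi$ in absolute value gives $\ang{y}^{-1}\log\min\{|z|^{-1},\ang{y}^{1/p}\}$. At $|z|=\ang{y}^{-1/(2p)}$ this is $\approx\ang{y}^{-1}\log\ang{y}$. The logarithm is produced below $\ang{y}^{1/p}$, so the $\xi^{-p}$ decay of the tail cannot absorb it. Your ``odd part'' fallback does not name a usable cancellation. Also, the integral you call the leading piece is $\operatorname{Re}\widecheck{m_{2,0}}$ up to constants. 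The relevant object is $\operatorname{Re}\widecheck{m_{1,0}}$, which is a \emph{sine} transform of $\frac{(1-\chi)\chi_{\textup{ns}}}{\xi(y-\xi^p)}$.

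\textbf{The cases $j=1,2$, $k=0$.} Your low-region bound $\ang{y}^{-1}\min\{\ang{y}^{1/p},|z|^{-1}\}$ gives $\ang{y}^{-1}|z|^{-1}$ on $\ang{y}^{-1/p}<|z|<1$, and so does your dyadic sum with absolute values. This exceeds the claimed $\ang{y}^{-2}|z|^{-p-1}+\ang{y}^{-1}$ there. Integrated in \eqref{eqn:linear_nonstationary_weighted_kernels}, it would lose a factor $\log\ang{y}$. The term $\ang{y}^{-2}|z|^{-p-1}$ does not come from the $y\xi^{-2p}$ term of the tail expansion. It comes from the deviation $(y-\xi^p)^{-1}-y^{-1}\approx\xi^p/y^2$ on $\xi\lesssim\min\{|z|^{-1},y^{1/p}\}$.

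\textbf{What is missing is cancellation, not size.}

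For $(2,0)$, integrate by parts \emph{before} splitting, on all of $(0,\infty)$. Set $q(\xi)=-\frac{1}{2\pi}\frac{1-\chi_{\textup{s}}(y,\xi)}{y-\xi^p}$. Then $\int_0^\infty q\cos(z\xi)\,d\xi=-z^{-1}\int_0^\infty q'(\xi)\sin(z\xi)\,d\xi$. There is no boundary term at $0$ because $\sin 0=0$, so the near-constant value $q\approx-1/(2\pi y)$ costs nothing. Now use $|q'|\lesssim\xi^{p-1}(|y|+\xi^p)^{-2}$: the bound $|\sin(z\xi)|\le|z|\xi$ below $|z|^{-1}$, plus one more integration by parts above it, gives $\ang{y}^{-2}|z|^{-p-1}$. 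The leftover piece $\int\chi q\cos(z\xi)\,d\xi$ is $O(\ang{y}^{-1})$. The $(1,0)$ bound then follows from $i\xi m_{1,0}=m_{2,0}$.

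For $(0,0)$, use that $m_{1,0}=i\xi m_{0,0}$ is purely imaginary. Hence $\operatorname{Re}\widecheck{m_{1,0}}(0)=0$, and
\[
\operatorname{Re}\partial_z\widecheck{m_{0,0}}(z)=\int_0^z\operatorname{Re}\widecheck{m_{2,0}}(z')\,dz' .
\]
Integrating the $(2,0)$ bound gives $\lesssim\ang{y}^{-1}$ for $|z|\le1$.

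For $m_{2,1}$, use its explicit form rather than symbol-class bounds. The $\xi$-derivative falls either on $(1-\chi)\chi_{\textup{ns}}$, which is localized with $O(1)$ total variation, or on $(y-\xi^p)^{-1}$. The latter gives $\int\xi^{p-1}(\ang{y}+\xi^p)^{-3}\,d\xi\lesssim\ang{y}^{-2}$.

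For $m_{1,1}$, write $\xi m_{1,1}=-im_{2,1}+\frac{(1-\chi)\chi_{\textup{ns}}}{2\pi\xi(y-\xi^p)^2}$. Treat the second term inside the sine transform by integrating by parts against the uniformly bounded primitive $\int_1^\xi\frac{\sin(z\eta)}{\eta}\,d\eta$. This trades the factor $\xi^{-1}$ for the total variation of $\frac{(1-\chi)\chi_{\textup{ns}}}{(y-\xi^p)^2}$, which is $O(\ang{y}^{-2})$.
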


\begin{proof}
    We start by proving the result for $k = 0$. Recall from \eqref{eqn:linear_nonstationary_symbols} that
    \[
        m_{j,0}(y,\xi)
        =\frac{i^j}{2\pi}
        \frac{(1-\chi(\xi))(1-\chi_{\textup{s}}(y,\xi))
        \xi^{j-2}}{y-\xi^p}.
    \]
    We suppress the input $y$ in $m_{j,0}$ for convenience. For $j = 2$, we have that
    \[m_{2,0}(\xi) = -\frac{1}{2\pi}\frac{(1-\chi(\xi))(1-\chi_{\textup{s}}(y,\xi))}{y-\xi^p} = (1 - \chi(\xi))q(\xi)\]
    where 
    \[q(\xi) = -\frac{1}{2\pi}\frac{(1-\chi_{\textup{s}}(y,\xi))}{y-\xi^p}.\]
    Note that $q$ satisfies the bounds 
    \[
    |q(\xi)|\lesssim\frac1{|y|+\xi^p},\qquad
    |q'(\xi)|\lesssim\frac{\xi^{p-1}}{(|y|+\xi^p)^2},\qquad
    |q''(\xi)|\lesssim\frac{\xi^{p-2}}{(|y|+\xi^p)^2}.
    \] 
    We will estimate $\widecheck{m_{2,0}}(z)$ in three different ways. First write
    \[\text{Re} \, \widecheck{m_{2,0}}(z) = \int (1 - \chi(\xi)) q(\xi) \cos(z\xi) \, d\xi = \int q(\xi) \cos(z\xi) \, d\xi - \int \chi(\xi) q(\xi) \cos(z\xi) \, d\xi.\]
    The second term is bounded by $C \ang{y}^{-1}$ from the bounds for $q$:
    \[\left|\int \chi(\xi) q(\xi) \cos(z\xi) \, d\xi\right| \lesssim \int_{0}^{2}\frac{1}{|y| + \xi^p} \, d\xi \lesssim \ang{y}^{-1}.\]
    For the first term, we can estimate this in two different ways. First,
    \begin{align*}
        \left|\int_{0}^{\infty} q(\xi) \cos(z\xi) \, d\xi\right| & \leq \left|\int_{0}^{|z|^{-1}} q(\xi) \cos(z\xi) \, d\xi\right| + \left|\int_{|z|^{-1}}^{\infty} q(\xi) \cos(z\xi) \, d\xi\right| \\
        & \lesssim |z|^{p - 1} + |z|^{-1}\left(q(|z|^{-1}) + \int_{|z|^{-1}}^{\infty} |q'(\xi)| \, d\xi\right) \\
        & \lesssim |z|^{p-1} 
    \end{align*}
    Next, we can also first integrate by parts to obtain
    \begin{align*}
         \left|\int_{0}^{\infty} q(\xi) \cos(z\xi) \, d\xi\right| & =  |z|^{-1} \left|\int q'(\xi) \sin(z\xi) \, d\xi\right| \\
         & \lesssim \int_{0}^{|z|^{-1}}|\xi||q'(\xi)| \, d\xi + |z|^{-2}\left( |q'(|z|^{-1})| + \int_{|z|^{-1}}^{\infty}|q''(\xi)| \, d\xi \right) \\
         & \lesssim |y|^{-2}|z|^{-p-1} + |y|^{-2}|z|^{-2}\int_{|z|^{-1}}^{\infty}\xi^{p-2} \, d\xi \\
         & \lesssim |y|^{-2}|z|^{-p-1}.
    \end{align*}
    Therefore, for $0<|z|\leq1$, combining these bounds for $|y|\geq1$ and using the first bound for $|y|\leq1$, we obtain
    \[\left|\int_{0}^{\infty} q(\xi) \cos(z\xi) \, d\xi\right| \lesssim |z|^{p-1}\min\{1, \ang{y}^{-2}|z|^{-2p}\}.\]
    Finally, we can also estimate $\text{Re} \, \widecheck{m_{2,0}}(z)$ by integrating by parts to get
    \[|\text{Re} \, \widecheck{m_{2,0}}(z)| \lesssim_L |z|^{-L} \int_{1}^\infty |\partial^L_\xi((1 - \chi(\xi))q(\xi))| \, d\xi \lesssim \ang{y}^{-1}|z|^{-L}.\]
    This concludes the bounds for $(j, k) = (2, 0)$. For the other bounds when $k = 0$, observe that
    \[\left|\operatorname{Re}\partial_z\widecheck{m_{1,0}(y,\cdot)}(z)\right| = \left|\operatorname{Re}\widecheck{m_{2,0}(y,\cdot)}(z)\right|\]
    and 
    \[\left|\operatorname{Re}\partial_z\widecheck{m_{0,0}(y,\cdot)}(z)\right| = \left|\operatorname{Re}\widecheck{m_{1,0}(y,\cdot)}(z)\right|
        \]
    by the definition \eqref{eqn:linear_nonstationary_symbols} of the symbols. The estimates for $m_{1,0}$ follow. For $m_{0,0}$, we write
    \[\left|\operatorname{Re}\partial_z\widecheck{m_{0,0}(y,\cdot)}(z)\right| = \left|\int_{0}^{z}\partial_z\operatorname{Re}\widecheck{m_{1,0}(y,\cdot)}(z') \, dz'\right|.\]
    The boundary term vanishes because $m_{1,0}$ is imaginary. We can therefore bound this by integrating the bounds for $m_{1,0}$ to obtain 
    \[\left|\operatorname{Re}\partial_z\widecheck{m_{0,0}(y,\cdot)}(z)\right| \lesssim \ang{y}^{-1}\]
    for $|z| \leq 1$. For $|z| > 1$, we simply write
    \[\partial_z \widecheck{m_{0,0}(y,\cdot)}(z) = -\frac{i}{2\pi}\int_{1}^{\infty}(1 - \chi(\xi))q(\xi)\xi^{-1} e^{iz\xi} \, d\xi,\]
    and integrate by parts $L$ times to bound this by $\ang{y}^{-1}|z|^{-L}$. This concludes the proof for $k = 0$. For $k \geq 1$, observe that we have the estimates
    \[
    |\partial_\xi^\ell m_{j,k}(y,\xi)|
    \lesssim_{k,\ell}
    \xi^{j-2-k-\ell}(|y|+\xi^p)^{-k-1},
    \qquad \xi\ge1
    \]
    for all $j = 0,1,2$ and $k,l \geq 0$. Indeed, this follows from the estimates
    \[|\partial_\xi^{l} \chi_{\textup{ns}}(y, \xi)| \lesssim \xi^{-l}, \quad  |\partial_\xi^{l} a_j(y, \xi)| \lesssim \xi^{j - 2 - l}, \quad \left|\partial_\xi^l \left(\frac{1}{y - \xi^p}\right)\right| \lesssim \xi^{-l}(|y| + \xi^p)^{-1}\]
    on the support of our symbols, and induction in $k$ using \eqref{eqn:linear_nonstationary_symbols}. It follows that
    \[
        \sum_{j=0}^{1}
        |\partial_\xi^\ell(i\xi m_{j,k})|
        +
        |\partial_\xi^\ell m_{2,k}|
        \lesssim_{k,\ell}
        \xi^{-k-\ell}(|y|+\xi^p)^{-k-1}.
    \]
    Therefore, since $|y| + \xi^p \gtrsim \ang{y}$, we have that, for $l+k \geq 2$,
    \[
        \begin{aligned}
        \sum_{j=0}^{1}
        \|\partial_\xi^\ell(i\xi m_{j,k})\|_{L^1_\xi}
        +\|\partial_\xi^\ell m_{2,k}\|_{L^1_\xi}
        &\lesssim_{k,\ell}
        \langle y\rangle^{-k-1}
        \int_1^\infty \xi^{-k-\ell}\,d\xi\\
        &\lesssim_{k,\ell}\langle y\rangle^{-k-1}.
        \end{aligned}
    \]
    Now assume $k = 1$. For $j = 0$, we have 
    \[\|i \xi m_{0,1}(y, \cdot)\|_{L^1_\xi} \lesssim \int_{1}^{\infty}\xi^{-2}(|y| + \xi^p)^{-2} \, d\xi \lesssim \ang{y}^{-2}\]
    and for $j = 2$, we have 
    \[m_{2,1}(y, \xi) = -\frac{i}{2\pi}\left[\frac{\partial_\xi[(1-\chi(\xi))\chi_{\textup{ns}}(y,\xi)]}{(y-\xi^p)^2}+\frac{p(1-\chi(\xi))\chi_{\textup{ns}}(y,\xi)\xi^{p-1}}{(y-\xi^p)^3}\right].\]
    Therefore we can bound 
    \begin{align*}
        \| m_{2,1}(y, \cdot)\|_{L^1_\xi} & \lesssim \ang{y}^{-2}\int_{1}^{\infty}|\partial_\xi[(1-\chi(\xi))\chi_{\textup{ns}}(y,\xi)]| \, d\xi \\
        & + \int_{1}^{\infty}\xi^{p-1}(|y| + \xi^p)^{-3} \, d\xi \\
        & \lesssim \ang{y}^{-2}.
    \end{align*}
    For $j = 1$, we write
    \[\xi m_{1,1}(y,\xi) = -i m_{2,1}(y,\xi)+\frac{(1-\chi(\xi))\chi_{\textup{ns}}(y,\xi)}{2\pi\xi(y-\xi^p)^2}.\]
    Now for $m = i\xi m_{0, 1}, i\xi m_{1,1}$, or $m_{2,1}$ we write
    \[\widecheck{m}(z) = \frac1{2\pi}\int_{1}^{\infty}m(\xi)e^{iz\xi} \, d\xi.\]
    For $|z| \leq 1$, we can bound this by the $L^1$ norm when $m = i\xi m_{0, 1}$ or $m_{2,1}$ but when $m = i\xi m_{1,1}$, we instead write 
    \[\left|\text{Re} \, \partial_z\widecheck{m_{1,1}(y, \cdot)}(z)\right| = \frac1{2\pi}\left|\int_{1}^{\infty} \xi m_{1,1}(y,\xi)\sin(z\xi) \, d\xi\right|.\]
    We can bound the $m_{2,1}$ term as above. For the second term, the integral we get is
    \begin{align*}
        \left|\int_1^\infty\frac{(1-\chi(\xi))\chi_{\textup{ns}}(y,\xi)}{2\pi\xi(y-\xi^p)^2}\sin(z\xi)\,d\xi\right| & =\frac1{2\pi}\left|\int_1^\infty
        \partial_\xi\left[\frac{(1-\chi(\xi))\chi_{\textup{ns}}(y,\xi)}{(y-\xi^p)^2}\right]
        \left(\int_1^\xi\frac{\sin(z\eta)}{\eta}\,d\eta\right)\,d\xi\right|\\
        &\quad\lesssim\int_1^\infty\left|\partial_\xi\left[\frac{(1-\chi(\xi))\chi_{\textup{ns}}(y,\xi)}{(y-\xi^p)^2}\right]\right|\,d\xi\\
        &\quad\lesssim\ang{y}^{-2}\int_1^\infty\left|\partial_\xi[(1-\chi(\xi))\chi_{\textup{ns}}(y,\xi)]\right|\,d\xi
        +\int_1^\infty\frac{\xi^{p-1}}{(|y|+\xi^p)^3}\,d\xi\\
        &\quad\lesssim\ang{y}^{-2}.
    \end{align*}
    Therefore in any case, we have that for $|z| \leq 1$,
    \[|\text{Re} \, \widecheck{m}(z)| \lesssim \ang{y}^{-2}.\]
    For $|z| \geq 1$, choose an integer $N\geq\max\{1,L\}$. We integrate by parts $N$ times in the integral for $\widecheck{m}(z)$ and use the derivative bounds for $m$ to obtain 
    \[
        \sum_{j=0}^1
        \left|\operatorname{Re}\partial_z\widecheck{m_{j,1}(y,\cdot)}(z)\right|
        +
        \left|\operatorname{Re}\widecheck{m_{2,1}(y,\cdot)}(z)\right|
        \lesssim_L
        \langle y\rangle^{-2}\langle z\rangle^{-L}.
    \]
    Finally, for $k \geq 2$, we write 
    \[\widecheck{m_{2,k}(y, \cdot)}(z) = \frac1{2\pi}\int_{1}^{\infty} m_{2,k}(y, \xi) e^{iz\xi} \, d\xi.\]
    For $|z| \leq 1$, we use the $L^1$ bound
    \[|\widecheck{m_{2,k}(y, \cdot)}(z)| \lesssim \ang{y}^{-k-1}\]
    and for $|z| > 1$, we integrate by parts $N$ times, with an integer $N\geq\max\{1,L\}$, to obtain
    \begin{align*}
        |\widecheck{m_{2,k}(y, \cdot)}(z)| & = \frac{|z|^{-N}}{2\pi}\left|\int_{1}^{\infty} \partial^N_\xi m_{2,k}(y, \xi) e^{iz\xi} \, d\xi\right| \\ 
        & \lesssim_N |z|^{-N}\int_{1}^{\infty}\xi^{-k - N}(|y| + \xi^p)^{-k-1} \, d\xi \\
        & \lesssim_L \ang{y}^{-k-1}|z|^{-L}.
    \end{align*}
    The estimates for $m_{0,k}$ and $m_{1,k}$ follow similarly. This proves the result.

\end{proof}

\begin{lemma}\label{lem:linear_nonstationary_remainder}
    Consider the kernel of the remainder term $\mc{R}_{j,M}(G)$ in \eqref{eqn:linear_nonstationary_remainder_definition}:
    \[K_{\mc{R}_{j,M}}(y, z) = -\int_{1<\xi<\eta}\partial_\xi m_{j,M-1}(y,\xi)
        e^{i(\Phi_0(\eta)-\Phi_0(\xi)-z\eta+y\xi)}\,d\xi\,d\eta.\]
    Then for $M \geq 2$, we have the estimate
    \[|K_{\mc{R}_{j,M}}(y, z)| \lesssim_M \ang{y}^{-M} \begin{cases}
        \ang{z}^{-1}, & z \leq 0 \\
        \ang{z}^{\frac{1-p}{2p}}, & z > 0
    \end{cases}.\]
\end{lemma}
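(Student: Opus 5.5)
We will bound the kernel by performing the $\eta$-integral first, mirroring the first case of \cref{prop:linear_stationary}. Set $b(\xi) := \partial_\xi m_{j,M-1}(y,\xi)$ and
\[E_z(\xi) = \int_\xi^\infty e^{i(\Phi_0(\eta)-z\eta)}\,d\eta,\]
interpreted as an oscillatory (improper) integral. Fubini then gives
\[K_{\mc{R}_{j,M}}(y,z) = -\int_1^\infty b(\xi)\,e^{i(y\xi-\Phi_0(\xi))}\,E_z(\xi)\,d\xi,\]
so that
\[|K_{\mc{R}_{j,M}}(y,z)| \le \|b\|_{L^1_\xi}\,\sup_{\xi>1}|E_z(\xi)|.\]
This is legitimate once $b\in L^1$, after inserting a convergence factor $e^{-\delta\eta}$ and letting $\delta\to0$. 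The two factors are estimated separately.

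\textbf{Amplitude factor.} The inductive symbol bound from the proof of \cref{lem:linear_nonstationary_leading_kernels} gives
\[|\partial_\xi^\ell m_{j,k}(y,\xi)| \lesssim \xi^{j-2-k-\ell}(|y|+\xi^p)^{-k-1}.\]
With $k=M-1$ and $\ell=1$ this yields
\[|b(\xi)| \lesssim \xi^{j-2-M}(|y|+\xi^p)^{-M} \lesssim \ang{y}^{-M}\xi^{j-2-M},\]
using $|y|+\xi^p\gtrsim\ang{y}$ on $\xi\ge1$. Since $j\le2$ and $M\ge2$, the exponent satisfies $j-2-M\le-2$, so $\|b\|_{L^1}\lesssim_M\ang{y}^{-M}$. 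This is exactly where $M\ge2$ is used: it guarantees integrability uniformly in $j\le2$.

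\textbf{Oscillatory factor, $z>0$.} Here we invoke \cref{lem:appendix_stationary}, which was already used in \cref{prop:linear_ell_continuity} and \cref{prop:linear_stationary}. It gives
\[\sup_{\xi>0}|E_z(\xi)| \lesssim \ang{z}^{\frac{1-p}{2p}},\]
and this immediately produces the stated bound for $z>0$.

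\textbf{Oscillatory factor, $z\le0$.} This is the main point. Lemma~\ref{lem:appendix_stationary} only gives $O(1)$, but the claimed bound is $\ang{z}^{-1}$. For $z\le0$ the phase derivative is $\eta^p-z=\eta^p+|z|\ge|z|$, which never vanishes. One integration by parts gives
\[E_z(\xi) = -\frac{e^{i(\Phi_0(\xi)-z\xi)}}{i(\xi^p+|z|)} + \int_\xi^\infty e^{i(\Phi_0(\eta)-z\eta)}\,\partial_\eta\!\left(\frac{1}{i(\eta^p+|z|)}\right)d\eta.\]
The boundary term is bounded by $(\xi^p+|z|)^{-1}\le\min\{1,|z|^{-1}\}$ when $\xi\ge1$. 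The remainder is bounded by
\[\int_\xi^\infty \frac{p\,\eta^{p-1}}{(\eta^p+|z|)^2}\,d\eta = (\xi^p+|z|)^{-1},\]
which obeys the same bound. Hence $\sup_{\xi\ge1}|E_z(\xi)|\lesssim\ang{z}^{-1}$. Restricting to $\xi\ge1$ matters, and is automatic because the symbols vanish for $\xi\le1$: at $\xi\to0$ the bound would degenerate when $|z|$ is small, but $\xi\ge1$ keeps it uniform.

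Combining the amplitude bound with the two oscillatory bounds gives the lemma. The only delicate point is justifying the order of integration and the boundary term at $\eta=\infty$. We handle both through the damping factor $e^{-\delta\eta}$ and dominated convergence, using $b\in L^1$.
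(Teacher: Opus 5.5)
Your proof is correct and follows the paper's argument. Like the paper, you put the $\eta$-integral inside, bound $|\partial_\xi m_{j,M-1}|\lesssim\xi^{j-2-M}(|y|+\xi^p)^{-M}$ to get an $L^1_\xi$ norm $\lesssim_M\ang{y}^{-M}$, and multiply by $\sup_{\xi\ge1}|E_z(\xi)|$. The one difference is that the paper cites \cref{lem:appendix_stationary} for $z\le0$ too, because that lemma's statement already gives $\ang{z}^{-1}$ there rather than only $O(1)$; your direct integration by parts is just its van der Corput proof, up to a harmless sign slip in the remainder term.
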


\begin{proof}
    By definition, we have that $|a_{j}^{\textup{ns}}(y, \xi)| \lesssim \xi^{j - 2}$ and so by induction using \eqref{eqn:linear_nonstationary_symbols}, we have that
    \[
    |\partial_\xi m_{j,M-1}(y, \xi)| \lesssim_M \xi^{j - 2 - M}(|y| + \xi^p)^{-M}, \quad M \geq 1.
    \]
    We write the kernel as
    \[K_{\mc{R}_{j,M}}(y,z) = -\int_{1}^{\infty}\left[\int_{\xi}^{\infty}e^{i(\Phi_0(\eta) - z\eta)} \, d\eta\right]\partial_\xi m_{j,M-1}(y, \xi) e^{i(y\xi - \Phi_0(\xi))}  \,d\xi.\]
    By \cref{lem:appendix_stationary} and the estimates for $\partial_\xi m_{j,M-1}$, we have that
    \begin{align*}
        |K_{\mc{R}_{j,M}}(y,z)| \lesssim_M \left(\int_{1}^{\infty}\xi^{j - 2 - M}(|y| + \xi^p)^{-M} \, d\xi\right) \begin{cases}
        \ang{z}^{-1}, & z \leq 0 \\
        \ang{z}^{\frac{1-p}{2p}}, & z > 0
    \end{cases} \\
        \lesssim_M \ang{y}^{-M} \begin{cases}
        \ang{z}^{-1}, & z \leq 0 \\
        \ang{z}^{\frac{1-p}{2p}}, & z > 0
    \end{cases}
    \end{align*}
    as wanted.
\end{proof}

\begin{proposition}\label{prop:linear_high_frequency}
For $\beta+\frac1p<\gamma<2p+1$, we have the following estimate for the high-frequency part of the inverse:
\[\|\mc{T}_{\textup{h}}\|_{\sq{Y}_\gamma \to X_\gamma} < \infty.\]
\end{proposition}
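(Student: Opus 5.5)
Since $G$ is real, $\widehat{G}(-\xi)=\overline{\widehat{G}(\xi)}$ and the formula for $\mc{T}_0$ is conjugation-symmetric. Hence $\mc{T}_{\textup{h}}G = 2\operatorname{Re}\mc{T}_+G$, and it suffices to bound $\partial_y^j\operatorname{Re}\mc{T}_+G$ for $j=0,1,2$. The target weights are $\ang{y}^{-\gamma+j/p}$ for $y\ge0$ and $\ang{y}^{-\gamma-j}$ for $y\le0$. I will use the representation \eqref{eqn:linear_positive_kernel_representation} together with the splitting \eqref{eqn:linear_stationary_kernel_split} into $\mc{T}^{\textup{s}}_{+,j}+\mc{T}^{\textup{ns}}_{+,j}$. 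Throughout, I use $|G'(z)|\lesssim \|G\|_{Y_\gamma}\ang{z}^{-\gamma+1/p}$ for $z\ge0$ and $\ang{z}^{-\gamma-1}$ for $z\le0$.

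\textbf{Stationary part.} Here I apply \cref{prop:linear_stationary}. In the region $z\ge cy$ (with $y\gtrsim1$) this gives
\[
\ang{y}^{\frac jp-\frac{p+3}{2p}}\int_{z\ge cy}\ang{z}^{\frac{1-p}{2p}-\gamma+\frac1p}\,dz\lesssim \ang{y}^{\frac jp-\frac{p+3}{2p}+\frac{1-p}{2p}+1+\frac1p-\gamma}=\ang{y}^{\frac jp-\gamma}.
\]
The identity for the exponent holds because $-\frac{p+3}{2p}+\frac{1-p}{2p}+1+\frac1p=0$. The integral converges exactly because $\gamma>\beta+\frac1p$. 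The region $z<cy$ is $O(\ang{y}^{-M})$ after integrating against $\ang{z}^{-\gamma-1}$ or against the $z\ge0$ weights. For $y\le0$ the stationary kernel vanishes.

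\textbf{Nonstationary part.} I apply \cref{lem:linear_nonstationary_symbol_expansion} with a large $M$. The remainder $\mc{R}_{j,M}$ is handled by \cref{lem:linear_nonstationary_remainder}: the same $z$-integrability as above gives $\ang{y}^{-M}$, which beats every required weight on both sides. For the terms with $k\ge1$, \eqref{eqn:linear_nonstationary_higher_kernels} gives, after taking real parts and integrating by parts once in $z$ when $j=0,1$ to move the derivative onto the kernel, convolution kernels bounded by $\ang{y}^{-k-1}\ang{y-z}^{-L}$. For $j=2$ the kernel is used directly against $G'$. Convolving against $G$ or $G'$ then yields $\ang{y}^{-2}\ang{y}^{-\gamma+j/p}$ for $y\ge0$, which is acceptable. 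For $y\le 0$ with $j\ge1$, however, I need $\ang{y}^{-\gamma-j}$, while $G'$ only gives $\ang{y}^{-\gamma-1}$. The extra factor $\ang{y}^{-2}$ supplies the missing decay.

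\textbf{The main obstacle: the leading term $k=0$.} Its kernel \eqref{eqn:linear_nonstationary_leading_kernel_j12} has a local singularity $|z|^{p-1}$ and only decays like $\ang{y}^{-1}$ on $|y-z|\le1$. For $j=0$, \eqref{eqn:linear_nonstationary_leading_kernel_j0} gives $\ang{y}^{-1}\ang{y-z}^{-L}$ against $G$, hence $\ang{y}^{-1-\gamma}$. For $j=1,2$ the kernel acts on $G$ (for $j=1$, after integrating by parts) and on $G'$ (for $j=2$). I split into three ranges. On $|y-z|\le\ang{y}^{-1/p}$ the contribution is $\int|s|^{p-1}\,ds\cdot\sup|G|\sim\ang{y}^{-1}\ang{y}^{-\gamma}$. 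On $\ang{y}^{-1/p}<|y-z|\le 1$ the term $\ang{y}^{-2}|s|^{-p-1}$ integrates to $\ang{y}^{-2}\ang{y}$, again $\ang{y}^{-1}$. The tail is $\ang{y}^{-1}$ times Schwartz decay. So each $k=0$ piece gains a factor $\ang{y}^{-1}$ relative to $\sup_{|z-y|\le1}|G^{(j-1)}|$. This yields $\ang{y}^{-\gamma-1}$ for $j=1$ and $\ang{y}^{-1}\ang{y}^{-\gamma+1/p}$ (or $\ang{y}^{-\gamma-2}$ for $y\le0$) for $j=2$, all admissible since $1/p<2/p$. The delicate point I would check carefully is that the real part is taken before estimating: the imaginary part of the kernel has a non-integrable $1/z$ Hilbert-type piece. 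So the bookkeeping must use $\mc{T}_{\textup{h}}=2\operatorname{Re}\mc{T}_+$, and taking real parts must commute correctly with the $y$-dependent symbols. Finally, summing the finitely many $k$ terms, the remainder, and the stationary part gives $\|\mc{T}_{\textup{h}}G\|_{X_\gamma}\lesssim\|G\|_{Y_\gamma}$ for $G\in\sq{Y}_\gamma$, where $\ell(G)=0$ was used through \eqref{eqn:linear_projected_tail} in defining $K_{+,j}$.
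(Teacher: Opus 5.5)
Your proposal is correct and follows the paper's proof essentially step for step. It uses $\mc{T}_{\textup{h}}=2\operatorname{Re}\mc{T}_+$, the stationary piece via \cref{prop:linear_stationary} with the same exponent count (integrability coming exactly from $\gamma>\beta+\frac1p$), the remainder via \cref{lem:linear_nonstationary_remainder}, and the real-part kernel bounds of \cref{lem:linear_nonstationary_leading_kernels} paired with $G$ for $j=0,1$ (after integrating by parts) and with $G'$ for $j=2$.

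One small correction to your side remark: the imaginary part of the leading kernel is not non-integrable. Its $1/(yz)$-type piece lives only on $\ang{y}^{-1/p}<|z|<1$ and costs a factor $\log\ang{y}$, so taking real parts is what avoids a logarithmic loss in the $y\le0$ weights for $j=1,2$, not a divergence.
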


\begin{proof}
    By the frequency decomposition \eqref{eqn:linear_frequency_split}, we can express the operator as $\mc{T}_{\textup{h}} = 2 \text{Re} \, \mc{T}_{+}$. Using \eqref{eqn:linear_positive_kernel_representation}, \eqref{eqn:linear_stationary_kernel_split}, and \eqref{eqn:linear_stationary_operators}, we now write the operator and its derivatives as
    \[\partial^j \mc{T}_{+} = \mc{T}_{+,j}^{\textup{s}} + \mc{T}_{+,j}^{\textup{ns}}.\]
    To bound the stationary contribution, we can appeal to \cref{prop:linear_stationary} which gives the bound
    \[|K_{+,j}^{\textup{s}}(y, z)| \lesssim \begin{cases}
        \ang{y}^{\frac{j}{p} - \frac{p + 3}{2p}} \ang{z}^{\frac{1 - p}{2p}}, & z \geq cy,\\
        C_M\ang{y}^{-M}(\ang{y} + |z|)^{-M}, & z < cy.
    \end{cases}\]
    Therefore we have that 
    \begin{equation}\label{eqn:linear_positive_stationary_contribution}
        \begin{aligned}
            \left|\int K_{+,j}^{\textup{s}}(y, z)G'(z)\,dz\right| & \leq \int_{cy}^{\infty}|K_{+,j}^{\textup{s}}(y, z)||G'(z)|\,dz + \int_{-\infty}^{cy}|K_{+,j}^{\textup{s}}(y, z)||G'(z)|\,dz \\
            & \lesssim_M \|G\|_{Y_\gamma}\ang{y}^{\frac{j}{p}-\frac{p+3}{2p}}\int_{cy}^{\infty}\ang{z}^{\frac{3-p}{2p}-\gamma}\,dz + \ang{y}^{-2M}\|G'\|_{L^1} \\
            & \lesssim_M \|G\|_{Y_\gamma}\left(\ang{y}^{-\gamma+\frac{j}{p}} + \ang{y}^{-2M}\right) \\
            & \lesssim \|G\|_{Y_\gamma}\ang{y}^{-\gamma+\frac{j}{p}}
        \end{aligned}
    \end{equation}
    Since $\mc{T}_{+,j}^{\textup{s}}G(y) = 0$ for $y \leq 0$, this establishes the estimate for the stationary contribution.
    \par
    For the nonstationary contribution, we use the decomposition \eqref{eqn:linear_nonstationary_expansion} from \cref{lem:linear_nonstationary_symbol_expansion}. Indeed, we write
    \[\mc{T}_{+,j}^{\textup{ns}}G(y) = 2\pi\sum_{k = 0}^{M-1} [\widecheck{m_{j,k}(y, \cdot)} * G'](y) + \mc{R}_{j,M}(G)(y).\]
    By \cref{lem:linear_nonstationary_remainder}, we can bound the remainder term by
    \begin{align*}
        |\mc{R}_{j,M}(G)(y)| & \lesssim \ang{y}^{-M} \left(\int_{-\infty}^{0}|G'(z)|\ang{z}^{-1} \, dz + \int_{0}^{\infty}|G'(z)|\ang{z}^{\frac{1-p}{2p}} \, dz \right)\\
        & \lesssim_M \|G\|_{Y_\gamma}\ang{y}^{-M}\left(\int_{-\infty}^{0}\ang{z}^{-\gamma-2}\,dz+\int_{0}^{\infty}\ang{z}^{\frac{3-p}{2p}-\gamma}\,dz\right)\\
        & \lesssim_M \|G\|_{Y_\gamma}\ang{y}^{-M}\lesssim \|G\|_{Y_\gamma}\ang{y}^{-\gamma-j}.
    \end{align*}
    Here we used $\gamma>\beta+\frac1p$, and in the last inequality we take $M=5>\gamma+j$ for $j=0,1,2$. For the convolution terms, we use \cref{lem:linear_nonstationary_leading_kernels}. Taking $L>\gamma+2$, we obtain, for $0\leq k\leq4$,
    \begin{equation}\label{eqn:linear_nonstationary_weighted_kernels}
        \begin{aligned}
            &\int_{\R}\ang{z}^{\gamma+1}\left(\sum_{j=0}^1\left|\operatorname{Re}\partial_z\widecheck{m_{j,k}(y,\cdot)}(z)\right|
            +\left|\operatorname{Re}\widecheck{m_{2,k}(y,\cdot)}(z)\right|\right)\,dz\\
            &\qquad\lesssim_k\ang{y}^{-k-1}.
        \end{aligned}
    \end{equation}
    For $k=0$, this follows by integrating the three regions in the lemma and using
    \[
        \int_0^{\ang{y}^{-1/p}}z^{p-1}\,dz
        +\ang{y}^{-2}\int_{\ang{y}^{-1/p}}^1z^{-p-1}\,dz+\ang{y}^{-1}
        \lesssim\ang{y}^{-1}.
    \]
    For $k\geq1$, it follows directly from \eqref{eqn:linear_nonstationary_higher_kernels}. \par
    First, for $j=0,1$, we integrate by parts. Since $G$ is real-valued and $\ang{y}\lesssim\ang{y-z}\ang{z}$, \eqref{eqn:linear_nonstationary_weighted_kernels} gives
    \begin{equation}\label{eqn:linear_nonstationary_convolution_j01}
        \begin{aligned}
            \left|\operatorname{Re}[\widecheck{m_{j,k}(y,\cdot)}*G'](y)\right|
            &=\left|\int_{\R}\operatorname{Re}\partial_z\widecheck{m_{j,k}(y,\cdot)}(z)G(y-z)\,dz\right|\\
            &\lesssim\|G\|_{Y_\gamma}\ang{y}^{-\gamma}
            \int_{\R}\ang{z}^{\gamma}\left|\operatorname{Re}\partial_z\widecheck{m_{j,k}(y,\cdot)}(z)\right|\,dz\\
            &\lesssim_k\|G\|_{Y_\gamma}\ang{y}^{-\gamma-k-1}.
        \end{aligned}
    \end{equation}
    For $j=2$, we keep the derivative on $G$. By the definition of $Y_\gamma$, we have
    \begin{equation}\label{eqn:linear_nonstationary_shifted_derivative}
        |G'(y-z)|\lesssim\|G\|_{Y_\gamma}\ang{z}^{\gamma+1}
        \begin{cases}
            \ang{y}^{-\gamma+1/p},&y\geq0,\\
            \ang{y}^{-\gamma-1},&y\leq0.
        \end{cases}
    \end{equation}
    Indeed, for $y\geq0$, use the global bound $|G'(y-z)|\lesssim\|G\|_{Y_\gamma}\ang{y-z}^{-\gamma+1/p}$ and $\ang{y}\lesssim\ang{y-z}\ang{z}$. For $y\leq0$, the same argument applies when $y-z\leq0$ using the negative-side bound for $G'$. If $y-z>0$, then $|z|\geq|y|$, so the uniform bound $|G'(y-z)|\lesssim\|G\|_{Y_\gamma}$ proves \eqref{eqn:linear_nonstationary_shifted_derivative}. Therefore, by \eqref{eqn:linear_nonstationary_weighted_kernels},
    \begin{equation}\label{eqn:linear_nonstationary_convolution_j2}
        \begin{aligned}
            \left|\operatorname{Re}[\widecheck{m_{2,k}(y,\cdot)}*G'](y)\right|
            &\leq\int_{\R}\left|\operatorname{Re}\widecheck{m_{2,k}(y,\cdot)}(z)\right||G'(y-z)|\,dz\\
            &\lesssim_k\|G\|_{Y_\gamma}
            \begin{cases}
                \ang{y}^{-\gamma+1/p-k-1},&y\geq0,\\
                \ang{y}^{-\gamma-k-2},&y\leq0.
            \end{cases}
        \end{aligned}
    \end{equation}
    Summing \eqref{eqn:linear_nonstationary_convolution_j01} and \eqref{eqn:linear_nonstationary_convolution_j2} over $0\leq k\leq4$, and adding the remainder bound, gives the required estimates for the nonstationary contribution. Combining these with \eqref{eqn:linear_positive_stationary_contribution} and $\mc{T}_{\textup{h}}=2\operatorname{Re}\mc{T}_+$, we conclude that
    \[\|\mc{T}_{\textup{h}}G\|_{X_\gamma}\lesssim\|G\|_{Y_\gamma}.\]
\end{proof}

Finally, we are able to prove \cref{thm:fixed_point_linear_inverse}.

\begin{proof}[Proof of \cref{thm:fixed_point_linear_inverse}]
Continuity of $\ell$ follows from \cref{prop:linear_ell_continuity}.
By the definition of $\mc{T}_0$, we know that $\mc{L}_0\mc{T}_0 G = G$ for any $G \in \sq{Y}_\gamma$ and by \eqref{eqn:linear_frequency_split}, \cref{prop:linear_low_frequency} and \cref{prop:linear_high_frequency}, we conclude that $\mc{T}_0$ is bounded from $\sq{Y}_{\gamma}$ to $X_\gamma$. The first assertion of \cref{thm:fixed_point_linear_inverse} follows. \par
Next, the existence of the isomorphisms $\mc{U}_\epsilon, \mc{V}_\epsilon$ and their bounds are supplied by \cref{lem:linear_conjugation}. Finally, the bound for $G_\mu$ follows because $G_\mu$ is a Schwartz function with the corresponding bounds on the norms in \cref{lem:linear_conjugation} being independent of $\mu$. It follows that
\[\|(\mc{V}_\epsilon^{-1} - I)G_\mu\|_{Y_\gamma} \lesssim \epsilon.\]
This finishes the proof of \cref{thm:fixed_point_linear_inverse}.
\end{proof}

\section{Nonlinear Estimates}\label{sec:nonlinear_estimates}

In this section, we will prove the nonlinear estimates in \cref{thm:fixed_point_nonlinear_estimates}. Recall that the nonlinear operator is given by 
\[\mc{N}_{\epsilon, \ov{f}}(r) = \mc{N}_1(r) + \mc{N}_2(r) + \mc{N}_3(r) - \mc{R}_\epsilon(r)\]
where 
\begin{equation}\label{eq:nonlinear_decomposition}
\begin{aligned}
    \mc{N}_1(r) & = (B_\epsilon(\ov{f}' + r') - B_\epsilon(\ov{f}'))\left(\ov{f} - y \ov{f}'\right) \\
    \mc{N}_2(r) & = (B_\epsilon(\ov{f}' + r') - 1)\left(r - y r'\right) \\
    \mc{N}_3(r) & = \mc{I}_\epsilon(\ov{f} + r) - \mc{I}_\epsilon(\ov{f}) - b_\epsilon r'.
\end{aligned}
\end{equation}

We will need to bound these operators with estimates that are uniform in $\epsilon$ and in the family of approximate solutions $\{\ov{f}_{\epsilon, \mu}\}$. This motivates the following definition.

\begin{definition}\label{def:nonlinear_admissible_profile}
    Fix a large constant $M$. We say that $\ov{f} \in C^2$ is an \emph{admissible profile} if it satisfies the following bounds. For $y \geq 0$,
    \begin{equation}\label{eq:nonlinear_admissible_profile_plus}|\ov{f}(y)| \leq M \ang{y}^{-p}, \quad |\partial^{j+1} \ov{f}(y)| \leq M \ang{y}^{-\beta + \frac{j}{p}}, \quad j = 0,1\end{equation}
    and for $y \leq 0$, 
    \begin{equation}\label{eq:nonlinear_admissible_profile_minus}| \partial^j (\ov{f}(y) + y)| \leq M \ang{y}^{-p - j} \quad j = 0,1, 2.\end{equation}
\end{definition}

It's clear from the definition that the approximate solutions $\{\ov{f}_{\epsilon, \mu}\}$ are admissible profiles with a uniform constant $M$ independent of $\epsilon$ and $\mu$.

\begin{lemma}\label{lem:N1}
Let $0\le\epsilon\le1$ and $\beta + \frac{1}{p} < \gamma < 2p + 1$, and let $\ov{f}$ be an admissible profile and let $r_1, r_2 \in X_\gamma$ with $\|r_1\|_{X_\gamma}, \|r_2\|_{X_\gamma} \leq N$. Then 
\[\|\mc{N}_1(r_1) - \mc{N}_1(r_2)\|_{Y_\gamma} \lesssim_{M,N} \epsilon \|r_1 - r_2\|_{X_\gamma}.\]
\end{lemma}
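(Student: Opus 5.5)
The plan is to treat $\mc{N}_1$ as a purely local (pointwise) operator and to bound it directly from the definitions of $X_\gamma$ and $Y_\gamma$. Write $h=r_1-r_2$ and $g=\ov{f}-y\ov{f}'$. By the fundamental theorem of calculus,
\[
\mc{N}_1(r_1)-\mc{N}_1(r_2)=h'\,\mathcal{B}\,g,\qquad \mathcal{B}(y)=\int_0^1 B_\epsilon'\bigl(\ov{f}'+r_2'+s h'\bigr)\,ds .
\]
Since $B_\epsilon'(u)=\alpha\epsilon u(1+\epsilon u^2)^{\alpha/2-1}$, we have $|B_\epsilon'(u)|\lesssim\epsilon|u|$ and $|B_\epsilon''(u)|\lesssim\epsilon$ for bounded $u$. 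Here $u$ is bounded because $\ov{f}'$ is bounded by admissibility and $|r_i'|\lesssim N$ by the $X_\gamma$ norm. This gives the factor $\epsilon$ in the statement. It then remains to check that $h'\mathcal{B}g$ has the weighted decay required by $Y_\gamma$.

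\textbf{Case $y\ge 0$.} By admissibility, $|g|\lesssim_M\ang{y}^{1-\beta}$ and $|g'|=|y\ov{f}''|\lesssim_M\ang{y}^{1-\beta+1/p}$. We also have $|\mathcal{B}|\lesssim\epsilon(|\ov{f}'|+|r_1'|+|r_2'|)$. Here $|\ov{f}'|\lesssim\ang{y}^{-\beta}$ and $|r_i'|\lesssim N\ang{y}^{-\gamma+1/p}\le N\ang{y}^{-\beta}$, since $\gamma>\beta+1/p$. Hence
\[
|h'\mathcal{B}g|\lesssim\epsilon\|h\|_{X_\gamma}\ang{y}^{-\gamma+1/p}\ang{y}^{-\beta}\ang{y}^{1-\beta}
=\epsilon\|h\|_{X_\gamma}\ang{y}^{-\gamma+1/p+1-2\beta},
\]
and $1/p+1-2\beta=0$ because $2\beta=1+1/p$. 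This is exactly the required $\ang{y}^{-\gamma}$ decay.

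For the derivative we use the product rule. First, $|h''|\lesssim\ang{y}^{-\gamma+2/p}$, so $h''\mathcal{B}g$ gains exactly the extra factor $\ang{y}^{1/p}$ that the norm allows. Second, $\mathcal{B}'$ is bounded by $\epsilon(|\ov{f}''|+|r_i''|+|h''|)\lesssim\epsilon\ang{y}^{-\beta+1/p}$, using $\gamma-2/p>\beta-1/p$; this again gains only $\ang{y}^{1/p}$. Third, the term $h'\mathcal{B}g'$ also gains only $\ang{y}^{1/p}$. In each case the result is $\epsilon\|h\|\ang{y}^{-\gamma+1/p}$, as required.

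\textbf{Case $y\le 0$.} Here $\ov{f}'\to -1$, so we can no longer use smallness of the argument of $B_\epsilon'$. Instead we use only $|\mathcal{B}|+|\mathcal{B}'|/|\cdot|\lesssim\epsilon$. The gain now comes from $g$: since $\ov{f}+y$ and its derivatives decay, we have $g=(\ov{f}+y)-y(\ov{f}+y)'$, so $|g|\lesssim\ang{y}^{-p}$ and $|g'|\lesssim\ang{y}^{-p-1}$. Also $|h'|\lesssim\ang{y}^{-\gamma-1}$ and $|h''|\lesssim\ang{y}^{-\gamma-2}$. Therefore $|h'\mathcal{B}g|\lesssim\epsilon\ang{y}^{-\gamma-1-p}$, and each derivative costs at most $\ang{y}^{-1}$, where $\mathcal{B}'$ is bounded by $\epsilon(|\ov{f}''|+|r''|)\lesssim\epsilon\ang{y}^{-1}$. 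This comfortably beats the $Y_\gamma$ weights.

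\textbf{Main obstacle.} The step I expect to be tight is the positive side, where the exponent count $1/p+1-2\beta=0$ leaves no room. It works only because the derivative of $B_\epsilon$ vanishes at $0$, which supplies the extra factor $\ang{y}^{-\beta}$. I would therefore make sure to bound $\mathcal{B}$ by $\epsilon\cdot\max|u|$ rather than by $\epsilon$ on that side. All constants depend only on $M$ and $N$, which gives the stated estimate.
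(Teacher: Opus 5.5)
Your proof is correct and follows the paper's argument: the paper also bounds the difference of $B_\epsilon$ using $|B_\epsilon'(u)|\lesssim\epsilon|u|$, so that the factor $|\ov{f}'|+|r_1'|+|r_2'|\lesssim\ang{y}^{-\beta}$ offsets the growth $\ang{y}^{1-\beta}$ of $\ov{f}-y\ov{f}'$ on $y>0$ via $2\beta=1+1/p$. It likewise uses $|B_\epsilon''|\lesssim\epsilon$ for the derivative and the decay of $\ov{f}+y$ for the gain on $y<0$; writing the difference in integral mean-value form $h'\mathcal{B}$ rather than bounding $\Delta B_\epsilon$ and $\partial_y\Delta B_\epsilon$ directly is only a cosmetic difference.
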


\begin{proof}
    For the first term in \eqref{eq:nonlinear_decomposition}, write
    \begin{equation}\label{eq:N1_delta_B}
        \Delta B_\epsilon(y)
        =B_\epsilon(\ov{f}'(y)+r_1'(y))-B_\epsilon(\ov{f}'(y)+r_2'(y)).
    \end{equation}
    All implicit constants may depend on $M,N,p,\gamma$.
    We use the estimates
    \begin{equation}\label{eq:N1_profile_factor}\left|\ov{f} - y\ov{f}'\right| \lesssim \begin{cases}
        \ang{y}^{1-\beta}, & y > 0,\\
        \ang{y}^{-p}, & y < 0,
    \end{cases}\end{equation}
    and
    \begin{equation}\label{eq:N1_profile_factor_derivative}|\partial_y(\ov{f} - y\ov{f}')| = |y\ov{f}''| \lesssim \begin{cases}
        \ang{y}^{1-\beta+1/p}, & y > 0,\\
        \ang{y}^{-p-1}, & y < 0.
    \end{cases}\end{equation}
    On the bounded range of $\ov{f}'+r_j'$, the function
    $B_\epsilon(u)=(1+\epsilon u^2)^{\alpha/2}$ satisfies
    \[
        |B_\epsilon'(u)|\lesssim\epsilon|u|,\qquad
        |B_\epsilon''(u)|\lesssim\epsilon.
    \]
    Thus, using $\gamma>\beta+1/p$, we obtain
    \begin{equation}\label{eq:N1_delta_B_bound}
\begin{aligned}
        |\Delta B_\epsilon(y)|
        &\lesssim\epsilon\bigl(|\ov{f}'|+|r_1'|+|r_2'|\bigr)|r_1'-r_2'|\\
        &\lesssim\epsilon\|r_1-r_2\|_{X_\gamma}
        \begin{cases}
            \ang{y}^{-\beta-\gamma+1/p}, & y>0,\\
            \ang{y}^{-\gamma-1}, & y<0.
        \end{cases}
    \end{aligned}
\end{equation}
    For the derivative, we write
    \begin{align*}
        \partial_y\Delta B_\epsilon
        &=B_\epsilon'(\ov{f}'+r_1')(r_1'' - r_2'')\\
        &\quad+\bigl[B_\epsilon'(\ov{f}'+r_1')
        -B_\epsilon'(\ov{f}'+r_2')\bigr](\ov{f}''+r_2'').
    \end{align*}
    It follows that
    \begin{equation}\label{eq:N1_delta_B_derivative_bound}
\begin{aligned}
        |\partial_y\Delta B_\epsilon(y)|
        &\lesssim\epsilon\bigl(|\ov{f}'|+|r_1'|\bigr)|r_1''-r_2''|
        +\epsilon|r_1'-r_2'|\bigl(|\ov{f}''|+|r_2''|\bigr)\\
        &\lesssim\epsilon\|r_1-r_2\|_{X_\gamma}
        \begin{cases}
            \ang{y}^{-\beta-\gamma+2/p}, & y>0,\\
            \ang{y}^{-\gamma-2}, & y<0.
        \end{cases}
    \end{aligned}
\end{equation}
    Since
    \[
        \mc{N}_1(r_1)-\mc{N}_1(r_2)
        =\Delta B_\epsilon\left(\ov{f}-y\ov{f}'\right),
    \]
    the bounds in \eqref{eq:N1_profile_factor}--\eqref{eq:N1_profile_factor_derivative} and \eqref{eq:N1_delta_B_bound}--\eqref{eq:N1_delta_B_derivative_bound}, the product rule, and $2\beta=1+1/p$ give
    \[
        |\mc{N}_1(r_1)(y)-\mc{N}_1(r_2)(y)|
        \lesssim\epsilon\|r_1-r_2\|_{X_\gamma}
        \begin{cases}
            \ang{y}^{-\gamma}, & y>0,\\
            \ang{y}^{-\gamma-p-1}, & y<0,
        \end{cases}
    \]
    and
    \[
        |\partial_y(\mc{N}_1(r_1)-\mc{N}_1(r_2))(y)|
        \lesssim\epsilon\|r_1-r_2\|_{X_\gamma}
        \begin{cases}
            \ang{y}^{-\gamma+1/p}, & y>0,\\
            \ang{y}^{-\gamma-p-2}, & y<0.
        \end{cases}
    \]
    We conclude that
    \[
        \|\mc{N}_1(r_1)-\mc{N}_1(r_2)\|_{Y_\gamma}
        \lesssim_{M,N}\epsilon\|r_1-r_2\|_{X_\gamma}.
    \]
\end{proof}

\begin{lemma}\label{lem:N2}
Let $0\le\epsilon\le1$ and $\beta + \frac{1}{p} < \gamma < 2p + 1$, and let $\ov{f}$ be an admissible profile and let $r_1, r_2 \in X_\gamma$ with $\|r_1\|_{X_\gamma}, \|r_2\|_{X_\gamma} \leq N$. Then 
\[\|\mc{N}_2(r_1) - \mc{N}_2(r_2)\|_{Y_\gamma} \lesssim_{M,N} \epsilon \|r_1 - r_2\|_{X_\gamma}.\]
\end{lemma}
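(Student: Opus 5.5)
The proof follows the template of \cref{lem:N1}. I will split the difference algebraically so that every piece is a product of a small coefficient and a factor already controlled by the $X_\gamma$ norm. Write $u_j = \ov{f}' + r_j'$ and $\delta = r_1 - r_2$. Then
\[
\mc{N}_2(r_1) - \mc{N}_2(r_2) = \bigl(B_\epsilon(u_1) - B_\epsilon(u_2)\bigr)(r_1 - y r_1') + \bigl(B_\epsilon(u_2) - 1\bigr)(\delta - y\delta').
\]
Since $\ov{f}'$ and the $r_j'$ are bounded by constants depending only on $M$ and $N$, on this range we have $|B_\epsilon(u)-1| \lesssim \epsilon u^2$, $|B_\epsilon'(u)| \lesssim \epsilon |u|$ and $|B_\epsilon''| \lesssim \epsilon$. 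This gives the bounds
\[
|B_\epsilon(u_2)-1| \lesssim \epsilon (|\ov{f}'|^2 + |r_2'|^2), \qquad |B_\epsilon(u_1)-B_\epsilon(u_2)| \lesssim \epsilon (|\ov{f}'| + |r_1'| + |r_2'|)|\delta'|.
\]
The derivatives of these coefficients are expanded exactly as in \eqref{eq:N1_delta_B_derivative_bound}.

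Next I bound each factor pointwise using admissibility and the $X_\gamma$ norm. For $y>0$ we have $|\ov{f}'| \lesssim \ang{y}^{-\beta}$ and $|r_j'| \lesssim \ang{y}^{-\gamma+1/p} \le \ang{y}^{-\beta}$, the last inequality because $\gamma > \beta + 1/p$. Hence $|B_\epsilon(u_2) - 1| \lesssim \epsilon \ang{y}^{-2\beta} = \epsilon\ang{y}^{-1-1/p}$. The transport factor satisfies $|\delta - y\delta'| \lesssim \|\delta\|_{X_\gamma}\ang{y}^{-\gamma+1+1/p}$, so the product is $\lesssim \epsilon\|\delta\|_{X_\gamma}\ang{y}^{-\gamma}$. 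For the derivative, $\partial_y(\delta - y\delta') = -y\delta''$ costs an extra $\ang{y}^{1/p}$. Similarly, $\partial_y B_\epsilon(u_2)$ is bounded by $\epsilon|u_2||u_2'| \lesssim \epsilon \ang{y}^{-2\beta+1/p}$, which is again a loss of exactly $\ang{y}^{1/p}$. Both are therefore consistent with the $Y_\gamma$ weight.

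For the first piece, $|B_\epsilon(u_1)-B_\epsilon(u_2)| \lesssim \epsilon\ang{y}^{-\beta}\|\delta\|_{X_\gamma}\ang{y}^{-\gamma+1/p}$. It is multiplied by $|r_1 - y r_1'| \lesssim N\ang{y}^{-\gamma+1+1/p}$. The resulting exponent is $-2\gamma - \beta + 1 + 2/p$, and this is at most $-\gamma$ iff $\gamma \ge 1+2/p-\beta = \beta + 1/p$. That inequality holds, and this is precisely the restriction $\gamma > \frac{\alpha+2}{2(\alpha-1)}$ from \cref{sec:choice_of_space}, coming from the term $y\ov{f}'(r')^2$. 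The derivative again loses exactly $\ang{y}^{1/p}$.

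For $y<0$ all quantities are slowly varying: $|\ov{f}'| \lesssim 1$, $|r'| \lesssim \ang{y}^{-\gamma-1}$ and $|\delta - y\delta'| \lesssim \|\delta\|_{X_\gamma}\ang{y}^{-\gamma}$, and each derivative gains $\ang{y}^{-1}$. The only subtlety is that $B_\epsilon(u_2)-1$ is merely $O(\epsilon)$, not decaying, since $\ov{f}' \to -1$. This is harmless: $O(\epsilon)\cdot\ang{y}^{-\gamma}$ is the required bound, and $\partial_y B_\epsilon(u_2) = O(\epsilon\ang{y}^{-p-2})$ by \eqref{eq:nonlinear_admissible_profile_minus}. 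The main obstacle, such as it is, is the exponent bookkeeping on $y>0$ for the quadratic piece $\epsilon|\ov{f}'||r_1'|$ in the derivative estimate. There I will check carefully that the term where the derivative lands on $r_1''$ (costing $\ang{y}^{2/p}$) is paired with $|\delta'|$ and not $|\delta''|$, so that the total loss is still $\ang{y}^{1/p}$ relative to the $Y_\gamma$ weight once $\gamma > \beta+1/p$ is used.
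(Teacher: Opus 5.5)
Your proof is correct and follows essentially the same route as the paper. The paper uses the same splitting, $\mc{N}_2(r_1)-\mc{N}_2(r_2)=(B_\epsilon-1)\bigl((r_1-r_2)-y(r_1'-r_2')\bigr)+\Delta B_\epsilon\,(r_j-yr_j')$, with the roles of $r_1$ and $r_2$ swapped relative to yours. It then uses the same pointwise transport and coefficient bounds, closing with $2\beta=1+1/p$ and $\gamma>\beta+1/p$ exactly as you do.
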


\begin{proof}
    As in \eqref{eq:N1_delta_B}, write
    \[
        \Delta B_\epsilon(y)
        =B_\epsilon(\ov{f}'(y)+r_1'(y))-B_\epsilon(\ov{f}'(y)+r_2'(y)).
    \]
    All implicit constants may depend on $M,N,p,\gamma$.
    We use the estimates, valid for every $r\in X_\gamma$,
    \begin{equation}\label{eq:N2_transport_bound}|r-yr'|\lesssim\|r\|_{X_\gamma}\begin{cases}
        \ang{y}^{1-\gamma+1/p}, & y>0,\\
        \ang{y}^{-\gamma}, & y<0,
    \end{cases}\end{equation}
    and
    \begin{equation}\label{eq:N2_transport_derivative_bound}|\partial_y(r-yr')|=|yr''|\lesssim\|r\|_{X_\gamma}\begin{cases}
        \ang{y}^{1-\gamma+2/p}, & y>0,\\
        \ang{y}^{-\gamma-1}, & y<0.
    \end{cases}\end{equation}
    On the bounded range of $\ov{f}'+r_j'$, the function
    $B_\epsilon(u)=(1+\epsilon u^2)^{\alpha/2}$ satisfies
    \[
        |B_\epsilon(u)-1|\lesssim\epsilon|u|^2,\qquad
        |B_\epsilon'(u)|\lesssim\epsilon|u|.
    \]
    Thus, using $\gamma>\beta+1/p$, we obtain
    \begin{equation}\label{eq:N2_coefficient_bound}
\begin{aligned}
        |B_\epsilon(\ov{f}'+r_j')-1|
        &\lesssim\epsilon\bigl(|\ov{f}'|+|r_j'|\bigr)^2\\
        &\lesssim\epsilon\begin{cases}
            \ang{y}^{-2\beta}, & y>0,\\
            1, & y<0.
        \end{cases}
    \end{aligned}
\end{equation}
    For the derivative, we write
    \[
        \partial_y\bigl(B_\epsilon(\ov{f}'+r_j')-1\bigr)
        =B_\epsilon'(\ov{f}'+r_j')(\ov{f}''+r_j'').
    \]
    It follows that
    \begin{equation}\label{eq:N2_coefficient_derivative_bound}
\begin{aligned}
        |\partial_y(B_\epsilon(\ov{f}'+r_j')-1)|
        &\lesssim\epsilon\bigl(|\ov{f}'|+|r_j'|\bigr)
        \bigl(|\ov{f}''|+|r_j''|\bigr)\\
        &\lesssim\epsilon\begin{cases}
            \ang{y}^{-2\beta+1/p}, & y>0,\\
            \ang{y}^{-p-2}, & y<0.
        \end{cases}
    \end{aligned}
\end{equation}
    Recalling \eqref{eq:N1_delta_B_bound} and \eqref{eq:N1_delta_B_derivative_bound}, we also have
    \[
        |\Delta B_\epsilon(y)|
        \lesssim\epsilon\|r_1-r_2\|_{X_\gamma}
        \begin{cases}
            \ang{y}^{-\beta-\gamma+1/p}, & y>0,\\
            \ang{y}^{-\gamma-1}, & y<0,
        \end{cases}
    \]
    and
    \[
        |\partial_y\Delta B_\epsilon(y)|
        \lesssim\epsilon\|r_1-r_2\|_{X_\gamma}
        \begin{cases}
            \ang{y}^{-\beta-\gamma+2/p}, & y>0,\\
            \ang{y}^{-\gamma-2}, & y<0.
        \end{cases}
    \]
    Since
    \begin{align*}
        \mc{N}_2(r_1)-\mc{N}_2(r_2)
        &=\bigl(B_\epsilon(\ov{f}'+r_1')-1\bigr)
        \left((r_1-r_2)-y(r_1'-r_2')\right)\\
        &\quad+\Delta B_\epsilon\left(r_2-yr_2'\right),
    \end{align*}
    the transport bounds \eqref{eq:N2_transport_bound}--\eqref{eq:N2_transport_derivative_bound}, the coefficient bounds \eqref{eq:N2_coefficient_bound}--\eqref{eq:N2_coefficient_derivative_bound}, and the difference bounds \eqref{eq:N1_delta_B_bound}--\eqref{eq:N1_delta_B_derivative_bound}, together with the product rule, $2\beta=1+1/p$, and
    $\gamma>\beta+1/p$ give
    \[
        |\mc{N}_2(r_1)(y)-\mc{N}_2(r_2)(y)|
        \lesssim\epsilon\|r_1-r_2\|_{X_\gamma}\ang{y}^{-\gamma},
    \]
    and
    \[
        |\partial_y(\mc{N}_2(r_1)-\mc{N}_2(r_2))(y)|
        \lesssim\epsilon\|r_1-r_2\|_{X_\gamma}
        \begin{cases}
            \ang{y}^{-\gamma+1/p}, & y>0,\\
            \ang{y}^{-\gamma-1}, & y<0.
        \end{cases}
    \]
    We conclude that
    \[
        \|\mc{N}_2(r_1)-\mc{N}_2(r_2)\|_{Y_\gamma}
        \lesssim_{M,N}\epsilon\|r_1-r_2\|_{X_\gamma}.
    \]
\end{proof}

\begin{lemma}\label{lem:N3}
Let $0\le\epsilon\le1$ and $\beta + \frac{1}{p} < \gamma < 2p + 1$, and let $\ov{f}$ be an admissible profile and let $r_1, r_2 \in X_\gamma$ with $\|r_1\|_{X_\gamma}, \|r_2\|_{X_\gamma} \leq N$. Then 
\[\|\mc{N}_3(r_1) - \mc{N}_3(r_2)\|_{Y_\gamma} \lesssim_{M,N} \epsilon \|r_1 - r_2\|_{X_\gamma}.\]
\end{lemma}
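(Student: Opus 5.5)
Write $f_k=\ov{f}+r_k$, $\delta=r_1-r_2$, and use the representation of \cref{prop:analysis_semilinear_kernel}:
\[
\mc{N}_3(r_1)-\mc{N}_3(r_2)=\int\frac{\Gamma_\epsilon(f_1'(y),S_{f_1}(y,z))-\Gamma_\epsilon(f_2'(y),S_{f_2}(y,z))}{|y-z|^\alpha}\,dz-b_\epsilon\delta'.
\]
By the fundamental theorem of calculus along $f_\theta=\ov f+r_2+\theta\delta$, the difference equals
\[
\int_0^1\!\!\int\frac{\partial_1\Gamma_\epsilon(f_\theta'(y),S_{f_\theta})\,\delta'(y)+\partial_2\Gamma_\epsilon(f_\theta'(y),S_{f_\theta})\,S_\delta(y,z)}{|y-z|^\alpha}\,dz\,d\theta .
\]
The plan is to subtract from the $\partial_1$ term the frozen coefficient
\[
\int\frac{\partial_1\Gamma_\epsilon(W'(y),S_W(y,z))}{|y-z|^\alpha}\,dz,
\]
which for $y\ge2$ equals $b_\epsilon(y)$ by \cref{lem:analysis_coefficient}. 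Everything else should then be perturbative. I will use a near/far splitting with cutoff $\chi((y-z)/\ang{y})$, separately for $y\le0$, bounded $y$, and $y\gg1$, always working with $s=y-z$ so that $y$-derivatives fall on $\delta,f_\theta$ through $\partial_yS_w(y,y-s)=S_{w'}(y,y-s)$.

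\textbf{Near region} ($|y-z|\le\ang{y}/4$). Here I use \eqref{eqn:analysis_kernel_derivative_bounds}:
\[
|\partial_1\Gamma_\epsilon|+|\partial_2\Gamma_\epsilon|\lesssim\epsilon\,|f_\theta'(y)-S_{f_\theta}|\,(|f_\theta'|+|S_{f_\theta}|).
\]
For $y>0$ I bound $|f_\theta'(y)-S_{f_\theta}|\lesssim\min\{\ang{y}^{-\beta},\ang{y}^{-\beta+1/p}|s|\}$. The integral of $|s|^{-\alpha}\min\{1,\ang{y}^{1/p}|s|\}$ is $\lesssim\ang{y}^{p/p}=\ang{y}$, so the $\delta'$ term is bounded by
\[
\epsilon\ang{y}^{-2\beta+1}\,|\delta'|\lesssim\epsilon\ang{y}^{-1/p}\ang{y}^{-\gamma+1/p},
\]
which suffices. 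The $S_\delta$ term is handled identically, using $|S_\delta|\lesssim\ang{y}^{-\gamma+1/p}$ together with a diagonal bound $|\delta'(y)-S_\delta|\lesssim\ang{y}^{-\gamma+2/p}|s|$ when needed. (The combination of the two $\partial_j\Gamma_\epsilon$ terms vanishes quadratically on the diagonal, and this should be exploited through \eqref{eqn:analysis_kernel_diagonal_derivative_bounds}.) For $y<0$ all functions are slowly varying with derivative gains $\ang{y}^{-1}$, and the same computation gives $\epsilon\ang{y}^{-\gamma-p}$.

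\textbf{Far region.} Integrate by parts in $z$ as in the treatment of $\mc{I}_2$ in \cref{prop:approx_residual_negative}, moving $\partial_z$ off $\delta$ so that only $\delta$ appears, weighted by kernels of size $\epsilon|y-z|^{-p-2}$. The only non-decaying piece is the interaction of $y\gg1$ with $z<0$, where $S_{f_\theta}\approx S_W=z/(y-z)$ is $O(1)$. There I replace $\partial_1\Gamma_\epsilon(f_\theta'(y),S_{f_\theta})$ by $\partial_1\Gamma_\epsilon(0,S_W)$. The error is controlled by
\[
\epsilon\bigl(|f_\theta'(y)|+|S_{f_\theta-W}|\bigr)\lesssim\epsilon\bigl(\ang{y}^{-\beta}+\ang{z}^{-p}/|y-z|\bigr),
\]
which, multiplied by $|\delta'(y)|$ and integrated against $|y-z|^{-\alpha}$, gains $\ang{y}^{-p-\beta}$ or better, and this is enough. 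The remaining $\partial_2$-term with $S_\delta$ is bounded by $\epsilon\|\delta\|_{L^\infty}|y-z|^{-p-2}$ and is integrable, giving $\epsilon\ang{y}^{-\gamma}$. The $z<0$ contributions use the decay $\ang{z}^{-\gamma}$ of $\delta$ and $\gamma<2p+1<p+2$.

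\textbf{Derivative estimate.} The same scheme applies after differentiating in the $s$ variable. The derivative produces $\delta''$ and $f_\theta''$, each costing $\ang{y}^{1/p}$ for $y>0$ and gaining $\ang{y}^{-1}$ for $y<0$, which matches $Y_\gamma$. Differentiating the frozen term gives $b_\epsilon'$ and $b_\epsilon\delta''$, and both cancel exactly.

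\textbf{Main obstacle.} I expect the hardest step to be the far, nonlocal $y\gg1$, $z<0$ piece in the derivative estimate. There, $\partial_y$ falling on $f_\theta'(y)$ inside $\partial_1\Gamma_\epsilon$ costs $\ang{y}^{-\beta+1/p}$, which must be compensated by the $\epsilon|f_\theta'|$ factor. I would first check that the resulting exponent satisfies $-\beta-p+1/p-\gamma+1/p\le-\gamma+1/p$, i.e.\ $\beta+p\ge1/p$, and if this fails for $p$ near $3/4$, expand $\partial_1\Gamma_\epsilon$ one more order in $f_\theta'(y)$.
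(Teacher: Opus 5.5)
Your plan follows the paper's proof. The segment $f_\theta$ plays the role of the paper's reduction to $D\mc{N}_3(r)[h]$ on the $N$-ball. The near/far split, the diagonal decomposition near $z=y$, and the subtraction of $\partial_1\Gamma_\epsilon(0,S_W(y,z))\,\delta'(y)$ in the far field (which integrates exactly to $b_\epsilon\delta'$ for $y\ge2$) are the paper's steps. However, several of your estimates do not close as written.

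\textbf{Undifferentiated far field.} You bound the $\partial_2\Gamma_\epsilon$ term by $\epsilon\|\delta\|_{L^\infty}|y-z|^{-p-2}$. Integrating over $|y-z|\gtrsim\ang{y}$ gives only $\epsilon\ang{y}^{-p-1}$, and $\gamma>\beta+\frac1p=\frac{p+3}{2p}>p+1$ for every $p<1$, so this is not $\epsilon\ang{y}^{-\gamma}$. You need $|S_\delta(y,z)|\le(|\delta(y)|+|\delta(z)|)/|y-z|$ together with $\|\delta\|_{L^1}\lesssim\|\delta\|_{X_\gamma}$. This gives $\ang{y}^{-\gamma-p-1}+\ang{y}^{-p-2}\lesssim\ang{y}^{-\gamma}$, using $\gamma<2p+1<p+2$; this is \eqref{eq:N3_far_integrand_bound}--\eqref{eq:N3_far_plus_value}. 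No integration by parts is needed at this stage, since the $\Gamma_\epsilon$-representation contains no $\delta'(z)$.

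\textbf{Near-field derivative.} Your ``same scheme'' breaks here. The diagonal cancellation is indispensable, not optional. Because $\nabla^2\Gamma_\epsilon(u,u)\neq0$, terms such as $\partial_{11}\Gamma_\epsilon\,f_\theta''\,\delta'$ and $\partial_{12}\Gamma_\epsilon\,S_{f_\theta'}\,\delta'$ do not vanish at $s=0$. Bounding them one at a time therefore gives $\int|s|^{-1-p}\,ds=\infty$. Since $r\in W^{2,\infty}_{\textup{loc}}$ only, you also cannot Taylor expand $S_{f'}-f''$ to gain a factor $|s|$. Instead, differentiate the splitting $(\partial_1+\partial_2)\Gamma_\epsilon\,\delta'(y)+\partial_2\Gamma_\epsilon\,(S_\delta-\delta'(y))$ of \eqref{eq:N3_near_decomposition} and use \eqref{eqn:analysis_kernel_diagonal_derivative_bounds}. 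Then every term carries a factor $|f_\theta'-S_{f_\theta}|$ or $|S_\delta-\delta'|$.

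\textbf{Far-field derivative.} Differentiating at fixed $s$ produces $S_{\delta'}(y,y-s)$, which contains $\delta'(z)/(y-z)$. For $y\le-2$, the absolute bound $\epsilon\ang{y}^{-p-2}\|\delta'\|_{L^1}$ misses the required $\ang{y}^{-\gamma-1}$, because $\gamma+1>p+2$. Pointwise decay of $\delta'$ does not rescue this: the region $z\gtrsim|y|$ still leaves $\ang{y}^{-\gamma-1+\frac1p-p}$. You must move the derivative off $\delta$, which amounts to differentiating at fixed $z$ as in \eqref{eq:N3_variation_derivative}. Then $\partial_yS_\delta(y,z)=(\delta'(y)-S_\delta(y,z))/(y-z)$ involves only $\delta(z)$ and yields $\ang{y}^{-p-3}\lesssim\ang{y}^{-\gamma-1}$.

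\textbf{Your ``main obstacle''.} This step is harmless. The condition $\beta+p\ge\frac1p$ is equivalent to $(2p-1)(p+1)\ge0$, so it holds on the whole range of $p$. No further expansion of $\partial_1\Gamma_\epsilon$ is needed.
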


\begin{proof}
It suffices to prove that 
\[\|D\mc{N}_3(r)[h]\|_{Y_\gamma} \lesssim_{M,N} \epsilon \|h\|_{X_\gamma}\]
for all $r \in X_\gamma$ with $\|r\|_{X_\gamma} \leq N$ and all $h \in X_\gamma$. The case $\epsilon=0$ is immediate, so assume $0<\epsilon\le1$. Write $f = \ov{f} + r$.
All implicit constants below may depend on $p,\gamma,M,N$ and the fixed
cutoffs. The assumptions imply
\begin{gather}
    |f(y)-W(y)|\lesssim\ang{y}^{-p},\qquad
    \|f'\|_\infty+\|f\|_{W^{2,\infty}([-3,3])}\lesssim1,
    \qquad \|h\|_{L^1}\lesssim\|h\|_{X_\gamma},\label{eq:N3_global}\\
    |f'(y)|\lesssim y^{-\beta},\quad
    |f''(y)|\lesssim y^{-\beta+1/p}\quad(y\ge2),\label{eq:N3_plus_profile}\\
    |f'(y)+1|\lesssim|y|^{-p-1},\quad
    |f''(y)|\lesssim|y|^{-p-2}\quad(y\le-2).\label{eq:N3_minus_profile}
\end{gather}
We compute the linearization of $\mc{N}_3$ at $r$ in the direction $h$:
\begin{equation}\label{eq:N3_variation}
    D\mc{N}_3(r)[h](y)
    =\int_{\R}\frac{\partial_1 \Gamma_\epsilon(f'(y),S_f(y,z))h'(y)
          +\partial_2 \Gamma_\epsilon(f'(y),S_f(y,z))S_h(y,z)}{|y-z|^{\alpha}}\,dz
       -b_\epsilon(y)h'(y).
\end{equation}
Let's write 
\begin{equation}\label{eq:N3_integrand}K_{f,h}(y, z) = \partial_1 \Gamma_\epsilon(f'(y), S_f(y, z))h'(y) + \partial_2 \Gamma_\epsilon(f'(y), S_f(y, z))S_h(y, z)\end{equation}
so that
\[D\mc{N}_3(r)[h](y)
    =\int_{\R}\frac{K_{f,h}(y, z)}{|y-z|^{\alpha}}\,dz
       -b_\epsilon(y)h'(y)\]
and
\begin{equation}\label{eq:N3_variation_derivative}\partial_y D\mc{N}_3(r)[h](y) = \int_{\R}\frac{\partial_y K_{f,h}(y, z) - \alpha K_{f,h}(y, z) / (y - z)}{|y-z|^{\alpha}}\,dz - \partial_y (b_\epsilon(y)h'(y)).\end{equation}
We estimate \eqref{eq:N3_variation} and \eqref{eq:N3_variation_derivative} by splitting the integrals into a near-field and a far-field part given by $|y - z| \leq \frac{1}{2}|y|$ and $|y - z| > \frac{1}{2}|y|$. The coefficient term $b_\epsilon(y)$ will only be relevant in the control of the far-field term. \par 
\textbf{Near-field estimates:} In this region, we want to preserve the quadratic cancellation of $\Gamma_\epsilon$ across the diagonal. Therefore we will write 
\begin{equation}\label{eq:N3_near_decomposition}K_{f,h}(y, z) = (\partial_1 + \partial_2) \Gamma_\epsilon(f'(y), S_f(y, z))h'(y) + \partial_2 \Gamma_\epsilon(f'(y), S_f(y, z))(S_h(y, z) - h'(y))\end{equation}
We will use the following bounds in the near region $|y - z| \leq \frac{1}{2} |y|$
\begin{equation}\label{eq:N3_profile_increment}|f'(y) - S_f(y, z)| \lesssim 
\begin{cases}
    \ang{y}^{-\beta}\min(1, \ang{y}^{1/p}|y - z|) & y > 0 \\
    \ang{y}^{-p - 2}|y-z|, & y < 0
\end{cases}  \end{equation}
and 
\begin{equation}\label{eq:N3_direction_increment}|h'(y) - S_h(y, z)| \lesssim \|h\|_{X_\gamma} 
\begin{cases}
    \ang{y}^{-\gamma+1/p}\min(1, \ang{y}^{1/p}|y - z|) & y > 0 \\
    \ang{y}^{-\gamma - 2}|y-z|, & y < 0
\end{cases}.\end{equation}
Applying \eqref{eqn:analysis_kernel_derivative_bounds} and \eqref{eqn:analysis_kernel_diagonal_derivative_bounds} to \eqref{eq:N3_near_decomposition} and using \eqref{eq:N3_profile_increment}--\eqref{eq:N3_direction_increment}, we obtain
\begin{equation}\label{eq:N3_near_integrand_bound}
\begin{aligned}
    |K_{f,h}(y, z)| & \lesssim \epsilon|f'(y) - S_f(y, z)|^2 |h'(y)| \\ &\quad + \epsilon|f'(y) - S_f(y, z)|(|f'(y)| + |S_f(y, z)|) |S_h(y, z) - h'(y)| \\
    & \lesssim \epsilon\|h\|_{X_\gamma}
    \begin{cases}
        \ang{y}^{-2\beta-\gamma+1/p}\min\{1,\ang{y}^{1/p}|y-z|\}^2, & y>0,\\
        \ang{y}^{-\gamma-p-4}|y-z|^2, & y<0.
    \end{cases}
\end{aligned}
\end{equation}
Since $\alpha=1+p$ and $0<p<1$,
\[
    \int_{\R}\frac{\min\{1,\ang{y}^{1/p}|\rho|\}^2}{|\rho|^{1+p}}\,d\rho
    \lesssim \ang{y},\qquad
    \int_{|\rho|\le |y|/2}|\rho|^{1-p}\,d\rho
    \lesssim\ang{y}^{2-p}.
\]
Therefore, integrating \eqref{eq:N3_near_integrand_bound} and using $2\beta=1+1/p$, the near integral satisfies
\begin{equation}\label{eq:N3_near_value}
    \int_{|y-z|\le |y|/2}\frac{|K_{f,h}(y,z)|}{|y-z|^\alpha}\,dz
    \lesssim\epsilon\|h\|_{X_\gamma}
    \begin{cases}
        \ang{y}^{-2\beta-\gamma+1/p+1}=\ang{y}^{-\gamma}, & y>0,\\
        \ang{y}^{-\gamma-2p-2}, & y<0.
    \end{cases}
\end{equation}
We now estimate the derivative, keeping $z$ fixed. We have
\begin{equation}\label{eq:N3_secant_derivatives}
    \partial_yS_f(y,z)=\frac{f'(y)-S_f(y,z)}{y-z},\qquad
    \partial_yS_h(y,z)=\frac{h'(y)-S_h(y,z)}{y-z}.
\end{equation}
Differentiating \eqref{eq:N3_near_decomposition}, we obtain
\begin{equation}\label{eq:N3_near_integrand_derivative}
\begin{aligned}
    \partial_yK_{f,h}(y,z)
    &=(\partial_{11}\Gamma_\epsilon+\partial_{12}\Gamma_\epsilon)f''(y)h'(y)\\
    &\quad +(\partial_{12}\Gamma_\epsilon+\partial_{22}\Gamma_\epsilon)\partial_yS_f(y,z)h'(y)\\
    &\quad +(\partial_1\Gamma_\epsilon+\partial_2\Gamma_\epsilon)h''(y)\\
    &\quad +\bigl(\partial_{12}\Gamma_\epsilon f''(y)+\partial_{22}\Gamma_\epsilon\partial_yS_f(y,z)\bigr)
        (S_h(y,z)-h'(y))\\
    &\quad +\partial_2\Gamma_\epsilon\bigl(\partial_yS_h(y,z)-h''(y)\bigr),
\end{aligned}
\end{equation}
where all derivatives of $\Gamma_\epsilon$ are evaluated at $(f'(y),S_f(y,z))$.
The secant identities \eqref{eq:N3_secant_derivatives} and the bounds on $f''$ and $h''$ give
\[
    |f''(y)|+|\partial_yS_f(y,z)|\lesssim
    \begin{cases}
        \ang{y}^{-\beta+1/p}, & y>0,\\
        \ang{y}^{-p-2}, & y<0,
    \end{cases}
\]
and
\[
    |h''(y)|+|\partial_yS_h(y,z)|\lesssim\|h\|_{X_\gamma}
    \begin{cases}
        \ang{y}^{-\gamma+2/p}, & y>0,\\
        \ang{y}^{-\gamma-2}, & y<0.
    \end{cases}
\]
We also recall the bounds from \eqref{eq:N3_direction_increment},
\[
    |S_h(y,z)-h'(y)|\lesssim\|h\|_{X_\gamma}
    \begin{cases}
        \ang{y}^{-\gamma+1/p}\min\{1,\ang{y}^{1/p}|y-z|\}, & y>0,\\
        \ang{y}^{-\gamma-2}|y-z|, & y<0,
    \end{cases}
\]
and $|f'(y)-S_f(y,z)|\lesssim\ang{y}^{-p-2}|y-z|$ for $y<0$.
Applying \eqref{eqn:analysis_kernel_derivative_bounds} and \eqref{eqn:analysis_kernel_diagonal_derivative_bounds} to \eqref{eq:N3_near_integrand_derivative}, we obtain
\begin{equation}\label{eq:N3_near_integrand_derivative_bound}
    |\partial_yK_{f,h}(y,z)|\lesssim\epsilon\|h\|_{X_\gamma}
    \begin{cases}
        \ang{y}^{-2\beta-\gamma+2/p}\min\{1,\ang{y}^{1/p}|y-z|\}, & y>0,\\
        \ang{y}^{-\gamma-p-4}|y-z|, & y<0.
    \end{cases}
\end{equation}
The same bound holds for $|K_{f,h}(y,z)|/|y-z|$. For $y>0$, this follows
from \eqref{eq:N3_near_integrand_bound} and
\[
    \frac{\min\{1,\ang{y}^{1/p}|y-z|\}}{|y-z|}\le\ang{y}^{1/p},
\]
and for $y<0$ it follows directly by dividing \eqref{eq:N3_near_integrand_bound} by
$|y-z|$. Since $0<p<1$,
\[
    \int_{\R}\frac{\min\{1,\ang{y}^{1/p}|\rho|\}}{|\rho|^{1+p}}\,d\rho
    \lesssim\ang{y},\qquad
    \int_{|\rho|\le |y|/2}|\rho|^{-p}\,d\rho
    \lesssim\ang{y}^{1-p}.
\]
Therefore, using \eqref{eq:N3_near_integrand_derivative_bound} and $2\beta=1+1/p$, the near contribution to the derivative satisfies
\begin{equation}\label{eq:N3_near_derivative}
    \int_{|y-z|\le |y|/2}
    \frac{|\partial_yK_{f,h}(y,z)-\alpha K_{f,h}(y,z)/(y-z)|}{|y-z|^\alpha}\,dz
    \lesssim\epsilon\|h\|_{X_\gamma}
    \begin{cases}
        \ang{y}^{-\gamma+1/p}, & y>0,\\
        \ang{y}^{-\gamma-2p-3}, & y<0.
    \end{cases}
\end{equation}
Multiplying by $\ang{y}^{-1/p}$ for $y>0$ and by $\ang{y}$ for $y<0$
gives the required derivative bounds in $Y_\gamma$.

\textbf{Far-field Estimates:} For the far-field estimates in the region $|y - z| \geq \frac{1}{2}|y|$, it is now important to incorporate the subtraction of $b_\epsilon(y)h'(y)$ in \eqref{eq:N3_variation}. For $y > 2$, we will write
\begin{equation}\label{eq:N3_model_coefficient}b_\epsilon(y) = \int_{z < 0}\frac{\partial_1 \Gamma_\epsilon(0, S_W(y, z))}{|y - z|^\alpha} \, dz = \int_{|y - z| \geq \frac{1}{2}|y|} \frac{\partial_1 \Gamma_\epsilon(0, S_W(y, z))}{|y - z|^\alpha} \, dz.\end{equation}
Using \eqref{eq:N3_model_coefficient}, we can express the far-field contribution of $D\mc{N}_3$ as 
\begin{equation}\label{eq:N3_far_subtraction}
\begin{aligned}
    \int_{|y - z| \geq \frac{1}{2}|y|} \frac{K_{f,h}(y, z)}{|y - z|^\alpha} \, dz - b_\epsilon(y)h'(y) & = \int_{|y - z| \geq \frac{1}{2}|y|} \frac{K_{f,h}(y, z) - \partial_1\Gamma_\epsilon(0, S_W(y, z))h'(y)}{|y - z|^\alpha} \, dz 
\end{aligned}
\end{equation}
and the far-field contribution of $\partial_y D\mc{N}_3$ as 
\begin{equation}\label{eq:N3_far_subtraction_derivative}
\begin{aligned}
    &\int_{|y-z|\geq \frac{1}{2}|y|}
    \frac{\partial_y K_{f,h}(y,z)-\alpha K_{f,h}(y,z)/(y-z)}{|y-z|^\alpha}\,dz
    -\partial_y(b_\epsilon(y)h'(y))\\
    &\qquad=\int_{|y-z|\geq \frac{1}{2}|y|}\frac{1}{|y-z|^\alpha}
    \left(\partial_y-\frac{\alpha}{y-z}\right)
    \left(K_{f,h}(y,z)-\partial_1\Gamma_\epsilon(0,S_W(y,z))h'(y)\right)\,dz.
\end{aligned}
\end{equation}
Indeed, for $y>2$ and $z<0$, we have
\[
    S_W(y,z)=\frac{z}{y-z},\qquad
    \partial_yS_W(y,z)=-\frac{S_W(y,z)}{y-z},\qquad
    \partial_y|y-z|^{-\alpha}=-\frac{\alpha}{y-z}|y-z|^{-\alpha}.
\]
We therefore obtain
\begin{equation}\label{eq:N3_model_coefficient_derivative}
\begin{aligned}
    \partial_y(b_\epsilon(y)h'(y))
    &=\partial_y\left(\int_{z<0}
    \frac{\partial_1\Gamma_\epsilon(0,S_W(y,z))h'(y)}{|y-z|^\alpha}\,dz\right)\\
    &=\int_{z<0}\frac{\partial_1\Gamma_\epsilon(0,S_W(y,z))h''(y)}{|y-z|^\alpha}\,dz\\
    &\quad-\int_{z<0}
    \frac{\partial_{12}\Gamma_\epsilon(0,S_W(y,z))S_W(y,z)h'(y)}{(y-z)|y-z|^\alpha}\,dz\\
    &\quad-\alpha\int_{z<0}
    \frac{\partial_1\Gamma_\epsilon(0,S_W(y,z))h'(y)}{(y-z)|y-z|^\alpha}\,dz\\
    &=\int_{z<0}\frac{1}{|y-z|^\alpha}
    \left(\partial_y-\frac{\alpha}{y-z}\right)
    \left(\partial_1\Gamma_\epsilon(0,S_W(y,z))h'(y)\right)\,dz\\
    &=\int_{|y-z|\geq y/2}\frac{1}{|y-z|^\alpha}
    \left(\partial_y-\frac{\alpha}{y-z}\right)
    \left(\partial_1\Gamma_\epsilon(0,S_W(y,z))h'(y)\right)\,dz.
\end{aligned}
\end{equation}
Substituting \eqref{eq:N3_model_coefficient_derivative} into the far contribution of \eqref{eq:N3_variation_derivative} gives \eqref{eq:N3_far_subtraction_derivative}.
To bound the undifferentiated far-field integral for $y > 2$, we write
\begin{equation}\label{eq:N3_far_decomposition}
    K_{f,h}(y,z)-\partial_1\Gamma_\epsilon(0,S_W(y,z))h'(y)
    =[\partial_1\Gamma_\epsilon-\partial_1\Gamma_\epsilon(0,S_W(y,z))]h'(y)
    +(\partial_2\Gamma_\epsilon)S_h(y,z),
\end{equation}
where the unmarked derivatives of $\Gamma_\epsilon$ are evaluated at $(f'(y),S_f(y,z))$.
Using \eqref{eq:N3_global}--\eqref{eq:N3_plus_profile}, $\beta>1$, and $|y-z|\geq y/2$, we have
\begin{equation}\label{eq:N3_far_coefficient_bound}
\begin{aligned}
    |\partial_1\Gamma_\epsilon(f'(y),S_f(y,z))-\partial_1\Gamma_\epsilon(0,S_W(y,z))|
    &\lesssim\epsilon\bigl(|f'(y)|+|S_f(y,z)-S_W(y,z)|\bigr)\\
    &\lesssim\epsilon\left(\ang{y}^{-\beta}+\frac{1}{|y-z|}\right)
    \lesssim\epsilon\ang{y}^{-1}.
\end{aligned}
\end{equation}
Also, $|\partial_2\Gamma_\epsilon(f'(y),S_f(y,z))|\lesssim\epsilon$ by
\eqref{eqn:analysis_kernel_derivative_bounds}, since $f'$ and $S_f$ are uniformly bounded.
It follows that
\begin{equation}\label{eq:N3_far_integrand_bound}
\begin{aligned}
    |K_{f,h}(y,z)-\partial_1\Gamma_\epsilon(0,S_W(y,z))h'(y)|
    &\lesssim\epsilon\ang{y}^{-1}|h'(y)|
    +\epsilon\frac{|h(y)|+|h(z)|}{|y-z|}\\
    &\lesssim\epsilon\|h\|_{X_\gamma}
    \left(\ang{y}^{-\gamma+1/p-1}
    +\frac{\ang{y}^{-\gamma}+\ang{z}^{-\gamma}}{|y-z|}\right).
\end{aligned}
\end{equation}
Since $\gamma>1$, we have
\[
    \int_{|y-z|\geq y/2}\frac{\ang{z}^{-\gamma}}{|y-z|^{\alpha+1}}\,dz
    \lesssim\ang{y}^{-p-2}.
\]
Integrating \eqref{eq:N3_far_integrand_bound}, the far integral satisfies
\begin{equation}\label{eq:N3_far_plus_value}
\begin{aligned}
    &\int_{|y-z|\geq y/2}
    \frac{|K_{f,h}(y,z)-\partial_1\Gamma_\epsilon(0,S_W(y,z))h'(y)|}{|y-z|^\alpha}\,dz\\
    &\qquad\lesssim\epsilon\|h\|_{X_\gamma}
    \left(\ang{y}^{-\gamma+1/p-p-1}
    +\ang{y}^{-\gamma-p-1}+\ang{y}^{-p-2}\right)\\
    &\qquad\lesssim\epsilon\|h\|_{X_\gamma}\ang{y}^{-\gamma},
\end{aligned}
\end{equation}
where we used $p+1>1/p$ and $\gamma<2p+1<p+2$. To bound \eqref{eq:N3_far_subtraction_derivative} when $y > 2$, we again use \eqref{eq:N3_far_decomposition}: 
\[
    K_{f,h}(y,z)-\partial_1\Gamma_\epsilon(0,S_W(y,z))h'(y)
    =[\partial_1\Gamma_\epsilon-\partial_1\Gamma_\epsilon(0,S_W(y,z))]h'(y)
    +(\partial_2\Gamma_\epsilon)S_h(y,z).
\]
It follows that
\begin{align*}
    &\partial_y\left(K_{f,h}(y,z)-\partial_1\Gamma_\epsilon(0,S_W(y,z))h'(y)\right)\\
    &\qquad=[\partial_1\Gamma_\epsilon-\partial_1\Gamma_\epsilon(0,S_W(y,z))]h''(y)\\
    &\qquad\quad+\bigl[\partial_{11}\Gamma_\epsilon f''(y)
    +\partial_{12}\Gamma_\epsilon\partial_yS_f(y,z)
    -\partial_{12}\Gamma_\epsilon(0,S_W(y,z))\partial_yS_W(y,z)\bigr]h'(y)\\
    &\qquad\quad+\bigl[\partial_{12}\Gamma_\epsilon f''(y)
    +\partial_{22}\Gamma_\epsilon\partial_yS_f(y,z)\bigr]S_h(y,z)
    +\partial_2\Gamma_\epsilon\partial_yS_h(y,z).
\end{align*}
In the far region, the secant identities \eqref{eq:N3_secant_derivatives} give
\[
    |\partial_yS_f(y,z)|+|\partial_yS_W(y,z)|\lesssim\frac{1}{|y-z|},
\]
and
\[
    |\partial_yS_h(y,z)|
    \leq\frac{|h'(y)|}{|y-z|}
    +\frac{|h(y)|+|h(z)|}{|y-z|^2}.
\]
Since $\beta>1$ and $|y-z|\geq y/2$, we also have
\[
    |f''(y)|+\frac{1}{|y-z|}
    \lesssim\ang{y}^{-\beta+1/p}+\ang{y}^{-1}
    \lesssim\ang{y}^{1/p-1}.
\]
Using \eqref{eq:N3_far_coefficient_bound}, the preceding secant bounds, and $|D^2\Gamma_\epsilon|\lesssim\epsilon$
from \eqref{eqn:analysis_kernel_derivative_bounds}, we obtain
\begin{equation}\label{eq:N3_far_integrand_derivative_bound}
\begin{aligned}
    &\left|\partial_y\left(K_{f,h}(y,z)-\partial_1\Gamma_\epsilon(0,S_W(y,z))h'(y)\right)\right|\\
    &\qquad\lesssim\epsilon\left(
    \ang{y}^{-1}|h''(y)|+\ang{y}^{1/p-1}|h'(y)|
    +\ang{y}^{1/p-1}\frac{|h(y)|+|h(z)|}{|y-z|}\right)\\
    &\qquad\lesssim\epsilon\|h\|_{X_\gamma}\left(
    \ang{y}^{-\gamma+2/p-1}
    +\ang{y}^{1/p-1}\frac{\ang{y}^{-\gamma}+\ang{z}^{-\gamma}}{|y-z|}\right).
\end{aligned}
\end{equation}
The same bound holds for
$|K_{f,h}(y,z)-\partial_1\Gamma_\epsilon(0,S_W(y,z))h'(y)|/|y-z|$, by the
bound \eqref{eq:N3_far_integrand_bound} and $|y-z|\geq y/2$.
Integrating \eqref{eq:N3_far_integrand_derivative_bound}, the differentiated far integral satisfies
\begin{equation}\label{eq:N3_far_plus_derivative}
\begin{aligned}
    &\int_{|y-z|\geq y/2}\frac{1}{|y-z|^\alpha}
    \left|\left(\partial_y-\frac{\alpha}{y-z}\right)
    \left(K_{f,h}(y,z)-\partial_1\Gamma_\epsilon(0,S_W(y,z))h'(y)\right)\right|\,dz\\
    &\qquad\lesssim\epsilon\|h\|_{X_\gamma}\left(
    \ang{y}^{-\gamma+2/p-p-1}
    +\ang{y}^{-\gamma+1/p-p-2}
    +\ang{y}^{1/p-p-3}\right)\\
    &\qquad\lesssim\epsilon\|h\|_{X_\gamma}\ang{y}^{-\gamma+1/p},
\end{aligned}
\end{equation}
where we used $p+1>1/p$ and $\gamma<2p+1<p+3$.
Multiplying by $\ang{y}^{-1/p}$ gives the required derivative bound
in $Y_\gamma$ for $y>2$. \par
For $y < 2$, the estimates are simpler. In this region, we treat the integral and $b_\epsilon(y)h'(y)$ separately. Since $b_\epsilon(y)$ is smooth and supported on $y > 1$, the estimates for $b_\epsilon(y)h'(y)$ when $y < 2$ are immediate. It remains to estimate the integral terms 
\begin{align*}
    \int_{|y - z| \geq \frac{1}{2}|y|} \frac{K_{f,h}(y, z)}{|y - z|^\alpha} \, dz
\end{align*}
and 
\begin{align*}
    &\int_{|y-z|\geq \frac{1}{2}|y|}
    \frac{\partial_y K_{f,h}(y,z)-\alpha K_{f,h}(y,z)/(y-z)}{|y-z|^\alpha}\,dz
\end{align*}
when $y < 2$.
For the undifferentiated integral, first suppose that $y\leq-2$.
Since $f'$ and $S_f$ are uniformly bounded, \eqref{eqn:analysis_kernel_derivative_bounds} gives
$|\partial_1\Gamma_\epsilon|+|\partial_2\Gamma_\epsilon|\lesssim\epsilon$. Hence
\begin{equation}\label{eq:N3_far_minus_integrand_bound}
\begin{aligned}
    |K_{f,h}(y,z)|
    &\lesssim\epsilon|h'(y)|
    +\epsilon\frac{|h(y)|+|h(z)|}{|y-z|}\\
    &\lesssim\epsilon\|h\|_{X_\gamma}\left(
    \ang{y}^{-\gamma-1}
    +\frac{\ang{y}^{-\gamma}+\ang{z}^{-\gamma}}{|y-z|}\right).
\end{aligned}
\end{equation}
Since $\gamma>1$, integration over the far region yields
\begin{equation}\label{eq:N3_far_minus_value}
\begin{aligned}
    \int_{|y-z|\geq |y|/2}\frac{|K_{f,h}(y,z)|}{|y-z|^\alpha}\,dz
    &\lesssim\epsilon\|h\|_{X_\gamma}\left(
    \ang{y}^{-\gamma-p-1}+\ang{y}^{-p-2}\right)\\
    &\lesssim\epsilon\|h\|_{X_\gamma}\ang{y}^{-\gamma},
\end{aligned}
\end{equation}
where we used $\gamma<2p+1<p+2$.
For $|y|\leq2$ and $|y-z|\leq1$, the local $W^{2,\infty}$ bounds give
\[
    |f'(y)-S_f(y,z)|\lesssim|y-z|,\qquad
    |h'(y)-S_h(y,z)|\lesssim\|h\|_{X_\gamma}|y-z|.
\]
Using the decomposition \eqref{eq:N3_near_decomposition}, we obtain
$|K_{f,h}(y,z)|\lesssim\epsilon\|h\|_{X_\gamma}|y-z|^2$.
For $|y-z|\geq1$, the uniform bounds on the first derivatives of $\Gamma_\epsilon$
give $|K_{f,h}(y,z)|\lesssim\epsilon\|h\|_{X_\gamma}$.
Therefore
\begin{equation}\label{eq:N3_far_compact_value}
\begin{aligned}
    \int_{|y-z|\geq |y|/2}\frac{|K_{f,h}(y,z)|}{|y-z|^\alpha}\,dz
    &\leq\int_{\R}\frac{|K_{f,h}(y,z)|}{|y-z|^\alpha}\,dz\\
    &\lesssim\epsilon\|h\|_{X_\gamma}\left(
    \int_{|\rho|\leq1}|\rho|^{1-p}\,d\rho
    +\int_{|\rho|\geq1}|\rho|^{-1-p}\,d\rho\right)\\
    &\lesssim\epsilon\|h\|_{X_\gamma}
    \lesssim\epsilon\|h\|_{X_\gamma}\ang{y}^{-\gamma}.
\end{aligned}
\end{equation}
The bounds \eqref{eq:N3_far_minus_value} and \eqref{eq:N3_far_compact_value} give the required bound for the undifferentiated far integral for all $y<2$.

For the differentiated integral, first suppose that $y\leq-2$.
In the far region, we have
\[
    |f''(y)|+|\partial_yS_f(y,z)|
    \lesssim\ang{y}^{-p-2}+\frac{1}{|y-z|}
    \lesssim\ang{y}^{-1},
\]
and
\[
    |\partial_yS_h(y,z)|
    \leq\frac{|h'(y)|}{|y-z|}
    +\frac{|h(y)|+|h(z)|}{|y-z|^2}.
\]
Using $|\nabla \Gamma_\epsilon|+|D^2\Gamma_\epsilon|\lesssim\epsilon$ and the product rule, we obtain
\begin{align*}
    |\partial_yK_{f,h}(y,z)|
    &\lesssim\epsilon\left(|h''(y)|+\ang{y}^{-1}|h'(y)|
    +\ang{y}^{-1}\frac{|h(y)|+|h(z)|}{|y-z|}\right)\\
    &\lesssim\epsilon\|h\|_{X_\gamma}\left(
    \ang{y}^{-\gamma-2}
    +\ang{y}^{-1}\frac{\ang{y}^{-\gamma}+\ang{z}^{-\gamma}}{|y-z|}\right).
\end{align*}
The same bound holds for $|K_{f,h}(y,z)|/|y-z|$, by \eqref{eq:N3_far_minus_integrand_bound} and $|y-z|\geq |y|/2$.
Since $\gamma>1$, integration over the far region yields
\begin{equation}\label{eq:N3_far_minus_derivative}
\begin{aligned}
    &\int_{|y-z|\geq |y|/2}
    \frac{|\partial_yK_{f,h}(y,z)-\alpha K_{f,h}(y,z)/(y-z)|}{|y-z|^\alpha}\,dz\\
    &\qquad\lesssim\epsilon\|h\|_{X_\gamma}\left(
    \ang{y}^{-\gamma-p-2}+\ang{y}^{-p-3}\right)\\
    &\qquad\lesssim\epsilon\|h\|_{X_\gamma}\ang{y}^{-\gamma-1},
\end{aligned}
\end{equation}
where we used $\gamma<2p+1<p+2$.
For $|y|\leq2$ and $|y-z|\leq1$, the local $W^{2,\infty}$ bounds and the
differentiated decomposition \eqref{eq:N3_near_integrand_derivative} give
\[
    |\partial_yK_{f,h}(y,z)|+\frac{|K_{f,h}(y,z)|}{|y-z|}
    \lesssim\epsilon\|h\|_{X_\gamma}|y-z|.
\]
For $|y-z|\geq1$, the product rule and the uniform bounds on the first
and second derivatives of $\Gamma_\epsilon$ give
$|\partial_yK_{f,h}(y,z)|+|K_{f,h}(y,z)|/|y-z|\lesssim\epsilon\|h\|_{X_\gamma}$.
Since $0<p<1$, we obtain
\begin{equation}\label{eq:N3_far_compact_derivative}
\begin{aligned}
    &\int_{|y-z|\geq |y|/2}
    \frac{|\partial_yK_{f,h}(y,z)-\alpha K_{f,h}(y,z)/(y-z)|}{|y-z|^\alpha}\,dz\\
    &\qquad\leq\int_{\R}
    \frac{|\partial_yK_{f,h}(y,z)-\alpha K_{f,h}(y,z)/(y-z)|}{|y-z|^\alpha}\,dz\\
    &\qquad\lesssim\epsilon\|h\|_{X_\gamma}\left(
    \int_{|\rho|\leq1}|\rho|^{-p}\,d\rho
    +\int_{|\rho|\geq1}|\rho|^{-1-p}\,d\rho\right)\\
    &\qquad\lesssim\epsilon\|h\|_{X_\gamma}
    \lesssim\epsilon\|h\|_{X_\gamma}\ang{y}^{-\gamma-1}.
\end{aligned}
\end{equation}
The bounds \eqref{eq:N3_far_minus_derivative} and \eqref{eq:N3_far_compact_derivative} give the required bound for the differentiated far integral for all $y<2$.

Combining the near estimates \eqref{eq:N3_near_value} and \eqref{eq:N3_near_derivative}, the positive far estimates \eqref{eq:N3_far_plus_value} and \eqref{eq:N3_far_plus_derivative}, and the far estimates for $y<2$ with the bounds for
$b_\epsilon(y)h'(y)$ on $y<2$, we conclude that
\[
    \|D\mc{N}_3(r)[h]\|_{Y_\gamma}
    \lesssim_{M,N}\epsilon\|h\|_{X_\gamma}.
\]
\end{proof}

Finally, we study the continuity of the operator $\mc{N}_{\epsilon, \ov{f}}$ in the parameter $\ov{f}$. 

\begin{proposition}\label{prop:nonlinear_continuity}
    Fix $\epsilon > 0$ small and $\beta + \frac{1}{p} < \gamma' < \gamma < 2p + 1$ and let $r$ be a fixed function in $X_{\gamma}$. Let $\{\ov{f}_k\}$ be a sequence of admissible functions in the sense of \cref{def:nonlinear_admissible_profile} so that $\ov{f}_k \to \ov{f}$ in $C^2_{\textup{loc}}$ as $k \to \infty$. Then 
    \[\mc{N}_{\epsilon, \ov{f}_k}(r) \to \mc{N}_{\epsilon, \ov{f}}(r)\]
    in $Y_{\gamma'}$ as $k \to \infty$.
\end{proposition}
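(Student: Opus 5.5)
We combine a loss of decay with local convergence. The exponent gap $\gamma-\gamma'>0$ turns uniform bounds at weight $\gamma$ into smallness at large $|y|$ when measured at weight $\gamma'$. The $C^2_{\textup{loc}}$ convergence then handles compact regions.

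First, the uniform bound. Since every $\ov{f}_k$ and $\ov{f}$ is admissible with the same constant $M$, the proofs of \cref{lem:N1,lem:N2,lem:N3} and \cref{lem:linear_conjugation} apply with $r_2=0$. They give
\[\sup_k\|\mc{N}_{\epsilon,\ov{f}_k}(r)\|_{Y_\gamma}+\|\mc{N}_{\epsilon,\ov{f}}(r)\|_{Y_\gamma}\lesssim_{M,\|r\|_{X_\gamma}}1.\]
The limit $\ov{f}$ is admissible because the pointwise bounds pass to $C^2_{\textup{loc}}$ limits. One point needs care: $\mc{R}_\epsilon$ is built from $b_\epsilon$ and $\phi$, which do not depend on $\ov{f}$. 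Hence $\mc{R}_\epsilon(r)$ cancels in the difference, and only $\mc{N}_1,\mc{N}_2,\mc{N}_3$ need to be compared. Write $D_k=\mc{N}_{\epsilon,\ov{f}_k}(r)-\mc{N}_{\epsilon,\ov{f}}(r)$. For any $A>0$,
\[\|D_k\|_{Y_{\gamma'}}\lesssim \ang{A}^{\gamma'-\gamma}+\|D_k\|_{Y_{\gamma'}(|y|\le A)}.\]
So it suffices to show $D_k\to0$ in $C^1([-A,A])$ for each fixed $A$.

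Next, the local convergence. The terms $\mc{N}_1$ and $\mc{N}_2$ are local in $\ov{f},\ov{f}',\ov{f}''$ (after one derivative) and depend smoothly on them through $B_\epsilon$. Their $C^1([-A,A])$ convergence is therefore immediate from $\ov{f}_k\to\ov{f}$ in $C^2_{\textup{loc}}$.

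For $\mc{N}_3=\mc{I}_\epsilon(\ov{f}+r)-\mc{I}_\epsilon(\ov{f})-b_\epsilon r'$, it suffices to show $\mc{I}_\epsilon(\ov{f}_k+r)\to\mc{I}_\epsilon(\ov{f}+r)$ in $C^1([-A,A])$, and likewise with $r=0$. The tool is dominated convergence in the representation of \cref{prop:analysis_semilinear_kernel}, written in the variable $s=y-z$, exactly as in the continuity step of the proof of \cref{thm:fixed_point_approximation}. For $|s|\le1$, \cref{rmk:analysis_vanishing_diagonal} and the uniform local $W^{2,\infty}$ bounds give an integrand bounded by $|s|^{-p}$. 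The $y$-derivative, computed via \eqref{eqn:approx_E7_secant_derivative}, is also bounded by $|s|^{-p}$, uniformly in $k$. For $|s|>1$, uniform boundedness of $\ov{f}_k'$ and $S_{\ov{f}_k}$ together with $\nabla\Gamma_\epsilon$ bounded gives domination by $|s|^{-1-p}$. For the derivative, the extra factors $\partial_yS_{\ov{f}_k}(y,y-s)=S_{\ov{f}_k'}(y,y-s)$ are bounded by $2\|\ov{f}_k'\|_\infty/|s|$, so the same integrable bound holds. Pointwise convergence of the integrands for each fixed $s$ follows from local uniform convergence of $\ov{f}_k,\ov{f}_k',\ov{f}_k''$. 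Here the only subtlety is that $\ov{f}_k(y-s)$ for large $|s|$ sits far away, but each fixed $s$ only involves a compact set. Dominated convergence then gives the claim.

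The main obstacle is the uniformity needed to invoke dominated convergence for the derivative near the diagonal. The second-order cancellation must be controlled with a bound independent of $k$. This uses $\Gamma_\epsilon=(u-v)^2\sq{\Gamma}_\epsilon$ and the uniform $C^2$ bounds on $[-A-1,A+1]$ supplied by admissibility, which make every term in \eqref{eq:N3_near_integrand_derivative} $O(1)$ uniformly. Once that is in place, choosing $A$ large and then $k$ large concludes the proof.
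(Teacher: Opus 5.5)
Your proposal is correct and follows essentially the same route as the paper. The steps match: a uniform $Y_\gamma$ bound from \cref{lem:N1,lem:N2,lem:N3} (with $\mc{R}_\epsilon(r)$ cancelling), a reduction to local $W^{1,\infty}$ convergence by trading the gap $\gamma-\gamma'$ (exactly \cref{prop:appendix_convergence}), immediate convergence of $\mc{N}_1,\mc{N}_2$, and local convergence of $\mc{I}_\epsilon(g_k)$ for $\mc{N}_3$.

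The only difference is in that last step. The paper splits into $|y-z|<\delta$, $\delta<|y-z|<4R$ and $|y-z|>4R$. It uses the smallness $\epsilon\delta^{2-\alpha}$ near the diagonal and a Lipschitz bound in the middle region, then lets $\delta\to0$. You instead apply dominated convergence over all of $s=y-z$ at once. To get uniform rather than pointwise convergence, apply it to $\sup_{|y|\le A}$ of the integrand difference. Also, your closing remark should say that \eqref{eqn:analysis_kernel_derivative_bounds} makes $\partial_y\Gamma_\epsilon(g'(y),S_g(y,y-s))$ of size $O(|s|)$, not merely $O(1)$, since that is what gives the $|s|^{-p}$ domination you correctly use earlier.
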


\begin{proof}
    We use the admissibility bounds \eqref{eq:nonlinear_admissible_profile_plus} and \eqref{eq:nonlinear_admissible_profile_minus} with a common constant $M$ independent of $k$; the limit $\ov{f}$ satisfies the same bounds. Since $\mc{R}_\epsilon(r)$ is independent of the profile, \cref{lem:N1,lem:N2,lem:N3} imply that
    \begin{equation}\label{eq:nonlinear_continuity_uniform_bound}\sup_k\|\mc{N}_{\epsilon, \ov{f}_k}(r)-\mc{N}_{\epsilon, \ov{f}}(r)\|_{Y_\gamma}<\infty.\end{equation}
    By \cref{prop:appendix_convergence}, it suffices to show that
    \[\mc{N}_{\epsilon, \ov{f}_k}(r) \to \mc{N}_{\epsilon, \ov{f}}(r)\]
    in $W^{1, \infty}_{\textup{loc}}$ as $k \to \infty$. Fix a radius $R > 0$. We will show that $\mc{N}_{\epsilon, \ov{f}_k}(r) \to \mc{N}_{\epsilon, \ov{f}}(r)$ in $W^{1, \infty}([-R, R])$. \par
    \textbf{Convergence of $\mc{N}_1$ and $\mc{N}_2$:} The convergence of these terms is immediate. Indeed, $B_{\epsilon}$ is a smooth function so $W^{1,\infty}$ convergence of the terms $B_\epsilon(\ov{f}_k' + r')$ and $B_\epsilon(\ov{f}_k')$ follows from the $C^2_{\textup{loc}}$ convergence of $\ov{f}_k$ to $\ov{f}$. The remaining factors converge in $W^{1,\infty}([-R,R])$, since
    \[\partial_y(\ov{f}_k-y\ov{f}_k')=-y\ov{f}_k'',\qquad \partial_y(r-yr')=-yr''.\]
    The product rule gives the claimed convergence. \par
    \textbf{Convergence of $\mc{N}_3$:} It suffices to show that 
    \begin{equation}\label{eq:nonlinear_continuity_local}\mc{I}_\epsilon(g_k) \to \mc{I}_{\epsilon}(g), \quad \text{in} \quad W^{1, \infty}([-R, R])\end{equation}
    where either $(g_k, g) = (\ov{f}_k + r, \ov{f} + r)$ or $(g_k, g) = (\ov{f}_k, \ov{f})$. We will only be using the properties
    \begin{equation}\label{eq:nonlinear_continuity_profiles}\sup_{k}\|g_k'\|_{L^\infty} + \|g'\|_{L^\infty} < \infty, \quad g_k \to g \quad \text{in} \quad W^{2,\infty}_{\textup{loc}}\end{equation}
    which hold for both choices of $(g_k, g)$. All estimates involving second derivatives are understood almost everywhere. \par
    Recall that
    \[\mc{I}_\epsilon(g)(y) = \int \frac{\Gamma_\epsilon(g'(y), S_g(y, z))}{|y-z|^\alpha}\,dz\]
    and so 
    \begin{equation}\label{eq:nonlinear_continuity_derivative}
        \partial_y \mc{I}_\epsilon(g)(y) = \int \frac{K_g^{(1)}(y, z)}{|y-z|^\alpha}\,dz
    \end{equation}
    where 
    \[\begin{aligned}
        K_g^{(1)}(y, z) & = \partial_1\Gamma_\epsilon(g'(y), S_g(y, z))g''(y)\\
        &\quad + \partial_2\Gamma_\epsilon(g'(y), S_g(y, z)) \frac{g'(y) - S_g(y, z)}{y - z}
        - (1 + p)\frac{\Gamma_\epsilon(g'(y), S_g(y, z))}{y - z}.
    \end{aligned}\]
    Fix $0<\delta<\min\{1,R\}$. For each $|y|\le R$, we decompose the integration region into three regions: 
    \[|y - z| < \delta, \quad \delta < |y - z| < 4R, \quad |y - z| > 4R.\]
    In the first region, we have that 
    \[|\Gamma_\epsilon(g'(y), S_g(y, z))| \lesssim \epsilon (g'(y) - S_g(y, z))^2(|g'(y)| + |S_g(y, z)|) \lesssim_R \epsilon |y - z|^2\]
    and 
    \[|K_g^{(1)}(y, z)| \lesssim_R \epsilon |y - z|\]
    where we used \eqref{eqn:analysis_kernel_quadratic_factor}, \eqref{eqn:analysis_kernel_derivative_bounds}, and the bounds on $g''$ on $[-2R,2R]$. The same estimates hold uniformly for $g_k$. Hence
    \begin{equation}\label{eq:nonlinear_continuity_near}
    \begin{aligned}
        &\sup_{|y|\le R}\int_{|y-z|<\delta}
        \frac{|\Gamma_\epsilon(g_k'(y),S_{g_k}(y,z))|+|\Gamma_\epsilon(g'(y),S_g(y,z))|}{|y-z|^\alpha}\,dz
        \lesssim_R\epsilon\delta^{3-\alpha},\\
        &\mathop{\mathrm{ess\,sup}}_{|y|\le R}\int_{|y-z|<\delta}
        \frac{|K_{g_k}^{(1)}(y,z)|+|K_g^{(1)}(y,z)|}{|y-z|^\alpha}\,dz
        \lesssim_R\epsilon\delta^{2-\alpha}.
    \end{aligned}
    \end{equation}
    These bounds also justify \eqref{eq:nonlinear_continuity_derivative}. Indeed, differentiating the integral truncated to $|y-z|>\delta$ gives the corresponding truncated integral of $K_g^{(1)}$ and the boundary term
    \[\delta^{-\alpha}\bigl[\Gamma_\epsilon(g'(y),S_g(y,y-\delta))-\Gamma_\epsilon(g'(y),S_g(y,y+\delta))\bigr].\]
    This term is $O_R(\epsilon\delta^{2-\alpha})$, and the omitted derivative integral satisfies the same bound by \eqref{eq:nonlinear_continuity_near}. Since $\alpha<2$, passing to the limit proves the identity in the weak sense. \par
    In the second region, \eqref{eq:nonlinear_continuity_profiles} gives 
    \[|S_{g_k}(y, z) - S_g(y, z)| \le \|g_k' - g'\|_{L^\infty([-5R,5R])}\]
    and
    \[|g_k'(y) - g'(y)| + |g_k''(y) - g''(y)| \lesssim \|g_k - g\|_{W^{2, \infty}([-5R,5R])}.\]
    By the smoothness of $\Gamma_\epsilon$ and the bound on $\nabla \Gamma_\epsilon$ in \eqref{eqn:analysis_kernel_derivative_bounds}, we have that
    \[|\Gamma_\epsilon(g_k'(y), S_{g_k}(y, z)) - \Gamma_\epsilon(g'(y), S_g(y, z))| \lesssim_R \epsilon\|g_k - g\|_{W^{2, \infty}([-5R,5R])}|y-z|\]
    and 
    \[|K_{g_k}^{(1)}(y,z)-K_g^{(1)}(y,z)| \lesssim_{R,\delta}\epsilon\|g_k-g\|_{W^{2,\infty}([-5R,5R])}.\]
    Integrating over $\delta<|y-z|<4R$ gives
    \begin{equation}\label{eq:nonlinear_continuity_middle}
    \begin{aligned}
        &\mathop{\sup}_{|y|\le R}\int_{\delta<|y-z|<4R}
        \frac{|\Gamma_\epsilon(g_k'(y),S_{g_k}(y,z))-\Gamma_\epsilon(g'(y),S_g(y,z))|}{|y-z|^\alpha}\,dz\\
        &\quad+\mathop{\sup}_{|y|\le R}\int_{\delta<|y-z|<4R}
        \frac{|K_{g_k}^{(1)}(y,z)-K_g^{(1)}(y,z)|}{|y-z|^\alpha}\,dz\\
        &\qquad\lesssim_{R,\delta}\epsilon\|g_k-g\|_{W^{2,\infty}([-5R,5R])}\to0.
    \end{aligned}
    \end{equation}
    In the third region, the uniform bounds in \eqref{eq:nonlinear_continuity_profiles} imply uniform bounds on $S_{g_k}$, and
    \[|\partial_yS_{g_k}(y,z)|\le\frac{2\|g_k'\|_{L^\infty}}{|y-z|}.\]
    Using the local bounds on $g_k''$ and \eqref{eqn:analysis_kernel_derivative_bounds}, we have that
    \[|\Gamma_\epsilon(g_k'(y),S_{g_k}(y,z))|+|K_{g_k}^{(1)}(y,z)|\lesssim_R\epsilon.\]
    The same estimate holds for $g$. Since $|y-z|>4R$ and $|y|\le R$ imply $|z|>3R$ and $|y-z|\ge 2|z|/3$, both integrands in 
    \[\int_{|y - z| > 4R}\frac{\Gamma_\epsilon(g_k'(y),S_{g_k}(y,z))}{|y-z|^\alpha}\,dz, \quad \int_{|y - z| > 4R}\frac{K_{g_k}^{(1)}(y,z)}{|y-z|^\alpha}\,dz\]
    are bounded by $C_R\epsilon|z|^{-\alpha}$. By applying the dominated convergence theorem, we get that
    \begin{equation}\label{eq:nonlinear_continuity_far}
    \begin{aligned}
        &\mathop{\sup}_{|y|\le R}\int_{|y-z|>4R}
        \frac{|\Gamma_\epsilon(g_k'(y),S_{g_k}(y,z))-\Gamma_\epsilon(g'(y),S_g(y,z))|}{|y-z|^\alpha}\,dz\\
        &\quad+\mathop{\sup}_{|y|\le R}\int_{|y-z|>4R}
        \frac{|K_{g_k}^{(1)}(y,z)-K_g^{(1)}(y,z)|}{|y-z|^\alpha}\,dz\to0.
    \end{aligned}
    \end{equation}
    Combining \eqref{eq:nonlinear_continuity_near}, \eqref{eq:nonlinear_continuity_middle}, and \eqref{eq:nonlinear_continuity_far}, we conclude that
    \[\limsup_{k\to\infty}\|\mc{I}_\epsilon(g_k)-\mc{I}_\epsilon(g)\|_{W^{1,\infty}([-R,R])}
    \lesssim_R\epsilon\delta^{2-\alpha}.\]
    Letting $\delta\to0$ proves \eqref{eq:nonlinear_continuity_local}. Applying this to both choices of $(g_k,g)$ proves the local $W^{1,\infty}$ convergence of $\mc{N}_3$, since $b_\epsilon r'$ is independent of the profile. The uniform $Y_\gamma$ bound \eqref{eq:nonlinear_continuity_uniform_bound} and \cref{prop:appendix_convergence} then give convergence in $Y_{\gamma'}$.
\end{proof}

Now, we are ready to prove \cref{thm:fixed_point_nonlinear_estimates}.

\begin{proof}[Proof of \cref{thm:fixed_point_nonlinear_estimates}]
    By \cref{prop:approx_profile_bounds}, the approximate profiles $\ov{f}_{\epsilon,\mu}$ are admissible in the sense of \cref{def:nonlinear_admissible_profile} with a uniform constant $M$ independent of $\epsilon$ and $\mu$. Therefore \cref{lem:N1,lem:N2,lem:N3}, and \cref{lem:linear_conjugation} give the uniform bounds on $\mc{N}_{\epsilon, \ov{f}_{\epsilon, \mu}}$ stated in the theorem. The last assertion about continuity follows from \cref{prop:nonlinear_continuity}. This concludes the proof of the theorem.
\end{proof}

\appendix

\section{Elementary Estimates}

\begin{proposition}\label{prop:appendix_nonsmooth_fourier_inverse}
    Let $\chi$ be a smooth bump function supported on $[-2, 2]$ with $\chi = 1$ on $[-1, 1]$ and define
    \[m_k(\xi) = \chi(\xi)(\Phi_0(\xi))^k, \quad \Phi_0(\xi) = \frac{\xi|\xi|^p}{p+1}.\]
    Then there exists a constant $C$ so that for $j = 0, 1, 2$ and $k \geq 0$, we have the estimate
    \[|\partial^j \widecheck{m_k}(y)| \lesssim C^k \ang{y}^{-p - 2 - j}.\]
\end{proposition}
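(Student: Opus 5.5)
We prove the bound separately for $|y|\le 1$ and $|y|\ge 1$. The basic input is that on $\operatorname{supp}\chi\subset[-2,2]$ the symbol $m_k$ is compactly supported and bounded by $C^k$, where $C=2^{p+1}/(p+1)$ up to a harmless factor. Writing
\[
\partial^j\widecheck{m_k}(y)=\frac{1}{2\pi}\int (i\xi)^j m_k(\xi)e^{iy\xi}\,d\xi ,
\]
we get $|\partial^j\widecheck{m_k}(y)|\lesssim C^k$ for all $y$. This settles $|y|\le1$.

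For $|y|\ge1$ we set $\mu_{k,j}(\xi)=(i\xi)^j m_k(\xi)$ and record its behaviour near $\xi=0$.
\begin{itemize}
\item For $k\ge1$ the function $\Phi_0(\xi)^k=(p+1)^{-k}\,\xi^k|\xi|^{kp}$ is smooth away from the origin and homogeneous of degree $k(p+1)$.
\item Hence $\mu_{k,j}$ is homogeneous of degree $d=k(p+1)+j$ near $0$.
\item We have the symbol bound $|\partial_\xi^\ell \mu_{k,j}(\xi)|\lesssim_\ell (C')^k k^\ell |\xi|^{d-\ell}$ for $0<|\xi|\le 2$. The factor $k^\ell$ comes from differentiating the power $\ell$ times; the contributions from derivatives hitting $\chi$ are bounded similarly.
\end{itemize}
Since $k^\ell\le \ell!\,2^k$, the polynomial loss in $k$ is absorbed into a larger geometric constant.

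The standard scheme is then the following. Let $\rho=|y|^{-1}$ and split $\mu_{k,j}=\psi(\xi/\rho)\mu_{k,j}+(1-\psi(\xi/\rho))\mu_{k,j}$, where $\psi$ is a bump function.
\begin{itemize}
\item The piece near the origin is bounded by $\int_{|\xi|\lesssim\rho}|\xi|^d\,d\xi\lesssim (C')^k\rho^{d+1}$.
\item For the outer piece we integrate by parts $N=\lceil d\rceil+1$ times, which gives a bound $|y|^{-N}\int_{|\xi|\gtrsim\rho}|\xi|^{d-N}\,d\xi\lesssim \rho^{d+1}$. Derivatives falling on the cutoff $\psi(\xi/\rho)$ produce the same bound.
\end{itemize}
The outcome is $|\partial^j\widecheck{m_k}(y)|\lesssim C^k|y|^{-(k(p+1)+j+1)}$ for $k\ge1$. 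When $k\ge1$ this is at most $C^k|y|^{-p-2-j}$, because $k(p+1)+1\ge p+2$.

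One caveat needs care. The number $N$ of integrations by parts grows with $k$, so the constants from $N$-fold differentiation must not grow faster than geometrically. To avoid this, we use only a fixed number of integrations by parts, namely $N=3+j$, which already suffices since $d\ge p+1+j$. We also use $\rho^{d+1}\le\rho^{p+2+j}$ for $\rho\le1$. The outer integral becomes $\int_{\rho}^2|\xi|^{d-N}\,d\xi$, which is $\lesssim \rho^{d-N+1}+1$ up to logarithms and is then multiplied by $|y|^{-N}$. In every case the result is bounded by $\rho^{p+2+j}$, again using $d\ge p+1+j$. The borderline case $d-N=-1$ gives a logarithm, which we handle by giving up a small amount of $d$ when $k\ge2$. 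When $k=1$ the exponent is exact, and $d-N=p-2\neq-1$.

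The case $k=0$ is separate: $m_0=\chi$ is Schwartz, so $\widecheck{m_0}$ decays faster than any power.

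The main obstacle is the bookkeeping of the $k$-dependence with the number of integrations by parts fixed. Concretely, we must verify that $|\partial_\xi^\ell(\chi\Phi_0^k\xi^j)|\le C^k|\xi|^{d-\ell}$ for $\ell\le 5$, uniformly in $k$ after enlarging $C$. This holds because $\partial^\ell$ of $|\xi|^{kp}\xi^k$ produces the factor $\prod_{i<\ell}(k(p+1)+j-i)\le (3k+3)^\ell\le 6^\ell 2^{k\ell}$.
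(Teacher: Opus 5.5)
Your proof is correct and follows essentially the paper's argument: you bound every symbol derivative by $C^k|\xi|^{p+1+j-\ell}$ (the worst exponent, attained at $k=1$), split at $|\xi|\sim|y|^{-1}$, estimate the inner piece absolutely, and integrate by parts $j+3$ times on the outer piece. The paper instead integrates by parts $j+2$ times on all of $\R$ and then once more past a sharp cutoff at $|y|^{-1}$, whereas you cut off smoothly first; that difference is cosmetic.

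Two harmless slips. The aside $k^\ell\le \ell!\,2^k$ is false (e.g.\ $k=8$, $\ell=5$), but your later bound $(3k+3)^\ell\le 6^\ell 2^{k\ell}$ is correct and is all you need. Also, the logarithmic case $d-N=-1$ never occurs, since $k(p+1)\neq 2$ for integer $k$ and $0<p<1$.
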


\begin{proof}
    When $k = 0$, the result is trivial since $m_0$ is smooth, so assume $k \geq 1$. It also suffices to assume that $|y| \geq 1$. Write
    \[a_{k,j}(\xi) = (i\xi)^j \chi(\xi)\Phi_0(\xi)^k\]
    so that $\partial^j \widecheck{m_k}(y) = \widecheck{a_{k,j}}(y)$. Note that
    \[|\partial^l a_{k,j}(\xi)| \lesssim C^k |\xi|^{p + 1 + j - l}, \quad 0 \leq l \leq j + 3.\]
    We integrate by parts $j + 2$ times to obtain
    \[|\widecheck{a_{k,j}}(y)| \lesssim |y|^{-j - 2}\left|\int e^{iy\xi}\partial^{j+2}a_{k,j}(\xi)\, d\xi\right|.\]
    We split the integral at $|y|^{-1}$. For the $|\xi| \leq |y|^{-1}$ contribution, we have
    \[\left|\int_{|\xi| \leq |y|^{-1}} e^{iy\xi}\partial^{j+2}a_{k,j}(\xi)\, d\xi\right| \lesssim C^k \int_{|\xi| \leq |y|^{-1}}|\xi|^{p - 1} \, d\xi \lesssim C^k |y|^{-p}.\]
    For the $|\xi| \geq |y|^{-1}$ contribution, we integrate by parts one more time to obtain
    \begin{align*}
        &\left|\int_{|\xi| \geq |y|^{-1}} e^{iy\xi}\partial^{j+2}a_{k,j}(\xi)\, d\xi\right| \\
        &\quad \lesssim |y|^{-1}\left(\int_{|\xi| \geq |y|^{-1}}|\partial^{j+3}a_{k,j}(\xi)|\, d\xi + \left|\partial^{j+2}a_{k,j}(|y|^{-1})\right| + \left|\partial^{j+2}a_{k,j}(-|y|^{-1})\right|\right) \\
        &\quad \lesssim C^k |y|^{-1}\int_{|\xi| \geq |y|^{-1}}|\xi|^{p - 2}\, d\xi \\
        &\quad \lesssim C^k |y|^{-p}.
    \end{align*}
    By combining the estimates, we obtain the result.
\end{proof}

\begin{proposition}\label{prop:appendix_convergence}
    Let $0 < \gamma' < \gamma < \infty$ and let $\{g_k\}$ be a uniformly bounded sequence of functions in $X_{\gamma}$ so that $g_k \to g$ in $W^{2, \infty}_{\textup{loc}}$. Then $g_k \to g$ in $X_{\gamma'}$. Similarly, if $\{G_k\}$ is a uniformly bounded sequence of functions in $Y_{\gamma}$ so that $G_k \to G$ in $W^{1, \infty}_{\textup{loc}}$, then $G_k \to G$ in $Y_{\gamma'}$.
\end{proposition}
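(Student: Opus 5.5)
The plan is a standard compactness/interpolation argument, which I will write out for $X_\gamma$; the statement for $Y_\gamma$ is identical, with one fewer derivative. Set $h_k = g_k - g$. First, $g \in X_\gamma$ with $\|g\|_{X_\gamma} \le \sup_k \|g_k\|_{X_\gamma} =: N$. Indeed, for each fixed $y$ (a.e. for the second derivative), the weighted pointwise quantities $\ang{y}^\gamma |g_k(y)|$, $\ang{y}^{\gamma}w_1(y)|g_k'(y)|$ and $\ang{y}^{\gamma}w_2(y)|g_k''(y)|$ pass to the limit under $W^{2,\infty}_{\textup{loc}}$ convergence. Here the weights are $w_1 = \ang{y}^{-1/p}$, $w_2 = \ang{y}^{-2/p}$ for $y\ge0$ and $w_1=\ang{y}$, $w_2=\ang{y}^2$ for $y\le 0$. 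So $\|h_k\|_{X_\gamma} \le 2N$ uniformly.

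Next, fix $\delta>0$ and split the supremum defining $\|h_k\|_{X_{\gamma'}}$ into the regions $|y| \le R$ and $|y| > R$. On $|y|>R$, each weighted term in the $X_{\gamma'}$ norm equals $\ang{y}^{\gamma'-\gamma}$ times the corresponding term in the $X_\gamma$ norm. Since the derivative weights $w_1,w_2$ are the same in both norms, this gives
\[\sup_{|y|>R}(\cdots) \le \ang{R}^{\gamma'-\gamma}\,\|h_k\|_{X_\gamma} \le 2N\ang{R}^{\gamma'-\gamma}.\]
Because $\gamma'<\gamma$, we can choose $R$ so large that this is less than $\delta$, uniformly in $k$.

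On the compact set $[-R,R]$, all the weights $\ang{y}^{\gamma'}$, $w_1$, $w_2$ are bounded by a constant $C_R$, so that part of the norm is at most $C_R\|h_k\|_{W^{2,\infty}([-R,R])}$. This tends to $0$ by hypothesis, so $\limsup_k \|h_k\|_{X_{\gamma'}} \le \delta$. Since $\delta$ was arbitrary, $g_k\to g$ in $X_{\gamma'}$. The $Y_\gamma$ case is the same argument with only the $G$ and $G'$ terms.

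The only point requiring any care is the first step, passing the bound to the limit for the second-derivative term. The $X_\gamma$ norm involves only weighted sup norms of $g,g',g''$, and $W^{2,\infty}_{\textup{loc}}$ convergence gives uniform convergence of these on compacts (essential sup for $g''$). Hence the bound passes to the limit pointwise almost everywhere, and no real obstacle arises.
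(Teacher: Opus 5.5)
Your proof is correct and follows the same route as the paper. You split at $|y|=R$: on the compact part local $W^{2,\infty}$ convergence gives smallness, and on $|y|>R$ the factor $\ang{R}^{\gamma'-\gamma}$ times the uniform $X_\gamma$ bound is small. Your first step, showing $\|g\|_{X_\gamma}\le\sup_k\|g_k\|_{X_\gamma}$ via a.e. pointwise limits, makes explicit a bound the paper uses tacitly when it writes $\|(g_k-g)\ang{y}^{\gamma}\|_{L^\infty}\le 2\sup_k\|g_k\|_{X_\gamma}$.
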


\begin{proof}
    We will prove the first statement; the proof of the second statement is the same. For $R > 0$, we estimate
    \begin{align*}
        \|(g_k - g)\ang{y}^{\gamma'}\|_{L^\infty} & \leq \|(g_k - g)\ang{y}^{\gamma'}\|_{L^\infty([-R, R])} + \|(g_k - g)\ang{y}^{\gamma'}\|_{L^\infty([-R, R]^c)} \\
        & \leq C_R\|g_k - g\|_{L^\infty([-R, R])} + R^{\gamma' - \gamma}\|(g_k - g)\ang{y}^{\gamma}\|_{L^\infty} \\
        & \leq C_R\|g_k - g\|_{L^\infty([-R, R])} + 2R^{\gamma' - \gamma}\sup_k\|g_k\|_{X_\gamma}.
    \end{align*}
    Letting $k \to \infty$ and then $R \to \infty$ gives the desired convergence of $g_k - g$. The convergence of the derivatives follows similarly.
\end{proof}

\section{Stationary and Nonstationary Phase Estimates}

\begin{lemma}\label{lem:van_der_Corput}
    Let $\chi, \Phi$ be smooth functions. If $\Phi'(\xi)$ is monotone and $|\Phi'(\xi)| \geq \lambda$ on the support of $\chi$, then
    \[\left|\int \chi(\xi)e^{i\Phi(\xi)}\,d\xi\right| \lesssim \lambda^{-1}(\|\chi\|_{L^\infty} + \|\chi'\|_{L^1}).\]
    If $|\Phi''(\xi)| \geq \lambda$ on the support of $\chi$, then
    \[\left|\int \chi(\xi)e^{i\Phi(\xi)}\,d\xi\right| \lesssim \lambda^{-1/2}(\|\chi\|_{L^\infty} + \|\chi'\|_{L^1}).\] 
\end{lemma}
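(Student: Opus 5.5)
The statement to prove is the classical van der Corput lemma with amplitude, and I would prove it by integration by parts for the first-derivative bound, then reduce the second-derivative bound to the first by splitting off a small neighbourhood of the point where $\Phi'$ is smallest.

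\textbf{First-derivative case.} I would first treat the constant amplitude $\chi\equiv 1$ on an interval $[a,b]$ contained in the region where $|\Phi'|\geq\lambda$. Write
\[
e^{i\Phi}=\frac{1}{i\Phi'}\,\partial_\xi e^{i\Phi}
\]
and integrate by parts. The boundary terms are bounded by $2/\lambda$. The remaining integral is bounded by
\[
\int_a^b\left|\partial_\xi\frac{1}{\Phi'}\right|d\xi .
\]
Because $\Phi'$ is monotone and does not change sign, $1/\Phi'$ is monotone, so this integral telescopes to $|1/\Phi'(b)-1/\Phi'(a)|\leq 1/\lambda$.

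For a general amplitude, I would avoid integrating by parts against $\chi$ directly, since $\chi$ could be nasty where $\Phi'$ is small. Instead, set $I(\xi)=\int_{-\infty}^{\xi}e^{i\Phi}$, taken over the support of $\chi$, and write $\int\chi e^{i\Phi}=-\int\chi' I$. The constant-amplitude estimate gives $|I|\lesssim\lambda^{-1}$ uniformly, so
\[
\Bigl|\int\chi e^{i\Phi}\Bigr|\lesssim\lambda^{-1}\|\chi'\|_{L^1}.
\]
If the support of $\chi$ is not an interval, I would apply the estimate on each component of the region $\{|\Phi'|\geq\lambda\}$ with the amplitude's boundary values. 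This is where the term $\|\chi\|_{L^\infty}$ enters. I would also check that monotonicity of $\Phi'$ means only boundedly many components can occur.

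\textbf{Second-derivative case.} Suppose $|\Phi''|\geq\lambda$ on the support of $\chi$. Then $\Phi'$ is strictly monotone there, so $|\Phi'|<\delta$ holds on a single interval $J$ of length at most $2\delta/\lambda$. On $J$, bound the integral trivially by $\|\chi\|_{L^\infty}\cdot 2\delta/\lambda$. On the complement, which has at most two components, $|\Phi'|\geq\delta$ with $\Phi'$ monotone, so the first part gives a bound of
\[
\delta^{-1}\bigl(\|\chi\|_{L^\infty}+\|\chi'\|_{L^1}\bigr).
\]
To make the cut harmless, I would either use sharp cutoffs and carry the endpoint values of $\chi$ into the $\|\chi\|_\infty$ term, or use the $I(\xi)$ trick above with primitives taken over each subinterval. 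Choosing $\delta=\lambda^{1/2}$ balances the two pieces and yields $\lambda^{-1/2}$.

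\textbf{Expected obstacle.} The only delicate point is the bookkeeping at the cut points and at the boundary of the support of $\chi$. The primitive formulation handles both cleanly, because the bound on $\sup|I|$ comes from the constant-amplitude estimate applied to each subinterval, and that estimate is uniform in the endpoints.
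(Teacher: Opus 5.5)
Your proposal is correct and is the classical proof of the van der Corput lemma and its amplitude corollary in \cite[Chapter~VIII]{Stein:harmonic-analysis}, which is what the paper cites in place of a proof: integration by parts with the bound $\int|(1/\Phi')'|\le\lambda^{-1}$ from monotonicity, the primitive $I(\xi)$ to bring in the amplitude and its endpoint value, and, for the second bound, removing the interval where $|\Phi'|<\lambda^{1/2}$, which has length $\lesssim\lambda^{-1/2}$.

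Your stated reason for not integrating by parts against $\chi$ directly does not hold, since $|\Phi'|\ge\lambda$ on all of $\operatorname{supp}\chi$; doing so gives $\lambda^{-1}\|\chi'\|_{L^1}+\|\chi\|_{L^\infty}\int|(1/\Phi')'|$, which is the most natural source of the $\|\chi\|_{L^\infty}$ term, though this does not affect the validity of your argument.
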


\begin{proof}
    The proof can be found in \cite[Chapter~VIII]{Stein:harmonic-analysis}.
\end{proof}

\begin{lemma}\label{lem:appendix_stationary}
    Let $0 < p < 1$ and let $\Phi_0(\xi) = \frac{\xi|\xi|^{p}}{p+1}$. Then we have that
    \[\sup_{\eta > 0}\left|\int_{\eta}^{\infty}e^{i(\Phi_0(\xi) - y\xi)}\,d\xi\right| \lesssim \begin{cases}
        \ang{y}^{-1}, & y \leq 0, \\
        \ang{y}^{\frac{1 - p}{2p}}, & y \geq 0.
    \end{cases}\]
\end{lemma}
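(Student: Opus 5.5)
The plan is to treat the two sign regimes of $y$ separately, after reducing by scaling to the model integral $I(\eta,y)=\int_\eta^\infty e^{i(\Phi_0(\xi)-y\xi)}\,d\xi$. This integral is only conditionally convergent at infinity, since the phase derivative $\xi^p-y$ grows like $\xi^p$ while the amplitude is $1$. So we interpret it as an improper limit $\lim_{N\to\infty}\int_\eta^N$ and check that this limit exists uniformly. Its existence follows from one integration by parts on $[\max(\eta,\xi_0),N]$, with $\xi_0$ large enough that $\xi^p-y\geq \tfrac12\xi^p$. The boundary term there is $O(\xi^{-p})\to0$, and the remaining integrand is $O(\xi^{-1})$. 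That alone does not give absolute convergence, so we integrate by parts a second time: after $k$ integrations the integrand is $O(\xi^{-k p-1}\cdot\xi^{p})$-type, and this is integrable once $kp>p$, i.e.\ $k\geq2$. Alternatively, the first part of \cref{lem:van_der_Corput} on dyadic blocks, with monotone phase derivative, gives the Cauchy property directly.

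\textbf{Case $y\leq0$.} Here $\Phi'(\xi)=\xi^p+|y|$ is increasing and positive, with $\Phi'\geq|y|$ on $(\eta,\infty)$. We split into $\xi\leq1$ and $\xi\geq1$ to handle the bound uniformly in $\eta$.
\begin{itemize}
\item For $|y|\leq 1$ we only need an $O(1)$ bound. The piece $\int_\eta^{\max(\eta,1)}$ is trivially at most $1$. On $[\max(\eta,1),N]$ we have $\Phi'\geq1$ and $\Phi'$ is monotone, so \cref{lem:van_der_Corput} (first-derivative test with $\chi=\mathbbm 1_{[a,N]}$, i.e.\ the classical version for intervals) gives $O(1)$ uniformly in $N$.
\item For $|y|\geq1$ we apply the first-derivative test on the whole interval $[\eta,N]$ with $\lambda=|y|$. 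This yields $\lesssim|y|^{-1}$, uniformly in $\eta$ and $N$.
\end{itemize}
Together these give $\ang{y}^{-1}$.

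\textbf{Case $y>0$.} The phase is stationary at $\xi_*=y^{1/p}$, with $\Phi''(\xi)=p\xi^{p-1}$ decreasing in $\xi$. For $y\leq1$ the same argument as the first bullet above gives $O(1)$, so assume $y\geq1$. We split $(\eta,N)$ into three pieces, intersecting each with $(\eta,N)$ so that all bounds stay uniform in $\eta$:
\begin{itemize}
\item $A=(0,\xi_*/2)$. Here $|\Phi'|=y-\xi^p\geq(1-2^{-p})y$ and $\Phi'$ is monotone, so the first-derivative test gives $\lesssim y^{-1}$.
\item $B=(\xi_*/2,2\xi_*)$. Here $\Phi''\approx y^{(p-1)/p}$, so the second-derivative test of \cref{lem:van_der_Corput} (interval version) gives $\lesssim y^{(1-p)/(2p)}$.
\item $C=(2\xi_*,N)$. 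Here $\Phi'\geq(2^p-1)y$ and $\Phi'$ is monotone, giving $\lesssim y^{-1}$.
\end{itemize}
Summing the three pieces yields the bound $\ang{y}^{(1-p)/(2p)}$.

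The main obstacle, mild as it is, is using \cref{lem:van_der_Corput} with a sharp indicator cutoff rather than a smooth $\chi$. This is the standard van der Corput lemma for intervals, and it follows from the stated version since $\|\chi'\|_{L^1}$ is replaced by the total variation, which equals $2$. Alternatively one could mollify the endpoints and absorb the $O(\lambda^{-1})$ and $O(\lambda^{-1/2})$ errors. The other point needing care is the uniformity in $N$ used to justify the improper integral.
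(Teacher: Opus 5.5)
Your proof is correct and is essentially the paper's argument. It uses van der Corput's first-derivative test wherever $\Phi'=\xi^p-y$ is monotone and bounded below (all of $y\le0$, and away from $\xi_*=y^{1/p}$ when $y>0$), the second-derivative test near $\xi_*$ where $\Phi''\approx y^{(p-1)/p}$, and a trivial length bound on a bounded interval when $|y|\le1$.

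There are two small slips, both easily fixed:
\begin{itemize}
\item For $0<y\le1$ the cut must be at $\xi=2$ rather than $\xi=1$, as in the paper. At $\xi=1$ you have $\Phi'(1)=1-y$, which degenerates as $y\to1$, so the first-derivative test gets no uniform lower bound there.
\item In your convergence remark (a point the paper leaves implicit), a single integration by parts already leaves the absolutely integrable term $\Phi''/(\Phi')^2=O(\xi^{-1-p})$, not $O(\xi^{-1})$. So no second integration by parts is needed.
\end{itemize}
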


\begin{proof}
    We will use \cref{lem:van_der_Corput}. First assume that $|y| \geq 1$. If $y \leq 0$, then for $\xi \geq 0$, the derivative 
    \[\partial_\xi(\Phi_0(\xi) - y\xi) = \xi^{p} - y\]
    is bounded below by $|y|$ and is increasing. Therefore by \cref{lem:van_der_Corput}, the desired estimate follows for $y \leq 0$. For $y \geq 0$, the phase is stationary at $\xi = y^{1/p}$. We decompose the integral into the regions
    \[|\xi - y^{1/p}| < \frac{1}{2} y^{1/p}, \quad |\xi - y^{1/p}| \geq \frac{1}{2} y^{1/p}.\]
    In the first region, which contains the stationary point, we have that 
    \[\partial^2_\xi(\Phi_0(\xi) - y\xi) = p\xi^{p-1} \approx y^{(p-1)/p}.\]
    Therefore by applying \cref{lem:van_der_Corput} with $k = 2$, we have 
    \[\left|\int_{|\xi - y^{1/p}| < \frac{1}{2} y^{1/p}} e^{i(\Phi_0(\xi) - y\xi)}\,d\xi\right| \lesssim \ang{y}^{-(p-1)/2p}.\]
    Next, the region $|\xi - y^{1/p}| \geq \frac{1}{2} y^{1/p}$ can be split into the two regions $(0, \frac{1}{2} y^{1/p})$ and $(\frac{3}{2} y^{1/p}, \infty)$. In both regions, $\xi^p - y$ is monotone and $|\xi^p - y| \gtrsim y$. Therefore by applying \cref{lem:van_der_Corput} with $k = 1$, we have
    \[\left|\int_{|\xi - y^{1/p}| \geq \frac{1}{2} y^{1/p}} e^{i(\Phi_0(\xi) - y\xi)}\,d\xi\right| \lesssim \ang{y}^{-1} \lesssim \ang{y}^{-(p-1)/2p}.\]
    Combining the estimates for the two regions, we have that the supremum is bounded by $C\ang{y}^{-(p-1)/2p}$ for $y \geq 0$, as desired. \par Finally, for $|y| \leq 1$, it suffices to bound the supremum over $\eta \geq 2$. For every $\xi \geq \eta$, we then have that 
    \[|\xi^{p} - y| \gtrsim \xi^p \gtrsim 1\]
    and $\xi^p - y$ is increasing. Applying \cref{lem:van_der_Corput} with $k = 1$ gives that the supremum is bounded by a constant independent of $y \leq 1$. The result follows.
\end{proof}

\begin{lemma}\label{lem:appendix_homogeneous_stationary_phase}
    Let $0<p<1$, $m\in\R$, and suppose $a\in C^\infty((0,\infty))$ vanishes for $\xi\leq1$ and satisfies
    \[
        |\partial_\xi^k a(\xi)|\leq C_k\xi^{m-k},
        \qquad \xi\geq1,\quad k\geq0.
    \]
    Set
    \[
        \lambda_0(y)=y^{1/p},\qquad
        \Psi_0(y)=\frac{p}{p+1}y^{1+1/p}.
    \]
    Then, for sufficiently large $y$, the oscillatory integral satisfies
    \begin{equation}\label{eqn:appendix_homogeneous_stationary_expansion}
        \int_0^\infty a(\xi)e^{i(y\xi-\xi^{p+1}/(p+1))}\,d\xi
        =
        e^{i(\Psi_0(y)-\pi/4)}
        \left(\frac{2\pi}{p\lambda_0(y)^{p-1}}\right)^{1/2}
        \bigl[a(\lambda_0(y))+r(y)\bigr],
    \end{equation}
    where $r$ is smooth and, for every $j\geq0$,
    \begin{equation}\label{eqn:appendix_homogeneous_stationary_remainder}
        |\partial_y^j r(y)|
        \lesssim_j \lambda_0(y)^{m-(p+1)}y^{-j}.
    \end{equation}
    The implicit constants depend on $p,m,j$ and finitely many of the amplitude bounds $C_k$.
\end{lemma}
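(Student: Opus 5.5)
The plan is a standard one-dimensional stationary phase argument with an explicitly tracked remainder. First, substitute $\xi=\lambda_0(y)\,s$ with $\lambda_0=y^{1/p}$. The phase $y\xi-\xi^{p+1}/(p+1)$ then becomes $\lambda_0^{p+1}\,\phi(s)$, where
\[
\phi(s)=s-\frac{s^{p+1}}{p+1}.
\]
This function has a unique nondegenerate critical point at $s=1$, with $\phi(1)=\frac{p}{p+1}$ and $\phi''(1)=-p$. The large parameter is $\Lambda:=\lambda_0^{p+1}=y^{1+1/p}$, so the integral equals $\lambda_0\int_0^\infty a(\lambda_0 s)e^{i\Lambda\phi(s)}\,ds$. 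The amplitude $b_y(s):=a(\lambda_0 s)\lambda_0^{-m}$ satisfies symbol bounds $|\partial_s^k b_y(s)|\lesssim_k s^{m-k}$ for $s\ge \lambda_0^{-1}$, uniformly in $y$, and vanishes for $s\le\lambda_0^{-1}$.

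Next, split with a smooth cutoff $\rho(s)$ supported in $[1/2,2]$ and equal to $1$ near $s=1$.

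On the support of $1-\rho$, the phase is nonstationary, since $|\phi'(s)|=|1-s^p|\gtrsim 1+s^p$. We integrate by parts $N$ times using $L=\frac{1}{i\Lambda\phi'}\partial_s$. Each step gains $\Lambda^{-1}$, and the adjoint applied to the amplitude costs at most $s^{-1}$ near $s\sim\lambda_0^{-1}$. That costs a factor $\lambda_0$, which is fine because $\Lambda^{-1}\lambda_0=\lambda_0^{-p}$. We also get decay $s^{m-k-Np}$ at infinity. There are no boundary terms, since $b_y$ vanishes near $0$. This piece is therefore $O(\lambda_0^{1+m}\lambda_0^{-Np})$. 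That is far smaller than the main term's size $\lambda_0\Lambda^{-1/2}\lambda_0^{m}\lambda_0^{-(p+1)}$, after dividing out the prefactor $(2\pi/(p\lambda_0^{p-1}))^{1/2}=\lambda_0\,(2\pi/(p\Lambda))^{1/2}$.

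On the support of $\rho$, apply the standard stationary phase expansion with remainder, for instance via the Morse lemma change of variables $\phi(s)=\phi(1)-\tfrac p2 u^2$ followed by the Fourier formula for Gaussians. This gives
\[
e^{i\Lambda\phi(1)-i\pi/4}\Big(\tfrac{2\pi}{p\Lambda}\Big)^{1/2}\big[b_y(1)+O(\Lambda^{-1}\|b_y\|_{C^{3}([1/2,2])})\big].
\]
Multiplying back by $\lambda_0^{m+1}$ and noting $\Lambda\phi(1)=\Psi_0(y)$, we identify the leading term $a(\lambda_0)$ and the remainder
\[
r=\lambda_0^{m}\,O(\Lambda^{-1})=O\big(\lambda_0^{m-(p+1)}\big).
\]
This is \eqref{eqn:appendix_homogeneous_stationary_remainder} for $j=0$.

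The main obstacle is the $y$-derivative bounds on $r$. Differentiating directly in $y$ brings down factors of $\xi$, which is the wrong approach. Instead, define $r$ by \eqref{eqn:appendix_homogeneous_stationary_expansion} and note that everything is expressed through the rescaled integral, which is a smooth function of the two parameters $\Lambda$ and $\lambda_0$. The plan is to show that $F(\Lambda,\lambda_0):=\int b_y(s)e^{i\Lambda(\phi(s)-\phi(1))}\,ds$ admits an expansion $\Lambda^{-1/2}\sum_k c_k(\lambda_0)\Lambda^{-k}$ plus a remainder, with symbol-type derivative bounds $\Lambda^{-1/2-K}$ in $\Lambda$. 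Here $\partial_\Lambda$ inserts $i(\phi-\phi(1))$, which vanishes to second order at $s=1$; integrating by parts twice shows it costs exactly $\Lambda^{-1}$. For $\lambda_0\partial_{\lambda_0}$ acting on $b_y$, note that $\lambda_0\partial_{\lambda_0}b_y=(s\partial_s-m)b_y$ is again a symbol of the same order, so it costs $O(1)$. Since $y\partial_y$ is a linear combination of $\Lambda\partial_\Lambda$ and $\lambda_0\partial_{\lambda_0}$, each $y$-derivative costs $y^{-1}$, giving $|\partial_y^j r|\lesssim \lambda_0^{m-(p+1)}y^{-j}$. The same logic applied to the nonstationary piece, whose bounds are stable under $\Lambda\partial_\Lambda$ and $\lambda_0\partial_{\lambda_0}$, completes the proof.
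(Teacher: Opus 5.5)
Your proposal is correct and follows the paper's route. The paper's proof is only the remark that the lemma follows from the stationary phase expansion in Stein, Ch.~VIII, after rescaling and localizing near the stationary point, which is exactly your substitution $\xi=\lambda_0 s$, cutoff near $s=1$, and nonstationary integration by parts elsewhere. Your two-parameter treatment of the $y$-derivatives — writing $y\partial_y$ as a combination of $\Lambda\partial_\Lambda$ and $\lambda_0\partial_{\lambda_0}$, and noting that $\lambda_0\partial_{\lambda_0}b_y=(s\partial_s-m)b_y$ stays in the same symbol class — makes explicit what the citation leaves implicit: the $y$-dependence of the rescaled amplitude does not spoil the remainder bounds.
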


\begin{proof}
This follows from the stationary phase expansion in
\cite[Chapter~VIII]{Stein:harmonic-analysis}, after rescaling and localization
near the stationary point.
\end{proof}

\begin{lemma}\label{lem:appendix_triangle}
    Let $I = (x_0 - L,x_0 + L )$ and suppose $\Phi$ is a function on $I$ satisfying
    \[\Phi'(x_0) = 0, \quad |\Phi''(t)| \approx \kappa, \quad |\Phi'''(t)| \lesssim \kappa L^{-1}.\]
    Suppose $a \in C^2_c(I)$ satisfies $|\partial^j a(t)| \lesssim A L^{-j}$ for $j = 0, 1, 2$. Write
    \[E(t) = \int_{t}^{\infty}e^{i\Psi(s)} \, ds\]
    and suppose that $|E(t)| \leq M$ for all $t \in I$. Finally, suppose that $\Phi(t) + \Psi(t) = \omega t$. Then we have the bound 
    \[\left|\int_{I}a(t)e^{i\Phi(t)}E(t) \, dt\right| \lesssim A(M\kappa^{-1/2} + \kappa^{-1}).\]
\end{lemma}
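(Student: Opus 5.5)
The approach is to freeze the stationary-phase structure in the $t$ variable and integrate by parts in $t$ against the primitive $E$. Writing $\Psi = \omega t - \Phi$, we have $E'(t) = -e^{i\Psi(t)} = -e^{i\omega t}e^{-i\Phi(t)}$. This gives
\[
a e^{i\Phi}E = a\,e^{i\omega t}\bigl(e^{i\Phi}/e^{i\omega t}\bigr)E = a\,e^{i\omega t}\,e^{-i\Psi}E ,
\]
so the integrand is the product of a slowly varying amplitude, a linear phase, and $E$. I would split the integral with a smooth cutoff into the region $|t - x_0| \le \kappa^{-1/2}$ near the stationary point and the complementary region where $|\Phi'(t)| \approx \kappa |t - x_0|$. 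In the near region I bound trivially, using $|E| \le M$ and $|a| \le A$, to get $A M \kappa^{-1/2}$. This is the first term of the claimed bound, and it is the genuinely stationary contribution.

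In the far region I integrate by parts using $e^{i\Phi} = \partial_t(e^{i\Phi})/(i\Phi')$. The derivative then falls either on $a/\Phi'$ (times the cutoff) or on $E$. The terms where the derivative hits $a/\Phi'$ or the cutoff are handled as in van der Corput (\cref{lem:van_der_Corput}). Using $|E| \le M$, the bounds $|\partial a| \lesssim A L^{-1}$ and $|\Phi''| \approx \kappa$, and the estimate
\[
\int_{|t - x_0| > \kappa^{-1/2}} \frac{dt}{\kappa |t - x_0|^2} \lesssim \kappa^{-1/2},
\]
these terms are bounded by $A M \kappa^{-1/2}$. The $L^{-1}$ contributions give $A M L^{-1}\kappa^{-1}\log(\cdot)$; to avoid the logarithm I instead use a dyadic decomposition in $|t - x_0|$ together with a second integration by parts, which makes each dyadic piece summable. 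I may also need to assume $L \gtrsim \kappa^{-1/2}$, since otherwise the near region is all of $I$ and the trivial bound $A M L \le A M \kappa^{-1/2}$ already suffices.

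The key term is the one where the derivative hits $E$:
\[
\int \frac{a}{i\Phi'}\, e^{i\Phi} E' \,dt = -\int \frac{a}{i\Phi'}\, e^{i(\Phi+\Psi)} \,dt = -\int \frac{a(t)}{i\Phi'(t)}\, e^{i\omega t}\,dt .
\]
The phase is now linear, and this is where the hypothesis $\Phi + \Psi = \omega t$ is used. This term has no oscillation from $\Phi$ left, so I would bound it by the absolute integral of $A/(\kappa|t - x_0|)$ over the far region. That gives $A\kappa^{-1}\log(L\kappa^{1/2})$, which is the main obstacle: a logarithm appears. To remove it, I would exploit the antisymmetry of $1/\Phi'(t) \approx 1/(\Phi''(x_0)(t - x_0))$ about $x_0$. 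The principal-value pairing of $t$ and $2x_0 - t$ cancels the leading part. The error is controlled by $|\Phi'''| \lesssim \kappa L^{-1}$, which gives $|1/\Phi'(t) + 1/\Phi'(2x_0 - t)| \lesssim \kappa^{-1}L^{-1}$, and by $|a(t) - a(2x_0 - t)| \lesssim A|t - x_0|/L$. The residual oscillation of $e^{i\omega t}$ over the pair is bounded trivially. With these two estimates the symmetrized integrand is $O(A\kappa^{-1}L^{-1})$ over a set of length $\lesssim L$, hence $O(A\kappa^{-1})$. Together with the near-region bound this gives $A(M\kappa^{-1/2} + \kappa^{-1})$.
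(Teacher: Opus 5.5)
Your near-region bound and your handling of the terms where the derivative lands on $a/\Phi'$ or on the cutoff are fine. No dyadic decomposition is needed there: on $I$ one has $1/(L|t-x_0|)\le (t-x_0)^{-2}$, so $|(a/\Phi')'|\lesssim A\kappa^{-1}(t-x_0)^{-2}$ directly.

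\textbf{The gap.} The problem is the key term $\int g(t)e^{i\omega t}\,dt$ with $g=a/\Phi'$. Your two estimates correctly show that $g$ is odd about $x_0$ up to an error $O(A\kappa^{-1}L^{-1})$. But pairing $t=x_0+s$ with $x_0-s$ gives
\[
g(x_0+s)e^{i\omega(x_0+s)}+g(x_0-s)e^{i\omega(x_0-s)}
=e^{i\omega x_0}\Bigl[2i\,g(x_0+s)\sin(\omega s)+\bigl(g(x_0+s)+g(x_0-s)\bigr)e^{-i\omega s}\Bigr],
\]
and approximate oddness only kills the second piece. The first piece has size $A\kappa^{-1}s^{-1}|\sin(\omega s)|$, not $O(A\kappa^{-1}L^{-1})$. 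Bounding this ``residual oscillation'' trivially gives
\[
A\kappa^{-1}\int_{\kappa^{-1/2}}^{L}s^{-1}\min\{1,|\omega|s\}\,ds ,
\]
which is $\approx A\kappa^{-1}\log(L\kappa^{1/2})$ as soon as $|\omega|\gtrsim\kappa^{1/2}$. That is exactly the logarithm you set out to remove. This regime is not degenerate: in \cref{prop:linear_stationary} one has $\omega=y-z$ and $L\approx y^{1/p}$, so $|\omega|L$ is unbounded.

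\textbf{The missing ingredient.} What closes the estimate is the uniform boundedness of the Dirichlet (sine) integral, which is how the paper concludes. Set $h(t)=(t-x_0)a(t)/\Phi'(t)$. The hypotheses on $\Phi''$, $\Phi'''$ and $a'$ give $|h'|\lesssim A\kappa^{-1}L^{-1}$, hence
\[
\frac{a(t)}{\Phi'(t)}=\frac{a(x_0)}{\Phi''(x_0)(t-x_0)}+O\!\left(\frac{A}{\kappa L}\right)\quad\text{on } I.
\]
The error integrates to $O(A\kappa^{-1})$ over $I$. The main term contributes
\[
\frac{2i\,a(x_0)}{\Phi''(x_0)}\,e^{i\omega x_0}\int_{\kappa^{-1/2}}^{L}\frac{\sin(\omega s)}{s}\,ds ,
\]
which is $\lesssim A\kappa^{-1}$ uniformly in $\omega$ and in the endpoints, since $\int_0^x\frac{\sin u}{u}\,du$ is bounded. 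If you keep your smooth cutoff, integrate by parts against this bounded primitive.

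Alternatively, you can keep your symmetrization and treat $2i\,g(x_0+s)\sin(\omega s)$ directly. Split at $s=|\omega|^{-1}$: below it use $|\sin(\omega s)|\le|\omega|s$, and above it integrate by parts in $\sin(\omega s)$ using $|g'|\lesssim A\kappa^{-1}s^{-2}$. Either way, some integration by parts in the linear phase $e^{i\omega t}$ is indispensable; a pointwise bound on the paired integrand cannot close the estimate.
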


\begin{proof}
    By a translation, we may assume that $x_0 = 0$. If $L \leq \kappa^{-1/2}$, we can simply bring the absolute value inside the integral and bound
    \[\left|\int_{I}a(t)e^{i\Phi(t)}E(t) \, dt\right| \lesssim LMA \leq AM \kappa^{-1/2}.\]
    Now assume $L \geq \kappa^{-1/2}$. Since $\Phi''$ has constant sign, we have $|\Phi'(t)|\approx\kappa|t|$, and hence
    \[
        \left|\frac{a(t)}{\Phi'(t)}\right|\lesssim\frac{A}{\kappa|t|},
        \qquad
        \left|\left(\frac{a(t)}{\Phi'(t)}\right)'\right|
        \lesssim\frac{A}{\kappa L|t|}+\frac{A}{\kappa t^2}
        \lesssim\frac{A}{\kappa t^2}.
    \]
    The contribution from $|t|\leq\kappa^{-1/2}$ is bounded by $AM\kappa^{-1/2}$:
    \[\left|\int_{|t| \leq \kappa^{-1/2}}a(t)e^{i\Phi(t)}E(t) \, dt\right| \lesssim AM\kappa^{-1/2}.\]
    On the remaining two intervals $(-L, -\kappa^{-1/2})$ and $(\kappa^{-1/2}, L)$ we integrate by parts and use $E'=-e^{i\Psi}$ and $\Phi+\Psi=\omega t$ to get
    \begin{align*}
        \int_{\kappa^{-1/2}<|t|<L}a(t)e^{i\Phi(t)}E(t)\,dt 
        &=\int_{\kappa^{-1/2}<|t|<L}
        \frac{a(t)E(t)}{i\Phi'(t)}\,\partial_t\bigl(e^{i\Phi(t)}\bigr)\,dt \\
        &=\left[\frac{a(t)E(t)e^{i\Phi(t)}}{i\Phi'(t)}\right]_{-L}^{-\kappa^{-1/2}}
        +\left[\frac{a(t)E(t)e^{i\Phi(t)}}{i\Phi'(t)}\right]_{\kappa^{-1/2}}^L \\
        &-\frac1i\int_{\kappa^{-1/2}<|t|<L}
        \left(\left(\frac{a(t)}{\Phi'(t)}\right)'E(t)
        +\frac{a(t)E'(t)}{\Phi'(t)}\right)e^{i\Phi(t)}\,dt \\
        &=\frac{a(-\kappa^{-1/2})E(-\kappa^{-1/2})e^{i\Phi(-\kappa^{-1/2})}}{i\Phi'(-\kappa^{-1/2})}
        -\frac{a(\kappa^{-1/2})E(\kappa^{-1/2})e^{i\Phi(\kappa^{-1/2})}}{i\Phi'(\kappa^{-1/2})} \\
        &-\frac1i\int_{\kappa^{-1/2}<|t|<L}
        \left(\frac{a(t)}{\Phi'(t)}\right)'E(t)e^{i\Phi(t)}\,dt \\
        &+\frac1i\int_{\kappa^{-1/2}<|t|<L}
        \frac{a(t)}{\Phi'(t)}e^{i\omega t}\,dt.
    \end{align*}
    The boundary terms at $\pm L$ vanish because $a$ is compactly supported in $I$. By the bounds above and $|E|\leq M$, the boundary terms at $\pm\kappa^{-1/2}$ and the integral containing $(a(t)/\Phi'(t))'$ are $O(AM\kappa^{-1/2})$.
    To estimate the remaining integral, observe that
    \[
        \frac{\Phi'(t)}t=\int_0^1\Phi''(st)\,ds,
        \qquad
        \partial_t\!\left(\frac{\Phi'(t)}t\right)
        =\int_0^1s\Phi'''(st)\,ds.
    \]
    By the assumptions on $\Phi''$ and $\Phi'''$, we see that $h(t) := ta(t)/\Phi'(t)$ extends continuously to $t=0$ with value $a(0)/\Phi''(0)$, and its derivative is bounded by $A/(\kappa L)$. It follows that
    \[
        \left|\frac{a(t)}{\Phi'(t)}-\frac{a(0)}{\Phi''(0)t}\right| = \left|\frac{h(t) - h(0)}{t}\right|
        \lesssim\frac{A}{\kappa L}.
    \]
    It follows that we can estimate
    \begin{align*}
        \left|\int_{\kappa^{-1/2}<|t|<L}\frac{a(t)}{\Phi'(t)}e^{i\omega t}\,dt\right|
        &\lesssim\frac{A}{\kappa L}\int_{\kappa^{-1/2}<|t|<L}dt
        +\left|\frac{a(0)}{\Phi''(0)}\int_{\kappa^{-1/2}<|t|<L}\frac{e^{i\omega t}}t\,dt\right|\\
        &\lesssim\frac{A}{\kappa}+\frac{A}{\kappa}\left|\int_{\kappa^{-1/2}<t<L}\frac{\sin(\omega t)}{ t}\,dt\right| \\
        & \lesssim \frac{A}{\kappa}
    \end{align*}
    as wanted.
\end{proof}

\begin{lemma}\label{lem:appendix_oscillatory_multiplier}
    Let $0<p<1$, $A>0$, and $Y, \Lambda\geq1$. Let $a\in C_c^\infty((Y/4,4Y))$ be complex-valued and $\Psi\in C^\infty((Y/4,4Y))$ real-valued. Write $\lambda=\Psi'$ and assume that, on this interval,
    \[
        \lambda(y)\approx\Lambda,\qquad
        |\partial_y^k\lambda(y)|\lesssim_k\Lambda Y^{-k},\qquad
        |\partial_y^ka(y)|\lesssim_k A Y^{-k},
        \qquad k\geq0.
    \]
    Set $m(\xi)=\xi^p$ for $\xi>0$ and, for $z$ near zero,
    \[
        \Theta_y(z)=\Psi(y+z)-\Psi(y)-\lambda(y)z.
    \]
    Then, for every integer $N\geq1$ and $Y\leq y\leq2Y$,
    \begin{equation}\label{eqn:appendix_oscillatory_multiplier_expansion}
        |D|^p(ae^{i\Psi})(y)
        =e^{i\Psi(y)}\biggl[\sum_{k=0}^{2N-2}B_k(y)
        +\mc{R}_{\textup{mult},N}(y)\biggr].
    \end{equation}
    Here the coefficients are
    \begin{equation}\label{eqn:appendix_oscillatory_multiplier_coefficients}
        B_k(y)=\frac{(-i)^k}{k!}m^{(k)}(\lambda(y))
        \left.\partial_z^k
        \bigl[a(y+z)e^{i\Theta_y(z)}\bigr]\right|_{z=0},
        \qquad k\geq0,
    \end{equation}
    and the amplitude remainder satisfies, for $j=0,1$,
    \begin{equation}\label{eqn:appendix_oscillatory_multiplier_amplitude_remainder}
        |\partial_y^j\mc{R}_{\textup{mult},N}(y)|\lesssim_{N,j} A Y^{-N-j}\Lambda^{p-N},
        \qquad Y\leq y\leq2Y.
    \end{equation}
    In particular, the first two asymptotic orders are
    \begin{equation}\label{eqn:appendix_oscillatory_multiplier_two_term_expansion}
        |D|^p(ae^{i\Psi})(y)
        =e^{i\Psi(y)}\biggl[\lambda^pa-ip\lambda^{p-1}a'
        -\frac{ip(p-1)}2\lambda^{p-2}\lambda'a
        +\widetilde{\mc{R}}_{\textup{mult},2}(y)\biggr],
    \end{equation}
    where $\widetilde{\mc{R}}_{\textup{mult},2}
    =\mc{R}_{\textup{mult},2}-\frac{p(p-1)}2\lambda^{p-2}a''$ satisfies
    \[
        |\partial_y^j\widetilde{\mc{R}}_{\textup{mult},2}(y)|
        \lesssim_j AY^{-2-j}\Lambda^{p-2},\qquad j=0,1.
    \]
    Moreover, let $(a_\nu,\Psi_\nu)$, $\nu=0,1$, satisfy these hypotheses with the same $Y,\Lambda,A$ and uniform constants. Suppose that, for some $\delta\geq0$,
    \[
        |\partial_y^k(a_1-a_0)|\lesssim_k\delta A Y^{-k},
        \qquad
        |\partial_y^k(\Psi_1-\Psi_0)|\lesssim_k\delta Y^{-k},
        \qquad k\geq0,
    \]
    on $(Y/4,4Y)$. If $\mc{R}_{\textup{mult},N,\nu}$ denotes the amplitude remainder in \eqref{eqn:appendix_oscillatory_multiplier_expansion} for $(a_\nu,\Psi_\nu)$, then, for every integer $j\geq0$,
    \begin{equation}\label{eqn:appendix_oscillatory_multiplier_amplitude_difference}
        \begin{aligned}
            |\partial_y^j(\mc{R}_{\textup{mult},N,1}-\mc{R}_{\textup{mult},N,0})(y)|
            &\lesssim_{N,j}\min\{1,\delta\}\,A Y^{-N-j}\Lambda^{p-N},\\
            &\hspace{2em}Y\leq y\leq2Y.
        \end{aligned}
    \end{equation}
    The implicit constants are independent of $\delta$ and depend only on $p,N,j$.
\end{lemma}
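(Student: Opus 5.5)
The plan is to localize the multiplier and then Taylor expand the symbol around the local frequency $\lambda(y)$. Since $a$ is supported in $(Y/4,4Y)$ and has frequency $\approx\Lambda$, I write
\[
|D|^p(ae^{i\Psi})(y)=\frac1{2\pi}\int m(|\xi|)\widehat{ae^{i\Psi}}(\xi)e^{iy\xi}\,d\xi .
\]
First I split the symbol as $m=\rho\,m+(1-\rho)\,m$, where $\rho$ is a smooth cutoff equal to $1$ on $[\Lambda/4,4\Lambda]$ and supported in $[\Lambda/8,8\Lambda]$. The piece with $(1-\rho)m$ is nonstationary. Here $\xi$ is far from $\Psi'=\lambda\approx\Lambda$, so integrating by parts in the physical variable gains $(|\xi-\lambda|Y)^{-1}$ per step. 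Enough integrations by parts (about $N+2$) make this piece $O(AY^{-M}\Lambda^{-M})$ for any $M$, which lies well inside the remainder bound. Differentiating in $y$ costs at most $\Lambda$, and this is absorbed in the same way.

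For the localized piece, $\rho m$ is smooth with $|\partial^k(\rho m)(\xi)|\lesssim\Lambda^{p-k}$. I Taylor expand it around $\xi=\lambda(y)$ to order $2N-2$:
\[
\rho m(\xi)=\sum_{k\le 2N-2}\frac{m^{(k)}(\lambda)}{k!}(\xi-\lambda)^k+\frac{(\xi-\lambda)^{2N-1}}{(2N-2)!}\int_0^1(1-s)^{2N-2}(\rho m)^{(2N-1)}(\lambda+s(\xi-\lambda))\,ds .
\]
Writing $ae^{i\Psi}(y+z)=e^{i\Psi(y)}e^{i\lambda z}a(y+z)e^{i\Theta_y(z)}$, each factor $(\xi-\lambda)^k$ acts as $(-i\partial_z)^k$ on $g_y(z):=a(y+z)e^{i\Theta_y(z)}$ evaluated at $z=0$. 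This gives exactly the $B_k$.

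For the remainder I use the kernel of the operator with symbol given by the Taylor remainder, evaluated at fixed $y$. Its action on $g_y$ is $\int K_y(z)\,\partial_z^{2N-1}g_y(z)\,dz$, where $K_y$ is the inverse Fourier transform of $\int_0^1(\cdots)(\rho m)^{(2N-1)}(\lambda+s\,\cdot)\,ds$. I then bound $\partial_z^{n}g_y$. The point is that $\Theta_y(z)=O(\Lambda z^2/Y)$, so on the scale $|z|\lesssim (Y/\Lambda)^{1/2}$ the phase $\Theta_y$ costs $(\Lambda/Y)^{1/2}$ per derivative, while $a$ costs only $Y^{-1}$. This gives $|\partial_z^ng_y(0)|\lesssim A\,(\Lambda/Y)^{n/2}$ (a Faà di Bruno count, where the pure $\Theta$ terms need at least two derivatives each). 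With $n=2N-1$ or $2N$, multiplied by $\Lambda^{p-2N+1}$, this gives $A(\Lambda/Y)^{N}\Lambda^{p-2N}=AY^{-N}\Lambda^{p-N}$, matching the claimed remainder bound. Making this rigorous requires handling $z$ away from $0$ as well. For that I use the decay of $K_y$: integrating by parts in frequency, $K_y$ has size $\Lambda^{p-2N+2}(1+\Lambda|z|)^{-L}$. So only $|z|\lesssim\Lambda^{-1}\ll(Y/\Lambda)^{1/2}$ matters, and on that region the derivative bound holds uniformly.

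The $y$-derivative ($j=1$) follows by differentiating the representation. Each $\partial_y$ acting on $\lambda(y)$, $\Theta_y$ or $a(y+\cdot)$ costs at most $Y^{-1}$ in the relevant counting, since $\partial_y\Theta_y(z)=\lambda(y+z)-\lambda(y)-\lambda'(y)z=O(\Lambda z^2/Y^2)$. The two-term formula is then obtained by computing $B_0=\lambda^pa$, $B_1=-ip\lambda^{p-1}a'$, and $B_2=-\tfrac{p(p-1)}2\lambda^{p-2}(a''+i\lambda'a)$, using $\Theta_y''(0)=\lambda'$. The $a''$ part of $B_2$ is moved into $\widetilde{\mc R}_{\textup{mult},2}$, and $B_2$ is of the required size $AY^{-2}\Lambda^{p-2}$. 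For the difference estimate, I interpolate linearly, $(a_\theta,\Psi_\theta)=(a_0,\Psi_0)+\theta(a_1-a_0,\Psi_1-\Psi_0)$, and differentiate the remainder representation in $\theta$. The $\theta$-derivative produces either $a_1-a_0$ or $i(\Psi_1-\Psi_0)$ times the integrand, and these have the same symbol-type bounds times $\delta$. The $\min\{1,\delta\}$ comes from this when $\delta\le1$, and from the triangle inequality when $\delta>1$.

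The main obstacle is the Faà di Bruno/scale bookkeeping in the third step. Specifically, one must check that $|\partial_z^n g_y|\lesssim A(\Lambda/Y)^{n/2}$ holds uniformly on the effective support of $K_y$, and that the curvature of the phase costs only $(\Lambda/Y)^{1/2}$ per derivative. This is the step that forces a $2N-2$ term expansion to reach the order $N$ remainder, so I would verify it first.
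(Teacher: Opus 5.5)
Your proposal is correct in outline, but it takes a different route from the paper. You Taylor expand the \emph{symbol} around the local frequency $\lambda(y)$ on the Fourier side. The paper instead Taylor expands the \emph{amplitude} in physical space: it writes $e^{-i\Psi(y)}|D|^p(ae^{i\Psi})(y)-\lambda^pa(y)=-c_p\int e^{i\lambda z}(F_y(z)-F_y(0))|z|^{-1-p}\,dz$ with $F_y=g_y$, expands $F_y$ to degree $2N-2$, and reads off the $B_k$ from the moment identities $-c_p\int e^{i\lambda z}z^k|z|^{-1-p}\,dz=(-i)^km^{(k)}(\lambda)$. These integrals make sense only in a regularized, oscillatory sense. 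The Taylor remainder of $F_y$ is then expanded summand by summand, and each summand is bounded by splitting at $|z|\sim\Lambda^{-1}$ and integrating by parts against the phase $\lambda z+\Theta_y(tz)$, whose derivative is $\approx\Lambda$. For the difference estimate the paper uses $e^{iu}-e^{iv}=i(u-v)\int_0^1e^{i((1-s)v+su)}\,ds$ where you interpolate in $\theta$. Your version buys transparency: the $B_k$ are honest differential operators and no regularized moment integrals are needed. The price is the cutoff $\rho$, which must be adapted to the implicit constants in $\lambda\approx\Lambda$, and an extra nonstationary-phase step for $(1-\rho)m$; the paper avoids both by working with the exact homogeneous kernel. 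Since the $B_k$ coincide, the two remainders are the same quantity. The decisive estimate is also common to both proofs: a Fa\`a di Bruno count on $g_y$, tested against a kernel concentrated at $|z|\lesssim\Lambda^{-1}$.

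Two points of bookkeeping need correcting.

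(i) The heuristic $|\partial_z^ng_y|\lesssim A(\Lambda/Y)^{n/2}$ loses $(\Lambda Y)^{1/2}$ for odd $n$. With $n=2N-1$ and $\|K_y\|_{L^1}\lesssim\Lambda^{p-2N+1}$ it only gives $A\Lambda^{p-N+1/2}Y^{-N+1/2}$, so your displayed chain does not close as written. Use the integer count instead. A term with $\ell$ factors $\Theta_y^{(j)}$, $j\ge2$, has size at most $A\Lambda^{\ell}Y^{\ell-2N+1}$ for $|z|\lesssim\Lambda^{-1}$, and $\ell\le\lfloor(2N-1)/2\rfloor=N-1$. Together with $\Lambda Y\ge1$ this gives $|\partial_z^{2N-1}g_y|\lesssim A\Lambda^{N-1}Y^{-N}$, hence exactly $AY^{-N}\Lambda^{p-N}$. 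This is the paper's ``$\ell\le N-1$'' step. The polynomial growth in $|z|$ coming from the factors $\Theta_y'(z)=O(\Lambda|z|/Y)$ is absorbed by the decay of $K_y$.

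(ii) The remainder symbol $r(\zeta)=\frac{1}{(2N-2)!}\int_0^1(1-s)^{2N-2}(\rho m)^{(2N-1)}(\lambda+s\zeta)\,ds$ is not compactly supported. Outside the support of $\rho$ it equals minus the Taylor polynomial divided by $\zeta^{2N-1}$, so it decays only like $-m^{(2N-2)}(\lambda)/((2N-2)!\,\zeta)$. Consequently $K_y$ has a jump at $z=0$. The decay $(1+\Lambda|z|)^{-L}$ has to be derived from $|\partial_\zeta^Lr(\zeta)|\lesssim\Lambda^{p-2N+1-L}\langle\zeta/\Lambda\rangle^{-L-1}$, which is integrable only for $L\ge1$. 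With that argument, your bound $|K_y|\lesssim\Lambda^{p-2N+2}(1+\Lambda|z|)^{-L}$ is correct.
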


\begin{proof}
    The singular-kernel formula gives
    \begin{equation}\label{eqn:appendix_oscillatory_multiplier_kernel}
        \begin{aligned}
            e^{-i\Psi(y)}|D|^p(ae^{i\Psi})(y)-\lambda(y)^pa(y) = -c_p\int_{\mathbb R}e^{i\lambda(y)z}
            \frac{a(y+z)e^{i\Theta_y(z)}-a(y)}{|z|^{1+p}}\,dz,
        \end{aligned}
    \end{equation}
    where $c_p$ is normalized so that
    \[
        c_p\int_{\mathbb R}\frac{1-e^{i\lambda z}}{|z|^{1+p}}\,dz
        =\lambda^p,\qquad \lambda>0.
    \]
    In particular, differentiating this identity implies that
    \begin{equation}\label{eqn:appendix_oscillatory_multiplier_moments}
        -c_p\!\int_{\mathbb R}
        e^{i\lambda z}\frac{z^k}{|z|^{1+p}}\,dz
        =(-i)^km^{(k)}(\lambda),\qquad k\geq1.
    \end{equation}
    Define the function 
    \[F_y(z) = a(y + z)e^{i\Theta_y(z)}.\]
    We Taylor expand this function at $z=0$ through degree $2N-2$ to get
    \[F_y(z) = \sum_{k=0}^{2N-2} \frac{F_y^{(k)}(0)}{k!} z^k + R_{2N-1}(z).\]
    Polynomially growing integrals below are understood in the oscillatory sense, by inserting $e^{-\eta z^2}$ and letting $\eta\downarrow0$.
    It follows from \eqref{eqn:appendix_oscillatory_multiplier_kernel} that
    \begin{align*}
        e^{-i\Psi(y)}|D|^p(ae^{i\Psi})(y)-\lambda(y)^pa(y) = \sum_{k = 1}^{2N-2}(-i)^k m^{(k)}(\lambda(y)) \frac{F_y^{(k)}(0)}{k!}
        - c_p \int_{\mathbb R} \frac{e^{i\lambda(y)z}}{|z|^{p+1}} R_{2N-1}(z) \, dz.
    \end{align*}
    Taylor's integral remainder, followed by Leibniz and the chain rule, gives
    \begin{equation}\label{eqn:appendix_oscillatory_multiplier_taylor_remainder}
        \begin{aligned}
            R_{2N-1}(z)
            &=\frac{z^{2N-1}}{(2N-2)!}\int_0^1(1-t)^{2N-2}F_y^{(2N-1)}(tz)\,dt\\
            &=(2N-1)z^{2N-1}
            \sum_{\substack{r,q_1,\ldots,q_{2N-1}\geq0\\
                r+\sum_{j=1}^{2N-1}jq_j=2N-1}}
            \frac{i^{\sum_{j=1}^{2N-1}q_j}}
            {r!\prod_{j=1}^{2N-1}q_j!(j!)^{q_j}}\\
            &\qquad\times\int_0^1(1-t)^{2N-2}e^{i\Theta_y(tz)}
            a^{(r)}(y+tz)\prod_{j=1}^{2N-1}
            \bigl(\Theta_y^{(j)}(tz)\bigr)^{q_j}\,dt.
        \end{aligned}
    \end{equation}
    We estimate each summand separately. Fix a summand and set $\ell=\sum_{j=2}^{2N-1}q_j$. For $0<t\leq1$, write
    \[
        \phi_t(z)=\lambda(y)z+\Theta_y(tz),\qquad
        b_t(z)=z^{2N-1}a^{(r)}(y+tz)
        \prod_{j=1}^{2N-1}\bigl(\Theta_y^{(j)}(tz)\bigr)^{q_j}.
    \]
    Therefore the integral we are considering corresponding to this summand is 
    \[\int e^{i\phi_t(z)}\frac{b_t(z)}{|z|^{1+p}} \, dz.\]
    Since
    \[
        |\Theta_y'(tz)|\lesssim\Lambda Y^{-1}t|z|,\qquad
        |\Theta_y^{(j)}(tz)|\lesssim_j\Lambda Y^{1-j}\quad(j\geq2),
    \]
    and $|tz|\lesssim Y$ on the support, for every $s\geq0$,
    \begin{equation}\label{eqn:appendix_oscillatory_multiplier_remainder_symbols}
        |\partial_z^s b_t(z)|
        \lesssim_{N,s}A\Lambda^{q_1+\ell}Y^{-2N+1+\ell}
        t^{q_1}|z|^{2N-1+q_1-s}.
    \end{equation}
    Moreover,
    \begin{equation}\label{eqn:appendix_oscillatory_multiplier_phase_derivative_bounds}
        \phi_t'(z)=(1-t)\lambda(y)+t\lambda(y+tz)\approx\Lambda,
        \qquad
        \left|\partial_z^s\frac1{\phi_t'(z)}\right|
        \lesssim_s\Lambda^{-1}(t/Y)^s
        \lesssim_s\Lambda^{-1}|z|^{-s}.
    \end{equation}
    Set 
    \[K=2N-1+q_1, \quad B=A\Lambda^{q_1+\ell}Y^{-2N+1+\ell}t^{q_1}.\]
    We decompose the integral
    \[\int e^{i\phi_t(z)}\frac{b_t(z)}{|z|^{1+p}} \, dz = \int \chi_\Lambda e^{i\phi_t(z)}\frac{b_t(z)}{|z|^{1+p}} \, dz + \int (1 - \chi_\Lambda) e^{i\phi_t(z)}\frac{b_t(z)}{|z|^{1+p}} \, dz\]
    where $\chi_\Lambda$ is a smooth bump function supported on $[-2/\Lambda, 2/\Lambda]$ and equal to $1$ on $[-1/\Lambda, 1/\Lambda]$. To bound the integral on the support of $\chi_\Lambda$,  we can bound the integral absolutely. To bound the second integral, we integrate by parts $L>K-p$ times. By \eqref{eqn:appendix_oscillatory_multiplier_remainder_symbols} and \eqref{eqn:appendix_oscillatory_multiplier_phase_derivative_bounds},
    \begin{equation}\label{eqn:appendix_oscillatory_multiplier_remainder_summand}
        \begin{aligned}
            \left|\int_{\mathbb R}e^{i\phi_t(z)}\frac{b_t(z)}{|z|^{1+p}}\,dz\right|
            &\lesssim_N B\int_0^{2/\Lambda}z^{K-1-p}\,dz
            +B\Lambda^{-L}\int_{1/\Lambda}^{\infty}z^{K-1-p-L}\,dz\\
            &\lesssim_N B\Lambda^{p-K}
            =At^{q_1}Y^{-2N+1+\ell}\Lambda^{p-2N+1+\ell}\\
            &\leq AY^{-N}\Lambda^{p-N}.
        \end{aligned}
    \end{equation}
    The last inequality follows from $\ell\leq N-1$, $t\leq1$, and $Y\Lambda\geq1$ and the estimates are uniform in $t$. Therefore,
    \[
        |\mc{R}_{\textup{mult},N}(y)|
        =\left|-c_p\int_{\mathbb R}e^{i\lambda(y)z}
        \frac{R_{2N-1}(z)}{|z|^{1+p}}\,dz\right|
        \lesssim_N AY^{-N}\Lambda^{p-N}.
    \]
    The estimates for the derivatives of $\mc{R}_{\textup{mult},N}$ follow similarly.

    To prove \eqref{eqn:appendix_oscillatory_multiplier_amplitude_difference}, first assume $\delta\leq1$.
    Given $\nu$, for the corresponding summands, we denote the phases and amplitudes by $\phi_{\nu,t}$ and $b_{\nu,t}$, and write
    \[
        I_\nu(y,t)=\int_{\mathbb R}e^{i\phi_{\nu,t}(z)}
        \frac{b_{\nu,t}(z)}{|z|^{1+p}}\,dz,\qquad \nu=0,1.
    \]
    The assumptions on the amplitude and phase differences give
    \[
        |\Theta_{1,y}'(tz)-\Theta_{0,y}'(tz)|\lesssim\delta Y^{-2}t|z|,
        \qquad
        |\Theta_{1,y}^{(k)}(tz)-\Theta_{0,y}^{(k)}(tz)|
        \lesssim_k\delta Y^{-k}\quad(k\geq2).
    \]
    It follows that the difference $b_{1,t}-b_{0,t}$ gains a factor $\delta$ in the sense that 
    \[|\partial_z^{s}(b_{1,t}-b_{0,t})|\lesssim\delta B|z|^{K-s}.\]
    Moreover, writing $\Delta\Psi=\Psi_1-\Psi_0$,
    \[
        \phi_{1,t}(z)-\phi_{0,t}(z)
        =(1-t)\Delta\Psi'(y)z+\Delta\Psi(y+tz)-\Delta\Psi(y).
    \]
    Therefore, we have that
    \begin{equation}\label{eqn:appendix_oscillatory_multiplier_difference_symbols}
        \begin{aligned}
            |\partial_y^j\partial_z^s(b_{1,t}-b_{0,t})|
            &\lesssim_{N,j,s}\delta BY^{-j}|z|^{K-s},\\
            |\partial_y^j\partial_z^s(\phi_{1,t}-\phi_{0,t})|
            &\lesssim_{j,s}\delta Y^{-j-1}|z|^{1-s},
            \qquad j,s\geq0.
        \end{aligned}
    \end{equation}
    The identity $e^{iu}-e^{iv}=i(u-v)\int_0^1e^{i((1-s)v+su)}\,ds$ yields
    \begin{equation}\label{eqn:appendix_oscillatory_multiplier_difference_integrals}
        \begin{aligned}
            I_1-I_0
            &=\int_{\mathbb R}e^{i\phi_{1,t}}
            \frac{b_{1,t}-b_{0,t}}{|z|^{1+p}}\,dz\\
            &\quad+i\int_0^1\int_{\mathbb R}
            e^{i((1-s)\phi_{0,t}+s\phi_{1,t})}
            \frac{(\phi_{1,t}-\phi_{0,t})b_{0,t}}{|z|^{1+p}}\,dz\,ds.
        \end{aligned}
    \end{equation}
    Every phase in this identity has derivative comparable to $\Lambda$, with the same inverse-derivative bounds as in \eqref{eqn:appendix_oscillatory_multiplier_phase_derivative_bounds}. By the integration by parts argument used to obtain \eqref{eqn:appendix_oscillatory_multiplier_remainder_summand}, we have that
    \begin{equation}\label{eqn:appendix_oscillatory_multiplier_difference_summand}
        \begin{aligned}
            |\partial_y^j(I_1-I_0)|
            &\lesssim_{N,j}\delta BY^{-j}\Lambda^{p-K}
            +\delta BY^{-j-1}\Lambda^{p-K-1}\\
            &\lesssim_{N,j}\delta AY^{-N-j}\Lambda^{p-N}.
        \end{aligned}
    \end{equation}
    This concludes the proof for $\delta \leq 1$. For $\delta \geq 1$, we can use the triangle inequality instead.
\end{proof}

\bibliographystyle{amsalpha}
\bibliography{./ref}

\end{document}